\documentclass[10pt]{amsart}

\usepackage{amsmath}
\usepackage{amsthm}
\usepackage{amsopn}
\usepackage{amssymb}
\usepackage[all]{xy}
\usepackage{lscape,xcolor}
\usepackage{graphicx}
\usepackage{hyperref}
\usepackage{mathtools}

\setcounter{tocdepth}{1}

\parskip 0.7pc
\parindent 0pt

\allowdisplaybreaks[1]

\newcommand{\sslash}{\mathbin{/\mkern-6mu/}}
\newcommand{\mmod}{\! \sslash \!}

\newcommand{\mc}[1]{\mathcal{#1}}
\newcommand{\ul}[1]{\underline{#1}}

\newcommand{\mr}[1]{\mathrm{#1}}

\newcommand{\mit}[1]{\mathit{#1}}

\newcommand{\abs}[1]{\lvert #1 \rvert}

\newcommand{\bra}[1]{\langle #1 \rangle}
\newcommand{\br}[1]{\overline{#1}}

\newcommand{\td}[1]{\widetilde{#1}}

\newcommand{\ZZ}{\mathbb{Z}}

\newcommand{\QQ}{\mathbb{Q}}

\newcommand{\FF}{\mathbb{F}}

\newcommand{\tmf}{\mathrm{tmf}}
\newcommand{\bo}{\mathrm{bo}}

\newcommand{\KO}{\mr{KO}}

\newcommand{\St}{\mr{St}}

\newcommand{\bou}{\ul{\bo}}

 \newtheorem{thm}[equation]{Theorem}
 \newtheorem{cor}[equation]{Corollary}
 \newtheorem{lem}[equation]{Lemma}
 \newtheorem{prop}[equation]{Proposition}
 
 \newtheorem{sublem}[equation]{Sublemma}
 
 \newtheorem*{thm*}{Theorem}
 \newtheorem*{cor*}{Corollary}
 \newtheorem*{lem*}{Lemma}
 \newtheorem*{prop*}{Proposition}

 \theoremstyle{definition}

 \newtheorem{rmk}[equation]{Remark}

 \newtheorem{question}[equation]{Question}
 
\newtheorem{tab}[equation]{Table}

\newtheorem*{defn*}{Definition}
\newtheorem*{ex*}{Example}
\newtheorem*{exs*}{Examples}
\newtheorem*{rmk*}{Remark}
\newtheorem*{claim*}{Claim}

\numberwithin{equation}{section}
\numberwithin{figure}{section}
\DeclareMathOperator{\Ext}{Ext}

\DeclareMathOperator*{\Tot}{Tot}

\newcommand{\xib}{\zeta}

\newcommand{\E}[2]{\prescript{#1}{#2}{E}}

\title{The $2$-primary Hurewicz image of $\tmf$}
\author{Mark Behrens}
\address{
Dept. of Mathematics \\
University of Notre Dame \\
Notre Dame, IN, U.S.A.
}

\author{Mark Mahowald}
\address{
Dept. of Mathematics \\
Northwestern University \\
Evanston, IL, U.S.A.
}

\author{J.D. Quigley}
\address{
Dept. of Mathematics \\
Cornell University \\
Ithaca, NY, U.S.A.
}

\begin{document}

\begin{abstract}
We determine the image of the 2-primary tmf-Hurewicz homomorphism, where tmf is the spectrum of topological modular forms.  We do this by lifting elements of $\tmf_*$ to the homotopy groups of the generalized Moore spectrum $M(8,v_1^8)$ using a modified form of the Adams spectral sequence and the tmf-resolution, and then proving the existence of a $v_2^{32}$-self map on $M(8,v_1^8)$ to generate 192-periodic families in the stable homotopy groups of spheres.   
\end{abstract}

\maketitle
\tableofcontents

\section{Introduction}\label{sec:intro}

The Hurewicz theorem implies that the Hurewicz homomorphism
$$ h: \pi_*(S^n) \rightarrow \td{H}_*(S^n; \ZZ)$$
is an isomorphism for $* = n$, implying the well known result that the $0$th stable stem is given by
$$ \pi_0^s \cong \ZZ. $$
In his paper \cite{Adams}, Adams studied the Hurewicz homomorphism for real K-theory
$$
h_{\KO}: \pi^s_* \rightarrow \pi_*\KO = \KO^{-*}(pt).
$$
The computation of the real K-theory of a point (the homotopy groups of the spectrum $\KO$ representing real K-theory) 
is a consequence of the Bott periodicity theorem \cite{Bott}: these groups are given by the following 8-fold periodic pattern.
\begin{center}
\begin{tabular}{c|cccccccc}\hline
$n \mod 8$ & 0 & 1 & 2 & 3 & 4 & 5 & 6 & 7
\\ \hline
$\pi_n\KO$ & $\ZZ$ & $\ZZ/2$ & $\ZZ/2$ & 0 & $\ZZ$ & 0 & 0 & 0 
\\ \hline
\end{tabular} 
\end{center}
The map $h_{\KO}$ is an isomorphism in degree $0$, and Adams showed that $h_{\KO}$ is surjective in degrees $* \equiv 1,2 \mod 8$.  He did this by constructing what is now known as a $v_1$-self map
$$ v_1^4: \Sigma^{8}M(2) \rightarrow M(2), $$
where $M(2)$ denotes the mod $2$ Moore spectrum, and considering the projections
$$ \mu_{8j +1 + \epsilon} \in \pi^s_{8j+1+\epsilon} $$
of the elements
\begin{equation}\label{eq:mun}
 \eta^{\epsilon} \cdot v_1^{4j}\td{\eta} \in \pi_{8j+2 + \epsilon} M(2)
\end{equation}
to the top cell of $M(2)$. Here $\td{\eta}$ denotes a lift of $\eta \in \pi_1^s$ to the top cell of $M(2)$ and $\epsilon \in \{0,1\}$.  Because we have 
$$ \pi_*^s \otimes \QQ = 0 $$
for $* > 0$, the homomorphism $h_{\KO}$ is necessarily trivial in positive degrees $* \equiv 0 \mod 4$.
 
Goerss, Hopkins, and Miller constructed the spectrum $\tmf$ of topological modular forms \cite{tmf} as a higher analog of the real $K$-theory spectrum.\footnote{Here, $\tmf$ denotes \emph{connective} topological modular forms.}  The homotopy groups of $\tmf$ are 576-periodic.
The goal of this paper is to determine the image of the $2$-local $\tmf$-Hurewicz homomorphism.  
$$ h_{\tmf}: \pi_*^s \rightarrow \pi_*\tmf_{(2)}. $$
The $3$-primary Hurewicz image has recently been determined by Belmont and Shimomura \cite{BelmontShimomura}. Since $\pi_*\tmf_{(p)}$ has no torsion for $p \ge 5$, the $p$-primary $\tmf$-Hurewicz image is trivial in positive degrees for these primes.
\emph{Henceforth, everything in this paper is implicitly $2$-local.}  
\begin{figure}
\includegraphics[angle = 90, origin=c, height =.7\textheight]{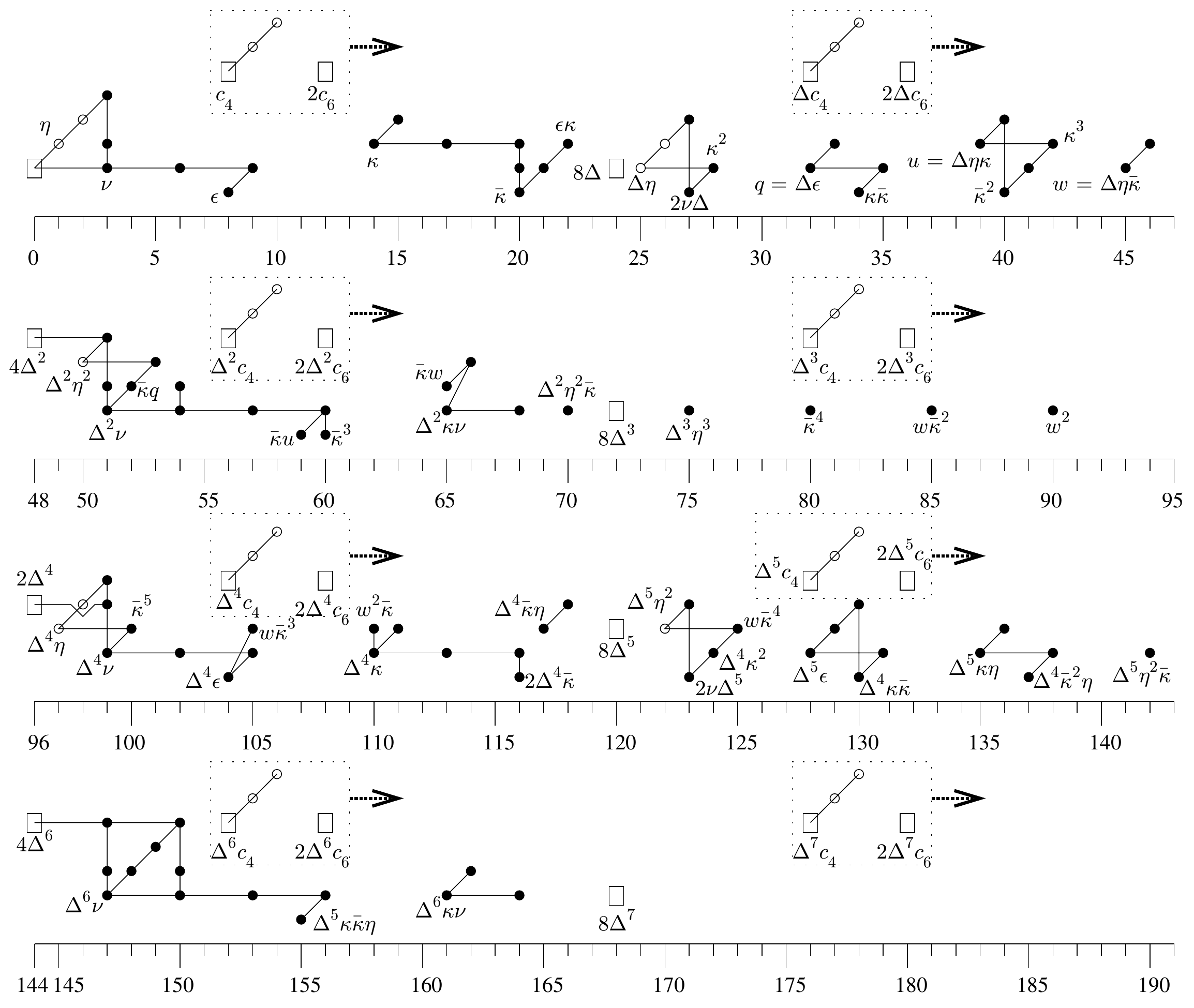}
\caption{The homotopy groups of $\tmf$}\label{fig:tmf2}
\end{figure}

$2$-locally, the homotopy groups of $\tmf$ are merely 192-periodic.  These homotopy groups were originally computed by Hopkins and Mahowald \cite[Ch.~15]{tmf} (see also \cite{Bau08}) using the descent spectral sequence
$$ \Ext^{s,t}_{\Gamma^{\mit{ell}}}(A^{\mit{ell}}, A^{\mit{ell}}) \Rightarrow \pi_{t-s}(\tmf) $$
where $(A^{\mit{ell}}, \Gamma^{\mit{ell}})$ is the elliptic curve Hopf algebroid.
These homotopy groups are displayed in Figure~\ref{fig:tmf2}. In this figure:
\begin{itemize}
\item A series of $i$ black dots joined by vertical lines corresponds to a factor of $\ZZ/2^i$ which is annihilated by some power of $c_4$.

\item An open circle corresponds to a factor of $\ZZ/2$ which is not annihilated by a power of $c_4$.

\item A box indicates a factor of $\ZZ_{(2)}$ which is not annihilated by a power of $c_4$.

\item The non-vertical lines indicate multiplication by $\eta$ and $\nu$.

\item A pattern with a dotted box around it and an arrow emanating from the right face indicates this pattern continues indefinitely to the right by $c_4$-multiplication (i.e. tensor the pattern with $\ZZ_{(2)}[c_4]$).

\item The vertical arrangement of the chart is arbitrary.
\end{itemize}
The homotopy groups $\pi_*\tmf$ are given by tensoring the pattern depicted in Figure~\ref{fig:tmf2} with $\ZZ_{(2)}[\Delta^8]$, where $\Delta^8 \in \pi_{192}\tmf$.  Our choice of names for generators in Figure~\ref{fig:tmf2} is motivated by the fact that the elements
$$ \eta, \nu, \epsilon, \kappa, \bar{\kappa}, q, u, w $$
in the stable stems map to the corresponding elements in $\pi_*\tmf$ under the $\tmf$-Hurewicz homomorphism.  The other indecomposable multiplicative generators are named based on the names of elements which detect them in the $E_2$-term of the descent spectral sequence.  There is thus some ambiguity in the naming of some of these elements coming from the filtration associated to the descent spectral sequence.

For definiteness we fix $c_4 \in \pi_8\tmf$ to be the unique element detected by $c_4$ in the descent spectral sequence of Adams filtration $4$.
Note that the $c_4$-torsion in $\pi_*\tmf$ does not have $c_4$-exponent 1.  Indeed, on $c_4$-torsion classes, multiplication by $c_4$ is equal to multiplication by $\epsilon$ \cite[9.5]{BrunerRognes}, so, for example, $c_4\kappa = \epsilon\kappa \ne 0$.  However, all $c_4$-torsion has $c_4$-exponent 2 \cite[Prop.~6.1]{BHHM2}, \cite[9.5]{BrunerRognes}.

The main theorem of this paper is the following.

\begin{thm}\label{thm:main}
The $\tmf$-Hurewicz image is the subgroup of $\pi_*\tmf$ generated by 
\begin{enumerate}
\item All the elements of $\pi_{\le 3}(\tmf)$,
\item The elements $c_4^i \eta$ and $c_4^i \eta^2$,
\item All the elements of $\pi_*\tmf$ annihilated by a power of $c_4$, \emph{except} those in $\pi_{24k+3}\tmf$.
\end{enumerate}
\end{thm}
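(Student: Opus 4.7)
The plan is to prove the theorem in two complementary halves: a positive half exhibiting each listed element as a genuine Hurewicz image, and a negative half obstructing every remaining class. The positive half is the bulk of the argument and uses the machinery described in the abstract; the negative half combines rational and $v_1$-periodic obstructions with targeted low-filtration computations in the descent spectral sequence.

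For the positive half, the elements of $\pi_{\le 3}\tmf$ and the classes $c_4^i\eta,\,c_4^i\eta^2$ are handled classically. The former are generated by the images of $\eta$ and $\nu$ from $\pi^s_*$. The latter come from Adams' $\mu$-elements $\mu_{8i+1},\mu_{8i+2}\in\pi^s_*$ constructed from the $v_1^4$-self map on $M(2)$ as in \eqref{eq:mun}; one checks that they project to $c_4^i\eta$ and $c_4^i\eta^2$ in $\pi_*\tmf$. The substantial work is lifting every $c_4$-torsion class of $\pi_*\tmf$ outside degrees $24k+3$. The strategy is to first construct a $v_2^{32}$-self map
$$ v_2^{32}\colon \Sigma^{192}M(8,v_1^8)\longrightarrow M(8,v_1^8), $$
which supplies $192$-periodic families in $\pi_*M(8,v_1^8)$ that project to $\pi^s_*$ via the quotient onto the top cell. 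One then lifts each fundamental $c_4$-torsion generator of $\pi_*\tmf$ (a representative per $\Delta^8$-period) to a class in $\pi_*M(8,v_1^8)$, using a modified Adams spectral sequence built from the $\tmf$-resolution $S\to\tmf\to\tmf\wedge\tmf\to\cdots$ of the sphere. Applying $v_2^{32}$-multiplication and projecting to the top cell then yields a $192$-periodic family in $\pi^s_*$ whose image under $h_\tmf$ is the prescribed class.

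For the negative half, three obstructions are needed. First, every non-$c_4$-torsion class of positive degree, namely the $\ZZ_{(2)}$-module generators shown as boxes in Figure~\ref{fig:tmf2}, cannot lie in the image because $\pi^s_*\otimes\QQ=0$ in positive degrees. Second, any $c_4$-periodic $2$-torsion outside the Adams $\mu$-family would yield a $v_1$-periodic element of $\pi^s_*$ beyond what is permitted by the classical image-of-$J$ computation; this is detected by comparison with $\KO$. Third, and most delicately, the excluded $c_4$-torsion classes in degree $24k+3$ must be ruled out by a direct argument: one uses Toda-bracket or Massey-product manipulation in the descent spectral sequence to show that any putative lift to $\pi_{24k+3}^s$ would force a nonexistent class in an adjacent stem or contradict a known relation on $\eta$- or $\nu$-multiplication.

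The principal obstacle is the simultaneous construction and control of the $v_2^{32}$-self map on $M(8,v_1^8)$ together with enough computational traction on the $\tmf$-resolution of $S$ to carry out the required lifts; both the existence of the self map and the visibility of each generator in a multi-page spectral sequence are nontrivial. A secondary but serious obstacle is verifying that the elements one produces project to the intended $\tmf$-classes rather than to higher-filtration noise, given the filtration ambiguities in the descent spectral sequence noted above. For the exclusion in degree $24k+3$, this bookkeeping must terminate in a nonexistence rather than an existence statement, which requires the sharpest spectral-sequence control of all.
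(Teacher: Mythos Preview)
Your positive half matches the paper's strategy: elements of types (1) and (2) are handled classically via $\eta,\nu$ and Adams' $\mu$-elements, while the $c_4$-torsion generators are lifted through $M(8,v_1^8)$ using the MASS and algebraic $\tmf$-resolution, then propagated by the $v_2^{32}$-self map. You have also correctly flagged the filtration-ambiguity issue, which the paper resolves by invoking the negative half (the higher-filtration noise is $v_1^4$-periodic and hence already excluded).

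Your negative half, however, has a genuine gap in the treatment of the degree-$24k+3$ exclusions. Your proposed ``Toda-bracket or Massey-product manipulation'' forcing a contradiction in an adjacent stem is vague and is not how the argument actually goes. The paper's exclusion of $\alpha\Delta^k\nu$ (with $\alpha\not\equiv 0\bmod 8$) uses the \emph{same} $v_1$-periodic framework as your obstruction (2), not a separate bracket argument. Concretely, one uses the equivalence $c_4^{-1}\tmf\simeq\KO[j^{-1}]$ and the diagram comparing $\pi_*M(2^\infty)$, $\tmf_*M(2^\infty)$, and $\KO_*M(2^\infty)[j^{-1}]$. A putative preimage $y\in\pi^s_{24k+3}$ of $\alpha\Delta^k\nu$ is torsion, hence lifts to $\tilde y\in\pi_{24k+4}M(2^\infty)$; one then computes that the image of $h(\tilde y)$ in $c_4^{-1}\tmf_*M(2^\infty)=\KO_*M(2^\infty)[j^{-1}]$ is $\overline{\alpha v_1^{12k+2}}/4\cdot j^{-k}$, which is nonzero. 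But commutativity forces this to lie in the image of $\KO_*M(2^\infty)\hookrightarrow\KO_*M(2^\infty)[j^{-1}]$, which is impossible for $k>0$ since the image contains no negative powers of $j$. The point you are missing is that although $\Delta^k\nu$ is $c_4$-torsion in $\tmf_*$, its lift to $\tmf_*M(2^\infty)$ is \emph{not} $c_4$-torsion, and this hidden $v_1$-periodicity is exactly what obstructs. No bracket manipulation is needed; the argument is a direct computation in $\KO_*M(2^\infty)$.
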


\begin{rmk}
The reader will note from Figure~\ref{fig:tmf2} that the subgroup of $\pi_*(\tmf)$ generated by the elements of type (3) above form a self dual pattern centered in dimension 85.  This is discussed in \cite[Ch.~10]{BrunerRognes}.
\end{rmk}

Besides representing an advance in our understanding of $v_2$-periodic homotopy at the prime 2, Theorem~\ref{thm:main} also has applications to smooth structures on spheres, as explained in \cite{BHHM2}.  Specifically, Hill, Hopkins, and the first two authors consider the following question.

\begin{question}\label{ques:exotic}
In which dimensions $n$ do there exist exotic smooth structures on the $n$-sphere?
\end{question}

Such spheres with exotic smooth structures are called exotic spheres.
The work of Kervaire and Milnor \cite{KervaireMilnor} relates the existence of exotic spheres to the triviality of the Kervaire homomorphism
$$ \pi_{4k+2}^s \rightarrow \ZZ/2 $$
and the non-triviality of the cokernel of the $J$-homomorphism
$$ J: \pi_n SO \rightarrow \pi^s_{n}. $$
Specifically, they prove that exotic spheres exist in dimensions $n$ for which
\begin{description}
\item[$n = 4k$] $n \ge 8$ and there exists a non-trivial element of coker $J$, 
\item[$n = 4k+1$] there exists a non-trivial element of coker $J$, or there does not exist an element of Kervaire invariant 1 in dimension $n+1$,
\item[$n = 4k+2$] there exists a non-trivial element of coker $J$ with Kervaire invariant $0$,
\item[$n = 4k+3$] $n \ge 7$.
\end{description}
Combining this with the work of Moise \cite{Moise}, Browder \cite{Browder}, Barratt-Jones-Mahowald \cite{BJM}, Hill-Hopkins-Ravenel \cite{HHR}, and Wang-Xu \cite{WangXu}, Question~\ref{ques:exotic} has been answered completely for $n$ odd:
\begin{quote}
the only odd dimensions $n$ for which there do not exist exotic spheres are $n = 1$, 3, 5, and 61.
\end{quote}
For $n$ even, the case of $n = 4$ is unresolved.  For other even $n$, by the previous discussion, the question boils down to the existence of non-trivial elements of coker $J$ (with Kervaire invariant 0).  It is shown in \cite{BHHM2}:
\begin{quote}
the only even dimensions $4 \ne n < 140$ for which there do not exist exotic spheres are $n = 2$, 6, 12, and 56. 
\end{quote}
In the case of $n = 8k+2 \ge 10$, Adams' elements $\mu_{8k+2}$ with non-trivial $\KO$-Hurewicz image are not in the image of $J$ and have trivial Kervaire invariant.  It thus follows that:
\begin{quote}
there exist exotic spheres in all dimensions $n = 8k+2 \ge 10$. 
\end{quote}
As is explained in \cite{BHHM2}, many of the 192-periodic families of elements of Theorem~\ref{thm:main} also are not in the image of $J$ and have trivial Kervaire invariant.  Theorem~\ref{thm:main} therefore has the following corollary.\footnote{In fact, the $v_2^{32}$-self map of Theorem~\ref{thm:v232} which is used to construct the periodic families of Theorem~\ref{thm:main} also immediately implies the existence of some elements not in the image of the $J$-homomorphism which are in the kernel of the $\tmf$-Hurewicz homomorphism, such as the beta elements $\beta_{32k/8}$. However, we will not concern ourselves here with the few additional dimensions such considerations add to the list of Corollary~\ref{cor:exotic}.}

\begin{cor}\label{cor:exotic}
There exist exotic spheres in the following congruence classes of even dimensions $n \ge 8$ modulo 192:
\begin{align*}
& 2, 6, 8, 10, 14, 18, 20, 22, 26, 28, 32, 34, 40, 42, 46, 50, 52, 54, 58, 60, 66, 68, \\ 
& 70, 74, 80, 82, 
 90, 98, 100, 102, 104, 106, 110, 114, 116, 118, 122, 124, 128, \\
 & 130, 136, 138, 142, 146, 148,
150, 154, 156, 162, 164, 170, 178, 186.
\end{align*}
(This accounts for over half of the even dimensions.)
\end{cor}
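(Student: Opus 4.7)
The plan is to combine Theorem~\ref{thm:main} with the Kervaire–Milnor criterion recalled above, which, for even $n \geq 8$, reduces the existence of exotic $n$-spheres to exhibiting an element of $\pi_n^s$ that is non-trivial in $\mathrm{coker}\, J$, and of Kervaire invariant $0$ when $n \equiv 2 \pmod 4$. First I would read off from Figure~\ref{fig:tmf2} the list of even residues $r$ modulo $192$ for which $\pi_r \tmf$ contains a non-trivial element of the $\tmf$-Hurewicz image: these come from the type~(2) classes $c_4^i \eta^2$ concentrated in residues $\equiv 2 \pmod 8$, together with the abundant type~(3) classes which, by the remark following Theorem~\ref{thm:main}, populate a self-dual pattern centered in dimension $85$. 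Multiplication by $\Delta^8 \in \pi_{192}\tmf$ extends every such residue class to all sufficiently large $n$, furnishing non-zero elements of $\pi_n^s$ in every dimension appearing in the corollary's list.

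Next I would verify the two exclusion conditions. At the prime $2$, the image of $J$ in positive even dimensions is concentrated in dimensions $\equiv 0 \pmod 8$, where it is the $\ZZ/2$ spanned by $\mu_{8k}$; in every other even residue class any non-trivial Hurewicz image is automatically in $\mathrm{coker}\, J$. In the residue classes $r \equiv 0 \pmod 8$ appearing on the list (such as $r = 8, 32, 40, 80, 104, 128, 136$), I would separate the chosen Hurewicz-image element from $\mu_{8k}$ by comparing Adams filtrations, or by observing that the relevant classes are $c_4$-torsion (equivalently $\eta$-torsion after suitable multiplication) whereas $\mu_{8k}$ is not. For the Kervaire invariant constraint, Hill–Hopkins–Ravenel together with Wang–Xu restrict the dimensions supporting Kervaire invariant $1$ classes to $\{2,6,14,30,62,126\}$; outside these six dimensions, every element of $\pi_n^s$ has Kervaire invariant $0$ for trivial reasons, and in the exceptional dimensions I would point to a specific Hurewicz-image element (e.g.\ $\kappa \in \pi_{14}^s$) known to have vanishing Kervaire invariant.

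The main obstacle is the bookkeeping: translating the $192$-periodic pattern visible in Figure~\ref{fig:tmf2}, together with the enumeration of type~(2) and type~(3) elements supplied by Theorem~\ref{thm:main}, into the explicit list of residues in the statement, and verifying the non-$J$ condition class by class in the residues divisible by $8$. This case-by-case enumeration is carried out in detail in \cite{BHHM2}, so in practice the proof of the corollary amounts to invoking Theorem~\ref{thm:main} and then referring to that analysis.
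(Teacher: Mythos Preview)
Your approach is essentially the paper's: the paper offers no proof beyond the sentence preceding the corollary, which says that Theorem~\ref{thm:main} together with the analysis in \cite{BHHM2} (showing the relevant families are not in $\mathrm{im}\,J$ and have trivial Kervaire invariant) yields the result. Your proposal correctly unpacks this and ends by deferring to \cite{BHHM2} for the enumeration, which is exactly what the paper does.

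Two small slips worth correcting. First, there is no element ``$\mu_{8k}$'': the paper's $\mu$-family lives in dimensions $8j+1$ and $8j+2$. What you want in dimensions $\equiv 0 \pmod 8$ is simply the generator of $\mathrm{im}\,J$ there (a $\ZZ/2$ coming from $\pi_{8k}(SO)\cong\ZZ/2$), and your filtration/torsion argument to separate the Hurewicz-image classes from it is the right idea. Second, Wang--Xu \cite{WangXu} concerns the $61$-stem and is not relevant to the Kervaire invariant constraint; the input you need is Browder \cite{Browder} (Kervaire classes live only in dimensions $2^j-2$) together with Hill--Hopkins--Ravenel \cite{HHR} (none above dimension $126$), and then the observation that none of the listed residues force you to confront a genuine Kervaire class except $14$, where $\kappa$ indeed has Kervaire invariant $0$.
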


We will prove Theorem~\ref{thm:main} by first showing (Theorem~\ref{thm:nothi}) that the subgroup of $\pi_*\tmf$ described by Theorem~\ref{thm:main} is contained in the Hurewicz image.  This will be a relatively straightforward consequence of some $v_1$-periodic computations.  The elements of Theorem~\ref{thm:main}(1) are already established to be in the Hurewicz image by the preceding discussion, and the elements (2) are in the Hurewicz image because they are the images of the elements $\mu_{8i+j}$. 
We are left to show that the elements of type (3) lift to $\pi_*^s$.  This is the main task of this paper.

In \cite{BrunerRognes}, Bruner and Rognes give a systematic and careful study of the Adams spectral sequence for tmf, and in particular they have independently established the Hurewicz image in many low-dimensional cases.  Specifically, they prove Theorem~\ref{thm:main} for degrees $* \le 101$ and also show that $w\bar{\kappa}^3$, $w^2\bar{\kappa}$, $w\bar{\kappa}^4$, $2\Delta^4\kappa\bar{\kappa}$, and $4\Delta^6 \nu^2$ (in dimensions 105, 110, 125, 130, and 150) are in the Hurewicz image.  Also, they use a different technique (Anderson duality) to prove that the Hurewicz image is contained in the subgroup of $\tmf_*$ described in Theorem~\ref{thm:main}. 

Our strategy to lift elements from $\pi_*\tmf$ to $\pi_*^s$ is to use the methods of \cite{BHHM2}.  We summarize that strategy here.
We recall the following from \cite[Prop. 6.1]{BHHM2}.

\begin{prop}[\cite{BHHM2}]\label{prop:torsion}
Every $c_4$-torsion element $x \in \pi_{*}\tmf$ is $8$-torsion and $c_4^2$-torsion.
\end{prop}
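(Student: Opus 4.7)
The plan is to handle the two claims by two complementary arguments: the $c_4^2$-torsion follows from a clean algebraic identity, while the $8$-torsion requires inspection of the descent spectral sequence for $\tmf$.

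For the $c_4^2$-torsion claim, I would use the identity (recorded in \cite{BrunerRognes}) that, on $c_4$-torsion classes, multiplication by $c_4$ agrees with multiplication by $\epsilon$. Hence for any $c_4$-torsion $x$, $c_4 x = \epsilon x$, and since $c_4 x$ is itself $c_4$-torsion, $c_4^2 x = c_4(\epsilon x) = \epsilon(c_4 x) = \epsilon^2 x$. Because $\epsilon^2 = 0$ in $\pi_*^s$ (already at $E_2$ of the classical Adams spectral sequence, where $\epsilon$ is detected by $c_0$ and $c_0^2 = 0$), and the Hurewicz map is a ring homomorphism, $\epsilon^2 = 0$ in $\pi_*(\tmf)$, giving $c_4^2 x = 0$.

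For the $8$-torsion claim, I would analyze the $c_4$-torsion part $T^{*,*}$ of the $E_2$-page of the descent spectral sequence
$$ E_2^{s,t} = \Ext^{s,t}_{\Gamma^{\mit{ell}}}(A^{\mit{ell}}, A^{\mit{ell}}) \Rightarrow \pi_{t-s}(\tmf). $$
The Hopkins--Mahowald computation shows that $T^{*,*}$ is concentrated in positive cohomological degree and is $2$-torsion throughout. Every $c_4$-torsion class in $\pi_*(\tmf)$ is detected in $T^{*,*}$ at $E_\infty$, because the $c_4$-free part of $E_2$ assembles into the torsion-free modular-forms summand of $\pi_*\tmf$. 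Each hidden $2$-extension in the spectral sequence strictly raises Adams filtration, so the $2$-exponent of a $c_4$-torsion class is bounded by $2^k$, where $k$ is the filtration spread of $T^{*,*}$ in that stem. A direct inspection of the $E_2$-page shows $k \le 3$ throughout, giving the $8$-torsion bound.

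The main obstacle will be verifying the filtration-spread bound $k \le 3$: a priori, iterated hidden $2$-extensions could combine to give $16$-torsion in some stem, and ruling this out requires a careful stem-by-stem inspection of the Hopkins--Mahowald computation (or cross-checking via the parallel Adams spectral sequence computation of Bruner--Rognes \cite{BrunerRognes}).
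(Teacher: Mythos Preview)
The paper does not give its own proof of this proposition; it is recalled verbatim from \cite[Prop.~6.1]{BHHM2} (and the $c_4^2$-torsion half is also attributed to \cite[9.5]{BrunerRognes}). So there is no in-paper argument to compare against, and your proposal should be read as a reconstruction of the original proof.

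Your outline is essentially the correct one, with one small gap and one caveat.

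For the $c_4^2$-torsion claim, the chain $c_4^2 x = \epsilon^2 x$ is fine, but your justification of $\epsilon^2 = 0$ is incomplete: $c_0^2 = 0$ at $E_2$ of the Adams spectral sequence only shows that $\epsilon^2$ has Adams filtration greater than $6$, not that it vanishes. The easiest fix is to work directly in $\pi_*\tmf$: since $\epsilon$ is $c_4$-torsion, so is $\epsilon^2 \in \pi_{16}\tmf$, and the Hopkins--Mahowald computation shows the $c_4$-torsion in $\pi_{16}\tmf$ is zero. (Alternatively, $\epsilon^2 = 0$ in $\pi_{16}^s$ is classical, but again requires more than the $E_2$ relation.)

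For the $8$-torsion claim, your filtration-spread argument is the right shape and is what the argument in \cite{BHHM2} amounts to. One small point: you do not need to worry about hidden $2$-extensions escaping the $c_4$-torsion part, since if $x$ is $c_4$-torsion then so is $2x$. Beyond that, the bound $k \le 3$ on the spread of $T^{*,*}$ in each stem is indeed a finite inspection over one $192$-periodic block of the known $E_\infty$-page, and it is sharp (e.g.\ $\nu \in \pi_3\tmf$ has order exactly $8$). There is no shortcut here; this is the content of the proposition.
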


Let $M(2^i)$ denote the cofiber of $2^i$, and let $M(2^i,v_1^j)$ denote the cofiber of a $v_1$-self map (see \cite[Prop.~2.3]{DavisMahowald})
$$ v_1^j: \Sigma^{2j}M(2^i) \rightarrow M(2^i). $$

\begin{cor}
Every $c_4$-torsion element 
$x \in  \pi_{*}(\tmf)$ lifts to an element 
$$ \td{x} \in \tmf_{*+18} M(8, v_1^8) $$
so that the projection to the top cell maps $\td{x}$ to $x$.
\end{cor}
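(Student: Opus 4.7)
The plan is a two-stage lift along the defining cofiber sequences of $M(8)$ and $M(8,v_1^8)$, with each stage powered by one of the two annihilation relations supplied by Proposition~\ref{prop:torsion}: namely $8x=0$ and $c_4^2 x = 0$.

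First, apply $\tmf$-homotopy to the cofiber sequence
\[
S^0 \xrightarrow{8} S^0 \xrightarrow{i} M(8) \xrightarrow{p} S^1
\]
to get the long exact sequence
\[
\pi_n\tmf \xrightarrow{8} \pi_n\tmf \xrightarrow{i_*} \tmf_n M(8) \xrightarrow{p_*} \pi_{n-1}\tmf \xrightarrow{8} \pi_{n-1}\tmf.
\]
Since $8x=0$, exactness at the right-hand $\pi_n\tmf$ produces a lift $x_1 \in \tmf_{n+1} M(8)$ with $p_*(x_1)=x$, unique up to adding an element of $i_*(\pi_{n+1}\tmf)$.

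Second, smash the cofiber sequence
\[
\Sigma^{16}M(8) \xrightarrow{v_1^8} M(8) \xrightarrow{i'} M(8,v_1^8) \xrightarrow{p'} \Sigma^{17}M(8)
\]
with $\tmf$ and take homotopy. Exactness at $\tmf_{n+1}M(8)$ in the resulting long exact sequence shows that $x_1$ is in the image of $p'_*\colon \tmf_{n+18}M(8,v_1^8) \to \tmf_{n+1}M(8)$ exactly when $v_1^8 \cdot x_1 = 0$ in $\tmf_{n+17}M(8)$. Once that vanishing is secured, choosing $\td{x}$ with $p'_*(\td{x})=x_1$ yields a class whose image under the composite top-cell projection $M(8,v_1^8) \xrightarrow{p'} \Sigma^{17}M(8) \xrightarrow{\Sigma^{17}p} S^{18}$ is $x$, as required.

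The technical heart of the argument is therefore the vanishing $v_1^8 x_1 = 0$, and this is where the $c_4^2$-torsion of $x$ enters. Since $\tmf \wedge M(8)$ is a $\tmf$-module and the $v_1^8$-self map is a map of $\tmf$-modules, its action on $\tmf_{*}M(8)$ is multiplication by a specific class in $\tmf_{16}M(8)$; one identifies this class with $i_*(c_4^2)$, as in the analysis of the $v_1$-self map in \cite{DavisMahowald} and its interaction with $\tmf$ recorded in \cite{BHHM2}. Consequently $v_1^8 x_1 = c_4^2\cdot x_1$, and since $p_*(c_4^2 x_1) = c_4^2 x = 0$, we have $c_4^2 x_1 \in \im i_*$. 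The ambiguity in $x_1$ (addition of any $i_*(z)$ for $z \in \pi_{n+1}\tmf$ changes $c_4^2 x_1$ by $i_*(c_4^2 z)$) together with the $c_4^2$-torsion of $x$ lets us adjust $x_1$ to kill the residual class. The main obstacle is precisely the identification of $v_1^8$ with $c_4^2$-multiplication on $\tmf \wedge M(8)$, which relies on the explicit construction of the $v_1$-self map from \cite{DavisMahowald}; with that in hand, the Corollary follows immediately from Proposition~\ref{prop:torsion} and the two long exact sequences above.
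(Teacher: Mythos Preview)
Your two-stage lift along the cofiber sequences for $M(8)$ and $M(8,v_1^8)$ is exactly the argument the paper has in mind (the Corollary is stated without proof, as an immediate consequence of Proposition~\ref{prop:torsion}), and you correctly isolate the crux: the $v_1^8$-self map acts as $c_4^2$-multiplication on $\tmf \wedge M(8)$.

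There is, however, a gap in your final adjustment step. You obtain $c_4^2 x_1 = i_*(y)$ for some $y \in \pi_{n+17}\tmf$ and observe that replacing $x_1$ by $x_1 + i_*(z)$ replaces $y$ by $y + c_4^2 z$. To kill $c_4^2 x_1$ you therefore need $y \in c_4^2\cdot\pi_{n+1}\tmf + 8\pi_{n+17}\tmf$, and this does \emph{not} follow from what you have written: it is precisely the snake-lemma connecting obstruction for lifting through two successive cofibers, and there is no general reason it vanishes. Your phrase ``together with the $c_4^2$-torsion of $x$ lets us adjust $x_1$ to kill the residual class'' does not address this---the hypothesis $c_4^2 x = 0$ was already fully spent in placing $c_4^2 x_1$ inside $\im i_*$, and imposes no further constraint on $y$ modulo $\im(c_4^2)$. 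To close the gap one must either argue more carefully (e.g.\ via Adams filtration) that $\tmf \wedge v_1^8$ equals a unit times $c_4^2$ \emph{exactly} and that the resulting connecting map vanishes on $c_4$-torsion, or else read the desired surjectivity directly off the MASS for $\tmf \wedge M(8,v_1^8)$ computed in Figures~\ref{fig:ExtH38} and~\ref{fig:MASSM38}.
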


Given a $c_4$-torsion element 
$x \in  \pi_{< 192}(\tmf)$, Proposition~\ref{prop:torsion} implies it lifts to an element 
$$ \td{x} \in \tmf_* M(8, v_1^8) $$
so that the projection to the top cell maps $\td{x}$ to $x$.  We will then show that $\td{x}$ lifts to an element 
$$ \td{y} \in \pi_*M(8, v_1^8). $$ 
Then the image 
$$ y \in \pi_*^s $$
given by projecting $\td{y}$ to the top cell is an element whose image under the $\tmf$-Hurewicz homomorphism is $x$.  

Every $c_4$-torsion element $x' \in \pi_{\ge 192} \tmf$ is of the form $v_2^{32k} x$ for $x \in \pi_{<192}\tmf$.  We will prove the following theorem.

\begin{thm}\label{thm:v232}
There exists a $v_2^{32}$-self map
$$ v_2^{32}: \Sigma^{192} M(8,v_1^8) \rightarrow M(8,v_1^8). $$
\end{thm}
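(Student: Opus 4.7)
The plan is to construct a map $f : \Sigma^{192} M(8, v_1^8) \to M(8, v_1^8)$ inducing multiplication by (a unit times) $v_2^{32}$ on $\BP$-homology. Since $M(8, v_1^8)$ is a type $2$ complex, the Hopkins--Smith periodicity theorem guarantees a $v_2^{2^k}$-self map for some $k \ge 0$; the task is to show that $k = 5$ suffices. The main tool will be the modified Adams spectral sequence used in \cite{BHHM2}, namely the tmf-based resolution
$$ S \to \tmf \to \tmf \wedge \br{\tmf} \to \tmf \wedge \br{\tmf}^{\wedge 2} \to \cdots $$
smashed with the endomorphism spectrum $M(8, v_1^8) \wedge D M(8, v_1^8)$, whose abutment in degree $192$ contains the desired self-map.

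The argument proceeds in three stages. First, I identify a class $[v_2^{32}]$ in the initial layer of the tmf-resolution detecting the self map: the element $\Delta^8 \in \pi_{192}\tmf$ determines a class in $\tmf_{192}(M(8, v_1^8) \wedge D M(8, v_1^8))$ that acts as $v_2^{32}$, since $\tmf_*(M(8, v_1^8))$ realizes the $v_2^{32}$-periodicity pattern visible in Figure~\ref{fig:tmf2}. Second, I study the obstructions to $[v_2^{32}]$ being a permanent cycle in the tmf-resolution. These obstructions live in the higher layers $\pi_*(\br{\tmf}^{\wedge s} \wedge M(8, v_1^8) \wedge D M(8, v_1^8))$ for $s \ge 1$, and can be controlled using computations of $\pi_* \br{\tmf}^{\wedge s}$ in low degrees together with the $v_1$-periodic structure on $\tmf$-module spectra. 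Third, I lift any surviving class in the abutment to a genuine homotopy-class self-map and verify its $\BP$-homological action via the tmf-Hurewicz map, which pins down the self-map up to a nilpotent correction of lower $v_2$-filtration that does not affect the $v_2^{32}$-self map property.

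The main obstacle is the differential analysis in the second stage. A priori, many potentially long differentials in the tmf-resolution could annihilate $[v_2^{32}]$, and each must be ruled out. I expect the crucial inputs to be: sparseness of the $E_1$-term in a range around dimension $192$; vanishing lines inherited from the Adams spectral sequence for the smash summands $\br{\tmf}^{\wedge s}$; and compatibility with the $v_1$-self map $v_1^8$, whose existence on $M(8)$ forces cancellation of any differentials incompatible with the $v_1^8$-torsion structure. A careful and fairly intricate accounting of the tmf-resolution differentials, in the spirit of \cite{BHHM2}, will be the crux of the proof.
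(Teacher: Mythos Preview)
Your outline is in the right spirit but has both a structural mismatch with the paper's argument and, more importantly, a genuine gap where the actual work lies.

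First, the structural points. The paper does not work with the endomorphism spectrum $M(8,v_1^8)\wedge DM(8,v_1^8)$, nor does it run the topological $\tmf$-resolution to detect the self-map. Instead it uses Proposition~\ref{prop:ring}: $M(8,v_1^8)$ is a weak ring spectrum, so it suffices to show that the element $v_2^{32}\in\Ext_{A_*}(H(8,v_1^8))$ is a permanent cycle in the \emph{modified Adams spectral sequence} for $M(8,v_1^8)$ itself. This cuts the size of the problem roughly by the number of cells in the Spanier--Whitehead dual. Moreover, since $v_2^8$ already exists in $\Ext_{A_*}(H(8,v_1^8))$ (Lemma~\ref{lem:v2^8}), the Leibniz rule pushes $v_2^{32}$ to the $E_4$-page for free; the obstruction theory only begins at $d_4$. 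Your proposal misses this reduction and would have to fight through $d_2$ and $d_3$ on a larger complex.

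Second, and this is the real gap: the inputs you list---sparseness, vanishing lines, compatibility with $v_1^8$---are not what kills the obstructions. The potential targets of $d_r(v_2^{32})$ for $r\ge 4$ are enumerated explicitly (Table~\ref{tab:v232targets}) via the algebraic $\tmf$-resolution, and almost all of them are $h_{2,1}$-towers, i.e.\ $g$-torsion-free classes that do \emph{not} go away by sparseness or vanishing-line arguments. Eliminating them requires three specific mechanisms you do not mention: (i) the weight spectral sequence and the $g$-local analysis of Bhattacharya--Bobkova--Thomas (Proposition~\ref{prop:h21death}, Proposition~\ref{prop:h21deathH38}) to kill most $h_{2,1}$-towers coming from $\bou_1^{\otimes k}$; (ii) the technical Lemma~\ref{lem:technical}, which lifts Adams differentials from $\tmf\wedge\br{\tmf}^n\wedge M(8,v_1^8)$ back to $M(8,v_1^8)$ and is applied repeatedly via the two cases of Remark~\ref{rmk:technical}; and (iii) for the hardest obstruction, $v_1^2h_{2,1}^{31}(v_0^{-1}v_2^2\zeta_1^8\zeta_2^4)[1]$, a delicate contradiction argument using partial totalizations $T^n$ and a filtration count (the sublemmas in the $\bou_2$ case). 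None of these are captured by ``$v_1$-periodic structure'' or generic vanishing; each is a targeted calculation. As written, your second stage is a placeholder for the entire content of the proof.
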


If $\td{x} \in \tmf_*M(8,v_1^8)$ is a lift of $x$, and $\td{y} \in \pi_*M(8,v_1^8)$ is a lift of $\td{x}$, as in the discussion above, then the resulting element
$$ v_2^{32k} \td{y} \in \pi_*M(8,v_1^8), $$
obtained by composing with the $k$-fold iterate of the $v_2^{32}$-self map, projects to an element $y' \in \pi_*^s$ which maps to $x'$ under the $\tmf$-Hurewicz homomorphism. 

As in \cite{BHHM2}, the analysis above rests on a systematic analysis of the homotopy groups $\pi_*M(8,v_1^8)$.  This will be based on computations using the \emph{modified Adams spectral sequence (MASS)}.  The $E_2$-term of the modified Adams spectral sequence will be analyzed in a region near its vanishing line by means of another spectral sequence, the \emph{algebraic tmf-resolution}.  

The work of \cite{BHHM2} was hampered by the fact that all of the algebraic tmf-resolution computations were performed on the level of the $E_1$-term of the algebraic tmf-resolution.  In this paper, we will show that the weight spectral sequence, used in the context of bo-resolutions by \cite{LellmannMahowald} and \cite{boass}, can be used to analyze the $E_2$-term of the algebraic tmf-resolution, greatly simplifying the computations.  

\subsection*{Conventions}

\begin{itemize}
\item Homology will be implicitly taken with mod 2 coefficients.

\item
We let $A_*$ denote the dual Steenrod algebra, $A\mmod A(2)_*$ denote the dual of the Hopf algebra quotient $A\mmod A(2)$, and for an $A_*$-comodule $M$ (or more generally an object of the stable homotopy category of $A_*$-comodules \cite{Hovey}) we let
$$ \Ext^{s,t}_{A_*}(M) $$
denote the group $\Ext^{s,t}_{A_*}(\FF_2, M)$.  

\item Given a Hopf algebroid $(B, \Gamma)$, and a comodule $M$, we will let $C^*_\Gamma(M)$ denote the associated normalized cobar complex.

\item For a spectrum $E$, we let $E_*$ denote its homotopy groups $\pi_*E$.
\end{itemize}

\subsection*{Outline of paper}

In Section~\ref{sec:preliminaries}, we recall the modified Adams spectral sequence (MASS), which takes the form
$$ \E{mass}{}_2^{*,*} = \Ext_{A_*}(H_*X \otimes H(8,v_1^8)) \Rightarrow \pi_*(X \wedge M(8,v_1^8)) $$
for a certain object $H(8,v_1^8)$ in the stable homotopy category of $A_*$-comodules.
We recall how the $E_2$-term of the MASS can be studied using the algebraic 
tmf-resolution, which is a spectral sequence that takes the form
$$ \E{\tmf}{alg}_1(M)^{*,*,*} \Rightarrow \Ext^{*,*}_{A_*}(M) $$
for any $M$ in the stable category of $A_*$-comodules.
We then recall how the $E_1$-term of the algebraic tmf-resolution decomposes as a sum of Ext groups involving tensor powers of bo-Brown-Gitler comodules, and also summarize an inductive method to compute these Ext groups.

In Section~\ref{sec:algtmfres}, we study the $d_1$ differential in the algebraic tmf-resolution for $\FF_2$, and introduce a tool, the weight spectral sequence (WSS) 
$$ \E{\tmf}{alg}_1 = \E{wss}{}_0 \Rightarrow \E{\tmf}{alg}_2, $$
which serves as an analog of the May spectral sequence, and converges to the $E_2$-term of the algebraic tmf-resolution.  The $E_0$-page of the $v_0$-localized weight spectral sequence is identified with the cobar complex of a primitively generated Hopf algebra, and this allows us to give ``names'' to the $v_0$-torsion-free classes of $\E{\tmf}{alg}_1$.  We include many charts of summands of $\E{\tmf}{alg}_1(\FF_2)$ corresponding to tensor powers of bo-Brown-Gitler comodules which illustrate this naming convention, and provide the essential data for the rest of the computations in this paper.  Finally, we study the $g$-local WSS\footnote{Here, $g \in \Ext^{4,24}_{A_*}(\FF_2)$ is the element corresponding to the element $h^4_{2,1}$ in the May spectral sequence which detects $\bar{\kappa}$ in the Adams spectral sequence for the sphere.} using recent work of Bhattacharya-Bobkova-Thomas \cite{BBT}, and show that many classes are killed in the $g$-local WSS by $d_1$-differentials.  This is the key fact we will use to systematically remove obstructions for lifting classes from $\tmf_*X$ to $\pi_*X$.

In Section~\ref{sec:M38} we study the structure of the MASS for $M(8,v_1^8)$.  We recall the structure of the MASS for $\tmf_*M(8,v_1^8)$, and we explain how to adapt the Ext charts of Section~\ref{sec:algtmfres} to give the corresponding computations of $\E{\tmf}{alg}_1(H(8,v_1^8))$. We then explain how to translate the computations of the $g$-localized algebraic tmf-resolution of Section~\ref{sec:algtmfres} to the case of $H(8,v_1^8)$.

Section~\ref{sec:v232} is dedicated to the proof of Theorem~\ref{thm:v232}.  We recall the work of Davis, Mahowald, and Rezk, who discovered topological attaching maps between the first two bo-Brown-Gitler spectra which comprise $\tmf \wedge \tmf$, which give extra differentials in the Adams spectral sequence of $\tmf \wedge \tmf$ that kill some $g$-torsion-free classes.  We then prove a technical lemma (Lemma~\ref{lem:technical}) which lifts differentials from the MASS for $\tmf^{s} \wedge M(8,v_1^8)$ to the MASS for $M(8,v_1^8)$.  We prove Theorem~\ref{thm:v232} by listing all elements in $\E{\tmf}{alg}_1(H(8,v_1^8))$ which could detect a non-trivial differential $d_r(v_2^{32})$ in the MASS for $M(8,v_1^8)$, and then we systematically eliminate these possibilities.  Most of these classes are $g$-torsion-free, and are eliminated in the WSS, or by using Lemma~\ref{lem:technical}.

In Section~\ref{sec:J}, we explain how $v_1$-periodic computations give an upper bound on the Hurewicz image.

Section~\ref{sec:tmfhi} is devoted to showing this upper bound is sharp, by producing lifts of the remaining elements of $\pi_*\tmf$ to the sphere.  We begin by identifying multiplicative generators of the Hurewicz image in dimensions less than 192, so that it suffices for us to lift these.  We then lift these elements by producing elements in the MASS for $M(8,v_1^8)$ which we show are permanent cycles, and detect elements of $\pi_*M(8,v_1^8)$ which project to the desired elements on the top cell.  These elements are then propagated to $v_2^{32}$-periodic families using the self-map, thus proving Theorem~\ref{thm:main} in all dimensions. 

\subsection*{Acknowledgments}

We are grateful to Bob Bruner and John Rognes for generously sharing their results on their study of the Adams spectral sequence of tmf, and also to Rognes for pointing out a redundancy in Section~\ref{sec:tmfhi}.  This project would have not been possible without the Ext computational software developed by Bob Bruner and Amelia Perry, and the detailed computations of the Adams spectral sequence of the sphere by Isaksen, Wang, and Xu.  The authors are especially grateful to Bob Bruner for providing them with a module definition file for $\br{A \mmod A(2)}$.    The first author would also like to express his appreciation to Agn\`es Beaudry, Prasit Bhattacharya, Dominic Culver, Kyle Ormsby, Nat Stapleton, Vesna Stojanoska, and Zhouli Xu, whose previous collaborative work on the tmf-resolution was essential for the results of this paper, as well as to Mike Hill and Mike Hopkins, whose collaboration with the first two authors was the genesis of this paper. The third author also wishes to thank the first two authors for the opportunity to contribute to this project. Finally, the authors wish to thank anonymous referees for important comments and corrections. The first author was supported by NSF grants DMS-1050466, DMS-1452111, DMS-1547292, DMS-1611786, and DMS-2005476 over the course of this work. The third author was partially supported by NSF grant DMS-1547292. 

\section{Preliminaries}\label{sec:preliminaries}

The techniques and methods of this paper closely follow those of \cite{BHHM2}.  In this section we recall some spectral sequences used in that paper.

\subsection*{The modified Adams spectral sequence}

Our computations of $\pi_*M(8,v_1^8)$ and $\tmf_*M(8,v_1^8)$ will be performed using the modified Adams spectral sequence (MASS).  We refer the reader to \cite[Sec. 6]{BHHM2} for a complete account of the construction of the MASS and summarize the form it takes here.

Let $\St_{A_*}$
denote Hovey's stable homotopy category of $A_*$-comodules \cite{Hovey}.  
For objects $M$ and $N$
of $\St_{A_*}$, we define groups
$$ \Ext^{s,t}_{A_*}(M,N) = \St_{A_*}(\Sigma^t M, N[s]) $$
as a group of maps in the stable homotopy category.  Here $\Sigma^t M$ denotes the
$t$-fold shift with respect to the internal grading of $M$, and $N[s]$ denotes
the $s$-fold shift with respect to the triangulated structure of
$\St_{A_*}$.  This
reduces to the usual definition of $\Ext_{A_*}$ when $M$ and $N$ are
$A_*$-comodules.  

Define $H(8)$ to be the cofiber of the map
\begin{equation}\label{eq:H3}
 \Sigma^3 \FF_2[-3] \xrightarrow{h_0^3} \FF_2
 \end{equation}
in the stable homotopy category of $A_*$-comodules.
Define $H(8,v_1^8) \in \St_{A_*}$ to be the cofiber
\begin{equation}\label{eq:H38}
\Sigma^{24}H(8)[-8] \xrightarrow{v_1^8} H(8) \rightarrow H(8,v_1^8).
\end{equation}
For a spectrum $X$, the MASS takes the form
$$ \E{mass}{}_2^{s,t}(M(8,v_1^8) \wedge X) = \Ext^{s,t}_{A_*}(H(8,v_1^8)\otimes H_*X) \Rightarrow \pi_{t-s} M(8,v_1^8) \wedge X. $$

Recall the following from \cite[Prop. 7.1]{BHHM2}.

\begin{prop}\label{prop:ring}
$M(8,v_1^8)$ is a weak homotopy ring spectrum.\footnote{By this, we mean a spectrum with a possibly non-associative product and a two sided unit in the stable homotopy category.}
\end{prop}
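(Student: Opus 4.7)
The plan is to build the multiplication on $V := M(8,v_1^8)$ by obstruction theory, starting from a two-sided unital multiplication on $M(8)$ and extending across the two cofiber sequences that define $V$.

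For $M(8)$, the classical argument of Oka shows it admits a (possibly non-associative) two-sided unital product with unit $i: S^0 \to M(8)$. Smashing the defining sequence $S^0 \xrightarrow{8} S^0 \xrightarrow{i} M(8)$ with $M(8)$, the problem of extending $\mathrm{id}_{M(8)}$ across $i \wedge 1_{M(8)}$ has obstruction $8 \cdot \mathrm{id}_{M(8)} \in [M(8), M(8)]$, which vanishes for $M(2^k)$ with $k \geq 2$. The resulting left-unital product $\mu_0$ can be symmetrized to be two-sided unital.

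To extend to $V$, let $j: M(8) \to V$ denote the inclusion and factor $j \wedge j = (j \wedge 1_V) \circ (1_{M(8)} \wedge j)$. Choosing $v_1^8$ to be a map of right $M(8)$-modules with respect to $\mu_0$, the extension of $j \circ \mu_0: M(8) \wedge M(8) \to V$ across $1_{M(8)} \wedge j$ has obstruction
\[
j \circ \mu_0 \circ (1_{M(8)} \wedge v_1^8) \;=\; j \circ v_1^8 \circ \mu_0 \;=\; 0,
\]
since $j \circ v_1^8 = 0$ by definition of $V$. This produces $\tilde\mu: M(8) \wedge V \to V$. The second extension, of $\tilde\mu$ across $j \wedge 1_V$, has obstruction $\tilde\mu \circ (v_1^8 \wedge 1_V) \in [\Sigma^{16} M(8) \wedge V, V]$. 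The set of extensions of $j \circ \mu_0$ in the first stage forms a torsor over $[\Sigma^{17} M(8) \wedge M(8), V]$, and one uses this indeterminacy to cancel the second obstruction. The resulting product $\mu_V: V \wedge V \to V$, together with the unit $i_V := j \circ i$, satisfies the two-sided unit axiom inherited from the corresponding axiom for $\mu_0$.

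The main obstacle is verifying that the second-stage obstruction can be killed within the allowed indeterminacy: this reduces to a vanishing statement in $[\Sigma^{17} M(8) \wedge M(8), V]$, modulo the image of a specific composition, which is a finite-range computation accessible from the Adams spectral sequence for $M(8) \wedge M(8)$. The argument is in the spirit of Oka's original construction of ring structures on generalized Moore spectra, and all relevant obstructions ultimately lie in groups that vanish because $v_1^8$ acts trivially on $V$ by construction.
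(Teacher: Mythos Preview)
The paper does not prove this proposition here; it is quoted from \cite[Prop.~7.1]{BHHM2}. Your obstruction-theoretic outline, in the spirit of Oka's construction, is the standard route and is presumably close to what appears there. So in broad strokes your approach matches.

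That said, what you have written is a strategy, not a proof, and the places where you wave your hands are exactly the places where the content lies. Two specific gaps:

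\begin{itemize}
\item \textbf{The second-stage obstruction.} You correctly identify that the obstruction to extending $\tilde\mu$ over $j \wedge 1_V$ is $\tilde\mu \circ (v_1^8 \wedge 1_V)$, and that the first-stage indeterminacy is a torsor over $[\Sigma^{17} M(8) \wedge M(8), V]$. But changing $\tilde\mu$ by $\alpha \circ (1 \wedge \partial_0)$ changes the obstruction by $\alpha \circ (v_1^8 \wedge \partial_0)$, and you give no argument that this map is surjective onto the relevant target, nor that the obstruction vanishes outright. Saying it ``reduces to a finite-range computation'' is not the same as doing the computation. Your final sentence, that ``all relevant obstructions ultimately lie in groups that vanish because $v_1^8$ acts trivially on $V$,'' is not correct: the vanishing of $j \circ v_1^8$ disposes of the \emph{first} obstruction, but the second one lives in $[\Sigma^{16} M(8) \wedge V, V]$, which has no reason to be zero on those grounds alone.

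\item \textbf{Two-sided unitality.} You assert that unitality is ``inherited'' from $\mu_0$. But $\tilde\mu \circ (i \wedge 1_V)$ and $1_V$ are only known to agree after restriction along $j$; their difference factors through $\Sigma^{17} M(8)$, and you have not argued that $[\Sigma^{17} M(8), V] = 0$ or that this difference can be absorbed into the choices. The same issue arises for the right unit. This needs a separate, if routine, check.
\end{itemize}

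In short: the architecture is right, but you have deferred the actual work to unspecified computations. A complete argument would either carry out those Adams spectral sequence checks explicitly, or cite the result as the paper does.
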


It follows that if $X$ is a ring spectrum, the MASS above is a spectral sequence of (non-associative) algebras. 

We recall the following key theorem of Mathew.

\begin{thm}[Mathew \cite{Mathew}]
We have
$$ H_*\tmf \cong A\mmod A(2)_* $$
as an algebra in $A_*$-comodules.
\end{thm}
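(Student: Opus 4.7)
The plan is to identify $H_*\tmf$ with $A\mmod A(2)_* \subset A_*$ as an $A_*$-comodule algebra, proceeding in two main steps: first constructing a natural comodule algebra map $\phi: H_*\tmf \to A_*$ with image in $A\mmod A(2)_*$, and then showing $\phi$ is an isomorphism onto this subalgebra via a Poincar\'e series comparison.

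Since $\tmf$ is a connective $\mathbb{E}_\infty$-ring with $\pi_0\tmf = \mathbb{Z}_{(2)}$, the composition $\tmf \to H\pi_0\tmf \to H\mathbb{F}_2$ of the bottom Postnikov section with mod-$2$ reduction is a ring map. Smashing with $H\mathbb{F}_2$ and taking homotopy yields the $A_*$-comodule algebra map
$$ \phi: H_*\tmf \to H_*H\mathbb{F}_2 = A_*, $$
dual to an $A$-module algebra map $\psi: A \to H^*\tmf$ sending $1 \mapsto 1$. To see that $\psi$ factors through $A \mmod A(2) = A/A \cdot A(2)^+$, note that $A(2)$ is generated as an algebra by $\sq^1, \sq^2, \sq^4$, so the left ideal $A \cdot A(2)^+$ is generated as a left $A$-ideal by these three operations. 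The factorization is therefore equivalent to the vanishing of $\sq^{2^i}(1)$ in $H^{2^i}\tmf$ for $i \in \{0,1,2\}$. I would deduce this from the Ando--Hopkins--Rezk $MString$-orientation $MString \to \tmf$: the unit class $1 \in H^0\tmf$ is the image of the Thom class $U \in H^0 MString$, and the Thom isomorphism formula $\sq^{2^i}(U) = w_{2^i} U$ combined with the vanishing $w_1 = w_2 = w_4 = 0$ for $String$-structured bundles (since $BString$ is $7$-connected and $p_1/2 = 0$) gives the desired vanishing.

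To upgrade $\phi: H_*\tmf \to A\mmod A(2)_*$ to an isomorphism, I would perform a Poincar\'e series comparison. The target is the polynomial algebra $\mathbb{F}_2[\xi_1^8, \xi_2^4, \xi_3^2, \xi_j : j \ge 4]$ with known Poincar\'e series $\prod_i (1-t^{d_i})^{-1}$. For the source, the Hopkins--Mahowald descent computation of $\pi_*\tmf$ combined with the Adams spectral sequence for $\tmf$ bounds $\dim_{\mathbb{F}_2} H_n\tmf$ above by the corresponding coefficient in the target's Poincar\'e series. Surjectivity onto the polynomial generators is then verified concretely: $\xi_1^8$ is the Hurewicz image of $c_4 \in \pi_8\tmf$, while higher generators are hit via the complex orientation $MU \to \tmf$ using Milnor's identification $H_*MU \cong \mathbb{F}_2[\xi_i^2] \subset A_*$.

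The main obstacle is bounding $H_*\tmf$ from above: while Ext computations are algebraic, controlling $H_*\tmf$ itself requires independent geometric input from the construction of $\tmf$ (e.g., its sheaf-theoretic realization over $\mc{M}_{\mit{ell}}$) or a bootstrap argument via the Adams spectral sequence assuming the conjectural shape of $H_*\tmf$. An alternative approach is to use cofiber sequences $\tmf \to \tmf/B \to \tmf/(B,C)$ for suitable classes $B \in \pi_8\tmf$ and $C \in \pi_{24}\tmf$ lifting $v_1^4$ and $v_2^8$ modulo the maximal ideal, and to iteratively compute mod-$2$ homology of simpler quotient spectra until one reaches $H\mathbb{F}_2$ itself.
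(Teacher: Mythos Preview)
The paper does not prove this theorem; it is stated with attribution to Mathew and used as a black box. So there is no ``paper's own proof'' to compare against, only Mathew's original argument.

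Your step~1 (constructing the comodule algebra map $H_*\tmf \to A\mmod A(2)_*$ via the $String$-orientation) is correct and is exactly how Mathew begins. The difficulty, as you recognize, is entirely in step~2.

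Your proposed surjectivity argument has a genuine error: there is no complex orientation $MU \to \tmf$. The spectrum $\tmf$ is not complex orientable (for instance, $\eta \in \pi_1\tmf$ is nonzero, whereas $\pi_1$ of any complex orientable ring spectrum vanishes). So you cannot use $H_*MU$ to hit generators of $A\mmod A(2)_*$. Relatedly, the claim that $\xi_1^8$ is ``the Hurewicz image of $c_4$'' is not right as stated: $c_4$ has Adams filtration~4 and maps to zero under the mod-2 Hurewicz map.

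Your acknowledged obstacle---bounding $\dim H_n\tmf$ from above---is the real content of the theorem, and neither of your suggested workarounds quite closes the gap. The bootstrap via the Adams spectral sequence is circular (you need $H_*\tmf$ to set up the spectral sequence whose $E_2$ you want to compare to). The quotient approach is closer to what actually works: Mathew's key step is to produce an equivalence $\tmf \wedge DA(1) \simeq \tmf_1(3)$ for an explicit $8$-cell complex $DA(1)$ with $H^*DA(1) \cong A(2)\mmod E(2)$, and then use the known computation $H_*\tmf_1(3) \cong A\mmod E(2)_*$ (which follows from the Landweber-exactness/complex-orientability of $\tmf_1(3)$) to back out $H_*\tmf$. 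The equivalence itself is established using level structures on the moduli of elliptic curves and the sheaf-theoretic construction of $\tmf$, so some genuine geometric input is unavoidable.
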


Taking $X = \tmf \wedge Y$ for some $Y$, and applying a change of rings theorem, the MASS takes the form
$$ \E{mass}{}_2^{s,t}(\tmf \wedge M(8,v_1^8) \wedge Y) = \Ext^{s,t}_{A(2)_*}(H(8,v_1^8) \otimes H_* Y) \Rightarrow \tmf_{t-s}(M(8,v_1^8) \wedge Y). $$

\subsection*{The algebraic $\tmf$-resolution}

The $E_2$-page of the MASS for $M(8,v_1^8)$ will be analyzed using an algebraic analog of the $\tmf$-resolution (as in \cite[Sec.~6]{BHHM2}).  

The (topological) $\tmf$-resolution of a space $X$ is the Adams spectral sequence based on the spectrum $\tmf$:
$$ \E{\tmf}{}_1^{s,t} = \pi_{t} \tmf \wedge \br{\tmf}^{s} \wedge X \Rightarrow \pi_{t-s} X. $$
Here, $\br{\tmf}$ is the cofiber of the unit
$$ S \rightarrow \tmf \rightarrow \br{\tmf} $$
and $\br{\tmf}^s = \br{\tmf}^{\wedge s}$ denotes its $s$-fold smash power. 

The \emph{algebraic} $\tmf$-resolution is an algebraic analog.
Namely, let $M$ be an object of the stable homotopy category of $A_*$-comodules, 
and let $\br{A\mmod A(2)}_*$ denote the cokernel of the unit
$$ 0 \rightarrow \FF_2 \rightarrow A\mmod A(2)_* \rightarrow \br{A \mmod A(2)}_* \rightarrow 0 $$
(note that $H_* \br{\tmf} = \br{A\mmod A(2)}_*$).
The \emph{algebraic $\tmf$-resolution} of $M$ is a spectral sequence of the form 
$$ \E{\tmf}{alg}_1^{s,t,n}(M) = \Ext^{s,t}_{A(2)_*}(\br{A\mmod A(2)}_*^{\otimes n} \otimes M) \Rightarrow \Ext^{s+n,t}_{A_*}(M). $$

\subsection*{$\bo$-Brown-Gitler comodules}\label{sec:boi}

We recall some material on bo-Brown-Gitler comodules.  These are $A_*$-comodules which are the homology of the bo-Brown-Gitler spectra constructed by \cite{GoerssJonesMahowald}.
Mahowald used integral Brown-Gitler spectra to analyze the bo-resolution \cite{Mahowaldbo}.  The bo-Brown-Gitler comodules play a similar role in the algebraic tmf-resolution \cite{BHHM}, \cite{MahowaldRezk}, \cite{DavisMahowald},  \cite{BOSS}, \cite{BHHM2}.  

Endow the mod $2$ homology of the connective real $K$-theory spectrum 
$$ H_*(\bo) \cong A\mmod A(1)_* = \FF_2[\xib_1^4, \xib_2^2, \xib_3, \ldots] $$
with a multiplicative grading by declaring the \emph{weight} of $\xib_i$ to be 
\begin{equation}\label{eq:weight}
wt(\xib_i) = 2^{i-1}. 
\end{equation}
The $i$th \emph{$\bo$-Brown-Gitler} comodule is the subcomodule
$$ \bou_i = F_{4i}A\mmod A(1)_* \subset A \mmod A(1)_* $$
spanned by monomials of weight less than or equal to $4i$.  It is isomorphic as an $A_*$-comodule to the homology of the $i$th bo-Brown-Gitler spectrum $\bo_i$.

The analysis of the $E_1$-page of the algebraic $\tmf$-resolution is simplified via the decomposition of $A(2)_*$-comodules
$$ \br{A \mmod A(2)}_* \cong \bigoplus_{i > 0} \Sigma^{8i} \bou_i $$
of \cite[Cor.~5.5]{BHHM}.
We therefore have a decomposition of the $E_1$-page of the algebraic $\tmf$-resolution for $M$ given by
\begin{equation}\label{eq:E1decomp}
\E{\tmf}{alg}_1^{s,t,n}(M) \cong \bigoplus_{i_1, \ldots, i_n > 0}\Ext_{A(2)_*}^{s,t}(\Sigma^{8(i_1+\cdots+i_n)}\bou_{i_1} \otimes \cdots \otimes \bou_{i_n} \otimes M).
\end{equation}

For any $M$, the computation of
$$ \Ext_{A(2)_*}^{s,t}(\Sigma^{8(i_1+\cdots+i_n)}\bou_{i_1} \otimes \cdots \otimes \bou_{i_n} \otimes M) $$
can be inductively determined from $\Ext_{A(2)_*}(\bou_1^{\otimes k} \otimes M)$ by means of a set of exact sequences of $A(2)_*$-comodules which relate the $\bou_i$'s \cite[Sec.~7]{BHHM} (see also \cite{BOSS}):
\begin{gather}
0\to \Sigma^{8j} \ul{\bo}_j \to \ul{\bo}_{2j}\to A(2)\mmod A(1)_* \otimes \ul{\tmf}_{j-1}  \to \Sigma^{8j+9} \ul{\bo}_{j-1} \to 0 \label{eq:boSES1},
\\
0 \to \Sigma^{8j} \ul{\bo}_j \otimes \ul{\bo}_1 \to \ul{\bo}_{2j+1}\to A(2)\mmod A(1)_* \otimes \ul{\tmf}_{j-1} \to 0 \label{eq:boSES2}
\end{gather}
Here, $\ul{\tmf}_j$ is the $j$th $\tmf$-Brown-Gitler comodule --- it is the subcomodule of 
$$ H_*(\tmf) \cong A\mmod A(2)_* = \FF_2[\xib_1^8, \xib_2^4, \xib_3^2, \xib_4, \ldots] $$
spanned by monomials of weight less than or equal to $8j$.\footnote{
Technically speaking, as is addressed in \cite[Sec.~7]{BHHM}, the comodules $A(2)\mmod A(1)_* \otimes \ul{\tmf}_{j-1}$ in the above exact sequences have to be given a slightly different $A(2)_*$-comodule structure from the standard one arising from the tensor product.  However, this different comodule structure ends up being $\Ext$-isomorphic to the standard one.  As we are only interested in Ext groups, the reader can safely ignore this subtlety.}

The exact sequences (\ref{eq:boSES1}) and (\ref{eq:boSES2}) can be re-expressed as resolutions in the stable homotopy category of $A(2)_*$-comodules:
\begin{gather*}
\ul{\bo}_{2j}\to A(2)\mmod A(1)_* \otimes \ul{\tmf}_{j-1}  \to \Sigma^{8j+9} \ul{\bo}_{j-1} \to \Sigma^{8j} \ul{\bo}_j[2],
\\
\ul{\bo}_{2j+1}\to A(2)\mmod A(1)_* \otimes \ul{\tmf}_{j-1} \to \Sigma^{8j} \ul{\bo}_j \otimes \ul{\bo}_1[1] 
\end{gather*}
which give rise to spectral sequences
\begin{equation}\label{eq:boIss}
\begin{split}
E_1^{n,s,t} = 
\left\{
\begin{array}{ll}
\Ext^{s,t}_{A(1)_*}(\ul{\tmf}_{j-1} \otimes M), & n = 0, \\
\Ext^{s,t}_{A(2)_*}(\Sigma^{8j+9}\bou_{j-1} \otimes M[-1]), & n = 1, \\ 
\Ext^{s,t}_{A(2)_*}(\Sigma^{8j}\bou_j \otimes M), & n = 2,  \\
0, & n > 2 \\ 
\end{array}
\right\}
\Rightarrow \Ext^{s,t}_{A(2)_*}(\bou_{2j} \otimes M), \\
\\
E_1^{n,s,t} = \left\{
\begin{array}{ll}
\Ext^{s,t}_{A(1)_*}(\ul{\tmf}_{j-1} \otimes M), & n = 0, \\
\Ext^{s,t}_{A(2)_*}(\Sigma^{8j}\bou_j \otimes \bou_1 \otimes M), & n = 1, \\
0, & n > 1 \\ 
\end{array}
\right\}
\Rightarrow \Ext^{s,t}_{A(2)_*}(\bou_{2j+1} \otimes M). \\
\end{split}
\end{equation}
These spectral sequences have been observed to collapse in low degrees (see \cite{BOSS}) but it is not known if they collapse in general.
They inductively build $\Ext_{A(2)_*}(\bou_i \otimes M)$ out of $\Ext_{A(2)_*}(\bou_1^{\otimes k} \otimes M)$ and $\Ext_{A(1)_*}(\ul{\tmf}_j \otimes M)$.

%

\section{Analysis of the algebraic tmf-resolution}\label{sec:algtmfres}

In this section we will compute the $d_1$-differential in the algebraic tmf-resolution, and will introduce a tool, the \emph{weight spectral sequence (WSS)}, which is a variant of the May spectral sequence that converges to the $E_2$-page of the algebraic tmf-resolution.

\subsection*{The $d_1$ differential in the algebraic tmf-resolution}

Our approach to understanding the $d_1$-differential in the algebraic $\tmf$-resolution will be to compute it on $v_0$-torsion-free classes, and then infer its effect on $v_0$-torsion classes by means of linearity over $\Ext_{A_*}(\FF_2).$

Consider the algebraic $BP\bra{2}$ and algebraic $BP$-resolutions.  
\begin{gather*}
\E{BP\bra{2}}{alg}^{s,t,n} = \Ext^{s,t}_{E[2]_*}(\br{A\mmod E[2]}_*^{\otimes n}) \Rightarrow \Ext^{s+n,t}_{A_*}(\FF_2) \\
\E{BP}{alg}^{s,t,n} = \Ext^{s,t}_{E_*}(\br{A\mmod E}_*^{\otimes n}) \Rightarrow \Ext^{s+n,t}_{A_*}(\FF_2)
\end{gather*}
Here, $E[2] = E[Q_0, Q_1, Q_2]$ and $E = E[Q_0, Q_1, Q_2, \cdots]$ denote subalgebras of the Steenrod algebra, where $Q_i$ are the Milnor generators dual to $\xi_{i+1} \in A_*$.

The $d_1$-differential in the algebraic $\tmf$-resolution may be studied by means of the zig-zag
\begin{equation}\label{eq:zigzag}
\E{\tmf}{alg}_1^{*,*,*} \rightarrow \E{BP\bra{2}}{alg}_1^{*,*,*} \leftarrow 
\E{BP}{alg}_1^{*,*,*}.
\end{equation}
Note that 
$$ \E{BP}{alg}_1^{*,*,n} \cong \FF_2[v_0, v_1, v_2, \cdots ]\otimes \br{\FF_2[\xib_1^2, \xib^2_2, \cdots]}^{\otimes n} $$
where 
$\br{\FF_2[\xib_1^2, \xib_2^2, \cdots ]}$ denotes the cokernel of the unit
$$ \FF_2 \rightarrow \FF_2[\xib_1^2, \xib_2^2, \cdots ]. $$
The Adams spectral sequences
$$ \E{BP}{alg}_1^{n,*,*} = \E{ass}{*,*}_2(BP \wedge \br{BP}^{n}) \Rightarrow C^n_{BP_*BP}(BP_*) $$
collapse, where $C^*_{BP_*BP}$ is the normalized cobar complex for $BP_*BP$, and 
$$ \text{$\xib_i^2 \in \br{A\mmod E}_*$ detects $t_i \in BP_*BP$.} $$ 
We conclude:

\begin{lem}
The $d_1$ differential in the algebraic $BP$-resolution is the associated graded of the differential in the cobar complex for $BP_*BP$ with respect to Adams filtration.
\end{lem}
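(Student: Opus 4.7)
The strategy is to compare the algebraic $\BP$-resolution of $\FF_2$ with the topological $\BP$-based Adams resolution, using the collapse of the Adams spectral sequence for each $BP \wedge \br{BP}^{\wedge n}$ that is asserted just above the lemma.

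First I would identify the $E_1$-page of the algebraic $\BP$-resolution with the Adams $E_2$-page of the corresponding smash product. Since $H_*(BP) \cong A \mmod E_*$ as an algebra in $A_*$-comodules, change of rings yields
$$
\E{BP}{alg}_1^{n,s,t} \;=\; \Ext^{s,t}_{E_*}\bigl(\br{A \mmod E}_*^{\otimes n}\bigr) \;\cong\; \Ext^{s,t}_{A_*}\bigl(H_*(BP \wedge \br{BP}^{\wedge n})\bigr),
$$
which is the Adams $E_2$-page of $BP \wedge \br{BP}^{\wedge n}$. The stated collapse identifies this with the associated graded (under Adams filtration) of $\pi_*(BP \wedge \br{BP}^{\wedge n}) \cong C^n_{BP_*BP}(BP_*)$.

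Next I would observe that the algebraic $\BP$-resolution of $\FF_2$ is the image under $H_*(-)$ of the topological Adams tower assembled from the cofiber sequence $S \to BP \to \br{BP}$. Consequently the $d_1^{\mit{alg}}$ differential is induced, via $\Ext_{A_*}$, by the connecting maps $\br{BP}^{\wedge n} \to \br{BP}^{\wedge n+1}$ in this tower; the effect of these connecting maps on $\pi_*$ is, by the standard identification of $\pi_*(BP \wedge \br{BP}^{\wedge n})$ with $C^n_{BP_*BP}(BP_*)$ as a cosimplicial object, precisely the cobar differential of the Hopf algebroid $(BP_*, BP_*BP)$.

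The final step invokes naturality of the Adams spectral sequence: for any map $f\colon X \to Y$ between spectra whose Adams SSs collapse, the induced map $f_*$ on $E_2 = E_\infty$ agrees with the associated graded of $f_*\colon \pi_* X \to \pi_* Y$ under Adams filtration. Applying this to each connecting map of the tower identifies $d_1^{\mit{alg}}$ with the associated graded of the cobar differential, proving the lemma. The main obstacle is essentially bookkeeping: while the term-by-term isomorphism is immediate from the collapse, matching the differentials requires confirming that the bar construction on $\br{A \mmod E}_*$ in $\St_{A_*}$ really is the image of the topological bar construction on $\br{BP}$, which holds by construction.
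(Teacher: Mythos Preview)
Your proposal is correct and follows essentially the same approach as the paper: the paper simply records the collapse of the Adams spectral sequence for each $BP \wedge \br{BP}^{\wedge n}$ together with the identification $\E{BP}{alg}_1^{n,*,*} = \E{ass}{}_2(BP \wedge \br{BP}^n)$, and then declares ``We conclude'' the lemma. You have spelled out the naturality step (that the $d_1$ is induced by the connecting maps of the tower, whose effect on $\pi_*$ is the cobar differential, and that on a collapsing Adams spectral sequence this becomes the associated graded) more carefully than the paper does, but the argument is the same.
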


\subsection*{The weight spectral sequence}

Endow the normalized cobar complex 
$$ C^*(A_*,A_*,\FF_2) $$
with a decreasing filtration by weight by defining 
$$ wt(a_0[a_1|\cdots|a_s]) = wt(a_1)+\cdots+wt(a_s). $$
Applying $\Ext_{A_*}(\FF_2,-)$ to the resulting filtered $A_*$-comodule produces a variant of the May spectral sequence which we will call the \emph{modified May spectral sequence} (MMSS)\footnote{The authors of \cite{LSWX} construct a similar modified May spectral sequence, but with a slightly different filtration.}
\begin{equation}\label{eq:mmss}
 \E{mmss}{}^{w,s,t}_0 = C^*_{E^0 A_*}(\FF_2) \Rightarrow \Ext^{s,t}_{A_*}(\FF_2). 
 \end{equation}
Since $E^0A_*$ is primitively generated, we have
$$ \E{mmss}{}_1^{*,*} = \FF_2[h_{i,j} \: : \: i \ge 1, \: j \ge 0]. $$

The map $\tmf \rightarrow H$ induces an inclusion
$$ \Phi: H_*(\tmf \wedge \br{\tmf}^n) \hookrightarrow H_*(H \wedge \br{H}^n) \cong C^n(A_*,A_*,\FF_2). $$ 
Under this inclusion, the weight filtration restricts to a decreasing filtration on
$$ H_*(\tmf \wedge \br{\tmf}^n) \cong A\mmod A(2)_* \otimes \br{A \mmod A(2)}_*^{\otimes n}  $$
by $A_*$-subcomodules.
Because the weights of all of the generators of $A \mmod A(2)_*$ are divisible by 8, we actually work with weights divided by 8.  
Applying $\Ext_{A(2)_*}(\FF_2, -)$ and taking cohomology, we get 
the \emph{weight spectral sequence} (WSS):
$$ \E{wss}{}_0^{w,n,s,t} = \bigoplus_{i_1 + \cdots + i_n = w} \Ext_{A(2)_*}^{s,t}(\bou_{i_1} \otimes \cdots \otimes \bou_{i_n}) \Rightarrow \E{\tmf}{alg}^{n,s,t}_2. $$
The WSS serves as an analog of the May spectral sequence for the algebraic $\tmf$-resolution.

The map $\Phi$ above induces a map of spectral sequences
\begin{equation}\label{eq:Phi}
\xymatrix{
\E{wss}{}^{w,n,0,t}_0 \ar@{=>}[r] \ar[d]_{\Phi_*} & \E{tmf}{alg}_0^{n,0,t} 
\ar[d]^{\Phi_*} \\
\E{mmss}{}^{8w,n,t}_0 \ar[r] & \Ext^{n,t}_{A_*}(\FF_2)
}
\end{equation}

\subsection*{The $v_0$-localized algebraic $\tmf$-resolution}

Observe that we have
\begin{equation}\label{v0localExtA2}
v_0^{-1}\Ext_{A(2)_*}(\FF_2) = \FF_2[v_0^{\pm}, v_1^4, v_2^2].
\end{equation}
Note that $c_4, c_6 \in (\tmf_*)_\QQ$ are detected in the $v_0$-localized ASS by $v_1^4$ and $v_0^3v_2^2$, respectively.

We recall from \cite{BOSS} that
\begin{equation}\label{eq:v0localtmftmf}
v_0^{-1} \Ext_{A(2)_*}^{*,*}(A \mmod A(2)_*) = \FF_2[v_0^{\pm}, v_1^4, v_2^2][\xib_1^8, \xib^4_2]
\end{equation}
and that there is an isomorphism
\begin{equation}\label{eq:v0localboi}
 v_0^{-1}\Ext_{A(2)_*}(\bou_i) \cong \FF_2[v_0^{\pm}, v_1^4, v_2^2]\{\xib_1^{8i'}\xib_2^{4 i''} \}_{i = i' + i''}.
\end{equation}

We will now compute the localized $E_1$-page $v_0^{-1} \E{wss}{}_1$.
The following is immediate from the computation of the cobar differential (modulo terms of higher Adams filtration) on the elements $\xib_1^8$ and $\xib_2^4$, using (\ref{eq:v0localtmftmf}), (\ref{eq:v0localboi}), and (\ref{eq:zigzag}).

\begin{prop}
There is an isomorphism of differential graded algebras
$$ v_0^{-1}\E{wss}{}^{*,n,*,*}_0 \cong \FF_2[v_0^\pm, v_1^4, v_2^2]\otimes C^n_{\FF_2[\xib^8_1, \xib_2^4]} $$
where $\FF_2[\xib_1^8,\xib_2^4]$ is regarded as a primitively generated Hopf algebra.
\end{prop}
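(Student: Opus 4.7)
The plan is to identify $v_0^{-1}\E{wss}{}_0$ as a DGA by separately pinning down the underlying $R$-module structure, where $R := \FF_2[v_0^{\pm}, v_1^4, v_2^2]$, and then the $d_0$-differential, and finally combining the two.

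First I would establish the module structure. By (\ref{eq:v0localboi}), each $v_0^{-1}\Ext_{A(2)_*}(\bou_i)$ is free over $R$ on the monomials of weight $i$ in $\xib_1^8$ and $\xib_2^4$. Because $R$ is a localization of a polynomial ring over a field, it is regular Noetherian, and this freeness kills the higher Tor obstructions in the Künneth spectral sequence for tensor products of $A(2)_*$-comodules after inverting $v_0$. One therefore obtains
\[
v_0^{-1}\Ext_{A(2)_*}(\bou_{i_1}\otimes\cdots\otimes\bou_{i_n}) \;\cong\; R\otimes \bigotimes_{j=1}^{n}\{\text{weight-}i_j\text{ monomials in }\xib_1^8,\xib_2^4\}.
\]
Summing over all positive tuples $(i_1,\ldots,i_n)$ assembles $v_0^{-1}\E{wss}{}^{*,n,*,*}_0$ into $R\otimes\br{\FF_2[\xib_1^8,\xib_2^4]}^{\otimes n}$, which is exactly $R\otimes C^n_{\FF_2[\xib_1^8,\xib_2^4]}$ as an $R$-module; the multiplication induced by concatenation of bars matches the obvious one on the right-hand side.

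Next I would identify $d_0$. By construction it is the associated graded, with respect to the weight filtration, of the algebraic $\tmf$-resolution $d_1$-differential. Via the zig-zag (\ref{eq:zigzag}) this can be transported into the algebraic $BP$-resolution, whose $d_1$ was identified in the preceding lemma with the associated graded (with respect to Adams filtration) of the $BP_*BP$-cobar differential. The standard comultiplication formula for $t_i\in BP_*BP$ shows that each $t_i$ is primitive modulo strictly higher Adams filtration, and under the detection identification $\xib_i^2\leftrightarrow t_i$ this transports back to the statement that $\xib_1^8$ and $\xib_2^4$ are primitive in $A\mmod A(2)_*$ modulo strictly higher weight. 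After inverting $v_0$, (\ref{eq:v0localtmftmf}) guarantees these are the only generators of $A\mmod A(2)_*$ which survive, so $d_0$ acts $R$-linearly and coincides with the normalized cobar differential of the primitively generated Hopf algebra $\FF_2[\xib_1^8,\xib_2^4]$. Combining the two steps gives the claimed DGA isomorphism.

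The principal obstacle is the Künneth step in the first paragraph, since $\Ext_{A(2)_*}$ of a tensor product of comodules does not split in general; it is only the freeness over $R$ of each factor after $v_0$-localization that forces the Künneth spectral sequence to collapse. Once this is granted, the identification of $d_0$ is forced by naturality of the WSS under (\ref{eq:zigzag}) together with the primitivity of $t_i$ modulo higher Adams filtration in $BP_*BP$.
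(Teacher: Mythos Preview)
Your overall strategy---pin down the $R$-module structure from (\ref{eq:v0localboi}) and (\ref{eq:v0localtmftmf}), then identify $d_0$ via the zig-zag (\ref{eq:zigzag})---is exactly the paper's. Two of your supporting steps, however, are not right as stated.

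First, the ``K\"unneth spectral sequence for tensor products of $A(2)_*$-comodules'' you invoke does not exist in the form you need: there is no general spectral sequence of the shape
\[
\mathrm{Tor}^{R}\bigl(v_0^{-1}\Ext_{A(2)_*}(M),\, v_0^{-1}\Ext_{A(2)_*}(N)\bigr)\ \Longrightarrow\ v_0^{-1}\Ext_{A(2)_*}(M\otimes_{\FF_2} N),
\]
and freeness of the individual $\Ext$ groups over $R$ is not by itself enough to force such a splitting. The clean fix is to note that the generators $\xib_1^{8a}\xib_2^{4b}$ in (\ref{eq:v0localtmftmf}) are genuinely primitive in the $A(2)_*$-comodule $A\mmod A(2)_*$ (one checks directly that $\xib_1^8$ and $\xib_2^4$ are primitive, and products of primitives in a comodule algebra remain primitive). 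They therefore define an actual $A(2)_*$-comodule map from a direct sum of shifts of $\FF_2$ into $A\mmod A(2)_*$, which by (\ref{eq:v0localtmftmf}) is a $v_0$-local equivalence. Tensor powers of a sum of suspensions of the unit are again such sums, and the module identification for all $n$ follows immediately---no K\"unneth machinery required.

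Second, your assertion that ``each $t_i$ is primitive modulo strictly higher Adams filtration'' is false for $i\ge 2$: e.g.\ the reduced coproduct of $t_2$ contains $t_1\otimes t_1^2$, which lies in the same Adams filtration as $t_2$. What \emph{is} true is that such cross terms have strictly higher \emph{weight}, and it is primitivity modulo higher weight that $d_0^{wss}$ detects. Your conclusion about $d_0$ is correct, but the filtration you cite to justify it is the wrong one.
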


\begin{cor}
There is an isomorphism
$$ v_0^{-1}\E{wss}{}_1 = \FF_2[v_0^\pm, v_1^4, v_2^2]\otimes \FF_2[h_{1,3}, h_{1,4}, \ldots, h_{2,2}, h_{2,3}, \ldots ] $$ 
\end{cor}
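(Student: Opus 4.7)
The plan is to deduce this directly from the preceding proposition by a classical cobar cohomology computation. The proposition identifies $v_0^{-1}\E{wss}{}_0$ as the tensor product of the differential graded algebra $C^*_{\FF_2[\xib_1^8, \xib_2^4]}$ (with $\FF_2[\xib_1^8, \xib_2^4]$ viewed as a primitively generated Hopf algebra) with $\FF_2[v_0^\pm, v_1^4, v_2^2]$; the latter tensor factor sits entirely in cobar degree $n=0$ and carries trivial differential, so it passes through cohomology unchanged. It therefore suffices to compute $\Cotor^{*,*}_{\FF_2[\xib_1^8, \xib_2^4]}(\FF_2, \FF_2)$ and tensor the answer with $\FF_2[v_0^\pm, v_1^4, v_2^2]$.

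For a primitively generated polynomial Hopf algebra $\FF_2[x]$ over $\FF_2$, the linear dual is the divided power algebra $\Gamma[y]$, whose Ext algebra is polynomial on the classes dual to $\gamma_{2^i}(y)$ for $i \geq 0$. Dually, $\Cotor^*_{\FF_2[x]}(\FF_2, \FF_2)$ is polynomial on the Frobenius powers $[x], [x^2], [x^4], \ldots$, which are precisely the primitive elements of $\FF_2[x]$. Applying the Künneth formula for Cotor to the factorization $\FF_2[\xib_1^8, \xib_2^4] \cong \FF_2[\xib_1^8] \otimes \FF_2[\xib_2^4]$ of primitively generated Hopf algebras produces a polynomial algebra on the two families $[\xib_1^{2^j}]$ for $j \geq 3$ (the Frobenius powers of $\xib_1^8 = \xib_1^{2^3}$) and $[\xib_2^{2^j}]$ for $j \geq 2$ (the Frobenius powers of $\xib_2^4 = \xib_2^{2^2}$).

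Finally, in the MMSS (\ref{eq:mmss}) the generator $h_{i,j}$ is by definition the cobar class of $\xib_i^{2^j}$, and the compatibility square (\ref{eq:Phi}) matches our Frobenius-power cobar classes with the MMSS generators of the same name. Relabeling $[\xib_i^{2^j}] = h_{i,j}$ thus yields exactly the polynomial generators $h_{1,3}, h_{1,4}, \ldots, h_{2,2}, h_{2,3}, \ldots$ asserted in the statement, and restoring the coefficient factor $\FF_2[v_0^\pm, v_1^4, v_2^2]$ gives the full answer. There is essentially no obstacle here: the only subtle point is the applicability of the Künneth isomorphism, which is automatic since $\FF_2[\xib_1^8]$ and $\FF_2[\xib_2^4]$ are free as $\FF_2$-modules.
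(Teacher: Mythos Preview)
Your proposal is correct and is precisely the standard argument the paper has in mind; indeed, the paper states this result as an immediate corollary of the preceding proposition without supplying any proof, and your computation of $\Cotor$ for the primitively generated Hopf algebra $\FF_2[\xib_1^8,\xib_2^4]$ via the K\"unneth decomposition and the identification of Frobenius-power primitives with the $h_{i,j}$ is exactly the unwritten justification.
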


\subsection*{Charts}

For the convenience of the reader we include some charts of $\Ext_{A(2)_*}(\bou_1^k)$ for $0 \le k \le 3$ as well as $\Ext_{A(2)_*}(\bou_2)$.

\begin{figure}
\includegraphics[angle = 90, origin=c, height =.7\textheight]{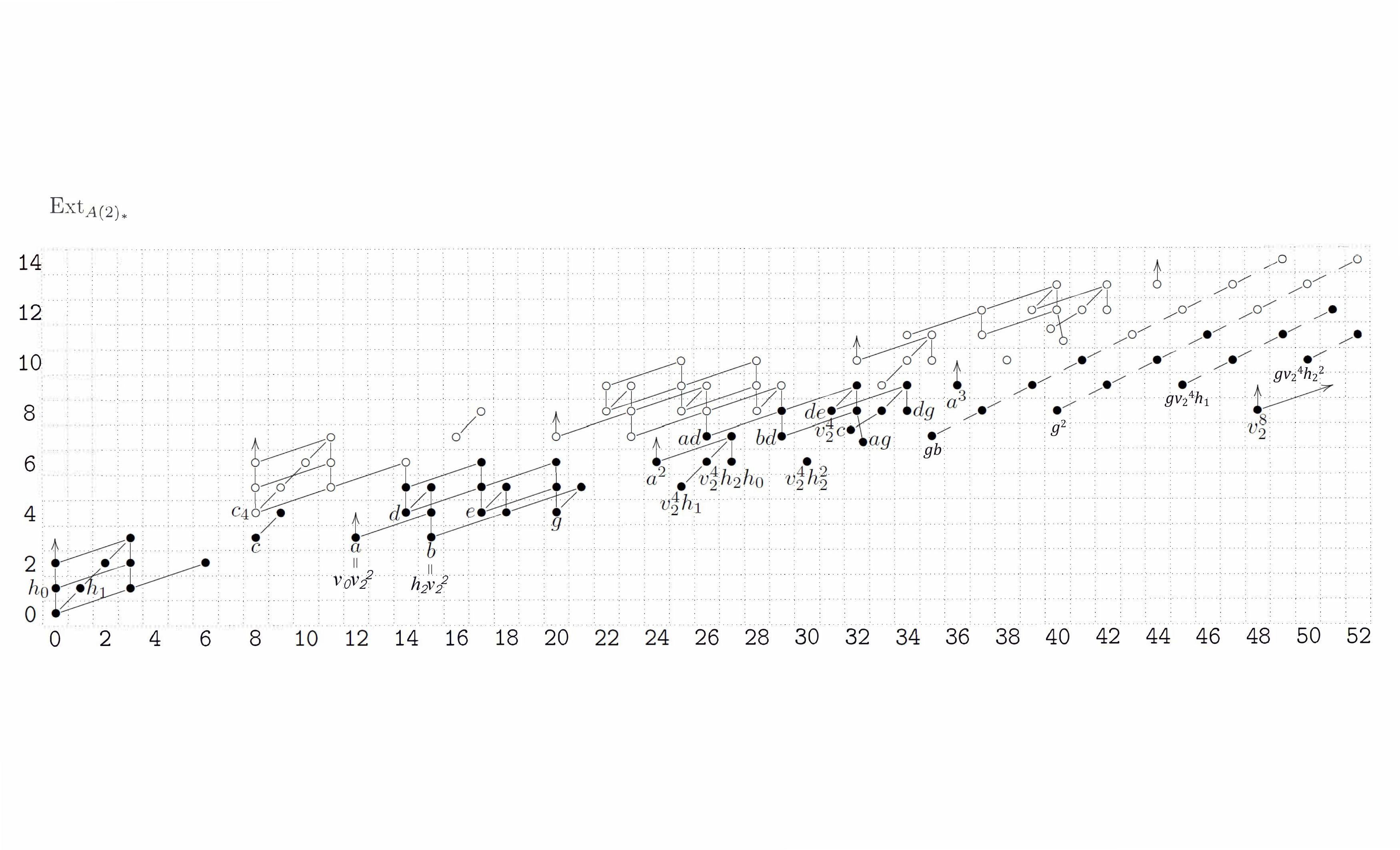}
\caption{$\Ext_{A(2)_*}(\FF_2)$.}\label{fig:ExtA2}
\end{figure}

\begin{figure}
\includegraphics[angle = 90, origin=c, height =.7\textheight]{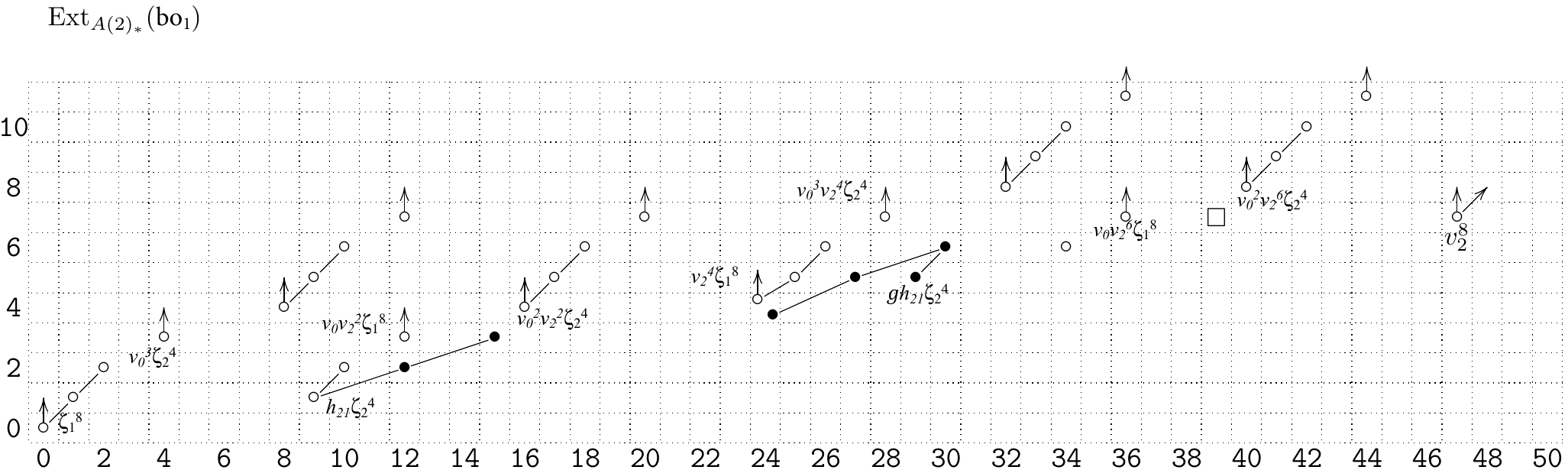}
\caption{$\Ext_{A(2)_*}(\bou_1)$.}\label{fig:bo1}
\end{figure}

\begin{figure}
\includegraphics[angle = 90, origin=c, height =.7\textheight]{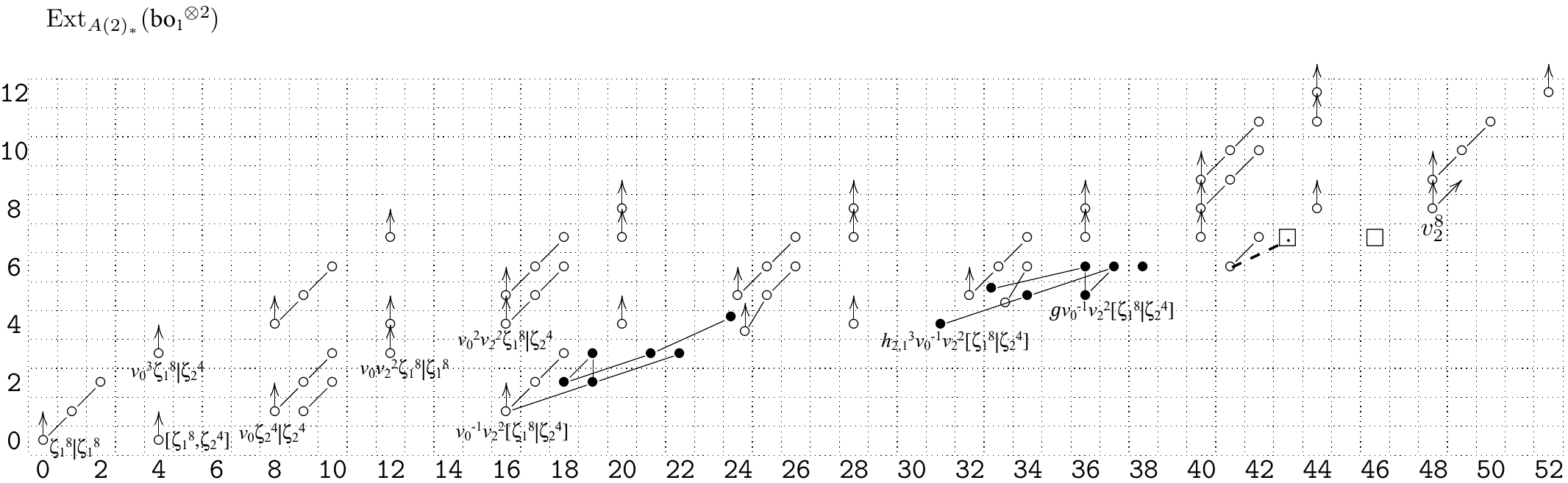}
\caption{$\Ext_{A(2)_*}(\bou_1^{\otimes 2})$.}\label{fig:bo1bo1}
\end{figure}

\begin{figure}
\includegraphics[angle = 90, origin=c, height =.7\textheight]{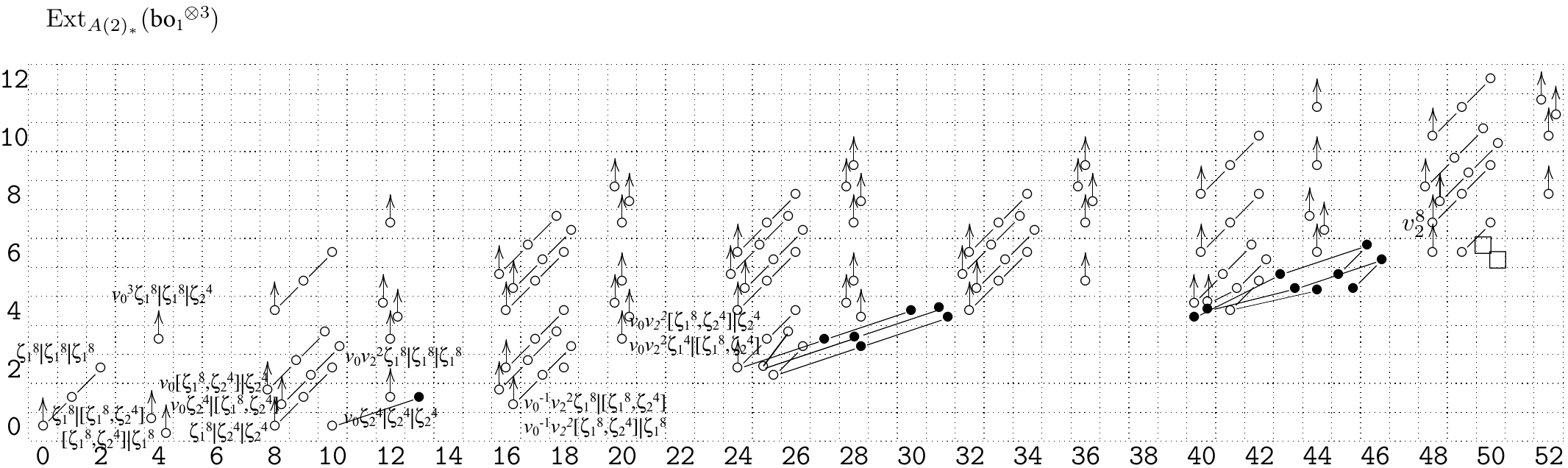}
\caption{$\Ext_{A(2)_*}(\bou_1^{\otimes 3})$.}\label{fig:bo1bo1bo1}
\end{figure}

\begin{figure}
\includegraphics[angle = 0, origin=c, height =\textheight]{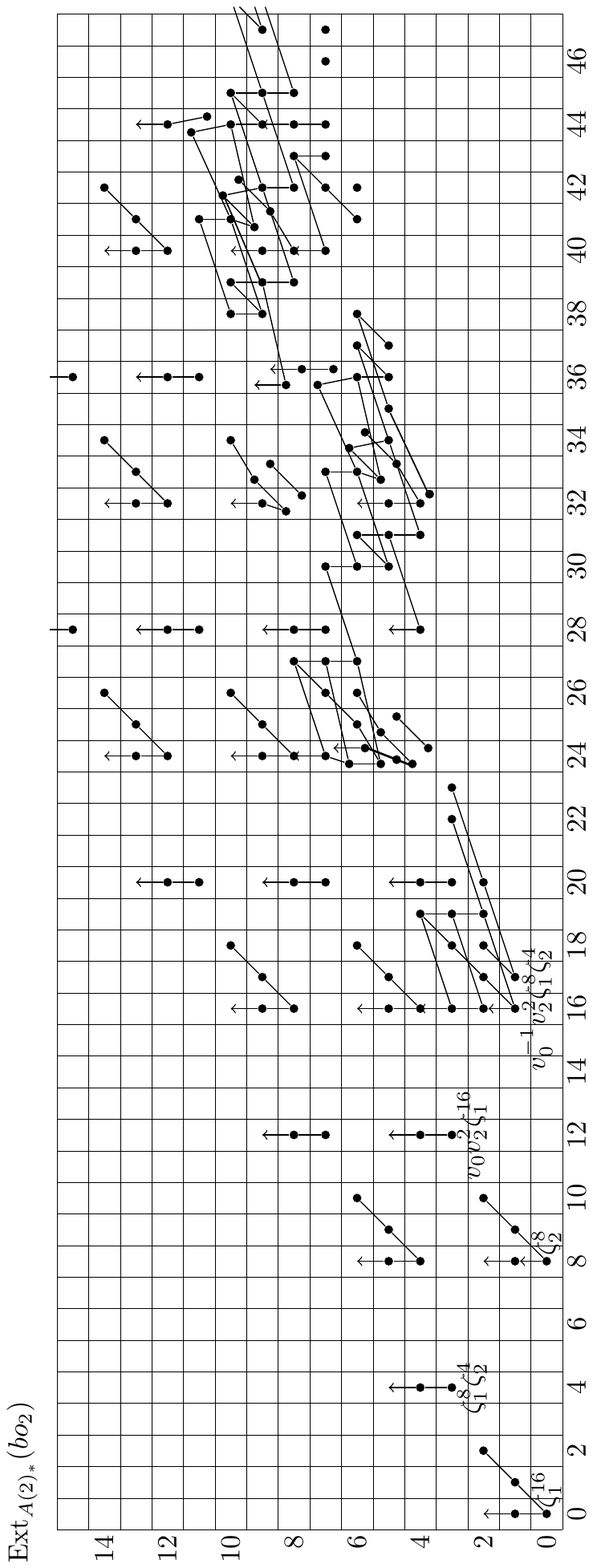}
\caption{$\Ext_{A(2)_*}(\bou_2)$.}\label{fig:bo2}
\end{figure}

$\Ext_{A(2)_*}(\FF_2):$ (Figure \ref{fig:ExtA2})

All of the elements are
$c_4 = v_1^4$-periodic, and $v_2^8$-periodic.  
Exactly one $v_1^4$ multiple
of each element is displayed with the $\bullet$ replaced by a $\circ$. 
Observe the wedge pattern beginning in $t-s = 35$.
This pattern is infinite, propagated horizontally by $h_{2,1}$-multiplication 
and vertically by $v_1$-multiplication.  Here, $h_{2,1}$ is the name of the
generator in the May spectral sequence of bidegree $(t-s,s) = (5,1)$, and
$h_{2,1}^4 = g$.

$\Ext_{A(2)_*}(\bou_1^{\otimes k}),\text{ for }k = 1,2,3:$ (Figures \ref{fig:bo1}, \ref{fig:bo1bo1}, \ref{fig:bo1bo1bo1})

Every element is $v_2^8$-periodic.
However, unlike $\Ext_{A(2)_*}(\FF_2)$, not every element of these Ext groups is
$v_1^4$-periodic.  Rather, it is the case that either an element $x \in
\Ext_{A(2)_*}(\bou_1^{\otimes k})$ satisfies 
$v_1^4x = 0$, or it is $v_1^4$-periodic.
Each of the $v_1^4$-periodic 
elements fit into families which look like shifted and truncated copies of
$\Ext_{A(1)_*}(\FF_2)$, and are labeled with a $\circ$.  
We have only
included the beginning of these $v_1^4$-periodic patterns in the chart.  
The other
generators are labeled with a $\bullet$.  A $\Box$ indicates a
polynomial algebra $\FF_2[h_{2,1}]$.

$\Ext_{A(2)_*}(\bou_2):$ (Figure~\ref{fig:bo2})

Via the spectral sequence (\ref{eq:boIss}), this Ext chart is assembled out of 
$\Ext_{A(1)_*}(\FF_2)$, $\Ext_{A(2)_*}(\Sigma^8\bou_1)$, and $\Ext_{A(2)_*}(\Sigma^{17}\FF_2[-1])$.

\subsection*{$\pmb{h_{2,1}}$-towers}

Our computations of the MASS for $M(8,v_1^8)$ will rely on a detailed understanding of this spectral sequence near its vanishing line.  Since $M(8,v_1^8)$ is a type 2 complex, the Hopkins-Smith Periodicity Theorem \cite{HopkinsSmith} implies that the $E_\infty$-page of this MASS has a vanishing line of slope $1/\abs{v_2} = 1/6$.  However, $g = h_{2,1}^4$ is not nilpotent in the modified Ext groups $\Ext_{A_*}(H(8,v_1^8))$, and $h_{2,1}$-multiplication has slope $1/5$.  The goal of this subsection is to show that many of the $h_{2,1}$-towers in the $E_1$-page of the algebraic tmf-resolution actually kill each other off by the $E_2$-page of the algebraic tmf-resolution.  We will then identify specific $h_{2,1}$-periodic elements of $\Ext_{A_*}(\FF_2)$ that some of these remaining $h_{2,1}$-towers detect.

Consider the quotient Hopf algebra $C_* := \FF_2[\zeta_2]/(\zeta_2^4)$ of $A(2)_*$, with
$$ \Ext^{*,*}_{C_*}(\FF_2) = \FF_2[v_1, h_{2,1}]. $$

\begin{lem}\label{lem:glocal}
Let $C(v_2^8)$ be the cofiber of the map
$$ v_2^8: \Sigma^{56}\FF_2[-8] \rightarrow \FF_2 $$
in the stable homotopy category $\St_{A(2)_*}$.
For any $M \in \St_{A(2)_*}$ there is an isomorphism 
$$ g^{-1}\Ext_{A(2)_*}(M \otimes C(v_2^8)) \cong h^{-1}_{2,1}\Ext_{C_*}(M). $$  
\end{lem}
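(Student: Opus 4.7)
The plan is to construct the isomorphism from the Hopf algebra quotient $\pi: A(2)_* \twoheadrightarrow C_*$. For any $M \in \St_{A(2)_*}$, restriction along $\pi$ produces a natural base-change map $\phi_M: \Ext_{A(2)_*}(M) \to \Ext_{C_*}(M)$ which, on cobar representatives, sends $h_{2,1} \mapsto h_{2,1}$ and hence $g = h_{2,1}^4 \mapsto h_{2,1}^4$. Since $\Ext_{C_*}(\FF_2) = \FF_2[v_1, h_{2,1}]$ is zero in bidegree $(s,t) = (1,7)$, we have $\phi_{\FF_2}(v_2) = 0$, so the restriction of $v_2^8: \Sigma^{56}\FF_2[-8] \to \FF_2$ to $\St_{C_*}$ is null. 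It follows that $C(v_2^8)|_{C_*} \simeq \FF_2 \oplus \Sigma^{57}\FF_2[-8]$, and projecting onto the $\FF_2$-summand after applying $\phi_{M\otimes C(v_2^8)}$ yields the natural comparison map
\[ \Phi_M : g^{-1}\Ext_{A(2)_*}(M \otimes C(v_2^8)) \longrightarrow h_{2,1}^{-1}\Ext_{C_*}(M). \]

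\textbf{Showing $\Phi_M$ is an isomorphism.} Both sides carry cofiber sequences in $M$ to long exact sequences and commute with filtered colimits in $\St_{A(2)_*}$, so a standard dévissage (five-lemma plus induction on cell structure) reduces the problem to $M = \FF_2$. The target is then $\FF_2[v_1, h_{2,1}^{\pm}]$. For the source, the plan is to combine the $g$-localized weight spectral sequence with the chart of $\Ext_{A(2)_*}(\FF_2)$ in Figure \ref{fig:ExtA2}: the complement of the $g$-periodic wedge pattern dies after $g$-localization, and within the wedge one verifies that $v_2^8$ acts injectively on the $v_1$-free portion while the $v_1$-towers become $v_2^8$-torsion after $g$-localization. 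The cofiber sequence for $v_2^8$ then identifies $g^{-1}\Ext_{A(2)_*}(C(v_2^8))$ with the $v_2^8$-cokernel on the $v_1$-free portion, which matches $\FF_2[v_1, h_{2,1}^{\pm}]$ under $\Phi_{\FF_2}$.

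\textbf{Main obstacle.} The principal difficulty is pinning down the $\FF_2[v_2^8]$-module structure of $g^{-1}\Ext_{A(2)_*}(\FF_2)$, i.e., separating the $v_2^8$-injective and $v_2^8$-torsion parts of the $g$-localized wedge pattern. This requires feeding the $v_0$-localized weight spectral sequence computation of the preceding subsection into the analysis of the wedge carried out by Bhattacharya--Bobkova--Thomas \cite{BBT}. Once this structural input is in hand, the identification at $M = \FF_2$ is routine, and the dévissage extends the result to arbitrary $M$.
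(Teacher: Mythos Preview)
Your approach is correct in outline and shares the same core ingredient with the paper's proof: both need the computation $g^{-1}\Ext_{A(2)_*}(\FF_2) = \FF_2[v_2^8, v_1, h_{2,1}^{\pm}]$. However, what you flag as the ``main obstacle'' is not one. This identity is a direct read-off from the known structure of $\Ext_{A(2)_*}(\FF_2)$ (Figure~\ref{fig:ExtA2}): after inverting $g$, the only surviving classes are those in the wedge pattern, which is visibly free over $\FF_2[v_2^8, v_1]$ on powers of $h_{2,1}$. No appeal to \cite{BBT} or the weight spectral sequence is needed --- those tools are for $A\mmod A(2)_*^{\otimes n}$, not for $\FF_2$ itself. (Minor indexing note: the split cofiber over $C_*$ is $\FF_2 \oplus \Sigma^{56}\FF_2[-7]$, not $\Sigma^{57}\FF_2[-8]$.)

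The paper's proof is more direct than your d\'evissage. Rather than reducing to $M = \FF_2$ by a five-lemma induction on cells, the paper observes that the factorization $\FF_2 \to C(v_2^8) \to A(2)\mmod C_*$ (which exists precisely because $v_2^8$ vanishes in $\Ext_{C_*}$) gives a map of \emph{objects} in $\St_{A(2)_*}$. The computation at $\FF_2$ shows this map becomes an equivalence after $g$-localization. The general statement then follows in one stroke by tensoring the equivalence $g^{-1}C(v_2^8) \simeq g^{-1}A(2)\mmod C_*$ with $M$ and applying change of rings: $\Ext_{A(2)_*}(M \otimes A(2)\mmod C_*) \cong \Ext_{C_*}(M)$. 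Your d\'evissage achieves the same end, but the paper's argument explains \emph{why} the isomorphism holds for all $M$ simultaneously rather than verifying it one cell at a time.
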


\begin{proof}
Since the element $v_2^8 \in \Ext_{A(2)_*}(\FF_2)$ maps to zero in $\Ext_{C_*}(\FF_2)$, it follows that there is a factorization
$$
\xymatrix{
\FF_2 \ar[r] \ar[d] &
A(2)\mmod C_* \\
C(v_2^8) \ar@{.>}[ur]
}
$$
in $\St_{A(2)_*}$.
Explicit computation reveals
$$ g^{-1}\Ext_{A(2)_*}(\FF_2) = \FF_2[v_2^8,v_1, h^{\pm}_{2,1}] $$
and it follows that the map
$$ g^{-1} C(v_2^8) \rightarrow g^{-1} A(2)\mmod C_* $$
induces an isomorphism on $\Ext_{A(2)_*}$, and is hence an equivalence.  The result follows.
\end{proof}

\begin{cor}\label{cor:P21ss}
For any $M \in \St_{A(2)_*}$, there is a $v_2^8$-Bockstein spectral sequence
$$ h^{-1}_{2,1}\Ext_{C_*}(M)\otimes \FF_2[v_2^8] \Rightarrow g^{-1}\Ext_{A(2)_*}(M). $$
\end{cor}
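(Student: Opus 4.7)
The plan is to produce the spectral sequence as the standard $v_2^8$-Bockstein spectral sequence, and to identify its $E_1$-page via Lemma~\ref{lem:glocal}. First, smash the defining cofiber sequence for $C(v_2^8)$ in $\St_{A(2)_*}$ with $M$ to obtain the cofiber sequence
$$ \Sigma^{56} M[-8] \xrightarrow{v_2^8} M \to M \otimes C(v_2^8). $$
Iterating this self-map yields a $v_2^8$-adic tower of cofiber sequences whose inverse limit is $M$ and whose associated graded is built from shifted copies of $M \otimes C(v_2^8)$. Applying the exact functor $g^{-1}\Ext_{A(2)_*}(-)$ to this tower produces an exact couple, and the associated spectral sequence is the desired $v_2^8$-Bockstein spectral sequence for $g^{-1}\Ext_{A(2)_*}(M)$.

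The $E_1$-page of this Bockstein consists of one copy of $g^{-1}\Ext_{A(2)_*}(M \otimes C(v_2^8))$ for each power of $v_2^8$, so it takes the form
$$ E_1 \cong g^{-1}\Ext_{A(2)_*}(M \otimes C(v_2^8)) \otimes \FF_2[v_2^8]. $$
Lemma~\ref{lem:glocal} then identifies the first factor with $h^{-1}_{2,1}\Ext_{C_*}(M)$, yielding the claimed description of $E_1$. The higher $d_r$-differentials simply encode the extensions induced by $v_2^8$-multiplication in $g^{-1}\Ext_{A(2)_*}(M)$, as is standard for a $v$-Bockstein.

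The only substantive remaining issue, and the expected main obstacle, is strong convergence to $g^{-1}\Ext_{A(2)_*}(M)$. Since $v_2^8$ acts with strictly positive internal degree, in any fixed $(s,t)$-bidegree only finitely many $v_2^8$-filtration steps contribute; thus the $v_2^8$-adic filtration on $g^{-1}\Ext_{A(2)_*}(M)$ is Hausdorff, and exhaustion is automatic. This boundedness argument is routine for the comodules $M$ considered later in the paper (such as $H(8,v_1^8)$ and its tensor products with bo-Brown-Gitler comodules), so the conclusion follows.
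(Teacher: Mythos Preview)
Your construction is the standard one and matches what the paper leaves implicit (the corollary is stated without proof there). Two small points worth tightening: first, the phrase ``whose inverse limit is $M$'' is not what you mean---$M$ sits at the \emph{top} of the $v_2^8$-adic tower, not at its inverse limit, and the spectral sequence converges to the top. Second, your convergence argument as written does not survive $g$-localization: inverting $g$ destroys the lower bound on $(s,t)$-degrees that a naive ``$v_2^8$ has positive degree'' argument relies on, so the filtration need not be automatically Hausdorff in a fixed bidegree. You correctly hedge by restricting to the specific comodules used later, and indeed for those one can check convergence directly (e.g.\ for $M=\FF_2$ one has $g^{-1}\Ext_{A(2)_*}(\FF_2)=\FF_2[v_2^8,v_1,h_{2,1}^{\pm}]$, which is visibly $v_2^8$-adically separated); but the blanket claim for arbitrary $M\in\St_{A(2)_*}$ needs more care than you give it.
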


Bhattacharya, Bobkova, and Thomas \cite{BBT} computed the $P_2^1$-Margolis homology of the tmf-resolution, and in the process computed the structure of $A \mmod A(2)^{\otimes n}_*$ as $C_*$-comodules.  From this one can read off the Ext groups
$$ h_{2,1}^{-1}\Ext_{C_*}(A \mmod A(2)^{\otimes n}_*), $$
which in turn determines the $g$-local algebraic $\tmf$-resolution by Corollary~\ref{cor:P21ss} (the spectral sequence in this corollary will collapse in the cases we consider it).

To state the results of \cite{BBT} we will need to introduce some notation.
The coaction of $\FF_2[\zeta_2]/\zeta_2^4$ is encoded in the dual action of the algebra $E[Q_1, P_2^1]$ on $A\mmod A(2)^{\otimes n}_*$.
Define elements
\begin{align*}
x_{i,j} & = 1\otimes  \cdots \otimes 1 \otimes \underbrace{\zeta_{i+3}}_{j} \otimes 1 \otimes \cdots \otimes 1,  \\
t_{i,j} & = 1\otimes \cdots \otimes 1 \otimes \underbrace{\zeta^{4}_{i+1}}_{j} \otimes 1 \otimes \cdots \otimes 1
\end{align*}
in $A\mmod A(2)_*^{\otimes n}$.  
The weight filtration on $A \mmod A(2)_*$ induces a \emph{multi-weight filtration} on $A \mmod A(2)^{\otimes n}_*$ indexed by $n$-tuples of weights.
The generators $x_{i,j}$ and $t_{i,j}$ have multi-weight
$$ (0, \ldots ,0, \underbrace{2^{i+2}}_{j}, 0, \ldots, 0). $$
For sets of multi-indices
\begin{align*} 
I & = \{ (i_1, j_1), \ldots, (i_k, j_k) \}, \\
I' & = \{ (i'_1, j'_1), \ldots, (i'_{k'}, j'_{k'}) \}
\end{align*}
with $I \cap I' = \emptyset$,
let 
$$ x_I t_{I'} \in A\mmod A(2)_* $$
denote the corresponding monomial. 
The action of the algebra $E[Q_1, P_2^1]$ on the $\FF_2$-submodule of $A\mmod A(2)_*^{\otimes n}$ spanned by such monomials is given by
\begin{align*}
Q_1(x_{I}t_{I'}) & = \sum_\ell x_{I-\{(i_\ell, j_\ell)\}} t_{I' \cup \{(i_\ell, j_\ell)\}}, \\
P_2^1(x_I t_{I'}) & = \sum_{\ell < \ell'} x_{I-\{(i_\ell,j_\ell),(i_{\ell'}, j_{\ell'})\}} t_{I' \cup \{(i_\ell,j_\ell),(i_{\ell'}, j_{\ell'})\}}. 
\end{align*}
For an \emph{ordered} set 
$$ J = ((i_1, j_1), \ldots, (i_k, j_k)) $$  
of multi-indices, let
$$ \abs{J} := k $$
denote the number of pairs of indices it contains.
Define linearly independent sets of elements
$$ \mc{T}_J \subset A \mmod A(2)^{\otimes n}_* $$
inductively as follows.  Define
$$ \mc{T}_{(i,j)} = \{x_{i,j}\}. $$
For $J$ as above with $\abs{J}$ odd, define
\begin{align*}
\mc{T}_{J, (i,j)} & = \{ z \cdot x_{i,j} \}_{z \in \mc{T}_J}, \\
\mc{T}_{J, (i,j), (i',j')} & = \{ Q_1(z \cdot x_{i,j})x_{i',j'} \}_{z \in \mc{T}_J} \cup \{ Q_1(z \cdot x_{i',j'})x_{i,j} \}_{z \in \mc{T}_J}.
\end{align*} 
Let 
$$ N_J \subset A\mmod A(2)_*^{\otimes n} $$ 
denote the $\FF_2$-subspace with basis
$$ Q_1 \mc{T}_J := \{ Q_1(z) \}_{z \in \mc{T}_J}. $$
While the set $\mc{T}_J$ depends on the ordering of $J$, the subspace $N_J$ does not.

Finally, for a set of pairs of indices
$$ J = \{(i_1,j_1), \cdots , (i_k,j_k)\} $$ 
as before, define
$$ x_Jt_J := x_{i_1,j_1}t_{i_1,j_1} \cdot \cdots \cdot x_{i_k,j_k}t_{i_k,j_k}. $$

The following is the main theorem of \cite{BBT}\footnote{The main theorem of \cite{BBT} is a computation of $P_2^1$-Margolis homology, but the actual content of the paper is a decomposition of $A\mmod A(2)_*$ in the stable module category of $E[Q_1, P_2^1]$.} 

\begin{thm}[Bhattacharya-Bobkova-Thomas]\label{thm:BBT}
As modules over $\FF_2[h^\pm_{2,1}, v_1]$, we have
\begin{multline*} 
h_{2,1}^{-1}\Ext^{*,*}_{E[Q_1,P_2^1]}(A \mmod A(2)_*^{\otimes n}) = \\
\FF_2[h_{2,1}^{\pm}] \otimes \bigg( \FF_2[v_1]\{x_{J'}t_{J'}\}_{J'} \oplus 
\bigoplus_{\abs{J} \: \mr{odd}} N_J\{x_{J'}t_{J'}\}_{J \cap J' = \emptyset} 
\\
\oplus \bigoplus_{\abs{J} \ne 0 \: \mr{even}}
\FF_2[v_1]/v_1^2 \otimes N_J\{x_{J'}t_{J'}\}_{J \cap J' = \emptyset} \bigg)
\end{multline*}
where $J$ and $J'$ range over the subsets of
$$ \{ (i,j) \: : \: 1 \le i, 1 \le j \le n \} $$ 
and $v_1$ acts trivially on $N_J$ for $\abs{J}$ odd.  The summand 
$$ h_{2,1}^{-1}\Ext^{*,*}_{E[Q_1,P_2^1]}(\bou_{i_1} \otimes \cdots \otimes \bou_{i_n}) $$
is spanned by those monomials of multiweight $(8i_1, \ldots, 8i_n)$.
\end{thm}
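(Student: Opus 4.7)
\emph{Proof proposal.} The plan is to decompose $A\mmod A(2)_*^{\otimes n}$ into indecomposable $E[Q_1,P_2^1]$-modules in the stable module category, since $h_{2,1}$-localization annihilates projective (free) summands; then read off each surviving summand's $h_{2,1}$-localized Ext, and reassemble. Over the exterior algebra $E[Q_1,P_2^1]$ the stable category is tractable: non-projective indecomposables give Ext patterns of the form $\FF_2[h_{2,1}^\pm]$ with $v_1$-action either free, square-zero, or zero, which matches the three summand types in the statement.

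I would first verify the stated action formulas. Using the coproduct $\Delta(\zeta_k) = \sum_j \zeta_{k-j}^{2^j}\otimes \zeta_j$ in $A_*$ together with the pairings $\bra{Q_1,\zeta_j}=\delta_{j,2}$ and $\bra{P_2^1,\zeta_j\zeta_{j'}}=1$ iff $\{j,j'\}=\{2,2\}$, one finds $Q_1\cdot\zeta_{i+3}=\zeta_{i+1}^4$ (hence $Q_1 x_{i,j}=t_{i,j}$), while $P_2^1$ annihilates each individual generator and acts on a product of two $x$'s (possibly in different tensor factors) via the unique $\zeta_2\cdot\zeta_2$ cross term, yielding a pair of $t$'s. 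Extending by the Cartan/tensor-product formula recovers the displayed action on $x_It_{I'}$.

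Next I would set up the decomposition, indexed by pairs $(J,J')$ of disjoint multi-index sets. For each $J'$, the relevant summand concerns monomials containing the ``anchor'' $x_{i,j}t_{i,j}$ for $(i,j)\in J'$ together with monomials in the active indices $J$ built from the orbit basis $\mc{T}_J$. The inductive definition of $\mc{T}_J$ produces this orbit basis: start with a single $x_{i,j}$; when $|J|$ is odd, extending by a new $x_{i',j'}$ multiplies the generator in, while when the next extension makes $|J|$ even, the $P_2^1$-image populates the basis with pair-swapped monomials encoded in $Q_1\mc{T}_J$. Multi-weight grading then identifies the summand of weight $(8i_1,\ldots,8i_n)$ with $h_{2,1}^{-1}\Ext(\bou_{i_1}\otimes\cdots\otimes\bou_{i_n})$.

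Finally, each piece's $h_{2,1}$-localized Ext follows from K\"unneth for $E[Q_1,P_2^1]=E[Q_1]\otimes E[P_2^1]$. For $J=\emptyset$ only the base contributes with free $v_1$-action, giving $\FF_2[h_{2,1}^\pm,v_1]\{x_{J'}t_{J'}\}$. For $|J|$ odd, the orbit is stably a direct sum of copies of $E[Q_1]$ indexed by $N_J$, contributing $\FF_2[h_{2,1}^\pm]\otimes N_J$ with $v_1$ acting trivially on $N_J$. For $|J|\ne 0$ even, the orbit has a coupled $Q_1$-$P_2^1$ structure whose non-projective quotient produces the $\FF_2[v_1]/v_1^2\otimes N_J$ factor. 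I expect the main obstacle to be verifying that the enumerated orbits actually exhaust the non-projective part of $A\mmod A(2)_*^{\otimes n}$: this requires a careful multi-weight-graded dimension count, combined with an induction on $n$ via K\"unneth for $A\mmod A(2)_*^{\otimes n}=(A\mmod A(2)_*)^{\otimes n}$, matching total dimensions against the stated decomposition in each multi-weight.
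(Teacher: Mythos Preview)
The paper does not give its own proof of this theorem: it is quoted from Bhattacharya--Bobkova--Thomas \cite{BBT}, with only a footnote indicating that the actual content of \cite{BBT} is a decomposition of $A\mmod A(2)_*$ in the stable module category of $E[Q_1,P_2^1]$. So there is no proof in this paper to compare against; your sketch is, however, consistent with that footnote's description of the BBT argument---decompose into non-projective indecomposables over the exterior algebra and read off the $h_{2,1}$-localized Ext of each piece.

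Two remarks on the sketch itself. First, the representation theory of $E[Q_1,P_2^1]\cong\FF_2[(\ZZ/2)^2]$ is tame rather than finite, so the assertion that ``non-projective indecomposables give Ext patterns with $v_1$-action either free, square-zero, or zero'' is not a general structural fact about this stable module category; there are infinitely many indecomposable types (string and band modules), and the three patterns you list are specific to the particular string-type modules that actually occur in $A\mmod A(2)_*^{\otimes n}$. Pinning down \emph{which} indecomposables appear is the real content of \cite{BBT}, not a consequence of abstract nonsense. Second, your closing concern is exactly right: the substantive step is the exhaustiveness check---showing that the enumerated orbits account for the entire non-projective part in each multi-weight---and this is where the combinatorics of \cite{BBT} lives. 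Your outline is a correct roadmap to that paper's argument, but the work is there, not here.
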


In light of Lemma~\ref{lem:glocal} and Corollary~\ref{cor:P21ss}, we may refer to elements of the $g$-local algebraic tmf-resolution as $v_2^{8j}z$, where $z$ is an element of the $h_{2,1}$-localized Ext groups described in the theorem above.

\begin{lem}\label{lem:dxt}
The WSS $d_0$-differential on the element
$$ x_{1,1}t_{1,1} \in g^{-1}\Ext^{*,*}_{A(2)_*}(\bou_2) $$
is given by
$$ d^{wss}_0(x_{1,1}t_{1,1}) = Q_1(x_{1,1}x_{1,2}) \in \Ext_{A(2)_*}(\bou_1^{\otimes 2}). $$
\end{lem}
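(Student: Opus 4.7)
The plan is as follows. By construction, the WSS arises from the decreasing weight filtration on the $\tmf$-cobar complex, so the $d_0$-differential is the associated graded of the cobar differential; equivalently, via the comparison map $\Phi$ of (\ref{eq:Phi}), it is computed from the Milnor coproduct on $A\mmod A(2)_* \subset A_*$ modulo higher weight.

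The key structural input to exploit is that in the associated graded Hopf algebra $E^0 A_*$, each generator $\zeta_n^{2^k}$ of $A\mmod A(2)_*$ is primitive. Indeed, in $\Delta(\zeta_n) = \sum_{i+j=n} \zeta_i \otimes \zeta_j^{2^i}$, any cross term ($0 < i < n$) carries weight $2^{i-1} + 2^{n-1}$, which strictly exceeds $wt(\zeta_n) = 2^{n-1}$ and thus vanishes in $E^0$; the same holds for $\zeta_n^{2^k}$ by Frobenius. Applying the Leibniz rule to the product $\zeta_4 \cdot \zeta_2^4$ representing $x_{1,1} t_{1,1} \in \bou_2$, one reads off
\[
\br{\Delta}^{E^0}(\zeta_4 \cdot \zeta_2^4) = \zeta_4 \otimes \zeta_2^4 + \zeta_2^4 \otimes \zeta_4,
\]
each summand of bi-weight $(1,1)$ and thus living in $\bou_1 \otimes \bou_1$.

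To conclude, translate these terms into the BBT notation ($\zeta_4 \otimes \zeta_2^4 = x_{1,1} \cdot t_{1,2}$ and $\zeta_2^4 \otimes \zeta_4 = t_{1,1} \cdot x_{1,2}$) and match against the BBT formula
\[
Q_1(x_I t_{I'}) = \sum_\ell x_{I \setminus \{(i_\ell,j_\ell)\}}\, t_{I' \cup \{(i_\ell,j_\ell)\}}
\]
applied with $I = \{(1,1),(1,2)\}$, $I' = \emptyset$, which yields $Q_1(x_{1,1}x_{1,2}) = x_{1,1} t_{1,2} + x_{1,2} t_{1,1}$, in agreement with the cobar computation.

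The main obstacle is pinning down the identification of $d_0^{wss}$ with the naive associated graded of the Milnor coproduct: one must check that the terms of $\br{\Delta}(\zeta_4 \zeta_2^4)$ carrying a factor outside $A\mmod A(2)_*$ (such as $\zeta_1$, $\zeta_1^4$, $\zeta_2$, or $\zeta_3$) do not contribute to the $\tmf$-cobar differential, being instead absorbed into the $A(2)_*$-coaction on the $\bou_i$'s already built into the $E_0$-page of the WSS, and that the remaining weight-preserving piece really is $d_0$ (rather than a higher $d_r$). This is a routine but slightly finicky bookkeeping step that can be streamlined using the zig-zag (\ref{eq:zigzag}) to the algebraic $BP$-resolution, where the analogous differential is given by the leading Adams-filtration term of the cobar differential for $BP_*BP$.
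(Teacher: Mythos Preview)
Your approach has a genuine gap that you yourself flag in the last paragraph but then dismiss as ``routine bookkeeping.'' It is not routine, and it is the entire content of the lemma.

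The symbol $x_{1,1}t_{1,1}$ in the statement names a class in $g^{-1}\Ext^{*,*}_{A(2)_*}(\bou_2)$, not a comodule element in filtration zero. The WSS $E_0$-page already consists of $\Ext_{A(2)_*}$-groups, and $d_0^{\mathrm{wss}}$ is a map between such groups induced by the weight-preserving piece of the algebraic $\tmf$-resolution $d_1$. The map $\Phi_*$ of (\ref{eq:Phi}) to the modified May spectral sequence only exists for $s=0$, so your coproduct computation on the comodule element $\zeta_4\zeta_2^4$ computes, at best, the differential on whatever $s=0$ class it represents, and you give no argument that this class survives to something that maps to $x_{1,1}t_{1,1}$ under $g$-localization. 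In fact the paper shows that the relevant lift of $h_{2,1}x_{1,1}t_{1,1}$ to unlocalized $\Ext_{A(2)_*}(\bou_2)$ is the class $v_0^{-1}v_2^2\zeta_1^8\zeta_2^4$, which lives in positive Adams filtration, not in $\Ext^0$.

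The paper's argument proceeds along entirely different lines: it identifies, by explicit computation of $g^{-1}\Ext_{A(2)_*}(\bou_2)$, that $v_0^{-1}v_2^2\zeta_1^8\zeta_2^4 \mapsto h_{2,1}x_{1,1}t_{1,1}$ under the localization map; then invokes the already-established $v_0$-local WSS differential $d_0^{\mathrm{wss}}(v_0^{-1}v_2^2\zeta_1^8\zeta_2^4)=v_0^{-1}v_2^2[\zeta_1^8,\zeta_2^4]$; and finally checks, again by explicit computation, that $v_0^{-1}v_2^2[\zeta_1^8,\zeta_2^4]\mapsto h_{2,1}Q_1(x_{1,1}x_{1,2})$ in $g^{-1}\Ext_{A(2)_*}(\bou_1^{\otimes 2})$. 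The two explicit identifications under the $g$-localization map are precisely the ``finicky bookkeeping'' you would have to carry out to make your argument go through, and they are the substance of the proof.
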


\begin{proof}
We use the map of spectral sequences
$$ \E{wss}{}_0 \rightarrow g^{-1} \E{wss}{}_0. $$
By explicit computation of $g^{-1}\Ext_{A(2)_*}(\bou_2)$, under the map
$$ \Ext_{A(2)_*}(\bou_2) \rightarrow g^{-1} \Ext_{A(2)_*}(\bou_2) $$
we have
$$ v_0^{-1}v_2^2 \zeta_1^8\zeta_2^4 \mapsto h_{2,1}x_{1,1}t_{1,1}. $$
In the WSS we have 
\begin{equation}\label{eq:dzeta18zeta24}
 d_0^{wss}(v_0^{-1}v_2^2 \zeta_1^8\zeta_2^4) = v_0^{-1}v_2^2 [\zeta_1^8,\zeta_2^4]. 
 \end{equation}
Again, by explicit computation of $g$-local Ext groups, under the map
$$ \Ext_{A(2)_*}(\bou_1^{\otimes 2}) \rightarrow g^{-1} \Ext_{A(2)_*}(\bou_1^{\otimes 2}) $$
we have
$$ v_0^{-1}v_2^2 [\zeta_1^8,\zeta_2^4] \mapsto h_{2,1}Q_1(x_{1,1}x_{1,2}). $$
The result follows.
\end{proof}

\begin{prop}\label{prop:h21death}
In $g^{-1}\E{wss}{}_0$, all of the $h_{2,1}$-towers coming from $\Ext_{A(2)_*}(\bou_1^{\otimes k})$, for $k \ge 2$, either support non-trivial $d_0$-differentials, or are the target of $d_0$-differentials.
\end{prop}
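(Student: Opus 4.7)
The plan is to use the Bhattacharya--Bobkova--Thomas structure theorem (Theorem~\ref{thm:BBT}), together with Lemma~\ref{lem:glocal} and Corollary~\ref{cor:P21ss}, to identify the $h_{2,1}$-towers in $g^{-1}\Ext_{A(2)_*}(\bou_1^{\otimes k})$, and then to extend Lemma~\ref{lem:dxt} so as to produce explicit $d_0^{wss}$-differentials killing each of these towers.

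\textbf{Setup.} First I would observe that in $g^{-1}\Ext_{A(2)_*}(\bou_1^{\otimes k})$, the multi-weight constraint $(8, \ldots, 8)$ combined with the restriction $i \ge 1$ in Theorem~\ref{thm:BBT} forces the only contributing summand to come from $J = J_0 := \{(1,1), \ldots, (1,k)\}$ and $J' = \emptyset$. This implies that the $h_{2,1}$-towers are generated over $\FF_2[h_{2,1}^{\pm}]$ by a basis of $N_{J_0}$, with an additional $\FF_2[v_1]/v_1^2$ factor when $k$ is even.

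\textbf{Key computation.} Next, I would extend Lemma~\ref{lem:dxt}: for any class $y \in \Ext_{A(2)_*}(\bou_1^{\otimes(k-2)})$ supported on the slots complementary to a chosen consecutive pair $(\ell, \ell+1)$, I expect the identity
\[ d_0^{wss}(y \cdot x_{1,\ell} t_{1,\ell}) = y \cdot Q_1(x_{1,\ell} x_{1,\ell+1}) \]
to hold in $g^{-1}\Ext_{A(2)_*}(\bou_1^{\otimes k})$, where the LHS lives in $\Ext_{A(2)_*}(\bou_1^{\otimes(\ell-1)} \otimes \bou_2 \otimes \bou_1^{\otimes(k-\ell-1)})$. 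This will follow from the Leibniz rule for the cobar differential together with the observation that $x_{1,\ell} t_{1,\ell}$ is the unique factor in the preimage whose reduced coproduct survives at the $d_0$-level (all other factors remain primitive in the weight-graded Hopf algebra), combined with the $g$-local identification established in Lemma~\ref{lem:dxt}. Since $y$ is $Q_1$-closed as a cycle representative, using $Q_1^2 = 0$ one may rewrite the image as $Q_1(y \cdot x_{1,\ell} x_{1,\ell+1})$, which confirms it lies in $N_{J_0}$.

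\textbf{Killing the towers.} Finally, I would apply Theorem~\ref{thm:BBT} to the preimage groups to show that $\Ext_{A(2)_*}(\bou_1^{\otimes(\ell-1)} \otimes \bou_2 \otimes \bou_1^{\otimes(k-\ell-1)})$ contains a ``Case B'' family of $h_{2,1}$-towers indexed by $N_{J_0 \setminus \{(1,\ell), (1,\ell+1)\}} \cdot \{x_{1,\ell} t_{1,\ell}\}$, with the appropriate $v_1$-truncation determined by the parity of $k-2$. Letting $\ell$ range over $\{1, \ldots, k-1\}$ and $y$ over bases of the various $N_{J_0 \setminus \{(1,\ell),(1,\ell+1)\}}$, the resulting $d_0^{wss}$-images will span all of $N_{J_0}$. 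The hard part will be verifying this spanning claim; the plan is to prove it by induction on $k$ (base case Lemma~\ref{lem:dxt}) using the recursive construction of $\mc{T}_{J_0}$ and the cyclic-type relations among the elements $Q_1(x_{1,a} x_{1,b}) \cdot \beta$ (for instance, $\sum_{\{a,b,c\} = \{1,2,3\}} Q_1(x_{1,a} x_{1,b}) t_{1,c} = 0$ in the case $k = 3$). The $v_1$-multiples when $k$ is even will be handled analogously by multiplying each preimage by $v_1$, which is permitted since $|J_0 \setminus \{(1,\ell), (1,\ell+1)\}| = k-2$ has the parity for which the $\FF_2[v_1]/v_1^2$-structure is inherited.
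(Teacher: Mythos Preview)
Your proposal is correct and follows the same overall strategy as the paper: identify the $g$-local towers in $\bou_1^{\otimes k}$ via Theorem~\ref{thm:BBT} as $N_{J_0}$ with $J_0 = \{(1,1),\ldots,(1,k)\}$, and kill them with $d_0^{wss}$-differentials sourced from terms of the form $N_{J_0 \smallsetminus\{(1,\ell),(1,\ell+1)\}}\cdot x_{1,\ell}t_{1,\ell}$ sitting in a summand with one $\bou_2$ factor. The one substantive difference is in how you organize the surjectivity onto $N_{J_0}$. You let $\ell$ range over all of $\{1,\ldots,k-1\}$ and propose proving that the images span by an induction using cyclic-type relations; you correctly flag this as the hard step. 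The paper sidesteps this entirely by observing the identity
\[
\mc{T}_{(1,1),\ldots,(1,k)} \;=\; Q_1(x_{1,1}x_{1,2})\,\mc{T}_{(1,3),\ldots,(1,k)} \;\cup\; Q_1(x_{1,2}x_{1,3})\,\mc{T}_{(1,1),(1,4),\ldots,(1,k)},
\]
which is immediate from the recursive definition of $\mc{T}_J$. This shows that $\ell = 1$ and $\ell = 2$ alone already suffice, and moreover that the two $d_0^{wss}$-maps are \emph{isomorphisms} onto the two halves of $N_{J_0}$ (a surjection for $k=2$), with the $v_1$-truncations matching up automatically by parity of $|J|$. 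So rather than a spanning argument across many sources, the paper gets a clean two-piece decomposition of the target matched by two sources. Your route would work, but the recursive identity for $\mc{T}_{J_0}$ is exactly the bookkeeping device that makes the induction you have in mind unnecessary.
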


\begin{proof}
By Lemma~\ref{lem:glocal} and Theorem~\ref{thm:BBT}, the $h_{2,1}$-towers coming from 
$$\Ext_{A(2)_*}(\bou_1^{\otimes k})$$ 
are supported by the elements
$\mc{T}_{\{(1,1), \ldots, (1,k)\}}$.
By Lemma~\ref{lem:dxt}, the WSS $d_0$ induces a surjection for $k = 2$
$$ d_0^{wss}: \FF_2[h_{2,1}^{\pm}, v_1, v_2^8]\{x_{1,1}t_{1,1}\} \twoheadrightarrow \FF_2[h_{2,1}^{\pm}, v_1, v_2^8]/v_1^2 \otimes N_{\{(1,1), (1,2)\}}. $$
For $k > 2$, observe that
$$ \mc{T}_{(1,1), \ldots, (1,k)} = Q_1(x_{1,1}x_{1,2})\mc{T}_{(1,3), \ldots, (1,k)} \cup Q_1(x_{1,2}x_{1,3}) \mc{T}_{\{(1,1), (1,4), \ldots, (1,k) \}}. $$
For $k>2$ even the WSS $d_0$ gives isomorphisms
\begin{multline*}
d_0^{wss}: \FF_2[h_{2,1}^{\pm}, v_1, v_2^8]/v_1^2 \otimes x_{1,1}t_{1,1}N_{\{(1,2), \ldots, (1,k-1)\}} \\
\xrightarrow{\cong} \FF_2[h_{2,1}^{\pm}, v_1, v_2^8]/v_1^2 \otimes Q_1(x_{1,1}x_{1,2})N_{\{(1,3), \ldots, (1,k)\}},
\end{multline*}
\begin{multline*}
d_0^{wss}: \FF_2[h_{2,1}^{\pm}, v_1, v_2^8]/v_1^2 \otimes x_{1,2}t_{1,2}N_{\{(1,1),(1,3), \ldots, (1,k-1)\}} \\
\xrightarrow{\cong} \FF_2[h_{2,1}^{\pm}, v_1, v_2^8]/v_1^2 \otimes Q_1(x_{1,2}x_{1,3})N_{\{(1,1),(1,4),\ldots, (1,k)\}},
\end{multline*}
and for $k > 2$ odd the WSS $d_0$ gives isomorphisms
\begin{multline*}
d_0^{wss}: \FF_2[h_{2,1}^{\pm}, v_2^8] \otimes x_{1,1}t_{1,1}N_{\{(1,2), \ldots, (1,k-1)\}} \\ 
\xrightarrow{\cong} \FF_2[h_{2,1}^{\pm}, v_2^8] \otimes Q_1(x_{1,1}x_{1,2})N_{\{(1,3), \ldots, (1,k)\}},
\end{multline*}
\begin{multline*}
d_0^{wss}: \FF_2[h_{2,1}^{\pm}, v_2^8] \otimes x_{1,2}t_{1,2}N_{\{(1,1),(1,3), \ldots, (1,k-1)\}} \\ \xrightarrow{\cong} \FF_2[h_{2,1}^{\pm}, v_2^8] \otimes Q_1(x_{1,2}x_{1,3})N_{\{(1,1),(1,4),\ldots, (1,k)\}}.
\end{multline*}
\end{proof}

We shall denote the elements of the Mahowald-Tangora wedge \cite{MahowaldTangora} in $\Ext_{A_*}(\FF_2)$ by\footnote{This notation is slightly misleading, as there are a few wedge elements for which the $P$ operator does not take the element we are denoting $v_1^i x$ to the element we are denoting $v_1^{i+4}x$, but we justify this notation by the fact that the wedge elements map to elements with such names in $\Ext_{A(2)_*}(\FF_2)$. }
$$ v_1^i h_{2,1}^j g^2, \quad i \ge 0, j \ge 0. $$
Recall that the Mahowald operator
$$ M = \bra{g_2, h_0^3, -} $$
leads to an infinite collection of wedges
$$ M^k(v_1^i h_{2,1}^j g^2) \in \Ext_{A_*}(\FF_2) $$
with non-zero image in 
$$\Ext_{B_*}(\FF_2) = \Ext_{A(2)_*}(\FF_2)[v_3] $$
where $B_*$ is the quotient algebra
\begin{equation}\label{eq:Bstar}
 B_* := \FF_2[\zeta_1, \zeta_2, \zeta_3, \zeta_4]/(\zeta_1^8, \zeta_2^4, \zeta_3^2, \zeta_4^2)
 \end{equation}
of $A_*$ \cite{MargolisPriddyTangora},\cite{IsaksenMO}. 
The existence of the element $\Delta^2 g^2 \in \Ext_{A_*}(\FF_2)$ gives elements  
$$ \Delta^{2m} M^k(v_1^i h_{2,1}^{j+8m} g^2) \in \Ext_{A_*}(\FF_2). $$
These elements are all linearly independent, since they project to linearly independent elements of $\Ext_{B_*}(\FF_2)$.

The following proposition gives the elements of $\Ext_{A(2)_*}$ that some of the remaining $h_{2,1}$ towers in $\Ext_{A(2)_*}$ detect in the algebraic tmf-resolution.

\begin{prop}\label{prop:h21detection}
The following table lists, for $i \ge 0$, $m \ge 0$, and $j \ge 4$ an $A(2)_*$-comodule $M$, an $h_{2,1}$-tower in $g^{-1}\Ext_{A(2)_*}(M)$, the corresponding $h_{2,1}$-tower in $\Ext_{A(2)_*}(M)$, and an $h_{2,1}$-tower in $\Ext_{A_*}(\FF_2)$ that it detects in the algebraic tmf-resolution (assuming the latter is non-zero). 
\begin{center}
\begin{tabular}{c|c|c|c}
$M$ & $g^{-1}\Ext_{A(2)_*}(M)$ & $\Ext_{A(2)_*}(M)$ & $\Ext_{A_*}(\FF_2)$ \\
\hline
$\FF_2$ & $\Delta^{2m} v_1^i h^{j+8m+8}_{2,1}$ & 
$\Delta^{2m} v_1^i h^{j+8m}_{2,1} g^2$ &$\Delta^{2m} v_1^i h_{2,1}^{j+8m} g^2$ \\
&&&\\
$\bou_1$ & $\Delta^{2m} h^{j+8m+4}_{2,1} Q_1(x_{1,1})$ & $\Delta^{2m} h_{2,1}^{j+8m+4}\zeta_2^4$ & $\Delta^{2m}h_{2,1}^{j+8m} n$ \\
&&&\\
$\bou_2$ & $\Delta^{2m}h_{2,1}^{j+8m+6} Q_1(x_{2,1})$ & $\Delta^{2m}h^{j+8m+1}_{2,1} g(h_{2,1}v_0^{-2}v_2^2\zeta_1^{16})$ & $\Delta^{2m}h_{2,1}^{j+8m} Q_2$ \\
 & $\Delta^{2m}v_1^{i+2} h_{2,1}^{j+8m+11}x_{1,1}t_{1,1}$ & $\Delta^{2m} v_1^{i+2}h^{j+8m+2}_{2,1}g^2(v_0^{-1}v_2^2\zeta_1^8\zeta_2^4)$ & $\Delta^{2m}v_1^i h_{2,1}^{j+8m} Mg^2$ \\
\hline
\end{tabular}
\end{center}
(Note that the notation $Q_2$ in the above table refers to the name of the generator of $\Ext^{7,57+7}_{A_*}(\FF_2)$, and \emph{not} the Milnor generator $Q_2 \in A$.) 
\end{prop}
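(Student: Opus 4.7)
The plan is to build each row of the table in three stages: (1) identify the $h_{2,1}$-tower after $g$-localization using Theorem~\ref{thm:BBT}; (2) lift the tower back to $\Ext_{A(2)_*}(M)$ via the $v_2^8$-Bockstein spectral sequence of Corollary~\ref{cor:P21ss} together with the explicit charts of Figures~\ref{fig:ExtA2}, \ref{fig:bo1}, \ref{fig:bo2}; and (3) identify the detected class in $\Ext_{A_*}(\FF_2)$ of the algebraic $\tmf$-resolution by comparing with the larger Hopf algebra $B_*$ of (\ref{eq:Bstar}) and the map of spectral sequences (\ref{eq:zigzag}).

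For stage~(1), Theorem~\ref{thm:BBT} applied to $M=\FF_2$ has only the trivial index sets $J=J'=\emptyset$, contributing the single $h_{2,1}$-periodic family $\FF_2[v_1]\{1\}$, which after multiplication by $\Delta^{2m}$ and the identification $g=h_{2,1}^4$ produces the first row. For $M=\bou_1$ the only admissible nonempty index pair is $(1,1)$, so the new $v_1$-trivial summand $N_{(1,1)}$ contributes the tower on $Q_1(x_{1,1})$ of the second row. For $M=\bou_2$ the new $h_{2,1}$-towers that are not killed by the $d_0$-differentials of Proposition~\ref{prop:h21death} are those on $Q_1(x_{2,1})$ and on $v_1^{i+2} x_{1,1}t_{1,1}$; the $v_1^2$-factor is forced because $x_{1,1}t_{1,1}$ itself supports a nonzero $d_0^{wss}$-differential by Lemma~\ref{lem:dxt} and $v_1$ annihilates its target in $N_{\{(1,1),(1,2)\}}$.

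For stage~(2), the $v_2^8$-Bockstein spectral sequence collapses on the classes in hand (they all have $v_0$-divisible $g$-local names), and the resulting lifts can be read off the charts: $Q_1(x_{1,1})=t_{1,1}=\zeta_2^4$ recovers column~2 of row two; the class $h_{2,1}^{j+8m+6}Q_1(x_{2,1})$ lifts to the $v_0$-torsion-free class detected in the $v_0$-localization by $h_{2,1}v_0^{-2}v_2^2\zeta_1^{16}$ visible in Figure~\ref{fig:bo2}; and the lift of $v_1^{i+2}h_{2,1}^{j+8m+11}x_{1,1}t_{1,1}$ is the $g^2$-multiple of the class $v_0^{-1}v_2^2\zeta_1^8\zeta_2^4$ that already appeared in the proof of Lemma~\ref{lem:dxt} via equation~(\ref{eq:dzeta18zeta24}).

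The last stage is the identification of the detected class in $\Ext_{A_*}(\FF_2)$. Row one is immediate, since the wedge elements $\Delta^{2m}v_1^i h_{2,1}^{j+8m}g^2$ already live in $\Ext_{A(2)_*}(\FF_2)$ which is the $0$-line of the algebraic $\tmf$-resolution, and the restriction $\Ext_{A_*}(\FF_2)\to\Ext_{A(2)_*}(\FF_2)$ carries the wedge generator to itself. For rows two and three I would factor the identification through $\Ext_{B_*}(\FF_2)$, where the named classes $n$, $Q_2$, and $Mg^2$ have non-zero images with explicit cobar representatives (see \cite{MargolisPriddyTangora}, \cite{IsaksenMO}); matching these with the cobar forms of the column~2 lifts above, via the maps (\ref{eq:zigzag}) into the algebraic $BP$-resolution, completes the identification. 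The hardest step will be the Mahowald-operator class $Mg^2=\langle g_2, h_0^3, g^2\rangle$, whose Massey-product definition requires a cobar computation and carries an a priori indeterminacy; to resolve this I would track the defining cocycles through the $\Sigma^{16}\bou_2$-summand of $\br{A\mmod A(2)}_*^{\otimes 2}$ and verify that no term of the indeterminacy affects the filtration-$2$ detection statement.
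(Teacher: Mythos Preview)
Your stages (1) and (2) are essentially fine and match what the paper does implicitly: the BBT decomposition and the charts pin down the $h_{2,1}$-towers in $g^{-1}\Ext_{A(2)_*}(M)$ and their lifts to $\Ext_{A(2)_*}(M)$.

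The gap is in stage~(3), the detection statement. You propose to identify which element of $\Ext_{A_*}(\FF_2)$ is detected by exhibiting explicit cobar representatives for $n$, $Q_2$, and $Mg^2$, then matching these through $\Ext_{B_*}(\FF_2)$ and the zig-zag~(\ref{eq:zigzag}). This is not a workable route. First, the map to $\Ext_{B_*}$ is used in the paper only to prove linear independence of the iterated-Mahowald wedges; it does not see the filtration of the algebraic $\tmf$-resolution, so knowing the image in $\Ext_{B_*}$ does not tell you on which line of the resolution an element is detected. Second, producing explicit cobar cocycles for $n\in\Ext^{5,41}_{A_*}$ and $Q_2\in\Ext^{7,64}_{A_*}$ and then reducing them along the $\tmf$-filtration is not practical; you have not indicated how you would actually carry this out. (Also, your $\Sigma^{16}\bou_2$ sits inside $\br{A\mmod A(2)}_*$, not $\br{A\mmod A(2)}_*^{\otimes 2}$.)

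The paper's argument is instead a process of elimination, and this is the missing idea. One checks that $n$, $t$, $Q_2$, $C_0$ map to zero in $\Ext_{A(2)_*}(\FF_2)$, so they have algebraic $\tmf$-filtration $\ge 1$; a computer calculation of $\Ext_{A_*}(\br{A\mmod A(2)}_*^{\otimes 2})$ shows they are not in the image from the $2$-line, hence they are detected on the $1$-line; finally, inspection of the charts for $\Ext_{A(2)_*}(\bou_i)$ leaves only one candidate in each bidegree (up to terms annihilated by $g$, $r$, $m$, which suffices for the $h_{2,1}$-tower statement). For $Mg^2$ the same elimination pins down $e_0^2(v_0^{-1}v_2^2\zeta_1^8\zeta_2^4)\in\Ext_{A(2)_*}(\bou_2)$ as the detecting class for $Mg$, and then the wedge multiplicative structure and the relation $ge_0^2=v_1^2h_{2,1}^2g^2$ propagate this to all $v_1^ih_{2,1}^jMg^2$. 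You should replace your stage~(3) with this argument.
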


\begin{proof}
The classes corresponding to $\Delta^{2m} v_1^i h^k_{2,1}$ are clear, because they are in the image of the map
$$ \Ext_{A_*}(\FF_2) \rightarrow \Ext_{A(2)_*}(\FF_2). $$
In the case of the classes corresponding to $\Delta^{2m}h^k_{2,1} n$, $\Delta^{2m}h^k_{2,1} Q_2$,
we consider the $h^j_{2,1}$ multiples of $n, Q_2 \in \Ext_{A_*}(\FF_2)$ for $j \ge 4$:
 \begin{align*}
 & gn, \: gt, \: rn, \: mn, g^2 n, \cdots, \\
 & gQ_2, \: gC_0, \: rQ_2, \: mQ_2, \: g^2 Q_2 \cdots.
 \end{align*}
It suffices to show that 
$$ n,\: t, \: Q_2, \: C_0$$
are detected in the algebraic tmf-resolution by
\begin{equation}\label{eq:ntQ2C0}
 h_{2,1}^4\zeta_2^4 + \alpha_1, \: h^5_{2,1}\zeta_2^4+\alpha_2, \: h^6_{2,1}v_0^{-2}v_2^2\zeta_1^{16} + \alpha_3, \: h^{7}_{2,1} v_0^{-2}v_2^2\zeta_1^{16} + \alpha_4
\end{equation}
where $g\alpha_i = r\alpha_i = m\alpha_i = 0$.

Examination of a computer calculation of $\Ext_{A_*}(\br{A\mmod A(2)}^{\otimes 2}_*)$ reveals that none of the elements $n,t, Q_2, C_0$ are in the image of the map
\begin{equation}\label{eq:AmodA22map}
 \Ext^{*,*}_{A_*}(\br{A \mmod A(2)}^{\otimes 2}_*) \rightarrow \Ext^{*+2,*}_{A_*}(\FF_2). 
 \end{equation}
Since the elements $n, t, Q_2,$ and $C_0$ map to zero in $\Ext_{A(2)_*}(\FF_2)$, they must therefore be detected on the 1-line of the algebraic tmf-resolution.  Examination of the relevant Ext charts reveals the only possibility is for the elements to be detected by classes of the form (\ref{eq:ntQ2C0}).

If we consider the class $Mg \in \Ext_{A_*}(\FF_2)$, one can both check that it is not in the image of (\ref{eq:AmodA22map}), and that the only class in $\Ext_{A(2)_*}(\br{A \mmod A(2)}_*)$ which can detect it is the class 
$$ e_0^2(v_0^{-1}v_2^2\zeta_1^8\zeta_2^4) \in \Ext_{A(2)_*}(\bou_2). $$ 
It follows from the multiplicative structure of the wedge, and the fact that
$$ g e_0^2 = v_1^2 h^2_{2,1}g^2, $$
that the elements $v_1^i h^j_{2,1}Mg^2 \in \Ext_{A_*}(\FF_2)$ are detected by 
$$ v_1^{i+2} h^{j+2}_{2,1} g^2(v_0^{-1}v_2^2\zeta_1^8\zeta_2^4) \in \Ext_{A(2)_*}(\bou_2) $$ 
for $i \ge 0$ and $j \ge 4$.
\end{proof}

\section{The MASS for $M(8,v_1^8)$}\label{sec:M38}

In this and following sections, we shall use the notation
$$ x[k] $$
to denote an element of $\Ext_{A(2)_*}(M \otimes H(8,v_1^8))$ detected by an element 
$$ x\in \Ext_{A(2)_*}(M)$$
on the $k$-cell of $H(8,v_1^8)$ ($k \in \{0,1,17,18\}$).

\subsection*{The MASS for $\pmb{\tmf_*M(8,v_1^8)}$}

\begin{figure}
\includegraphics[angle = 90, origin=c, height =.7\textheight]{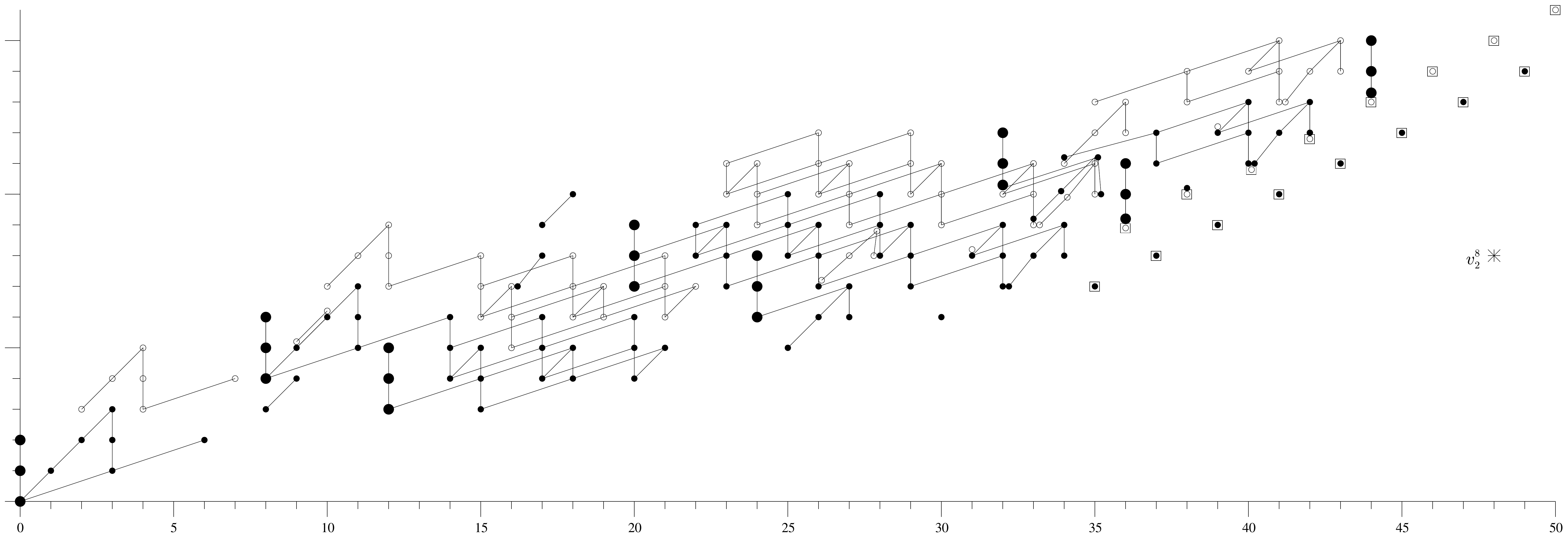}
\caption{The groups $\Ext_{A(2)_*}(H(8,v_1^8))$.}\label{fig:ExtH38}
\end{figure}

The computation of $\Ext_{A(2)_*}(H(8,v_1^8))$ is depicted in Figure~\ref{fig:ExtH38}.  
In this figure, solid dots correspond to classes carried by the ``$0$-cell'' of $H(8,v_1^8)$, and open circles correspond to classes carried by the ``$1$-cell'' of $H(8,v_1^8)$.  The large solid circles correspond to $h_0$-torsion free classes of $\Ext_{A(2)_*}(\FF_2)$ on the $0$-cell of $H(8,v_1^8)$.
The classes with solid boxes around them support $h_{2,1}$ towers.  Everything is $v_2^8$-periodic.
  

\begin{figure}
\includegraphics[angle = 90, origin=c, height =.7\textheight]{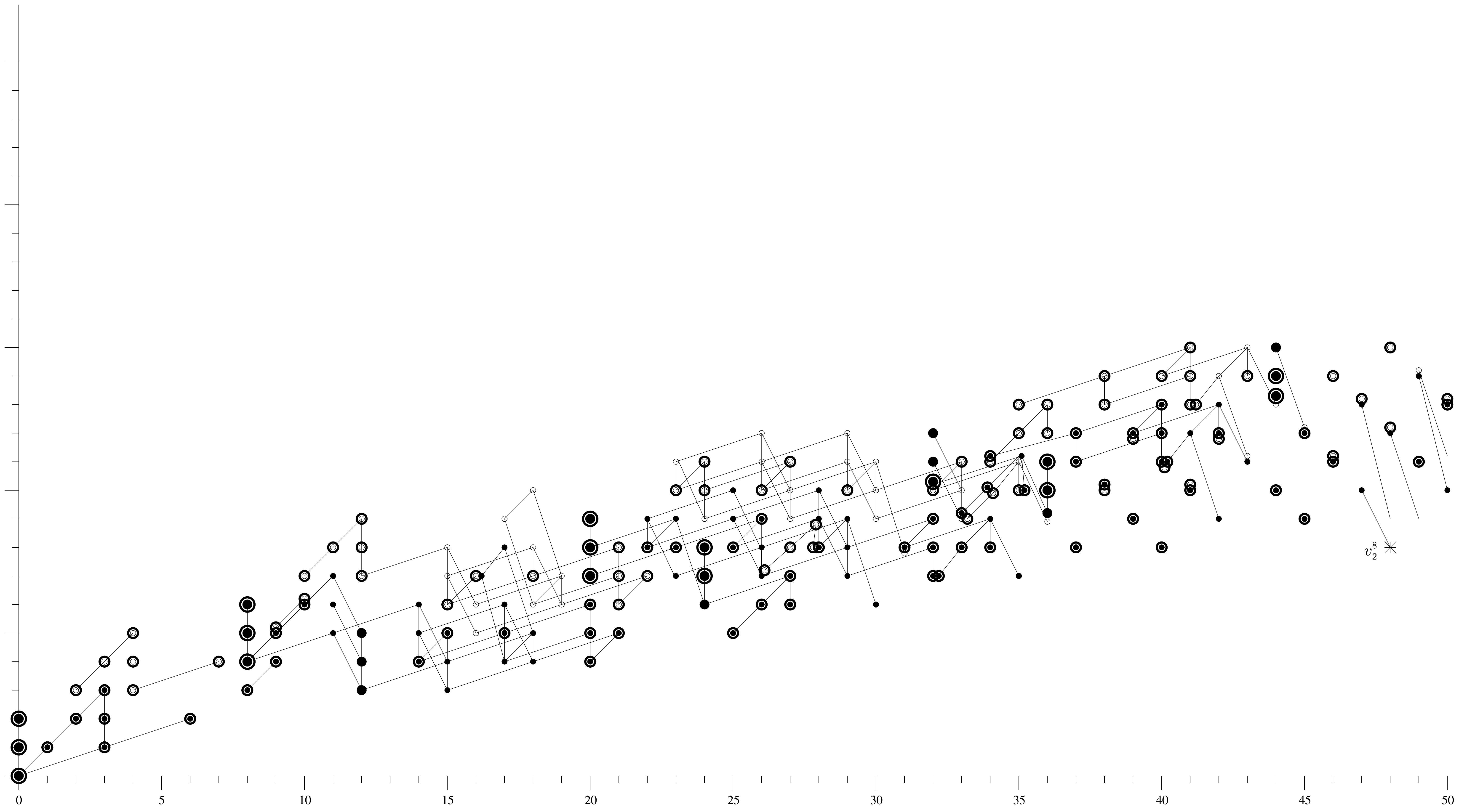}
\caption{The MASS for $\tmf \wedge M(8,v_1^8)$.}\label{fig:MASSM38}
\end{figure}

Figure~\ref{fig:MASSM38} depicts the differentials in the MASS for $\tmf \wedge M(8,v_1^8)$ through the same range; the complete computation of this MASS can be similarly accomplished.  An explanation of how to determine these differentials can be found in \cite{BHHM2}.  



\subsection*{The algebraic tmf-resolution for $\pmb{H(8,v_1^8)}$}

The following lemma explains that, in our $H(8,v_1^8)$ computations, we may disregard terms coming from $\Ext_{A(1)_*}$ in the sequence of spectral sequences (\ref{eq:boIss}).

\begin{lem}[Lemma~8.8 of \cite{BHHM2}]
In the algebraic $\tmf$-resolution for $M = H(8,v_1^8)$, the terms $$\Ext_{A(1)_*}(\text{something})$$ in (\ref{eq:boIss}) do not contribute to $\Ext^{s,t}_{A_*}(H(8,v_1^8))$ if
$$ s > \frac{1}{7}(t-s)+\frac{51}{7}. $$
\end{lem}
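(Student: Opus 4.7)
The plan is to combine the classical vanishing line for $\Ext_{A(1)_*}$ of bounded-above comodules with the stem-and-filtration shifts encoded in the algebraic tmf-resolution and the spectral sequences (\ref{eq:boIss}). The key numerical observation is that each $\bou_i$ factor appearing in the algebraic tmf-filtration shifts the internal degree by $8i$ while contributing only $1$ to the total Adams filtration; in the smallest case $i = 1$ this yields $8 - 1 = 7$ units of stem per unit of Adams filtration, which is the source of the slope $1/7$ in the statement.

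The first step is to record the following well-known vanishing result: for any bounded-above $A(1)_*$-comodule $N$, there is a vanishing line
$$\Ext^{s,t}_{A(1)_*}(N) = 0 \quad \text{for } s > \tfrac{1}{2}(t - s) + c(N),$$
with $c(N)$ linear in the top internal degree of $N$; this reflects the $v_1$-periodicity of $\Ext_{A(1)_*}(\FF_2)$, in which $v_1$ propagates classes along a line of slope $1/2$. I would apply this with $N$ equal to the comodules of the form $\tmfu_{j-1} \otimes \bou_{i_2} \otimes \cdots \otimes \bou_{i_n} \otimes H(8, v_1^8)$ which appear as the $n_1 = 0$ summand in (\ref{eq:boIss}); the top internal degree of $N$ is bounded by $8(j-1) + 8(i_2 + \cdots + i_n) + 18$.

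The next step is to trace such a class through the two levels of spectral sequences. A class in $\Ext^{s,T}_{A(1)_*}(N)$ feeds into $\Ext^{s, T}_{A(2)_*}(\bou_{2j} \otimes \bou_{i_2} \otimes \cdots \otimes \bou_{i_n} \otimes H(8, v_1^8))$ via the $n_1 = 0$ term of (\ref{eq:boIss}), and then into the $E_1$-page of the algebraic tmf-resolution at position $\E{\tmf}{alg}_1^{n, s, T + 8(2j + i_2 + \cdots + i_n)}$, converging to $\Ext^{s+n, T + 8(2j + i_2 + \cdots + i_n)}_{A_*}(H(8, v_1^8))$. Writing $s' = s + n$ and stem $t - s' = (T - s) + 8(2j + i_2 + \cdots + i_n) - n$, and combining the $\Ext_{A(1)_*}$ bound $s \leq \tfrac{1}{2}(T - s) + c(N)$ with the shift minimum $8 i_k - 1 \geq 7$ per $\bou_{i_k}$ factor, one forces the contribution to lie below a line of slope $1/7$; the extremal configuration (all $i_k = 1$, $j = 1$, and the class sitting on the vanishing line of $\Ext_{A(1)_*}(H(8, v_1^8))$) gives exactly the intercept $51/7$.

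The main obstacle is the careful bookkeeping of the constant $51/7$: I would have to verify that the intercept is tight for the critical configuration by tracking the $18$-dimensional top cell of $H(8, v_1^8)$, the mandatory $\bou_2$-shift of $16$, and the intercept of the $v_1$-periodic line in $\Ext_{A(1)_*}(H(8, v_1^8))$. I would also handle iterated applications of (\ref{eq:boIss}), where several $\bou_{i_k}$'s are simultaneously analyzed through their own $\Ext_{A(1)_*}$-terms; each such nested reduction only increases the stem-per-filtration ratio, so the same slope-$1/7$ bound continues to hold.
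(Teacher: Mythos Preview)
The paper does not actually prove this lemma; it is quoted verbatim as Lemma~8.8 of \cite{BHHM2} and no argument is given here. So there is nothing in the present paper to compare your sketch against.

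That said, your outline is in the right spirit but contains a real gap. Your key input, a slope-$1/2$ vanishing line for $\Ext_{A(1)_*}(N)$ for bounded-above $N$, is false as stated: already for $N=\FF_2$ the $h_0$-tower lives at $(t-s,s)=(0,s)$ for all $s$, and no slope-$1/2$ line bounds it. What is actually needed is that once you tensor with $H(8,v_1^8)$ (which kills $h_0^3$ and $v_1^8$), the groups $\Ext_{A(1)_*}(N\otimes H(8,v_1^8))$ for \emph{connective} $N$ vanish above a line whose intercept depends only on $H(8,v_1^8)$ and the connectivity of $N$, not on its top degree. Since each $\tmfu_{j-1}$ and $\bou_i$ is connective, this intercept is uniform in $j$ and in the $i_k$; your claim that $c(N)$ is ``linear in the top internal degree of $N$'' would make the intercept grow with $j$ and destroy the argument.

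There is also a numerical mismatch in your derivation: a genuine slope-$1/2$ bound on the inner $\Ext_{A(1)_*}$, pushed through the shifts $s'=s+n$ and $t'=t+8(2j+i_2+\cdots+i_n)$, yields only $s'\le\tfrac12(t'-s')+C$, which is \emph{weaker} than the slope-$1/7$ statement (the region $\tfrac17(t'-s')<s'\le\tfrac12(t'-s')$ is not excluded). The slope $1/7$ really comes from combining a \emph{horizontal} (or nearly horizontal) vanishing of $\Ext_{A(1)_*}(-\otimes H(8,v_1^8))$ with the bound $8(i_1+\cdots+i_n)-n\ge 7n+8$ (using $i_1\ge 2$), which gives $n\le\tfrac17\bigl((t'-s')-8\bigr)$ and hence $s'\le s_0+\tfrac17(t'-s')+\mathrm{const}$. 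Your intuition about ``$7$ units of stem per unit of Adams filtration'' is exactly this second ingredient, but it has to be paired with the correct (stronger) vanishing input on the $\Ext_{A(1)_*}$ side.
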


\begin{figure}
\includegraphics[angle = 90, origin=c, height =.7\textheight]{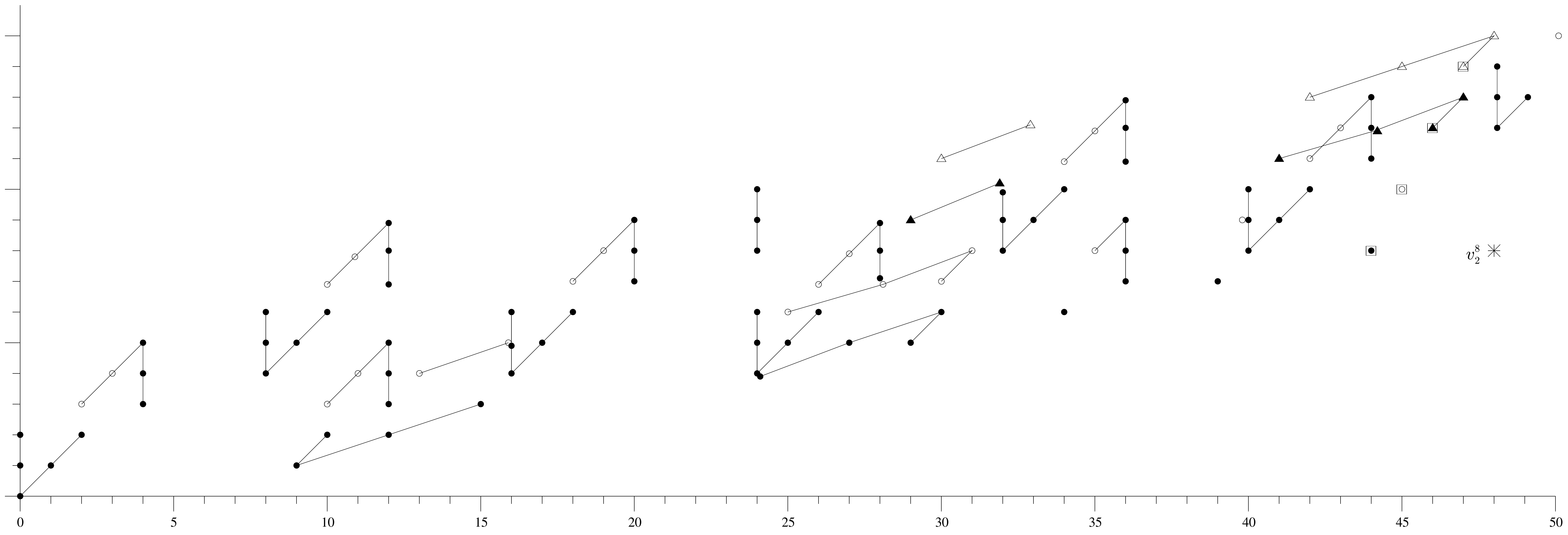}
\caption{$\Ext_{A(2)_*}(\bou_1 \otimes H(8,v_1^8))$.}\label{fig:Extbo1H38}
\end{figure}

For $n > 0$ and $i_1, \ldots, i_n > 0$, the terms
$$ \Ext_{A(2)_*}^{s,t}(\bou_{i_1} \otimes \cdots \otimes \bou_{i_n} \otimes H(8,v_1^{8})) $$
that comprise the terms in the algebraic $\tmf$-resolution for $H(8,v_1^8)$
are in some sense less complicated than $\Ext_{A(2)_*}(H(8,v_1^{8}))$.

Most of the features of these computations can already be seen in the computation of $\Ext_{A(2)_*}(\bou_1 \otimes H(8,v_1^8))$, which is displayed in Figure~\ref{fig:Extbo1H38}.  This computation was performed by taking the computation of $\Ext_{A(2)_*}(\bou_1)$ (see, for example, \cite{BHHM}) and running the long exact sequences in $\Ext$ associated to the cofiber sequences
\begin{gather*}
\Sigma^3 \bou_1 [-3] \xrightarrow{h_0^3} \bou_1 \rightarrow \bou_1 \otimes H(8), \\
\Sigma^{24}\bou_1 \otimes H(8)[-8] \xrightarrow{v_1^8} \bou_1 \otimes H(8) \rightarrow \bou_1 \otimes H(8,v_1^8). 
\end{gather*}
In Figure~\ref{fig:Extbo1H38}, as before, solid dots represent generators carried by the 0-cell of $H(8,v_1^8)$ and open circles are carried by the 1-cell.  Unlike the case of $\Ext_{A(2)_*}(H(8))$, there is $v_1^8$-torsion in $\Ext_{A(2)_*}(\bou_1 \otimes H(8))$.  This results in classes in $ \Ext_{A(2)_*}(\bou_1 \otimes H(8,v_1^8)) $ 
carried by the 17-cell and the 18-cell of $H(8,v_1^8)$, which are represented by solid triangles and open triangles, respectively.  A box around a generator indicates that that generator actually carries a copy of $\FF_2[h_{2,1}]$.  As before, everything is $v_2^8$-periodic.

One can similarly compute
$$ \Ext_{A(2)_*}(\bou_1^{\otimes k} \otimes H(8,v_1^8)) $$
for larger values of $k$ by applying the same method to the corresponding computations of 
$$ \Ext_{A(2)_*}(\bou_1^{\otimes k}) $$
in \cite{BHHM}.  We do not bother to record the complete results of these computations for small values of $k$, but will freely use them in what follows.  The spectral sequences (\ref{eq:boIss}) imply these computations control $\Ext_{A(2)_*}(\bou_I)$.


\subsection*{$\pmb{h_{2,1}}$ towers in the algebraic tmf-resolution for $\pmb{H(8,v_1^8)}$}

Theorem~\ref{thm:BBT} has the following implication for the $g$-local algebraic $\tmf$-resolution of $H(8,v_1^8)$:
\begin{multline*} 
h_{2,1}^{-1}\Ext^{*,*}_{E[Q_1,P_2^1]}(A \mmod A(2)_*^{\otimes n} \otimes H(8,v_1^8)) = \\
\FF_2[h_{2,1}^{\pm}] \otimes \bigg( \FF_2[v_1]/v_1^8\otimes H(8)\{x_{J'}t_{J'}\}_{J'} \oplus 
\bigoplus_{\abs{J} \: \mr{odd}} N_J \otimes H(8,v_1^8)\{x_{J'}t_{J'}\}_{J \cap J' = \emptyset} \\
\oplus \bigoplus_{\abs{J} \ne 0 \: \mr{even}}
\FF_2[v_1]/v_1^2 \otimes N_J \otimes H(8,v_1^8)\{x_{J'}t_{J'}\}_{J \cap J' = \emptyset} \bigg)
\end{multline*}
where $J$ and $J'$ range over the subsets of
$$ \{ (i,j) \: : \: 1 \le i, 1 \le j \le n \}. $$ 

This leads to the following twist in the analog of Proposition~\ref{prop:h21death}.

\begin{prop}\label{prop:h21deathH38}
In $g^{-1}\E{wss}{}_0(H(8,v_1^8))$, all of the $h_{2,1}$-towers coming from $$\Ext_{A(2)_*}(\bou_1^{\otimes k} \otimes H(8,v_1^8))$$ 
for $k \ge 3$ are either the source of a non-trivial $d_0$-differential, or are the target of a $d_0$-differential.  For $k = 2$, the $h_{2,1}$ towers
$$ v_1^\epsilon h^j_{2,1}Q_1(x_{1,1}x_{1,2})[n] $$
are  killed for $\epsilon \in \{0,1\}$ and $n \in \{0,1\}$ (but the corresponding towers with $n \in \{17,18\}$ are \emph{not} killed).
\end{prop}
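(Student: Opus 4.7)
The plan is to adapt the argument of Proposition~\ref{prop:h21death} by tensoring everything with $H(8,v_1^8)$ and carefully tracking which cells $n \in \{0, 1, 17, 18\}$ are carried by each summand in the $H(8, v_1^8)$-version of Theorem~\ref{thm:BBT}. First I would establish the analog of Lemma~\ref{lem:dxt}: since $H(8, v_1^8)$ is built from copies of $\FF_2$ via cofibers of $h_0^3$ and $v_1^8$ and contributes no coaction, the WSS $d_0$-differential is compatible with the cell decomposition, giving
\[ d_0^{wss}(x_{1,j}t_{1,j} \cdot z \cdot [n]) = Q_1(x_{1,j}x_{1,j+1}) \cdot z \cdot [n] \]
for $z$ in the appropriate $N_{J'}$ and for $n$ in the cells where the source class actually lives.

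The central observation is that in the $H(8, v_1^8)$-analog of Theorem~\ref{thm:BBT}, the $J = \emptyset$ summand is $\FF_2[v_1]/v_1^8 \otimes H(8)$, carrying only the cells $n \in \{0, 1\}$, whereas all other summands ($|J|$ odd or $|J| \ge 2$ even) carry the full $H(8, v_1^8)$. For $k = 2$, the source class $x_{1,1}t_{1,1}$ belongs to the $J = \emptyset$ summand, so it exists only on cells $n \in \{0, 1\}$ with $v_1^\epsilon$ for $\epsilon \in \{0, \ldots, 7\}$. The target class $Q_1(x_{1,1}x_{1,2})$ belongs to the $|J| = 2$ even summand, which carries all four cells of $H(8, v_1^8)$ and has $v_1^2 = 0$. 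The differential preserves both the $v_1^\epsilon$ factor (modulo $v_1^2$) and the cell index, so its image is precisely the span of $v_1^\epsilon h_{2,1}^j Q_1(x_{1,1}x_{1,2})[n]$ with $\epsilon \in \{0, 1\}$ and $n \in \{0, 1\}$, while those with $n \in \{17, 18\}$ survive as claimed.

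For $k \ge 3$, the source classes $x_{1,j}t_{1,j} \cdot z$ in $(\bou_2 \otimes \bou_1^{\otimes (k-2)}) \otimes H(8, v_1^8)$, possibly after permuting the position of $\bou_2$, no longer lie in the $J = \emptyset$ summand: the auxiliary factor $z \in N_{\{(1,i_1), \ldots, (1,i_{k-2})\}}$ pushes them into the $|J| = k-2$ summand, which carries the full $H(8, v_1^8)$. The corresponding targets $Q_1(x_{1,j}x_{1,j+1}) \cdot z$ sit in the $|J| = k$ summand of $\bou_1^{\otimes k} \otimes H(8, v_1^8)$, which also carries the full $H(8, v_1^8)$; since $k - 2$ and $k$ have the same parity, both summands are either of the form $N_J \otimes H(8, v_1^8)$ (when $|J|$ is odd) or $\FF_2[v_1]/v_1^2 \otimes N_J \otimes H(8, v_1^8)$ (when $|J|$ is even and nonzero). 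The isomorphisms in the proof of Proposition~\ref{prop:h21death} therefore extend verbatim after tensoring with $H(8, v_1^8)$, exhausting all $h_{2,1}$-towers in $\Ext_{A(2)_*}(\bou_1^{\otimes k} \otimes H(8, v_1^8))$.

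The main obstacle is keeping careful track of the $H(8, v_1^8)$-decomposition in the analog of Theorem~\ref{thm:BBT}; the asymmetry between the $\FF_2[v_1]/v_1^8 \otimes H(8)$ summand (for $J = \emptyset$) and the $\FF_2[v_1]/v_1^2 \otimes H(8, v_1^8)$ summand (for $|J|$ even and nonzero) is exactly what forces the special behavior of the $k = 2$ case and produces the surviving $n \in \{17, 18\}$ towers.
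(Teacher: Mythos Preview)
Your proposal is correct and follows essentially the same approach as the paper. The paper's proof is a three-line sketch saying ``everything is identical to the proof of Proposition~\ref{prop:h21death}, except that the differentials
\[ d_0^{wss}: \FF_2[v_1, h^{\pm}_{2,1}]/v_1^8 \{x_{1,1}t_{1,1}\} \otimes H(8) \rightarrow \FF_2[v_1, h_{2,1}^{\pm}]/v_1^2 \{ Q_1(x_{1,1}x_{1,2}) \}\otimes H(8,v_1^8) \]
now have non-trivial kernel and cokernel,'' and you have correctly fleshed out what this means: the $J = \emptyset$ summand in the $H(8,v_1^8)$-analog of Theorem~\ref{thm:BBT} carries only $H(8)$ (cells $0,1$) while the $|J| \ne 0$ summands carry the full $H(8,v_1^8)$, which is exactly why the $k=2$ differential fails to hit the $[17]$ and $[18]$ towers but the $k \ge 3$ isomorphisms go through unchanged.
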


\begin{proof}
Everything is identical to the proof of \ref{prop:h21death}, except that the differentials
$$ d_0^{wss}: \FF_2[v_1, h^{\pm}_{2,1}]/v_1^8 \{x_{1,1}t_{1,1}\} \otimes H(8) \rightarrow \FF_2[v_1, h_{2,1}^{\pm}]/v_1^2 \{ Q_1(x_{1,1}x_{1,2}) \}\otimes H(8,v_1^8) $$
now have non-trivial kernel and cokernel.
\end{proof}

We now give elements of $\Ext_{A_*}(H(8,v_1^8))$ which these remaining $h_{2,1}$-towers detect in the algebraic tmf-resolution.
Note that, as pointed out in \cite{MargolisPriddyTangora}, the Mahowald operator satisfies 
$$ h_0^3 M(x) = 0 $$
which implies that for any $x \in \Ext_{A_*}(\FF_2)$, there exists a lift 
$$ M(x)[1] \in \Ext_{A_*}(H(8)) $$
and thus an element $M(x)[1] \in \Ext_{A_*}(H(8,v_1^8))$.  Furthermore, the element $\Delta^2 = v_2^8$ exists in $\Ext_{A_*}(H(8,v_1^8))$ (see Lemma~\ref{lem:v2^8} below).  
We conclude that for $0 \le i \le 7$, $j,k,l \ge 0$, and $\epsilon \in \{0,1\}$
the wedge elements
$$ v_1^i h^j_{2,1}\Delta^{2k}M^l g^2 [\epsilon] \in \Ext_{A_*}(H(8,v_1^8)) $$
exist, and we see they are linearly independent by mapping to $\Ext_{B_*}(H(8,v_1^8))$ (where $B_*$ is defined in (\ref{eq:Bstar})).

\begin{prop}\label{prop:h21detectionH38}
The following table lists, for $m \ge 0$, $0 \le i \le 7$, $0 \le i' \le 5$, $j \ge 4$, $k \in \{0,1,17,18\}$, and $\epsilon, \epsilon' \in \{0,1\}$ an $A(2)_*$-comodule $M$, an $h_{2,1}$-tower in $g^{-1}\Ext_{A(2)_*}(M \otimes H(8,v_1^8))$, the corresponding $h_{2,1}$-tower in $\Ext_{A(2)_*}(M \otimes H(8,v_1^8))$, and an $h_{2,1}$-tower in $\Ext_{A_*}(H(8,v_1^8))$ that it detects in the algebraic tmf-resolution. 
\begin{center}
{\scriptsize \begin{tabular}{c|c|c|c}
$M$ & $g^{-1}\Ext_{A(2)_*}(M \otimes H(8,v_1^8))$ & $\Ext_{A(2)_*}(M \otimes H(8,v_1^8))$ & $\Ext_{A_*}(H(8,v_1^8))$ \\
\hline
$\FF_2$ & $\Delta^{2m}v_1^i h^{j+8}_{2,1}[\epsilon]$ & 
$\Delta^{2m}v_1^i h^{j}_{2,1} g^2[\epsilon]$ &$\Delta^{2m}v_1^i h_{2,1}^{j} g^2[\epsilon]$ \\
&&&\\
$\bou_1$ & $\Delta^{2m}h^{j+4}_{2,1} Q_1(x_{1,1})[k]$ & $\Delta^{2m}h_{2,1}^{j+4}\zeta_2^4[k]$ & $\Delta^{2m}h_{2,1}^{j} n[k]$ \\
&&&\\
$\bou_2$ & $\Delta^{2m}h_{2,1}^{j+6} Q_1(x_{2,1})[k]$ & $\Delta^{2m}h^{j+1}_{2,1} g(h_{2,1}v_0^{-2}v_2^2\zeta_1^{16})[k]$ & $\Delta^{2m}h_{2,1}^{j} Q_2[k]$ \\
 & $\Delta^{2m}v_1^{i'+2} h_{2,1}^{j+11}x_{1,1}t_{1,1}[\epsilon]$ & $\Delta^{2m}v_1^{i'+2}h^{j+2}_{2,1}g^2(v_0^{-1}v_2^2\zeta_1^8\zeta_2^4)[\epsilon]$ & $\Delta^{2m}v_1^{i'} h_{2,1}^{j} Mg^2[\epsilon]$ \\
&&&\\
$\bou_1^{\otimes 2}$ & $v_1^{\epsilon'} \Delta^{2m}h_{2,1}^{j+11}Q_1(x_{1,1}x_{1,2})[17+\epsilon]$ & $\Delta^{2m}v_1^{\epsilon'}h^{j+2}_{2,1}g^2(v_0^{-1}v_2^2[\zeta_1^8,\zeta_2^4])[17+\epsilon]$ & $\Delta^{2m}v_1^{6+\epsilon'} h_{2,1}^{j} Mg^2[\epsilon]$ \\
\hline
\end{tabular} }
\end{center}
\end{prop}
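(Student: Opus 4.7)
The strategy is to imitate the proof of Proposition~\ref{prop:h21detection}, adapted to the four-cell complex $H(8,v_1^8)$, with one genuinely new row (the $\bou_1^{\otimes 2}$ row) that reflects the WSS phenomenon already catalogued in Proposition~\ref{prop:h21deathH38}. Before anything else I would verify existence of every entry in the rightmost column. The discussion preceding the statement already produces $\Delta^{2m}v_1^i h_{2,1}^j g^2[\epsilon]$, $\Delta^{2m}h_{2,1}^j n[k]$, $\Delta^{2m}h_{2,1}^j Q_2[k]$, and $\Delta^{2m}v_1^{i'}h_{2,1}^j Mg^2[\epsilon]$ by combining $h_0^3 M(x)=0$, the $v_1^8$-Bockstein of (\ref{eq:H38}), and the existence of $\Delta^2=v_2^8$ in $\Ext_{A_*}(H(8,v_1^8))$ (Lemma~\ref{lem:v2^8}); the same reasoning produces $\Delta^{2m}v_1^{6+\epsilon'}h_{2,1}^j Mg^2[\epsilon]$.

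For the first three rows I would compare with Proposition~\ref{prop:h21detection} via the map of algebraic tmf-resolutions induced by the unit $\FF_2 \to H(8,v_1^8)$ in $\St_{A_*}$. On cells $k \in \{0,1\}$ the detection claim pulls back directly from the corresponding row of Proposition~\ref{prop:h21detection}. On cells $k \in \{17,18\}$ in the $\bou_1$ and $\bou_2$ rows I would chase the long exact sequences in Ext associated to (\ref{eq:H3}) and (\ref{eq:H38}): a detection of $x$ on the $0$-cell of $H(8)$ produces, via the $v_1^8$-connecting map, a detection of the $v_1^8$-Bockstein of $x$ on the $17$-cell of $H(8,v_1^8)$, and similarly on the $18$-cell. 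The requisite $v_1^8$-torsion generators at both the source and target of these Bocksteins are visible from the chart calculations described after Figure~\ref{fig:Extbo1H38}.

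The $\bou_1^{\otimes 2}$ row is the genuinely new ingredient. By Proposition~\ref{prop:h21deathH38}, the towers $v_1^{\epsilon'}h_{2,1}^j Q_1(x_{1,1}x_{1,2})[17+\epsilon]$ survive in $g^{-1}\E{wss}{}_0(H(8,v_1^8))$ precisely because the source towers $v_1^i x_{1,1}t_{1,1}[\epsilon]$ of the WSS $d_0$ on the $\bou_2$ line are truncated at $v_1^8=0$, so that the cells $17,18$ of the $H(8,v_1^8)$ factor in the target sit in the cokernel. To identify what these survivors detect I would imitate the $Mg^2$ half of the proof of Proposition~\ref{prop:h21detection}: examine a computer calculation of $\Ext_{A_*}(\br{A\mmod A(2)}_*^{\otimes k}\otimes H(8,v_1^8))$ for $k=0,1$ in the relevant bidegrees to rule out that $\Delta^{2m}v_1^{6+\epsilon'}h_{2,1}^j Mg^2[\epsilon]$ lies in the image from lower algebraic tmf-filtration, and then use the wedge multiplicative structure together with the $v_1^8=0$ truncation to match the $v_1^{6+\epsilon'}$ shift against the prescribed cell labels.

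The main obstacle is this last identification. The argument is conceptually the same as the one already used for $Mg^2$ in Proposition~\ref{prop:h21detection}, but now rests on a larger Ext computation for $H(8,v_1^8)$, and one must carefully track the interaction between the $v_1^8$-truncation, the $v_1^8$-Bockstein cell labels $17+\epsilon$, and the relations among wedge elements to ensure that the $v_1$-power appearing is $6+\epsilon'$ (and not some other value in $\{0,\ldots,7\}$).
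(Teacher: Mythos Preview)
Your outline for the first four rows is essentially the paper's proof: the $k\in\{0,1\}$ cases follow from Proposition~\ref{prop:h21detection} because the relevant elements are $v_0^3$-torsion, and for $k\in\{17,18\}$ the paper lifts the classes $h_{2,1}^{j+4}\zeta_2^4$ and $h_{2,1}^{j+6}\zeta_1^{16}$ to the top cells and checks directly that the lifts are permanent cycles in the algebraic $\tmf$-resolution, which is what your long-exact-sequence chase amounts to.

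The $\bou_1^{\otimes 2}$ row is where you diverge from the paper, and your proposed argument has a real gap. You want to argue by exclusion (rule out lower filtration via computer calculation, then match the $v_1$-power by ``wedge multiplicative structure''), but you never explain how that multiplicative structure forces the precise identification of the $v_1^{6+\epsilon'}$ class with the specific cocycle $v_1^{\epsilon'}h_{2,1}^{j+2}g^2(v_0^{-1}v_2^2[\zeta_1^8,\zeta_2^4])[17+\epsilon]$; even granting the exclusion, there is no mechanism in your sketch that produces the cell label $17+\epsilon$ or the $v_1$-shift from $8+\epsilon'$ down to $\epsilon'$. The paper handles this row by a completely different device: it applies Case~(5) of the Geometric Boundary Theorem to the triangle
\[
H(8,v_1^8)[-1] \to \Sigma^{24}H(8)[-8] \xrightarrow{v_1^8} H(8) \to H(8,v_1^8)
\]
together with the algebraic $\tmf$-resolution differential
\[
d_1\bigl(v_1^{\epsilon'}h_{2,1}^{j+2}g^2(v_0^{-1}v_2^2\zeta_1^8\zeta_2^4)\bigr)=v_1^{\epsilon'}h_{2,1}^{j+2}g^2(v_0^{-1}v_2^2[\zeta_1^8,\zeta_2^4])
\]
in $\Sigma^{24}H(8)[-8]$. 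This immediately identifies the image of $v_1^{8+\epsilon'}h_{2,1}^j M(g^2)[\epsilon]$ in $\Ext_{A_*}(H(8,v_1^8))$ as detected by the target of that differential, carried on the $[17+\epsilon]$-cell. The Geometric Boundary Theorem is exactly the missing ingredient: it converts the WSS $d_0$ you already noticed in Proposition~\ref{prop:h21deathH38} into the desired detection statement, with no further bookkeeping required.
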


\begin{proof}
The cases of
\begin{align*}
& \Delta^{2m} v_1^i h^j_{2,1} g^2[\epsilon], \\
& \Delta^{2m} h_{2,1}^j n[\epsilon], \\
& \Delta^{2m} h_{2,1}^j Q_2[\epsilon], \\
& \Delta^{2m} v_1^{i'} h^j_{2,1} Mg^2[\epsilon]
\end{align*}
follow immediately from Proposition~\ref{prop:h21detection} since all of these elements are annihilated by $v_0^3$.

The elements
\begin{equation*}
\begin{split}
h^{j+4}_{2,1} \zeta_2^4 \in \Ext_{A(2)_*}(\bou_1), \\
h^{j+6}_{2,1} \zeta_1^{16} \in \Ext_{A(2)_*}(\bou_2)
\end{split}
\end{equation*}
lift to elements
\begin{equation}\label{eq:nQ2lifts}
\begin{split}
h^{j+4}_{2,1} \zeta_2^4[17 + \epsilon] \in \Ext_{A(2)_*}(\bou_1 \otimes H(8,v_1^8)), \\
h^{j+6}_{2,1} \zeta_1^{16}[17+\epsilon] \in \Ext_{A(2)_*}(\bou_2 \otimes H(8,v_1^8).
\end{split}
\end{equation}
One can explicitly check that the lifts (\ref{eq:nQ2lifts}) are permanent cycles in the algebraic tmf-resolution.  Therefore they detect the desired elements
$$ h_{2,1}^j n[17 + \epsilon], \: h^j_{2,1} Q_2[17+\epsilon] \in 
\Ext_{A_*}(H(8,v_1^8)). $$
Applying Case (5) of the Geometric Boundary Theorem \cite[Lem.A.4.1]{goodehp}
to the triangle
$$ H(8,v_1^8)[-1] \rightarrow \Sigma^{24} H(8)[-8] \xrightarrow{v_1^8} H(8) \rightarrow H(8,v_1^8) $$
and the differential
$$ d_1(v_1^{\epsilon'}h^{j+2}_{2,1}g^2(v_0^{-1}v_2^2\zeta_1^8\zeta_2^4))
= v_1^{\epsilon'}h^{j+2}_{2,1}g^2(v_0^{-1}v_2^2[\zeta_1^8,\zeta_2^4]) $$
in the algebraic tmf-resolution for $\Sigma^{24}H(8)[-8]$  (\ref{eq:dzeta18zeta24}), we find that
the images of the elements
$$ v_1^{8 + \epsilon'} h_{2,1}^j M(g^2)[\epsilon] \in \Ext_{A_*}(H(8)) $$
under the map
$$ \Ext_{A_*}(H(8)) \rightarrow \Ext_{A_*}(H(8,v_1^8)) $$
are detected by the elements
$$ v_1^{\epsilon'}h^{j+2}_{2,1}g^2(v_0^{-1}v_2^2[\zeta_1^8,\zeta_2^4])[17+\epsilon] $$
in the algebraic tmf-resolution for $H(8,v_1^8)$.
\end{proof}

\section{The $v_2^{32}$ self-map on $M(8,v_1^8)$}\label{sec:v232}

We now endeavor to prove Theorem~\ref{thm:v232}.  We first recall the following lemma.

\begin{lem}[Lem.~7.6 of \cite{BHHM2}]\label{lem:v2^8}
The element
$$ v_2^8 \in \Ext^{8,48+8}_{A(2)_*}(H(8,v_1^8)) $$
is a permanent cycle in the algebraic $\tmf$-resolution, and gives rise to an element
$$ v_2^8 \in \Ext^{8,48+8}_{A_*}(H(8,v_1^8)). $$
\end{lem}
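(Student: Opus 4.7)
The plan is to construct the class by lifting along the unit map and then verify it is a permanent cycle by a finite enumeration of potential differentials.

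First I would define $v_2^8 \in \Ext^{8,56}_{A(2)_*}(H(8,v_1^8))$ as the image of $v_2^8 \in \Ext^{8,56}_{A(2)_*}(\FF_2)$ (which exists by (\ref{v0localExtA2})) along the unit $\FF_2 \to H(8,v_1^8)$ in $\St_{A(2)_*}$. This class sits at filtration $n = 0$ of the algebraic tmf-resolution, so for $r \ge 1$ the differential $d_r(v_2^8)$ takes values at filtration $n = r$, contributing to total Ext-degree $(s+n,\,t) = (9,\,56)$. By the decomposition (\ref{eq:E1decomp}), the $E_1$-page at $n = r$ splits as
$$
\bigoplus_{i_1, \ldots, i_r > 0} \Ext^{*,\,*}_{A(2)_*}\bigl(\bou_{i_1} \otimes \cdots \otimes \bou_{i_r} \otimes H(8,v_1^8)\bigr),
$$
and the relevant $t-s$ value in each $\bou_I$ contribution is shifted down by $8w$ with $w := i_1 + \cdots + i_r$; requiring non-negative internal degree bounds $w$ and hence $r$, leaving only finitely many summands to inspect.

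Next I would check vanishing of each of these candidate summands using the Ext charts in Figures \ref{fig:ExtA2}--\ref{fig:bo2} together with their $H(8,v_1^8)$-analogues constructed as in Section \ref{sec:M38}, and the spectral sequences (\ref{eq:boIss}) for larger $\bou_I$. The candidate bidegrees all sit at or below the main wedge of $\Ext_{A(2)_*}$ that begins near $t - s = 35$, so candidate classes are sparse.

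The main obstacle is the case-by-case bookkeeping across the four cells of $H(8,v_1^8)$. For each candidate class I would appeal to one of two strategies: a direct bidegree check against the Ext charts, or naturality along $\FF_2 \to H(8,v_1^8)$ combined with the Geometric Boundary Theorem applied to the defining cofiber triangles (\ref{eq:H3}) and (\ref{eq:H38}), reducing the candidate's detection to a known vanishing in the $\FF_2$-resolution. Once all candidates are eliminated, $v_2^8$ is a permanent cycle and its $E_\infty$-representative defines the desired class in $\Ext^{8,56}_{A_*}(H(8,v_1^8))$.
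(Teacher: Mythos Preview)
The paper does not give its own proof of this lemma: it is quoted verbatim from \cite[Lem.~7.6]{BHHM2} and used as a black box. So there is no in-paper proof to compare against; the relevant comparison is to the argument in \cite{BHHM2}.

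Your outline is the correct strategy and matches how this is actually proven in \cite{BHHM2}: one places $v_2^8$ on the $n=0$ line of the algebraic tmf-resolution for $H(8,v_1^8)$ and then eliminates, by direct inspection of the relevant $\Ext_{A(2)_*}(\bou_I \otimes H(8,v_1^8))$ charts, every group that could receive a differential from it. The finiteness argument you give (the weight shift $8w$ bounds the number of summands) is exactly the mechanism that makes this a finite check. One small correction: the target bidegrees for $d_1$ land at $t-s \approx 40$ in $\Ext_{A(2)_*}(\bou_1 \otimes H(8,v_1^8))$, which is not ``below the wedge'' in the sense you describe; the vanishing there is genuinely a chart lookup (cf.\ Figure~\ref{fig:Extbo1H38}), not a consequence of being in a sparse region. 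Your fallback strategy via naturality and the Geometric Boundary Theorem is not needed here---the direct chart inspection suffices for every case---but it does no harm to have in reserve.
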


It follows from the Leibniz rule that $v_2^{32}$ persists to the $E_4$-page of the MASS for $M(8,v_1^8)$.  Our task will then be reduced to showing that $d_r(v_2^{32}) = 0$ for $r \ge 4$.  We will do this by identifying the potential targets of such a differential, and show that they either the source or target of shorter differentials.  This will necessitate lifting certain differentials from the MASS for $\tmf \wedge \br{\tmf}^n \wedge M(8,v_1^8)$ to the MASS for $M(8,v_1^8)$.

As explained in \cite[Sec.~7.4]{BOSS}, work of the second author, Davis, and Rezk \cite{MahowaldRezk},\cite{DavisMahowald} implies that the algebraic map
$$ \Ext_{A(2)}(\Sigma^8 \bou_1 \oplus \Sigma^{16}\bou_2) \rightarrow \Ext_{A(2)_*}(\br{A\mmod A(2)}_*) $$
realizes to a map
\begin{equation}\label{eq:bo1bo2}
\tmf \wedge \br{\tmf}_2  \rightarrow \tmf \wedge \br{\tmf} 
\end{equation}
where $\tmf \wedge \br{\tmf}_2$ is a spectrum built out of $\tmf \wedge \Sigma^8 \bo_1$ and $\tmf \wedge \Sigma^{16} \bo_2$.
They furthermore show that there is a map
\begin{equation}\label{eq:tmfinbo2}
 \Sigma^{32} \tmf \rightarrow \tmf \wedge \br{\tmf}_2 
 \end{equation}
which geometrically realizes the inclusion of the direct summand (\ref{eq:boIss})
$$ \Ext_{A(2)_*}(\Sigma^{33}\FF_2[-1]) \hookrightarrow \Ext_{A(2)_*}(\Sigma^{16} \bou_2) \subset \Ext_{A(2)_*}(\Sigma^8 \bou_1 \oplus \Sigma^{16} \bou_2). $$
The attaching map from $\tmf \wedge \bo_2$ to $\tmf \wedge \bo_1$ in the spectrum $\tmf\wedge \br{\tmf}_2$ induces $d_3$-differentials from the $h_{2,1}$-towers in $\bo_2$ to the $h_{2,1}$-towers in $\bo_1$ in the ASS for $\tmf \wedge \br{\tmf}$ under the map (\ref{eq:bo1bo2}).  Furthermore, there are differentials in the ASS's for $\tmf \wedge \bo_1$, $\tmf \wedge \bo_2$, and $\tmf$, which induce differentials in the ASS for $\tmf \wedge \br{\tmf}$ under the maps (\ref{eq:bo1bo2}) and (\ref{eq:tmfinbo2}).  We wish to study when these differentials (and more generally differentials in the ASS for $\tmf \wedge \br{\tmf}^n$) lift via the tmf-resolution to differentials in the ASS for the sphere.

To this end we consider the partial totalizations
$$ T^n := {\Tot}^n (\tmf^{\bullet+1}) $$
of the cosimplicial $\tmf$-resolution of the sphere, so that we have
$$ S \simeq \varprojlim_n T^n $$
and fiber sequences
$$ \Sigma^{-n}\tmf \wedge \br{\tmf}^{n} \rightarrow T^n \rightarrow T^{n-1}. $$
The spectrum $T^n$ is a ring spectrum, and in particular has a unit
$$ S \rightarrow T^n.$$
We let 
\begin{equation}\label{eq:Ful}
 \ul{T}^n = {\Tot}^n (A\mmod A(2)_*^{\otimes \bullet + 1})
\end{equation}
denote the corresponding construction in the stable homotopy category of $A_*$-comodules.
There is a MASS
$$ \Ext^{*,*}_{A_*}(\ul{T}^n \otimes H(8,v_1^8)) \Rightarrow T^n_*M(8,v_1^8) $$
and the algebraic tmf-resolution for $H(8,v_1^8)$ truncates to give an algebraic tmf-resolution 
$$ \bigoplus_{i = 0}^n  \Ext^{*,*}_{A(2)_*}(\br{A \mmod A(2)}_*^{\otimes i} \otimes H(8,v_1^8)) \Rightarrow \Ext_{A_*}(\ul{T}^n \otimes H(8,v_1^8)). $$

The following lemma will be our key to lifting the desired differentials.

\begin{lem}\label{lem:technical}
Suppose $x$ is an element of $\Ext_{A_*}(H(8,v_1^8))$ which is detected in the $n$-line of the algebraic tmf-resolution for $H(8,v_1^8)$ by an element
$$ x' \in \Ext_{A(2)_*}(\br{A \mmod A(2)}_*^{\otimes n} \otimes H(8,v_1^8)). $$
Furthermore, suppose that in the MASS for $\tmf \wedge \br{\tmf}^n \wedge M(8,v_1^8)$, there is a differential
$$ d^{mass}_r(x') = y' $$ 
and that for $2 \le r' < r$ we have
$$ d^{mass}_{r'}(x) = 0 $$
in the MASS for the $M(8,v_1^8)$.  Then either of the following is true:
\begin{enumerate}
\item The differential 
$$ d^{mass}_r(x) $$ 
in the ASS for $M(8,v_1^8)$ is detected by $y'$ in the algebraic tmf-resolution, or 
\item The element $y'$ is the target of a differential in the algebraic tmf-resolution for $H(8,v_1^8)$, or in the algebraic tmf-resolution for $\ul{T}^n \otimes H(8,v_1^8)$ the element $y'$ detects an element of $\Ext_{A_*}(\ul{T}^n \otimes H(8,v_1^8))$ which is zero in $\E{mass}{}_r(T^n \wedge M(8,v_1^8))$.
\end{enumerate}
\end{lem}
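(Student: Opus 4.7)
The plan is to exploit the naturality of the MASS along the tower $S \to \cdots \to T^n \to T^{n-1} \to \cdots \to T^0 = \tmf$ (smashed with $M(8,v_1^8)$) together with its layers
$$ \Sigma^{-k}\tmf \wedge \br{\tmf}^k \wedge M(8,v_1^8) \to T^k \wedge M(8,v_1^8) \to T^{k-1}\wedge M(8,v_1^8). $$
The algebraic tmf-resolution for $H(8,v_1^8)$ is precisely the spectral sequence of the cotower $\ul{T}^n \otimes H(8,v_1^8)$ applied to $\Ext_{A_*}$, so the tmf-resolution filtration and MASS differentials interact in a controlled way; the idea is to transport both $x$ and the assumed differential on $x'$ through the tower to infer the sought differential on $x$.

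Let $\br{x} \in \Ext_{A_*}(\ul{T}^n \otimes H(8,v_1^8))$ denote the image of $x$ under $S \to T^n$. The hypothesis that $x$ is detected on the $n$-line of the algebraic tmf-resolution by $x'$ means $x$ lies in the kernel of the map to $\Ext_{A_*}(\ul{T}^{n-1} \otimes H(8,v_1^8))$, so by the long exact sequence associated to the fiber sequence above, $\br{x}$ lifts to $\Ext_{A_*}(\Sigma^{-n}\tmf \wedge \br{\tmf}^n \otimes H(8,v_1^8))$, a lift that via change of rings is identified with $x'$. By naturality of the MASS along $S \to T^n$, the vanishing $d^{mass}_{r'}(x) = 0$ for $r' < r$ implies $d^{mass}_{r'}(\br{x}) = 0$ for $r' < r$. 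Next, applying naturality along the inclusion of the fiber into $T^n$: since $x' \mapsto \br{x}$ and $d^{mass}_r(x') = y'$ in the fiber MASS, we deduce $d^{mass}_r(\br{x}) = \br{y'}$ in the MASS for $T^n \wedge M(8,v_1^8)$, where $\br{y'}$ denotes the image of $y'$ in $\Ext_{A_*}(\ul{T}^n \otimes H(8,v_1^8))$. Pulling back via $S \to T^n$, the differential $d^{mass}_r(x)$ in the MASS for $M(8,v_1^8)$ must map to $\br{y'}$.

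Finally, I would case-split on $\br{y'}$. If $\br{y'}$ represents a nonzero class on the $E_r$-page of the MASS for $T^n \wedge M(8,v_1^8)$, then $d^{mass}_r(x)$ is nonzero, and being detected on the $n$-line of the algebraic tmf-resolution for $\ul{T}^n$ by $\br{y'}$, is detected by $y'$ in the algebraic tmf-resolution for $H(8,v_1^8)$; this is case (1). Otherwise $\br{y'}$ vanishes in $\E{mass}{}_r$, which occurs either because $y'$ is hit by a differential in the algebraic tmf-resolution for $H(8,v_1^8)$ (first alternative of (2)), or because $y'$ survives the algebraic tmf-resolution but detects a class in $\Ext_{A_*}(\ul{T}^n \otimes H(8,v_1^8))$ that dies in the MASS for $T^n \wedge M(8,v_1^8)$ by the $E_r$-page (second alternative of (2)). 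The main technical hurdle is the naturality step identifying $d^{mass}_r(\br{x})$ with $\br{y'}$: it amounts to unpacking how the MASS of the middle term of a fiber sequence inherits differentials from its fiber, and the double filtration (algebraic tmf-resolution layered on top of MASS) must be handled carefully so that no extra ambiguity creeps in on the $n$-line.
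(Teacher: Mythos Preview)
Your proposal is correct and follows essentially the same approach as the paper: both arguments use the zig-zag
\[
M(8,v_1^8) \xrightarrow{\alpha} T^n \wedge M(8,v_1^8) \xleftarrow{\beta} \Sigma^{-n}\tmf \wedge \br{\tmf}^n \wedge M(8,v_1^8),
\]
push $x$ forward along $\alpha$ to $\br{x}$, identify $\br{x}$ with $\beta_*(x')$, transport the differential $d_r^{mass}(x') = y'$ along $\beta$ to obtain $d_r^{mass}(\br{x}) = \beta_*(y')$, and then case-split on whether $\beta_*(y')$ is nonzero on $\E{mass}{}_r(T^n \wedge M(8,v_1^8))$.

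One reassurance: the ``main technical hurdle'' you flag is more benign than you suggest. The identification $d_r^{mass}(\br{x}) = \br{y'}$ is plain naturality of the MASS along the map $\beta$; no delicate interaction of double filtrations is needed. The only point requiring care (which you already handle) is that $\br{x}$ must survive to the $E_r$-page before naturality can be invoked, and this follows from naturality along $\alpha$ and the hypothesis $d_{r'}^{mass}(x) = 0$ for $r' < r$. Also note the small logical step the paper makes explicit: if $y'$ is killed in the algebraic tmf-resolution for $\ul{T}^n \otimes H(8,v_1^8)$, it is killed in that for $H(8,v_1^8)$, because the former is a truncation of the latter.
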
 

\begin{proof}
Consider the maps of algebraic tmf-resolutions and MASS's induced from the zig-zag
$$ M(8,v_1^8) \xrightarrow{\alpha} T^n \wedge M(8,v_1^8) \xleftarrow{\beta} \Sigma^{-n} \tmf \wedge \br{\tmf}^n \wedge M(8,v_1^8). $$
Define
$$ \br{x} := \alpha_*(x) \in \Ext_{A_*}(\ul{T}^n \otimes H(8,v_1^8))$$ 
Then $\br{x}$ is detected by $x'$, regarded as an element of the algebraic tmf-resolution for $T^n \wedge M(8,v_1^8)$.  In particular, this means that 
$$ \br{x} = \beta_*(x') $$ 
Therefore, the differential
$$ d_r^{mass}(x') = y' $$
in the MASS for $\tmf \wedge \br{\tmf}^n \wedge M(8,v_1^8)$ maps to a differential
$$ d_r^{mass}(\br{x}) = \br{y} := \beta_*(y') $$
in the MASS for $T^n \wedge M(8,v_1^8)$.  In particular, either (Case 1) $\br{y}$ is nonzero in $\E{mass}{}_r(T^n \wedge M(8,v_1^8)$ and is detected by $y'$ in the algebraic tmf-resolution for $\ul{T}^n \otimes H(8,v_1^8)$, or (Case 2) either $\br{y} = 0$ in $\E{mass}{}_r(T^n \wedge M(8,v_1^8))$ or $y'$ is killed in the algebraic tmf-resolution for $\ul{T}^n \otimes H(8,v_1^8)$.   If the latter is true, then $y'$ is killed in the algebraic tmf-resolution for $H(8,v_1^8)$, since the algebraic tmf-resolution for $\ul{T}^n \otimes H(8,v_1^8)$ is a truncation of the algebraic tmf-resolution for $H(8,v_1^8)$.

If we are in Case (2), we are done.  If we are in Case (1), consider the differential
$$ y := d^{mass}_r(x) $$
in the MASS for $M(8,v_1^8)$ (which is defined by hypothesis).  
We must have 
$$ \alpha_*(y) = \br{y}. $$
Therefore, $d_r^{mass}(x)$ is detected by $y'$ in the algebraic tmf-resolution.
\end{proof}

\begin{rmk}\label{rmk:technical}
We will primarily be applying Lemma~\ref{lem:technical} to the following two cases:
\begin{description}
\item[Case 1] $\pmb{x = \Delta^{2m} h^{j}_{2,1}Q_2[k]}$.  Suppose that we can prove 
$$ d^{ass}_2(\Delta^{2m} h^{j}_{2,1}Q_2[k]) = 0 $$ 
in the MASS for $M(8,v_1^8)$.  The element $\Delta^{2m} h^{j}_{2,1}Q_2[k]$ is detected by 
$$ \Delta^{2m} h^{j+1}_{2,1} g(h_{2,1}v_0^{-2}v_2^2\zeta_1^{16})[k] \in \Ext_{A(2)_*}(\bou_2 \otimes H(8,v_1^8)) $$
in the algebraic tmf-resolution, and it is proven in \cite{BOSS} that in the ASS for $\tmf \wedge \br{\tmf}$ there is a differential
\begin{multline*}
d^{ass}_3(\Delta^{2m} h^{j+1}_{2,1} g(h_{2,1}v_0^{-2}v_2^2\zeta_1^{16})) = \\ \Delta^{2m} h^{j+4}_{2,1} g(h_{2,1}\zeta_2^4) + \epsilon(m) \Delta^{2m-4}h^{j+20}_{2,1} g(h_{2,1}v_0^{-2}v_2^2\zeta_1^{16}) 
\end{multline*}
where
$$ 
\epsilon(m) = 
\begin{cases}
1, & m \equiv 2 \mod 4, \\
0, & \rm{otherwise}.  
\end{cases}
$$
Lifting this differential to $\tmf \wedge \br{\tmf} \wedge M(8,v_1^8)$,  Lemma~\ref{lem:technical} implies that either the target of the differential $d^{ass}_3(\Delta^{2m} h^{j}_{2,1}Q_2[k])$ in the MASS for $M(8,v_1^8)$ is detected by
$$ \Delta^{2m} h^{j+4}_{2,1} g(h_{2,1}\zeta_2^4)[k]+\epsilon(m) \Delta^{2m-4} h^{j+20}_{2,1} g(h_{2,1}v_0^{-2}v_2^2\zeta_1^{16})[k] $$
in the algebraic tmf-resolution, or 
$$ \Delta^{2m} h^{j+4}_{2,1} g(h_{2,1}\zeta_2^4)[k]+\epsilon(m) \Delta^{2m-4} h^{j+20}_{2,1} g(h_{2,1}v_0^{-2}v_2^2\zeta_1^{16})[k] $$ 
is the target of a differential in the algebraic tmf-resolution or detects an element of $\Ext_{A_*}(\ul{T}^1 \otimes H(8,v_1^8))$ which is zero on the $E_3$-page of the MASS for $T^1 \wedge M(8,v_1^8)$.
\vspace{10pt}

\item[Case 2] $\pmb{x = M \Delta^2 v_1^i h_{2,1}^{j+8}[\epsilon]}$ for $\epsilon \in \{0,1\}$ and $0 \le i \le 4$.  The element $M \Delta^2 v_1^i h_{2,1}^{j+8}[\epsilon]$ is detected by 
$$ \Delta^2 v_1^{i+2} h^{j+10}_{2,1}(v_0^{-1}v_2^2\zeta_1^8\zeta_2^4)[\epsilon] $$
in the algebraic tmf-resolution for $H(8,v_1^8)$ , and the map (\ref{eq:tmfinbo2}) implies there is a differential
$$ d^{mass}_2(\Delta^2 v_1^{i+2} h^{j+10}_{2,1}(v_0^{-1}v_2^2\zeta_1^8\zeta_2^4)[\epsilon]) = 
v_1^{i+3} h^{j+19}_{2,1}(v_0^{-1}v_2^2\zeta_1^8\zeta_2^4)[\epsilon] $$
in the MASS for $\tmf \wedge \br{\tmf} \wedge M(8,v_1^8)$.  Then Lemma~\ref{lem:technical} implies that either $d^{mass}_2(M \Delta^2 v_1^i h_{2,1}^{j+8}[\epsilon])$ is detected by
$$ v_1^{i+3} h^{j+19}_{2,1}(v_0^{-1}v_2^2\zeta_1^8\zeta_2^4)[\epsilon] $$
in the algebraic tmf-resolution, or $v_1^{i+3} h^{j+19}_{2,1}(v_0^{-1}v_2^2\zeta_1^8\zeta_2^4)[\epsilon]$ is killed in the tmf-resolution for $H(8,v_1^8)$ or it detects an element which is zero in the $E_2$-term of the MASS for $T^1 \wedge M(8,v_1^8)$.  However, the element
$$ M v^{i+1}_1 h_{2,1}^{j+17}[\epsilon] \in \Ext_{A_*}(H(8,v_1^8)) $$
is non-zero, and is detected by $v_1^{i+3} h^{j+19}_{2,1}(v_0^{-1}v_2^2\zeta_1^8\zeta_2^4)[\epsilon]$ in the algebraic tmf-resolution for $H(8,v_1^8)$.  We conclude that $v_1^{i+3} h^{j+19}_{2,1}(v_0^{-1}v_2^2\zeta_1^8\zeta_2^4)[\epsilon]$ is not killed in the algebraic tmf-resolution for $H(8,v_1^8)$.  Since the algebraic tmf-resolution for $\ul{T}^1\otimes H(8,v_1^8)$ is a truncation of the algebraic tmf-resolution for $H(8,v_1^8)$, we conclude that $v_1^{i+3} h^{j+19}_{2,1}(v_0^{-1}v_2^2\zeta_1^8\zeta_2^4)[\epsilon]$ detects a non-trivial element of the $E_2$-page of the MASS for $T^1 \wedge M(8,v_1^8)$.  We conclude that 
$$ d^{mass}_2(M \Delta^2 v_1^i h_{2,1}^{j+8}[\epsilon])$$
is non-trivial in the MASS for $M(8,v_1^8)$, and is detected in the algebraic tmf-resolution by $v_1^{i+3} h^{j+19}_{2,1}(v_0^{-1}v_2^2\zeta_1^8\zeta_2^4)[\epsilon]$.
\end{description}
\end{rmk}

\begin{proof}[Proof of Theorem~\ref{thm:v232}]
By Proposition~\ref{prop:ring}, it suffices to prove that
$$ v_2^{32} \in \Ext_{A_*}(H(8,v_1^8)) $$
is a permanent cycle in the MASS.  Furthermore, since $v_2^8 \in \E{mass}{}_2(M(8,v_1^8))$, the Leibniz rule implies that $v_2^{32} \in \E{mass}{}_4(M(8,v_1^8))$.  We therefore are left with eliminating possible targets of $d^{mass}_r(v_2^{32})$ for $r \ge 4$.

Suppose that $d_r(v_2^{32})$ is non-trivial for $r \ge 4$.  We successively consider terms in the algebraic tmf-resolution which could detect $d_r(v_2^{32})$, and then eliminate these possibilities one by one.

The only terms in the algebraic tmf-resolution $E_1$-page which can contribute to $\Ext^{s,191+s}_{A_*}(H(8,v_1^8))$ for $s \ge 36$ are 
\begin{itemize}
\item $\Ext_{A(2)_*}(\bou^{\otimes s}_1)$ for $0 \le s \le 6$, and 
\item $\Ext_{A(2)_*}(\bou^{\otimes s}_1 \otimes \bou_2)$ for $0 \le s \le 2$.
\end{itemize}  
Furthermore, $\bou_1^{\otimes s}$ only contributes $h_{2,1}$-towers in this range for $s = 5,6$.
We list these contributions below, except we   
do not list elements in $h_{2,1}$-towers coming from $\bou_1^{\otimes s}$ for $s \ge 2$ which are zero in the WSS $E_1$-term (see Proposition~\ref{prop:h21deathH38}).  Also, since $v_2^{32}$ is a permanent cycle in the MASS for $\tmf \wedge M(8,v_1^8)$, we can disregard any terms coming from $\Ext_{A(2)_*}(\FF_2)$ (the zero-line of the algebraic tmf-resolution).  Finally, we do not include any terms which can be eliminated through the application of Case 2 of Remark~\ref{rmk:technical}.

\begin{minipage}{\textwidth}
\begin{tab}\label{tab:v232targets}
{\bf List of potential targets of $\pmb{d^{mass}_r(v_2^{32})}$ for $\pmb{r \ge 4}$.} 
\end{tab}
\begin{center}
\begin{tabular}{c|c}
\hline
${\bou_1}$ & $h_{2,1}^{31}g(h_{2,1}\zeta_2^4)[0]$ \\
& $h_{2,1}^{18} \Delta^2 g(h_{2,1}\zeta_2^4)[17]$ \\
& \\
$\bou_2$ 
& $h^5_{2,1}\Delta^{4}g(h_{2,1}\zeta_1^{16})[18]$ \\
& $v_1^2 h_{2,1}^{31}(v_0^{-1}v_2^2\zeta_1^8\zeta_2^4)[1]$ \\
& \\
$\bou^{\otimes 2}_1$
& $h^5_{2,1}\Delta^4 v_1 g(v_0^{-1}v_2^2[\zeta_1^8,\zeta_2^4])[18]$ \\
& $h_{2,1}^{15}\Delta^2 g(v_0^{-1}v_2^2[\zeta_1^8,\zeta_2^4])[18]$ \\
& \\
$\bou_1 \otimes \bou_2$ & $v_1 h^{21}_{2,1} g(v_0^{-1}v_2^2 [\zeta_2^4,\zeta_2^{8}])[18]$ \\
& $v_1 h^{21}_{2,1} g(v_0^{-1}v_2^2 [\zeta_2^8,\zeta_2^{4}])[18]$ \\
& \\
$\bou_1^3$ 
& $v_1^4 \Delta^6 h_1 (v_2^2 \zeta_1^8|\zeta_1^8|\zeta_2^4)[1]$ \\
& \\
$\bou_1^{\otimes 2} \otimes \bou_2$
& $v_1 h_{2,1}^{18} g(v_0^{-2}v_2^4[\zeta_1^8,\zeta_2^4]|\zeta_1^8\zeta_2^4)[18]$ \\
& $v_1 h_{2,1}^{18} g(v_0^{-2}v_2^4(\zeta_1^8|\zeta_1^8\zeta_2^4|\zeta_2^4+\zeta_2^4|\zeta_1^8\zeta_2^4|\zeta_1^8))[18]$ \\
& $v_1 h_{2,1}^{18} g(v_0^{-2}v_2^4\zeta_1^8\zeta_2^4|[\zeta_1^8,\zeta_2^4])[18]$ \\
& \\
$\bou_1^4$ 
& $v_1^4 \Delta^6 h_1^2 \zeta_1^8|\zeta_1^8|\zeta_2^4|\zeta_2^4[1]$ \\
\hline
\end{tabular}   
\end{center}
\end{minipage}

We now eliminate these possibilities one by one.  We will consider the terms in the order of \emph{reverse} algebraic tmf filtration.
\begin{description}
\item[$\pmb{\bou_1^{\otimes 4}}$] In the modified May spectral sequence (\ref{eq:mmss}) there is a differential
$$ d^{mmss}_8(b_{2,2}h^2_3) = h_3^5 $$
which lifts under the map $\Phi_*$ of \ref{eq:Phi} to a non-trivial differential
$$ d_1^{wss}([\zeta^8_1|\zeta^8_1|\zeta_2^4|\zeta_2^4]) = [\zeta_1^8|\zeta_1^8|\zeta_1^8|\zeta_1^8|\zeta_1^8] $$ 
in the WSS for $\FF_2$, 
and this implies a non-trivial differential
$$ d_1^{wss}(v_1^4 \Delta^6 h_1^2[\zeta^8_1|\zeta^8_1|\zeta_2^4|\zeta_2^4][1])
= v_1^4 \Delta^6 h_1^2 [\zeta_1^8|\zeta_1^8|\zeta_1^8|\zeta_1^8|\zeta_1^8][1] $$
in the WSS for $H(8,v_1^8)$.
\vspace{10pt}

\item[$\pmb{\bou_1^{\otimes 2} \otimes \bou_2}$]  In the cobar complex for $\FF_2[\zeta_1^8, \zeta_2^4]$ we find
$$
 d([\zeta_1^8,\zeta_2^4]|\zeta_1^8\zeta_2^4) \quad \rm{and} \quad d(\zeta_1^8|\zeta_1^8\zeta_2^4|\zeta_2^4 + \zeta_2^4|\zeta_1^8\zeta_2^4|\zeta_1^8)
$$
are linearly independent, and
$$ d([\zeta_1^8,\zeta^4_2]|\zeta_1^8\zeta_2^4+\zeta_1^8\zeta_2^4|[\zeta_1^8,\zeta_2^4]) = 0 $$
However
$$ d(\zeta_1^8\zeta_2^4|\zeta_1^8\zeta_2^4) = [\zeta_1^8,\zeta^4_2]|\zeta_1^8\zeta_2^4+\zeta_1^8\zeta_2^4|[\zeta_1^8,\zeta_2^4] $$
The elements are thus eliminated by multiplying the computations above with $v_1^{-2}v_2^4h^{22}_{2,1}$ and lifting them to the top cell of $H(8,v_1^8)$.
\vspace{10pt}

\item[$\pmb{\bou_1^{\otimes 3}}$]
Note that
$$ \Ext^{10,10+48}_{A_*}(\FF_2) = 0. $$
We conclude that the class
$$ v_1^4c_0h_1(v_0^{-1}v_2^2\zeta_1^8\zeta_2^4) \in \Ext_{A(2)_*}(\bou_2) $$
must either support or be the target of a differential in the algebraic tmf-resolution, for otherwise it would give a non-zero element of $\Ext^{10,10+48}_{A_*}(\FF_2)$.  However, by examination, there are no classes in $\Ext_{A(2)_*}(\FF_2)$ which can kill $v_1^4c_0h_1(v_0^{-1}v_2^2\zeta_1^8\zeta_2^4)$ in the algebraic tmf-resolution, so there must be a non-trivial differential
$$ d_r(v_1^4c_0h_1(v_0^{-1}v_2^2\zeta_1^8\zeta_2^4)) $$
in the algebraic tmf-resolution for $\FF_2$.  Since the target of this differential must be $h_1$-torsion, there is only one possibility:
$$ d_2(v_1^4c_0h_1(v_0^{-1}v_2^2\zeta_1^8\zeta_2^4)) = v_1^4 h_1^2 v_2^2 \zeta_1^8|\zeta_1^8|\zeta_2^4. $$
It follows that we have
$$ d_2(v_1^4c_0(v_0^{-1}v_2^2\zeta_1^8\zeta_2^4)) = v_1^4 h_1 v_2^2 \zeta_1^8|\zeta_1^8|\zeta_2^4. $$
This differential lifts to a differential
$$ d_2(v_1^4c_0(v_0^{-1}v_2^2\zeta_1^8\zeta_2^4)[1]) = v_1^4 h_1 v_2^2 \zeta_1^8|\zeta_1^8|\zeta_2^4[1] $$
in the algebraic tmf-resolution for $H(8,v_1^8)$.  Multiplying by $\Delta^6$, we have
$$ d_2(\Delta^6 v_1^4c_0(v_0^{-1}v_2^2\zeta_1^8\zeta_2^4)[1]) = \Delta^6 v_1^4 h_1 v_2^2 \zeta_1^8|\zeta_1^8|\zeta_2^4[1]. $$

\item[$\pmb{\bou_1 \otimes \bou_2}$]
There is a differential
$$ d^{wss}_0(\zeta_2^{12}) = [\zeta_2^4,\zeta_2^8] $$
in the WSS for $\FF_2$ which lifts to a differential
$$ d^{wss}_0(v_1 h_{2,1}^{21} g(v_0^{-1}v_2^2\zeta_2^{12})) = 
v_1 h_{2,1}^{21} g(v_0^{-1}v_2^2[\zeta_2^4,\zeta_2^8]). $$
We therefore only have to consider one of the two potential elements.
In the modified May spectral sequence (\ref{eq:mmss}), there is a differential
$$ d_8^{mmss}(h_{2,3}) = h_{1,3}h_{1,4} $$
which lifts to a differential
$$ d^{wss}_1(\zeta_2^8) = \zeta_1^8|\zeta_1^{16}. $$
using the map $\Phi_*$ of (\ref{eq:mmss}), and gives a differential
$$ d^{wss}_1(\zeta_2^4|\zeta_2^8) = \zeta_2^4|\zeta_1^8|\zeta_1^{16}. $$
The elements 
$$ v_1 g (v_0^{-1}v_2^2\zeta_2^4|\zeta_2^8) \in \Ext_{A(2)_*}(\bou_1 \otimes \bou_2) $$
and
$$ v_1 g (v_0^{-1}v_2^2\zeta_2^4|\zeta_1^8|\zeta_1^{16}) \in \Ext_{A(2)_*}(\bou_1^{\otimes 2} \otimes \bou_2) $$
support $h_{2,1}$-towers which are non-trivial in $\E{wss}{}_1$.  Therefore we have a non-trivial differential
$$ d_1^{wss}(v_1 h_{2,1}^{21} g(v_0^{-1}v_2^2\zeta_2^4|\zeta_2^8))
= v_1 h_{2,1}^{21} g(v_0^{-1}v_2^2\zeta_2^4|\zeta_1^8|\zeta_1^{16}). $$
This differential lifts to the top cell of $H(8,v_1^8)$ to give
$$ d_1^{wss}(v_1 h_{2,1}^{21} g(v_0^{-1}v_2^2\zeta_2^4|\zeta_2^8)[18])
= v_1 h_{2,1}^{21} g(v_0^{-1}v_2^2\zeta_2^4|\zeta_1^8|\zeta_1^{16})[18] $$
in the WSS for $H(8,v_1^8)$.
\vspace{10pt}

\item[$\pmb{\bou_1^{\otimes 2}}$] 
The element
$$ h_{2,1}^5\Delta^4 v_1 g(v_0^{-1}v_2^2[\zeta_1^8,\zeta_2^4])[18] $$
detects the element 
$$ \Delta^4 \cdot MP\Delta h_0^2e_0[18] $$
in the algebraic tmf-resolution for $H(8,v_1^8)$. 
Regarding this element as an element in the MASS for $\tmf \wedge \bo_1^2$, there is a non-trivial differential
$$ d^{mass}_3(h_{2,1}^5\Delta^4 v_1 g(v_0^{-1}v_2^2[\zeta_1^8,\zeta_2^4])[18]) = h^{24}_{2,1}v_1 g(v_0^{-1}v_2^2[\zeta_1^8,\zeta_2^4])[18]. $$
By applying $(-)^{\wedge_\tmf 2}$ to the map of tmf modules (\ref{eq:bo1bo2}),
we may consider the composite
\begin{equation}\label{eq:bo1^2map}
\tmf \wedge \bo_1^2 \hookrightarrow (\tmf \wedge \br{\tmf}_2)^{\wedge_\tmf 2}
\rightarrow \tmf \wedge \br{\tmf}^2. 
\end{equation}
The differential above maps to a non-trivial differential between elements of the same name in the MASS for $\tmf \wedge \br{\tmf}^2$.  We wish to apply Lemma~\ref{lem:technical}.  We must have 
$$ d^{mass}_2(\Delta^4 \cdot MP\Delta h_0^2e_0[18]) = 0 $$
in the MASS for $M(8,v_1^8)$, since there are no elements in the algebraic tmf-resolution for $H(8,v_1^8)$ which could detect a target for this differential.  Thus Lemma~\ref{lem:technical} implies that either
$$ d_3^{mass}(\Delta^4 \cdot MP\Delta h_0^2e_0[18]) $$
is non-trivial and detected by $h^{24}_{2,1}v_1 g(v_0^{-1}v_2^2[\zeta_1^8,\zeta_2^4])[18]$, or 
$$ h^{24}_{2,1}v_1 g(v_0^{-1}v_2^2[\zeta_1^8,\zeta_2^4])[18] $$
is killed in the algebraic tmf-resolution for $H(8,v_1^8)$, or detects an element which is killed in the MASS for $T^2 \wedge M(8,v_1^8)$.  The only such possibility is for 
$$ \Delta^2 h_{2,1}^{23} \zeta_2^4[17] $$
to detect the source of a $d_2$-differential in the MASS for $T^2 \wedge M(8,v_1^8)$ to do such a killing.
Projecting onto the top Moore space of $M(8,v_1^8)$, this would imply 
$$ \Delta^2 h_{2,1}^{23} \zeta_2^4 $$
detects an element in the algebraic tmf-resolution for the sphere which supports a non-trivial $d_2$-differential in the ASS for the sphere.  However, 
$\Delta^2 h_{2,1}^{23} \zeta_2^4$
detects
$$ \Delta^2 g^5 \cdot \Delta h_2 c_1 $$
in the ASS for the sphere, and there is a differential
\begin{align*}
d_2^{ass}(\Delta^2 g^5 \cdot \Delta h_2 c_1) 
& = d_2^{ass}(\Delta^2 g^2) \cdot g^3 \cdot \Delta h_2 c_1 \\
& = \Delta^2 h_2^2 g^2 e_0 \cdot g^3 \cdot \Delta h_2 c_1.
\end{align*}
However $\Delta^2 h_2^2 e_0 \cdot \Delta h_2 c_1 = 0$ in $\Ext_{A_*}(\FF_2)$ \cite{Bruner}, so this $d_2^{ass}$ is zero.
$$ \quad $$
We now turn our attention to the other potential target coming from $\bou_1^{\otimes 2}$:
$$ h_{2,1}^{15} \Delta^2 g (v_0^{-1}v_2^2[\zeta_1^8,\zeta_2^4])[18]. $$
This element detects
$$ \Delta^2g^2 v_1^6 h_{2,1} Mg^3[0] $$
in the algebraic tmf-resolution for $M(8,v_1^8)$.
However, in the ASS for the sphere, $v_1^6 h_{2,1} g^3$ is a $d_2$-cycle, and so there is a differential
\begin{align*}
d^{ass}_2(\Delta^2 g^2 \cdot v_1^6 h_{2,1} g^3) 
& = d^{ass}_2(\Delta^2 g^2) \cdot v_1^6 h_{2,1} g^3 \\ 
& = \Delta^2 h_2^2 g^2 e_0 \cdot v_1^6 h_{2,1} g^3 \\
& =  v_1^7 h_{2,1}^{22} g^2.
\end{align*}
Applying $M(-) = \bra{-,h_0^3,g_2}$, and mapping under the inclusion of the bottom cell of $M(8,v_1^8)$, we get a non-trivial differential
$$ d^{mass}_2(\Delta^2 g^2 \cdot v_1^6 h_{2,1} Mg^3[0]) = v_1^7 h_{2,1}^{22} M g^2[0]. $$

\item[$\pmb{\bou_1}$] 
The element
$$h_{2,1}^{31}g(h_{2,1}\zeta_2^4)$$
detects 
$$ g^8 n \in \Ext_{A_*}(\FF_2) $$
in the algebraic tmf-resolution for $\FF_2$ (Prop.~\ref{prop:h21detection}).  
This element can be eliminated by Case (1) of Remark~\ref{rmk:technical}, but we can also handle it manually using low dimensional calculations in the ASS for the sphere.
There is a differential
$$ d_3 (mQ_2) = g^3n $$
in the ASS for the sphere \cite{IWX2}, from which it follows that $g^8 n$ is zero on the $E_4$-page of the ASS of the sphere, and hence $g^8 n[0]$ is zero on the $E_4$-page of the MASS for $M(8,v_1^8)$. 
$$ \quad $$
For the the element
$$ h_{2,1}^{18} \Delta^2 g(h_{2,1}\zeta_2^4)[17] $$
we wish to employ Case (1) of Remark~\ref{rmk:technical}, using the differential
$$ d^{mass}_3(h^{15}_{2,1}\Delta^2 g(h_{2,1}v_0^{-2}v_2^2\zeta_1^{16})[17]) = h_{2,1}^{18} \Delta^2 g(h_{2,1}\zeta_2^4)[17] $$
in the MASS for $\tmf \wedge \br{\tmf} \wedge M(8,v_1^8)$.
Note that 
$$ h^{15}_{2,1}\Delta^2 g(h_{2,1}v_0^{-2}v_2^2\zeta_1^{16})[17] $$
detects the element
$$ C'' \cdot \Delta^2 g^2[17] $$
in the algebraic tmf-resolution.
Observe that  we have \cite{IsaksenChart}, \cite{Bruner}
\begin{align*}
d_2(C'' \cdot \Delta^2g^2) 
& = C'' \cdot d_2(\Delta^2 g^2) \\
& = g^2 \cdot C'' \Delta h_2^2 e_0 \\
& = g^2 \cdot 0 = 0.
\end{align*} 
It follows that $d_2(C'' \cdot \Delta^2g^2[17])$ is in the image of the map
$$ \Ext_{A_*}(H(8)) \rightarrow \Ext_{A_*}(H(8,v_1^8)) $$
but a check of the algebraic tmf-resolution for $H(8,v_1^8)$ reveals there are no possible targets in this bidegree.
We therefore have
$$ d_2(C'' \cdot \Delta^2 g^2[17]) = 0. $$
Therefore the hypotheses of Lemma~\ref{lem:technical} are satisfied.
It follows that 
$$ h_{2,1}^{18} \Delta^2 g(h_{2,1}\zeta_2^4)[17] $$
is either killed in the algebraic tmf-resolution for $H(8,v_1^8)$, 
or detects an element in the MASS which is killed by $d_3(C'' \cdot \Delta^2g^2[17])$,
 or detects an element which killed by a $d_2$-differential in the MASS for $T^1 \wedge M(8,v_1^8)$.  We just need to eliminate this last possibility.
 
 Any possible source for such a $d_2$-differential would necessarily be detected on the $0$-line of the algebraic tmf-resolution, and would not support a non-trivial $d_2$ in the MASS for $\tmf \wedge M(8,v_1^8)$.  The only such possibility is
$$ \Delta^4 h_{21}^{19}[1]. $$
However, we can express this element as the Hurewicz image of 
the element 
$$ gm \cdot \Delta^4 \cdot g^2[1] $$
in the MASS for $M(8,v_1^8)$.  This element is therefore necessarily a $d_2$-cycle, since it is a product of $d_2$-cycles.
\vspace{10pt}

\item[$\pmb{\bou_2}$] We begin with the element 
$$ h_{2,1}^5 \Delta^4 g(h_{2,1} v_0^{-2}v_2^2 \zeta_1^{16})[18] $$
which detects the element
$$ \Delta^4 gQ_2[18] $$
in the MASS for $M(8,v_1^8)$.  We are in Case (1) of Remark~\ref{rmk:technical}.
An elementary check using the charts of \cite{IsaksenChart} reveals that the element $gQ_2$ in the ASS for the sphere lifts to a $d_2$-cycle
$$ gQ_2[18] $$
supported by the top cell of $H(8,v_1^8)$.  Since $\Delta^4$ is a $d_2$-cycle in the MASS for $M(8,v_1^8)$, we deduce that
$$ \Delta^4 gQ_2[18] $$
is a $d_2$-cycle.
We therefore deduce that either 
$$ d_3^{mass}(\Delta^4gQ_2[18]) $$
is detected by 
$$ \Delta^4 h_{2,1}^{8} g(h_{2,1} \zeta_2^4)[18] + h_{2,1}^{24} g(h_{2,1} v_0^{-2}v_2^2 \zeta_1^{16}) $$
in the algebraic tmf-resolution for $H(8,v_1^8)$ or 
$$ \Delta^4 h_{2,1}^{8} g(h_{2,1} \zeta_2^4)[18] + h_{2,1}^{24} g(h_{2,1} v_0^{-2}v_2^2 \zeta_1^{16}) $$
is killed in the algebraic tmf-resolution for $H(8,v_1^8)$, or detects an element which is killed in the MASS for $T^1 \wedge M(8,v_1^8)$.  The only possible sources of such algebraic tmf-resolution differentials are wedge elements coming from $\Ext_{A(2)_*}(H(8,v_1^8))$, and we know these all must be permanent cycles in the algebraic tmf-resolution because they detect the corresponding wedge elements of $\Ext_{A_*}(H(8,v_1^8))$.  The only elements of the algebraic tmf-resolution which can detect an element which could support a $d_2$-differential killing 
$$ \Delta^4 h_{2,1}^{8} g(h_{2,1} \zeta_2^4)[18] + h_{2,1}^{24} g(h_{2,1} v_0^{-2}v_2^2\zeta_1^{16}) $$
in the MASS for $T^1 \wedge M(8,v_1^8)$ are the elements
\begin{equation}\label{eq:bo2elts}
\Delta^2 v_1^6 h_{2,1}^{23} [0] \quad \mr{and} \quad \Delta^2 v_1^3 h_{2,1}^{24} [1].
\end{equation}
However, using the map of spectral sequences
$$ \E{mass}{}_2^{*,*}(T^1 \wedge M(8,v_1^8)) \rightarrow \E{mass}{}_2^{*,*}(\tmf \wedge M(8,v_1^8)) $$
we can eliminate these possibilities on the basis that the elements (\ref{eq:bo2elts}) support non-trivial $d_2$ differentials in the MASS for $M(8,v_1^8)$.
$$ \quad $$
We are left with eliminating 
$$ v_1^2 h_{2,1}^{31}(v_0^{-1}v_2^2\zeta_1^8\zeta_2^4)[1] $$
as possibly detecting $d_5^{mass}(v_2^{32})$ in the MASS for $M(8,v_1^8)$.  This is the trickiest obstruction to eliminate.  In the MASS for $\tmf \wedge \br{\tmf} \wedge M(8,v_1^8)$ there is a differential
$$ d^{mass}_2(\Delta^2 v_1 h_{2,1}^{22}(v_0^{-1}v_2^2\zeta_1^8\zeta_2^4)[1]) = 
v_1^2 h_{2,1}^{31}(v_0^{-1}v_2^2\zeta_1^8\zeta_2^4)[1]. $$
The \emph{problem} is that in the WSS for $H(8,v_1^8)$ there is a non-trivial differential
$$ d^{wss}_0(\Delta^2 v_1 h_{2,1}^{22}(v_0^{-1}v_2^2\zeta_1^8\zeta_2^4)[1])
= \Delta^2 v_1 h_{2,1}^{22}(v_0^{-1}v_2^2[\zeta_1^8,\zeta_2^4])[1]. $$

\begin{sublem}\label{sublem:v232T1}
The element $v_2^{32}$ is a permanent cycle in the MASS for $T^1 \wedge M(8,v_1^8)$.
\end{sublem}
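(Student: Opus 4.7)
The plan is to enumerate and rule out every possible target of $d_r(v_2^{32})$ for $r \ge 2$ in the MASS for $T^1 \wedge M(8,v_1^8)$. The algebraic tmf-resolution for $\ul{T}^1 \otimes H(8,v_1^8)$ is the truncation at $n \le 1$, so only filtrations $0$ and $1$ contribute to its $E_2$-page. Since $v_2^{32}$ is a permanent cycle in the MASS for $\tmf \wedge M(8,v_1^8)$ (applying Leibniz to Lemma~\ref{lem:v2^8}), and the map $T^1 \to \tmf$ induces a compatible map of MASS, any hypothetical nontrivial $d_r(v_2^{32})$ must be detected by a class in filtration $1$. Thus the task reduces to eliminating the filtration $1$ candidates in the relevant total degree, which are exactly the first four rows of Table~\ref{tab:v232targets}.

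For the three candidates $h_{2,1}^{31} g(h_{2,1}\zeta_2^4)[0]$, $h_{2,1}^{18} \Delta^2 g(h_{2,1}\zeta_2^4)[17]$, and $h_{2,1}^{5}\Delta^{4}g(h_{2,1}\zeta_1^{16})[18]$, I would invoke the $d_3$-differentials in the MASS for $\tmf \wedge \br{\tmf} \wedge M(8,v_1^8)$ coming from the Davis--Mahowald--Rezk attaching maps inside $\tmf \wedge \br{\tmf}_2$, precisely those employed in Case~(1) of Remark~\ref{rmk:technical}. The fiber sequence $\Sigma^{-1}\tmf \wedge \br{\tmf} \to T^1 \to \tmf$ transports these differentials directly into $d_3$-differentials in the MASS for $T^1 \wedge M(8,v_1^8)$, each killing the corresponding candidate by the $E_4$-page. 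This step largely reuses the calculations already performed in the proof of Theorem~\ref{thm:v232}, with the simplification that no further lifting to $\pi_*^s$ is required.

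The remaining obstruction $v_1^2 h_{2,1}^{31}(v_0^{-1}v_2^2\zeta_1^8\zeta_2^4)[1]$ is precisely the reason for introducing the intermediate spectrum $T^1$. In the MASS for $\tmf \wedge \br{\tmf} \wedge M(8,v_1^8)$ there is the differential
\[
d_2^{mass}(\Delta^2 v_1 h_{2,1}^{22}(v_0^{-1}v_2^2\zeta_1^8\zeta_2^4)[1]) = v_1^2 h_{2,1}^{31}(v_0^{-1}v_2^2\zeta_1^8\zeta_2^4)[1].
\]
In the \emph{full} algebraic tmf-resolution for $H(8,v_1^8)$ the source of this differential is destroyed by a $d_0^{wss}$ landing in the filtration $2$ stratum $\bou_1^{\otimes 2}$, which is why this $d_2$ fails to descend to the MASS for $M(8,v_1^8)$. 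In the truncated algebraic tmf-resolution for $\ul{T}^1 \otimes H(8,v_1^8)$, however, the filtration $2$ stratum is absent, so the source survives to give a nonzero class in $\Ext_{A_*}(\ul{T}^1 \otimes H(8,v_1^8))$. Transporting the above $d_2$ via the fiber inclusion $\Sigma^{-1}\tmf \wedge \br{\tmf} \to T^1$ then produces a $d_2$-differential in the MASS for $T^1 \wedge M(8,v_1^8)$ that kills the final candidate.

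The main obstacle is this last step: one must check carefully that the source truly survives in $\Ext_{A_*}(\ul{T}^1 \otimes H(8,v_1^8))$ (no other $d_1$ of the tmf-resolution eliminates it) and that the transported $d_2$ lands exactly on $v_1^2 h_{2,1}^{31}(v_0^{-1}v_2^2\zeta_1^8\zeta_2^4)[1]$ without interference from other filtration $1$ classes of the same bidegree. The entire strategy hinges on the observation that passing from the sphere to $T^1$ amputates precisely the one WSS differential that blocks the natural $d_2$ in the tmf-resolution from killing our obstruction.
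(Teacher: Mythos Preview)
Your proposal is correct and follows essentially the same approach as the paper: restrict to the $\bou_1$ and $\bou_2$ entries of Table~\ref{tab:v232targets}, reuse the earlier arguments for the first three, and kill $v_1^2 h_{2,1}^{31}(v_0^{-1}v_2^2\zeta_1^8\zeta_2^4)[1]$ by the $d_2$ whose source survives in the truncated resolution for $\ul{T}^1$. The only packaging difference is that the paper bypasses your survival check by using the map $\Sigma^{31}\tmf \wedge M(8,v_1^8) \to T^1 \wedge M(8,v_1^8)$ from (\ref{eq:tmfinbo2}) to transport the $d_2$ directly from the fully computed MASS for $\tmf \wedge M(8,v_1^8)$.
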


\begin{proof}[Proof of sublemma]
The elements of the algebraic tmf-resolution which could possibly detect the target of a differential 
$$ d_r^{mass}(v_2^{32}), \quad r \ge 4, $$
in the MASS for $T^1 \wedge M(8,v_1^8)$ consist of those terms in Table~\ref{tab:v232targets} coming from $\bou_1$ and $\bou_2$.

Using (\ref{eq:tmfinbo2}) there is a map
$$ \Sigma^{31}\tmf \wedge M(8,v_1^8) \rightarrow \Sigma^{-1} \tmf \wedge \br{\tmf} \rightarrow T^1 $$
and we therefore have a differential
$$ d^{mass}_2(\Delta^2 v_1 h_{2,1}^{22}(v_0^{-1}v_2^2\zeta_1^8\zeta_2^4)[1]) = 
v_1^2 h_{2,1}^{31}(v_0^{-1}v_2^2\zeta_1^8\zeta_2^4)[1] $$
in the MASS for $T^1 \wedge M(8,v_1^8)$.
Therefore $v_1^2 h_{2,1}^{31}(v_0^{-1}v_2^2\zeta_1^8\zeta_2^4)[1]$ cannot be the target of a differential $d_5^{mass}(v_2^{32})$ in the MASS for $T^1 \wedge M(8,v_1^8)$.

Our previous arguments eliminate all the other possibilities.
\end{proof}

Suppose now for the purpose of generating a contradiction that the differential
$$ d_5^{mass}(v_2^{32}) $$ 
in the MASS for $M(8,v_1^8)$ is non-trivial and detected by $v_1^2 h_{2,1}^{31}(v_0^{-1}v_2^2\zeta_1^8\zeta_2^4)[1]$ in the algebraic tmf-resolution for $H(8,v_1^8)$.  
Consider the fiber sequence
$$ \Sigma^{-2} \br{\tmf}^{2} \wedge M(8,v_1^8) \rightarrow M(8,v_1^8) \rightarrow T^1 \wedge M(8,v_1^8) \xrightarrow{\partial} \Sigma^{-1} \br{\tmf}^2. $$
We have proven that $v_2^{32}$ exists in $\pi_{192}T^1 \wedge M(8,v_1^8)$, and because our assumption implies that $v_2^{32}$ does not lift to $\pi_{192}M(8,v_1^8)$, we must have
$$ 0 \ne \partial(v_2^{32}) \in \pi_{191}\Sigma^{-2} \br{\tmf}^2 \wedge M(8,v_1^8).
$$
\begin{sublem}
There exists a choice of $v_2^{32} \in \pi_{192}T^1 \wedge M(8,v_1^8)$ so that
$\partial(v_2^{32})$ has modified Adams filtration $34$.
\end{sublem}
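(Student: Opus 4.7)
The plan is to exploit the ambiguity in the lift $v_2^{32} \in \pi_{192}T^1 \wedge M(8,v_1^8)$ against a vanishing analysis of the MASS $E_2$-page in the relevant bidegree. By Sublemma~\ref{sublem:v232T1}, any two such lifts differ by a class of MASS-filtration at least $33$; since $\partial$ is a genuine map of spectra it preserves MASS-filtration, so the corresponding $\partial$-images likewise differ by a class of MASS-filtration at least $33$ in $\pi_{191}\Sigma^{-1}\tmf \wedge \br{\tmf}^2 \wedge M(8,v_1^8)$. This flexibility will let us absorb any indeterminacy of filtration $\ge 35$ in $\partial(v_2^{32})$.

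First I will inspect the MASS $E_2$-page
$\Ext_{A(2)_*}(\br{A \mmod A(2)}_*^{\otimes 2} \otimes H(8,v_1^8))$
(which by change of rings matches the $n=2$ line of the algebraic tmf-resolution for $H(8,v_1^8)$) in the bidegree where $\partial(v_2^{32})$ lives, using the $\bou_i \otimes \bou_j$ decomposition and the charts of Section~\ref{sec:algtmfres}. Proposition~\ref{prop:h21deathH38} and the WSS kill many of the $h_{2,1}$-towers that could otherwise contribute low-filtration classes, and I expect the remaining verification to show that there are no non-trivial classes in filtrations $\le 33$ in this bidegree. This forces $\partial(v_2^{32})$ to have MASS-filtration at least $34$.

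Next I will identify a specific filtration-$34$ candidate. Applying Case~(5) of the Geometric Boundary Theorem to the cofiber sequence
\[
\Sigma^{-2}\br{\tmf}^2 \wedge M(8,v_1^8) \to M(8,v_1^8) \to T^1 \wedge M(8,v_1^8) \xrightarrow{\partial} \Sigma^{-1}\tmf \wedge \br{\tmf}^2 \wedge M(8,v_1^8),
\]
together with the assumption that $d^{mass}_5(v_2^{32})$ in the MASS for $M(8,v_1^8)$ is detected by $v_1^2 h_{2,1}^{31}(v_0^{-1}v_2^2\zeta_1^8\zeta_2^4)[1]$ on the $n=1$ line of the algebraic tmf-resolution, $\partial(v_2^{32})$ must be detected on the $n=2$ line by a class whose image under the algebraic tmf-resolution differential recovers this target. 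Tracking bidegrees, this candidate sits in filtration $34$ of $\Ext_{A(2)_*}(\bou_i \otimes \bou_j \otimes H(8,v_1^8))$ for suitable $i,j$; after adjusting $v_2^{32}$ by a filtration-$\ge 33$ element within its ambiguity, $\partial(v_2^{32})$ will be detected by this class at filtration exactly $34$.

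The main obstacle will be the vanishing check in the first step: ruling out all non-trivial classes in filtrations $\le 33$ in the relevant bidegree of $\Ext_{A(2)_*}(\br{A \mmod A(2)}_*^{\otimes 2} \otimes H(8,v_1^8))$, and then isolating the unique filtration-$34$ candidate that detects $\partial(v_2^{32})$. Both will rely on the detailed algebraic tmf-resolution data from Sections~\ref{sec:algtmfres} and~\ref{sec:M38}, and on a careful application of the Geometric Boundary Theorem to match the differential target with a class on the $n=2$ line.
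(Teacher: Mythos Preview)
Your proposal has a genuine gap. First, a correction: the target of $\partial$ is $\Sigma^{-1}\br{\tmf}^2 \wedge M(8,v_1^8)$, not $\Sigma^{-1}\tmf \wedge \br{\tmf}^2 \wedge M(8,v_1^8)$, so the MASS $E_2$-page you must analyze is $\Ext_{A_*}(\br{A\mmod A(2)}_*^{\otimes 2}\otimes H(8,v_1^8))$, computed over $A_*$; the $\Ext_{A(2)_*}$ group you name is only the zero-line of its algebraic tmf-resolution. More seriously, your argument only addresses the lower bound (filtration $\ge 34$) and not the upper bound. A $d_5$ in the MASS for $M(8,v_1^8)$ detected on the $n=1$ line of the tmf-resolution does not, via any standard form of the Geometric Boundary Theorem, produce a filtration-$34$ detection of $\partial(v_2^{32})$ in the MASS for $\br{\tmf}^2 \wedge M(8,v_1^8)$; and ``adjusting within ambiguity'' cannot turn a filtration-$\ge 35$ class into a filtration-$34$ one, since the adjustment itself has $\partial$-image of filtration $\ge 34$ and could just as well cancel rather than create a filtration-$34$ component.

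The paper's argument is quite different. It constructs interpolating spectra $\td{M}_{\bra{k}}$ as homotopy pullbacks of $T^1 \wedge M(8,v_1^8)$ along the modified Adams truncations $M(8,v_1^8)_{\bra{k}}$, so that the algebraic tmf-resolution for $\td{M}_{\bra{k}}$ is the truncation of that for $M(8,v_1^8)$ obtained by discarding, for each $n\ge 2$, all classes of cohomological degree $>k-n$. The crux is that in $\td{M}_{\bra{35}}$ the WSS $d_0$-differential on $\Delta^2 v_1 h_{2,1}^{22}(v_0^{-1}v_2^2\zeta_1^8\zeta_2^4)[1]$ no longer fires (its target has been truncated away), so this element survives and supplies the $d_2^{mass}$ that kills the would-be target of $d_5(v_2^{32})$. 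Hence $v_2^{32}$ is a permanent cycle in the MASS for $\td{M}_{\bra{35}}$, while by the standing contradiction hypothesis it is not one in $\td{M}_{\bra{36}}$. The cofiber sequences identifying the layers of the $\td{M}_{\bra{k}}$-tower with Adams truncations of $\br{\tmf}^2\wedge M(8,v_1^8)$ then give $\partial''(v_2^{32})=0$ at level $33$ but $\partial'(v_2^{32})\ne 0$ at level $34$, which is precisely the statement that $\partial(v_2^{32})$ has modified Adams filtration $34$. This mechanism---controlling where the obstruction first appears by watching which truncation re-enables the killing $d_2$---is what your outline is missing.
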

\begin{proof}[Proof of sublemma]
Let $X^{\bra{k}}$ denote the $k$th modified Adams cover of $X$ - so that the MASS for $X^{\bra{k}}$ is the truncation of the MASS for $X$ obtained by only considering terms in $\E{mass}{}^{s,t}_2(X)$ for $s \ge k$, and let $X_{\bra{k}}$ denote the cofiber
$$ X^{\bra{k+1}} \rightarrow X \rightarrow X_{\bra{k}} $$ 
Then we have fiber sequences
$$ M(8,v_1^8)_{\bra{k}} \rightarrow (T^1 \wedge M(8,v_1^8))_{\bra{k}} 
\rightarrow (\Sigma^{-1}\br{\tmf}^2 \wedge M(8,v_1^8))_{\bra{k-2}}. $$
Define $\td{M}_{\bra{k}}$ to be the homotopy pullback
$$
\xymatrix{
\td{M}_{\bra{k}} \ar[r] \ar[d] &
T^1 \wedge M(8,v_1^8) \ar[d] \\
M(8,v_1^8)_{\bra{k}} \ar[r] &
(T^1 \wedge M(8,v_1^8))_{\bra{k}}
}
$$
Then the algebraic tmf-resolution for $\td{M}_{\bra{k}}$ is the truncation of the algebraic tmf-resolution for $M(8,v_1^8)$ obtained by omitting, for $n \ge 2$ all terms of
$$ \Ext_{A(2)_*}(\bou_{i_1} \otimes \cdots \otimes \bou_{i_n} \otimes H(8,v_1^8)) $$
of cohomological degree greater than $k-n$.
It follows from the map of algebraic tmf-resolutions and MASS's associated to the map
$$ M(8,v_1^8) \rightarrow \td{M}_{\bra{k}} $$
that there is a differential
$$ d^{mass}_5(v_2^{32}) = v_1^2 h_{2,1}^{31}(v_0^{-1}v_2^2\zeta_1^8\zeta_2^4)[1] $$
in the MASS for $\td{M}_{\bra{k}}$.  This differential is non-trivial in the MASS for $\td{M}_{\bra{36}}$, because it is non-trivial in the MASS for $M(8,v_1^8)$, and any intervening differentials killing the target in the algebraic tmf-resolution or MASS for $\td{M}_{\bra{36}}$ would lift to $M(8,v_1^8)$ because the spectral sequences are isomorphic in the relevant range.
The same is not true in the case of $\td{M}_{\bra{35}}$, where 
$$ d^{wss}_0(\Delta^2 v_1 h_{2,1}^{22}(v_0^{-1}v_2^2\zeta_1^8\zeta_2^4)[1])
= 0 $$
and therefore $\Delta^2 v_1 h_{2,1}^{22}(v_0^{-1}v_2^2\zeta_1^8\zeta_2^4)[1]$ persists to the $E_2$-term of the MASS
$$ d^{mass}_2(\Delta^2 v_1 h_{2,1}^{22}(v_0^{-1}v_2^2\zeta_1^8\zeta_2^4)[1]) = 
v_1^2 h_{2,1}^{31}(v_0^{-1}v_2^2\zeta_1^8\zeta_2^4)[1]. $$
Therefore the proof of Sublemma~\ref{sublem:v232T1} goes through with $T^1 \wedge M(8,v_1^8)$ replaced with $\td{M}_{\bra{35}}$ to show that there exists an element 
$$ \td{v_2}^{32} \in \pi_{192}\td{M}_{\bra{35}} $$
which is detected by $v_2^{32}$ in the MASS.
Consider the diagram
$$
\xymatrix{
& & \Sigma^{-1} \br{\tmf}^2 \ar[d] \\
\td{M}_{\bra{36}} \ar[d] \ar[r] & 
T^1 \wedge M(8,v_1^8) \ar[r]_-{\partial'} \ar[ru]^{\partial} \ar@{=}[d] & 
(\Sigma^{-1} \br{\tmf}^2 \wedge M(8,v_1^8))_{\bra{34}} \ar[d] \\
\td{M}_{\bra{35}} \ar[r] & 
T^1 \wedge M(8,v_1^8) \ar[r]_-{\partial''} &
(\Sigma^{-1} \br{\tmf}^2 \wedge M(8,v_1^8))_{\bra{33}} 
}
$$
where the rows are cofiber sequences.
The element $\td{v_2^{32}} \in \pi_{192}\td{M}_{\bra{35}}$ maps to an element
$v_2^{32} \in T^1 \wedge M(8,v_1^8)$ with
$$ \partial''(v_2^{32}) = 0. $$
However, since $d_5^{mass}(v_2^{32})$ is non-trivial in the MASS for $\td{M}_{\bra{36}}$, the element $v_2^{32} \in \pi_{192}T^1 \wedge M(8,v_1^8)$ cannot lift to $\td{M}_{\bra{36}}$, and therefore
$$ \partial'(v_2^{32}) \ne 0. $$
It follows that $\partial(v_2^{32})$ has modified Adams filtration $34$.  
\end{proof}

However we have

\begin{sublem}
There are no elements of $\pi_{191}\Sigma^{-2}\br{\tmf}^2 \wedge M(8,v_1^8)$ of modified Adams filtration $34$.
\end{sublem}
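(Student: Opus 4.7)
The argument proceeds by showing the stronger statement that the $E_2$-page of the MASS for $\Sigma^{-2}\br{\tmf}^2 \wedge M(8,v_1^8)$ vanishes in bidegree $(t-s, s) = (191, 34)$. Since $H_*(\Sigma^{-2}\br{\tmf}^2) = \Sigma^{-2}\br{A\mmod A(2)}_*^{\otimes 2}$ and internal suspension shifts the $t$-grading, this amounts to showing
$$ \Ext^{34,\, 34+191}_{A_*}\bigl(\Sigma^{-2}\br{A\mmod A(2)}_*^{\otimes 2} \otimes H(8,v_1^8)\bigr) \;=\; \Ext^{34,\,227}_{A_*}\bigl(\br{A\mmod A(2)}_*^{\otimes 2} \otimes H(8,v_1^8)\bigr) \;=\; 0. $$

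The plan is to run the algebraic $\tmf$-resolution with $M = \br{A\mmod A(2)}_*^{\otimes 2} \otimes H(8,v_1^8)$, which (after iterating the decomposition $\br{A\mmod A(2)}_* \cong \bigoplus_{i>0}\Sigma^{8i}\bou_i$) gives an $E_1$-page that is a direct sum of
$$ \Ext^{36-k,\, 227-8S}_{A(2)_*}\bigl(\bou_{i_1}\otimes\cdots\otimes\bou_{i_k} \otimes H(8,v_1^8)\bigr) $$
over $k \ge 2$ and $k$-tuples of positive integers with $S = i_1+\cdots+i_k$. Since the relevant stem in each summand is $t-s = 191 + k - 8S$, its positivity, together with the vanishing line of slope $1/5$ coming from $h_{2,1}$-towers (visible throughout the charts of Section~\ref{sec:algtmfres}), restricts the admissible tuples $(k, i_1, \ldots, i_k)$ to a short finite list.

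All remaining candidates lie in the $h_{2,1}$-tower region near the vanishing line, and the plan from there has two steps. First, Proposition~\ref{prop:h21deathH38} eliminates the $h_{2,1}$-towers supported on $\bou_1^{\otimes k}$-type comodules for $k \ge 3$ (and most of $k = 2$) by showing they are sources or targets of WSS $d_0$-differentials, so they do not even reach the $E_2$-page of the algebraic $\tmf$-resolution. Second, Proposition~\ref{prop:h21detectionH38} identifies every surviving $h_{2,1}$-tower class with the element of $\Ext_{A_*}(H(8,v_1^8))$ that it detects, at which point one checks directly that none of those elements lies in the target bidegree for $\br{A\mmod A(2)}_*^{\otimes 2}\otimes H(8,v_1^8)$.

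The principal obstacle is the bookkeeping: the enumeration here is essentially the complement of the one already carried out in Table~\ref{tab:v232targets}, and many of the remaining candidates can be killed by importing, with the appropriate reindexing, the very same differentials (via the weight spectral sequence, via Lemma~\ref{lem:technical}, and via the $d_2$-differentials of Remark~\ref{rmk:technical}) that were used earlier in the section to eliminate potential targets of $d^{mass}_r(v_2^{32})$. Once every surviving class is accounted for, the Ext group above vanishes and the sublemma follows.
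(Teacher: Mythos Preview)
Your opening claim---that the $E_2$-page of the MASS for $\Sigma^{-2}\br{\tmf}^2 \wedge M(8,v_1^8)$ vanishes in bidegree $(191, 34)$---is not what the paper proves and is in fact false. The specific obstruction your plan misses is
$$ \Delta^2 v_1 h_{2,1}^{22}\bigl(v_0^{-1}v_2^2[\zeta_1^8,\zeta_2^4]\bigr)[1] \;\in\; \Ext_{A(2)_*}\bigl(\bou_1^{\otimes 2} \otimes H(8,v_1^8)\bigr). $$
In the WSS for $H(8,v_1^8)$ this class is hit by the $d_0^{wss}$ whose source $\Delta^2 v_1 h_{2,1}^{22}(v_0^{-1}v_2^2\zeta_1^8\zeta_2^4)[1]$ lies in $\Ext_{A(2)_*}(\bou_2\otimes H(8,v_1^8))$; that is exactly the mechanism behind Proposition~\ref{prop:h21deathH38}. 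But $\bou_2$ is a \emph{single}-factor summand of $\br{A\mmod A(2)}_*$, so it does not occur in the algebraic $\tmf$-resolution for $\br{A\mmod A(2)}_*^{\otimes 2}\otimes H(8,v_1^8)$, where every term has at least two tensor factors. The killing differential is therefore unavailable in this context, and the class survives to $E_2$ of the MASS. Your appeal to Propositions~\ref{prop:h21deathH38} and~\ref{prop:h21detectionH38} does not touch it.

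The paper instead disposes of this class by lifting the differential
$$ d_3^{mass}\bigl(\Delta^6 v_1 h_{2,1}^{3}(v_0^{-1}v_2^2[\zeta_1^8,\zeta_2^4])[1]\bigr) \;=\; \Delta^2 v_1 h_{2,1}^{22}(v_0^{-1}v_2^2[\zeta_1^8,\zeta_2^4])[1] $$
from the MASS for $\tmf\wedge\br{\tmf}^2\wedge M(8,v_1^8)$ to the MASS for $\br{\tmf}^2\wedge M(8,v_1^8)$. For that lift one must first verify $d_2^{mass}$ on the source vanishes, which requires enumerating six candidate targets in higher $\tmf$-filtration and eliminating each via an explicit WSS $d_0$-differential. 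None of this appears in your plan. Incidentally, even for the Table~\ref{tab:v232targets} candidates of $\tmf$-filtration $\ge 2$, several of the ``previous arguments'' you propose to import already rely on MASS differentials (e.g.\ Case~2 of Remark~\ref{rmk:technical}), so they too would not establish vanishing of $E_2$.
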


\begin{proof}[Proof of sublemma]
The only possible elements in the algebraic tmf-resolution for $\br{\tmf}^{2} \wedge M(8,v_1^8)$ which could contribute to modified Adams filtration 34 in this degree are
\begin{equation}\label{eq:lastdude}
\Delta^2 v_1 h_{2,1}^{22}(v_0^{-1}v_2^2[\zeta_1^8,\zeta_2^4])[1] \in \Ext_{A(2)_*}(\bou_1^{\otimes 2} \otimes H(8,v_1^8))
\end{equation}
and the elements of Table~\ref{tab:v232targets} of algebraic tmf filtration greater than 1 in the appropriate modified Adams filtration.  However, the previous arguments eliminate all of the candidates coming from Table~\ref{tab:v232targets}, so we are left with eliminating (\ref{eq:lastdude}).   We wish to lift the differential
$$ d_3^{mass}(\Delta^6 v_1 h_{2,1}^{3}(v_0^{-1}v_2^2[\zeta_1^8,\zeta_2^4])[1])
= \Delta^2 v_1 h_{2,1}^{22}(v_0^{-1}v_2^2[\zeta_1^8,\zeta_2^4])[1] $$
in the MASS for $\tmf \wedge \br{\tmf}^2 \wedge M(8,v_1^8)$ to a differential in the MASS for $\br{\tmf}^2 \wedge M(8,v_1^8)$.  We therefore must argue that 
$$ d_2^{mass}(\Delta^6 v_1 h_{2,1}^{3}(v_0^{-1}v_2^2[\zeta_1^8,\zeta_2^4])[1]) = 0 $$
in the MASS for $\br{\tmf}^2 \wedge M(8,v_1^8)$.   
We will therefore argue there are no elements in the algebraic tmf-resolution for $\br{\tmf}^2 \wedge M(8,v_1^8)$ which could detect the target of such a $d_2$.
Ignoring any possibilities which are eliminated by Proposition~\ref{prop:h21deathH38}, the only possibilities are
\begin{equation*}
\begin{split}
\Delta^6 v_1^4 h_1 v_0^{-1}v_2^2\zeta_1^8|[\zeta_1^8,\zeta_2^4][1], \\
\Delta^6 v_1^4 h_1 v_0^{-1}v_2^2[\zeta_1^8,\zeta_2^4]|\zeta_1^8[1], \\
\Delta^6 v_1^4 h_0^2 [\zeta_1^8,\zeta_2^4]|\zeta_1^8|\zeta_1^8|\zeta_1^8[0], \\
\Delta^6 v_1^4 h_0^2 \zeta_1^8|[\zeta_1^8,\zeta_2^4]|\zeta_1^8|\zeta_1^8[0], \\
\Delta^6 v_1^4 h_0^2 \zeta_1^8|\zeta_1^8|[\zeta_1^8,\zeta_2^4]|\zeta_1^8[0], \\
\Delta^6 v_1^4 h_0^2 \zeta_1^8|\zeta_1^8|\zeta_1^8|[\zeta_1^8,\zeta_2^4][0].
\end{split}
\end{equation*}
However, these are killed by the respective WSS differentials:
\begin{equation*}
\begin{split}
d_0^{wss}\Delta^6 v_1^4 h_1 v_0^{-1}v_2^2\zeta_1^8|\zeta_1^8\zeta_2^4[1], \\
d_0^{wss}\Delta^6 v_1^4 h_1 v_0^{-1}v_2^2\zeta_1^8\zeta_2^4|\zeta_1^8[1], \\
d_0^{wss}\Delta^6 v_1^4 h_0^2
\zeta_1^8\zeta_2^4|\zeta_1^8|\zeta_1^8|\zeta_1^8[0], \\
d_0^{wss}\Delta^6 v_1^4 h_0^2 \zeta_1^8|\zeta_1^8\zeta_2^4|\zeta_1^8|\zeta_1^8[0], \\
d_0^{wss}\Delta^6 v_1^4 h_0^2 \zeta_1^8|\zeta_1^8|\zeta_1^8\zeta_2^4|\zeta_1^8[0], \\
d_0^{wss}\Delta^6 v_1^4 h_0^2 \zeta_1^8|\zeta_1^8|\zeta_1^8|\zeta_1^8\zeta_2^4[0].
\end{split}
\end{equation*}
\end{proof}
Thus we have arrived at a contradiction, as we have produced an element of modified Adams filtration 34, and subsequently showed no such elements exist. We conclude that our supposition, that the differential $d_5^{mass}(v^{32}_2)$ in the MASS for $M(8,v_1^8)$ is non-trivial and detected by $v_1^2 h_{2,1}^{31}(v_0^{-1}v_2^2\zeta_1^8\zeta_2^4)[1]$ in the algebraic tmf-resolution, is false. 

\end{description}
\end{proof}

\section{Determination of elements not in the tmf Hurewicz image}\label{sec:J}

\begin{thm}\label{thm:nothi}
The elements of $\tmf_*$ not in the subgroup described in Theorem~\ref{thm:main} are not in the Hurewicz image.
\end{thm}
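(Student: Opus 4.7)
The elements of $\pi_*\tmf$ lying outside the subgroup described in Theorem~\ref{thm:main} fall into two families: (A) the $c_4$-periodic classes in degree $>3$ other than $c_4^i\eta$ and $c_4^i\eta^2$, and (B) the $c_4$-torsion classes in $\pi_{24k+3}\tmf$ for $k\ge 1$. My plan is to rule out each family by $v_1$-periodic arguments, in the spirit of Adams' original treatment of $h_\KO$.

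For family (A), I would use the ring map $\varphi\colon\tmf\to\KO$ together with Adams' computation of $h_\KO$. Under $\varphi$, the element $c_4$ maps to a unit multiple of the fourth power of the Bott generator in $\pi_8\KO$, so $c_4^i$ is rationally detected. Since $\pi_*^s\otimes\QQ=0$ in positive degrees, no $c_4$-periodic class with non-trivial rational image can be in the Hurewicz image; this kills $c_4^i$ and each of its nonzero integer multiples for $i\ge 1$. In degrees $\equiv 1,2\pmod 8$, Adams showed $h_\KO(\pi_*^s)$ is the $\ZZ/2$ generated by $\mu_{8i+1}$ and $\eta\mu_{8i+1}$, the images of $c_4^i\eta$ and $c_4^i\eta^2$; any other $c_4$-periodic $2$-torsion class in these degrees projects to an element outside $h_\KO(\pi_*^s)$ and hence is not in $h_\tmf(\pi_*^s)$. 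In the remaining mod-$8$ residues the $c_4$-periodic generators visible in Figure~\ref{fig:tmf2} can all be handled by the same two tools (rational detection and Adams' theorem), since there are only finitely many of them modulo $c_4$-multiplication.

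For family (B), the map $\varphi$ gives no information because $\pi_{8k+3}\KO=0$, so a finer $v_1$-periodic invariant is needed. My first route is to exploit the explicit relation $c_4 y = \epsilon y$ on $c_4$-torsion classes \cite[9.5]{BrunerRognes}: a lift $\tilde y\in\pi_{24k+3}^s$ of such a $y$ would force $c_4 y = h_\tmf(\epsilon\tilde y)\in\pi_{24k+11}\tmf$, giving strong constraints on $\epsilon\tilde y$ that can be checked against the known structure of $\pi_*^s$ through the relevant range. An alternative route is via the Anderson-duality computation of Bruner--Rognes \cite[Ch.~10]{BrunerRognes}, who observe that the $c_4$-torsion subgroup described in Theorem~\ref{thm:main} sits as a self-dual pattern centered in dimension $85$, and obtain the same upper bound on the Hurewicz image by comparing with the self-dual structure.

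The main obstacle is family (B). Unlike (A), which is cleanly handled by $\tmf\to\KO$ and Adams' theorem, ruling out (B) requires a uniform obstruction across all residues of $24k+3\pmod{192}$; a case-by-case inspection guided by Figure~\ref{fig:tmf2} and the known sphere stems seems unavoidable, and one must carefully exclude hidden $\eta$- and $\nu$-extensions from $\pi_*^s$ which could in principle land in the $c_4$-torsion part of $\pi_{24k+3}\tmf$.
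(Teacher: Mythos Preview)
Your treatment of family (A) is in the right spirit but misses the organizing principle the paper uses. Rather than the single map $\tmf\to\KO$, the paper works with the equivalence $c_4^{-1}\tmf\simeq\KO[j^{-1}]$ and with $M(2^\infty)$: any torsion class $y\in\pi_*^s$ lifts to $\tilde y\in\pi_{*+1}M(2^\infty)$, and one compares the image of $\tilde y$ in $\KO_{*+1}M(2^\infty)$ (which is completely known) with the image of a hypothetical lift of $x$ in $c_4^{-1}\tmf_{*+1}M(2^\infty)=\KO_{*+1}M(2^\infty)[j^{-1}]$. This handles all $c_4$-periodic classes at once and shows uniformly that only $c_4^k\eta$ and $c_4^k\eta^2$ survive.

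The real gap is in family (B). Your claim that ``$\varphi$ gives no information because $\pi_{8k+3}\KO=0$'' is exactly what the paper's argument refutes: although $\Delta^k\nu$ itself dies in $\KO_*$, its lift to $\tmf_{*+1}M(2^\infty)$ does \emph{not} die after $c_4$-localization. Concretely, the paper computes that this lift maps to $\tfrac{\overline{\alpha v_1^{12k+2}}}{4}\,j^{-k}\in\KO_{*+1}M(2^\infty)[j^{-1}]$, which is nonzero and lies in the image of $\KO_{*+1}M(2^\infty)\hookrightarrow\KO_{*+1}M(2^\infty)[j^{-1}]$ only when $k=0$. So there \emph{is} a uniform $v_1$-periodic obstruction, and no case-by-case inspection is needed. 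Your alternative route via Anderson duality (Bruner--Rognes) is indeed valid and is acknowledged in the paper as an independent proof of this direction; but your first route through $c_4 y=\epsilon y$ does not lead anywhere uniform, and your conclusion that a case analysis ``seems unavoidable'' is precisely the misconception the paper's $M(2^\infty)$ argument dispels.
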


We first recall some well known K-theory computations.  Recall that $\pi_*\KO$ is given by the following $v_1^4$-periodic pattern:
\begin{center}
\includegraphics[width=0.5\linewidth]{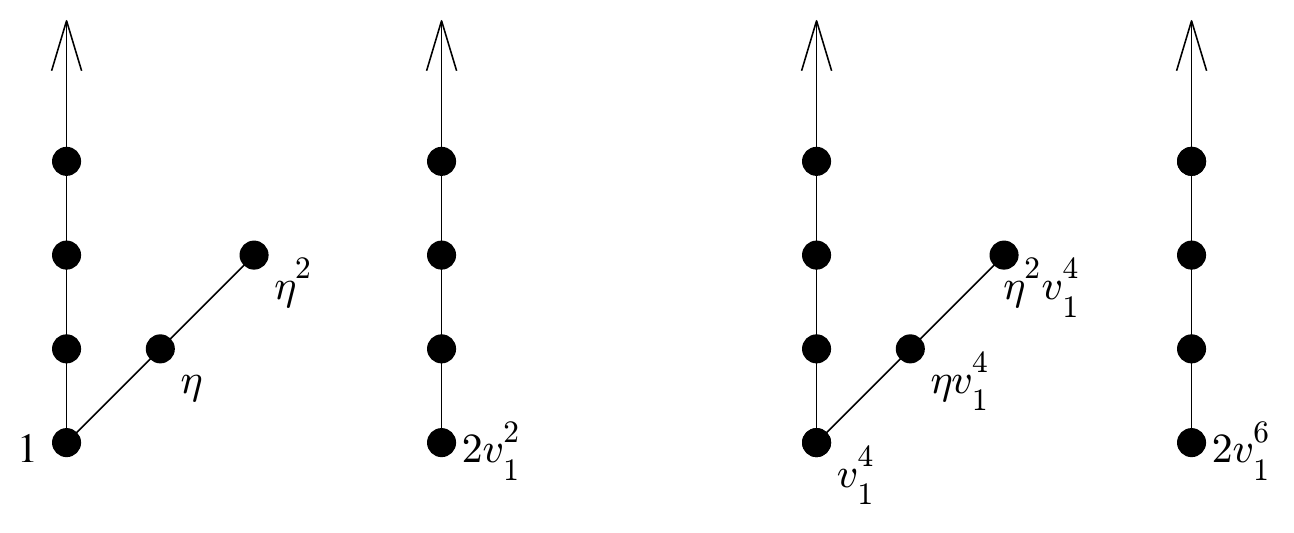}
\end{center}
Let
$$ M(2^\infty) := \varinjlim_i M(2^i) $$
denote the Moore spectrum for $\ZZ/2^{\infty}$.  Consider the following diagram
of cofiber sequences:
\begin{equation}\label{eq:KOM2}
\xymatrix{
\Sigma^{-1}\KO \wedge M(2) \ar[r]^-p \ar[d]_{\cdot 2^{-1}} &
\KO \ar[r]^{\cdot 2} \ar@{=}[d] &
\KO \ar[r]^-{\overline{(\cdot)}} \ar[d]^{\cdot 2^{-1}} &
\KO \wedge M(2) \ar[d]^{\cdot 2^{-1}} 
\\
\Sigma^{-1}\KO \wedge M(2^\infty) \ar[r]_-p  &
\KO \ar[r] &
\KO_\QQ \ar[r]_-{\overline{(\cdot)}} &
\KO \wedge M(2^\infty)     
}
\end{equation}
The groups $\KO_*M(2)$ are well-known to be given by the following $v^4_1$-periodic pattern:
\begin{center}
\includegraphics[width=0.5\linewidth]{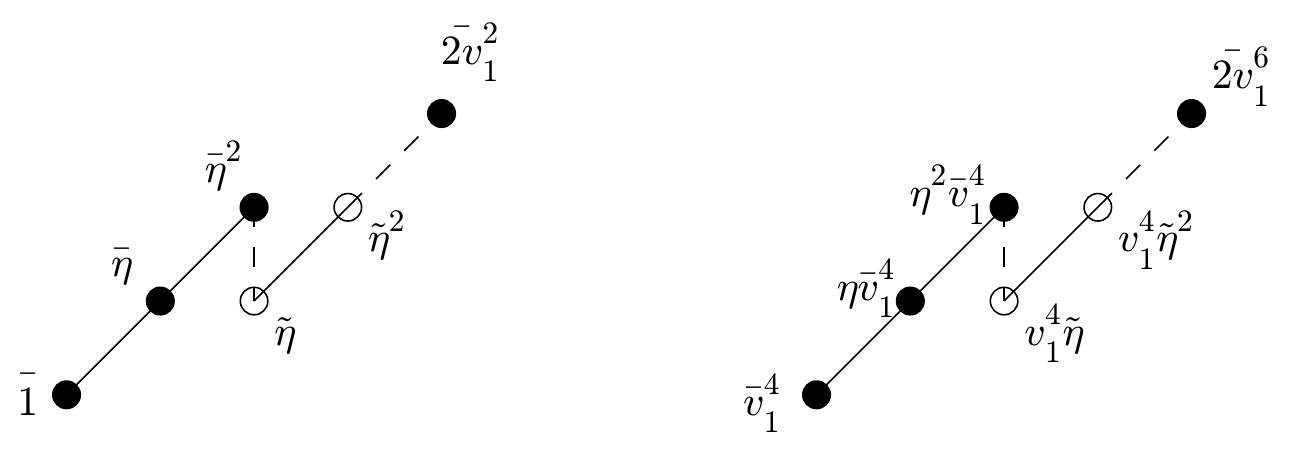}
\end{center}
where we denote lifts of elements of $\KO_*$ along the map $p$ of Diagram~(\ref{eq:KOM2}) with a tilde, and the images of the map $\overline{(\cdot)}$ with a bar.
It then follows easily from the map of long exact sequences coming from the above diagram that $\KO_*M(2^\infty)$ is given by the $v_1^4$-periodic pattern
\begin{center}
\includegraphics[width=0.5\linewidth]{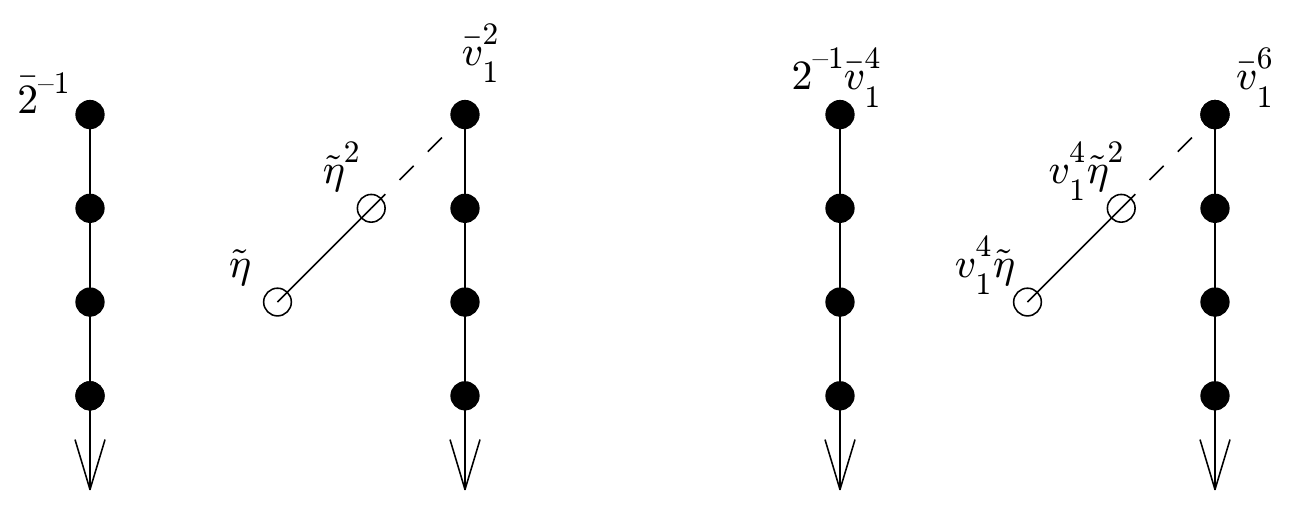}
\end{center}
where again we denote lifts over the map $p$ with a tilde, and images under the map $\overline{(\cdot)}$ with a bar.  The infinite sequences of dots going down represent the elements $2^{-i}$ in $\ZZ/2^{\infty} = \QQ/\ZZ_{(2)}$.

\begin{proof}[Proof of Theorem~\ref{thm:nothi}]
Recall that we have an equivalence \cite[Cor.~3]{Laures} 
$$ c_4^{-1}\tmf \simeq \KO[j^{-1}] $$
where $j^{-1} = \Delta/c_4^3$.  Applying $\pi_0$ to this equivalence, we have a commutative diagram
$$
\xymatrix{
S \ar[r] \ar[d] & 
\KO \ar@{^{(}->}[dr] 
\\
\tmf \ar[r] &
c_4^{-1}\tmf \ar[r]_\simeq & 
\KO[j^{-1}].
}
$$
Consider the following diagram
$$
\xymatrix{
\pi_*S \ar[d]_h &
\pi_{*+1} M(2^\infty) \ar[l]_p \ar[d]_h \ar[r] &
\KO_{*+1} M(2^\infty) \ar@{^{(}->}[dr]^i 
\\
\tmf_* \ar[d]_L &
\tmf_{*+1}M(2^\infty) \ar[l] \ar[r]^{L} &
c_4^{-1} \tmf_{*+1}M(2^\infty) \ar[dll]^{p'} \ar@{=}[r] &
\KO_{*+1}M(2^\infty)[j^{-1}].
\\
c_4^{-1}\tmf_*
}
$$
Suppose that $x \in \tmf_{>0}$ has non-trivial image in $L(x) \in c_4^{-1}\tmf_*$, and suppose that $x = h(y)$.  Since $y$ is torsion, it lifts over ${p}$ to an element 
$$ \td{y} \in \pi_{*+1}M(2^\infty) $$
The commutativity of the diagram, implies that
$$ 0 \ne L(x) \in \mr{Im}(p' \circ i) $$
and this implies that
$$ L(x) \in \{c_4^k\eta^l \: : \: k \ge 0, l \in \{1,2\}\}. $$
Now consider elements of the form
$$ x = \alpha\Delta^k\nu \in \tmf_* $$
with $\alpha \not\equiv 0 \mod 8$.  Suppose that $x = h(y)$.  Lift $y$ to an element 
$$ \td{y} \in \pi_{*+1}M(2^\infty). $$
Then we have
$$ Lh(\td{y}) = \frac{\br{\alpha\Delta^k v_1^2}}{8} =  \frac{\br{\alpha v_1^{12k+2}}}{4} j^{-k} \ne 0. $$
But the commutativity of the diagram implies that $Lh(\td{y})$ is in the image of $i$, which implies that $k = 0$. 
\end{proof}

\section{Lifting the remaining elements of $\tmf_*$ to $\pi_*^s$.}\label{sec:tmfhi}

\subsection*{Multiplicative generators of the Hurewicz image below the $192$-stem}

In this section, we determine a set of elements which multiplicatively generate the tmf-Hurewicz image the below the $192$-stem. The results in this section drastically reduce the number of classes which we must lift in the sequel. 

\begin{lem}\label{Lem:Eoo}
The Hurewicz map $S \to \tmf$ is a map of ring spectra. In particular, it preserves multiplication. 
\end{lem}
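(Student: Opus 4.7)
The proof will be essentially a citation of foundational facts about $\tmf$. The plan is to appeal to the fact, due to Goerss--Hopkins--Miller (as recorded in the volume \cite{tmf}), that $\tmf$ admits the structure of an $E_\infty$-ring spectrum, hence in particular a (homotopy) commutative ring spectrum. Since the sphere spectrum $S$ is the initial $E_\infty$-ring spectrum (equivalently, the initial ring spectrum in the stable homotopy category up to homotopy), the Hurewicz map $S \to \tmf$ is, up to homotopy, the unique ring map, and in particular it is compatible with the multiplication maps.

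Concretely, I would first recall that the Hurewicz map $h_{\tmf} \colon S \to \tmf$ is by definition the unit of the ring structure on $\tmf$. Any unit $\eta \colon S \to R$ of a ring spectrum $R$ satisfies $\mu_R \circ (\eta \wedge \eta) \simeq \eta \circ \mu_S$, where $\mu_S$ is the equivalence $S \wedge S \simeq S$; this is just the unit axiom for the multiplication $\mu_R$ applied to $\eta \wedge \mr{id}$ and $\mr{id}$. This already exhibits $h_{\tmf}$ as a map of ring spectra in the homotopy category, which is what is needed for the induced map $h_{\tmf*} \colon \pi_* S \to \pi_* \tmf$ to preserve products.

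There is no essential obstacle here: the statement is really a formal consequence of $\tmf$ being a ring spectrum with unit $h_{\tmf}$. I will state the lemma and give a one-line proof pointing to \cite{tmf} for the $E_\infty$-ring structure on $\tmf$ and noting that the Hurewicz map is by construction the unit, which is automatically multiplicative.
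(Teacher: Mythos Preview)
Your proposal is correct. In fact, the paper states this lemma without proof, treating it as standard: the unit map of any ring spectrum is automatically a ring map, and $\tmf$ is an $E_\infty$-ring spectrum by the work of Goerss--Hopkins--Miller. Your write-up simply spells out this standard fact, which is entirely appropriate.
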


\begin{cor}
Suppose $\alpha = \beta \gamma$ is a product of elements $\beta, \gamma \in \pi_*(\tmf)$ with lifts $\tilde{\beta}, \tilde{\gamma} \in \pi_*(S)$. Then $\tilde{\beta} \tilde{\gamma} \in \pi_*(S)$ must be a lift of $\alpha$.
\end{cor}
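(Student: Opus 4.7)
The plan is to invoke Lemma~\ref{Lem:Eoo} directly: since the Hurewicz map $h \colon S \to \tmf$ is a map of (homotopy) ring spectra, the induced map on homotopy groups
\[
h_* \colon \pi_*(S) \to \pi_*(\tmf)
\]
is a ring homomorphism, and in particular is multiplicative. Thus, given lifts $\tilde{\beta}, \tilde{\gamma} \in \pi_*(S)$ of $\beta, \gamma \in \pi_*(\tmf)$, one simply computes
\[
h_*(\tilde{\beta}\tilde{\gamma}) \;=\; h_*(\tilde{\beta})\, h_*(\tilde{\gamma}) \;=\; \beta\gamma \;=\; \alpha,
\]
so that $\tilde{\beta}\tilde{\gamma}$ is a lift of $\alpha$, as claimed.

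No genuine obstacle arises — the only thing to be a little careful about is that $S$ need not be a strictly commutative or associative ring in the point-set sense, but the homotopy-ring structure of $S$ (with unit $S \to S$ and the smash product pairing $S \wedge S \simeq S$) is enough to make $h_*$ a ring homomorphism on $\pi_*$, which is all the statement requires. Since lifts are in general not unique, the conclusion is that $\tilde{\beta}\tilde{\gamma}$ is \emph{one} choice of lift of $\alpha$, which is exactly what is needed for the applications in Section~\ref{sec:tmfhi}: to reduce the lifting problem for $\pi_*\tmf$ to the problem of lifting a set of multiplicative generators.
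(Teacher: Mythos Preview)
Your proof is correct and follows exactly the approach implicit in the paper: the corollary is stated immediately after Lemma~\ref{Lem:Eoo} with no proof given, since it is an immediate consequence of the multiplicativity of $h_*$. Your additional remarks on the homotopy-ring structure of $S$ are fine but unnecessary here.
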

 

With this in mind, it suffices to find a subset of the Hurewicz image which generates the entire Hurewicz image up to the $192$-stem under products. Our desired generating subset is given in Corollary \ref{Cor:Gens}.  We will obtain our generating set by listing generators in lemmas and then recording their products in corollaries, until we have exhausted the tmf-Hurewicz image up to stem 192. 

\begin{lem}
The classes $\eta \in \pi_1(\tmf)$, $\nu \in \pi_3(\tmf)$,
$\epsilon \in \pi_8(\tmf)$, $\kappa \in \pi_{14}(\tmf)$, $\bar{\kappa} \in \pi_{20}(\tmf)$, $u \in \pi_{39}(\tmf)$, and $w \in \pi_{45}(\tmf)$
are in the Hurewicz image.
\end{lem}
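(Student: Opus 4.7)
The plan is, for each listed element $x \in \pi_*\tmf$, to identify a class $\tilde{x} \in \pi_*^s$ of the same name and verify $h_{\tmf}(\tilde{x}) = x$ by tracking Adams filtration through the map of Adams spectral sequences induced by $h_{\tmf}$. By the naming convention for the generators of $\pi_*\tmf$ laid out in Section~\ref{sec:intro}, these elements are \emph{defined} as the images of the corresponding classical elements of $\pi_*^s$, so the lemma is essentially an unpacking of this convention; the task is to confirm in each case that the candidate preimage survives in $\Ext_{A(2)_*}(\FF_2)$ and detects precisely the intended class.

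The elements $\eta \in \pi_1^s$ and $\nu \in \pi_3^s$ are the Hopf maps, detected respectively by $h_1 \in \Ext^{1,2}_{A_*}(\FF_2)$ and $h_2 \in \Ext^{1,4}_{A_*}(\FF_2)$. The change-of-rings map
$$ \Ext^{*,*}_{A_*}(\FF_2) \to \Ext^{*,*}_{A(2)_*}(\FF_2) $$
sends these to classes of the same names in $\Ext_{A(2)_*}(\FF_2)$, which are permanent cycles detecting $\eta, \nu \in \pi_*\tmf$. The same reasoning applies to the classical elements $\epsilon \in \pi_8^s$, $\kappa \in \pi_{14}^s$, $\bar{\kappa} \in \pi_{20}^s$, which are detected respectively by $c_0$, $d_0$, and $g$ in appropriate bidegrees of $\Ext_{A_*}(\FF_2)$; their images in $\Ext_{A(2)_*}(\FF_2)$ are exactly the classes detecting the named elements of $\pi_*\tmf$. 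Inspecting Figure~\ref{fig:tmf2} in each relevant stem rules out competing classes (for instance, in $\pi_8\tmf$ the named class $\epsilon$ is the unique non-zero $2$-torsion element, so $h_{\tmf}(\epsilon)$ cannot be a $c_4$-multiple).

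For $u \in \pi_{39}^s$ and $w \in \pi_{45}^s$, the strategy is identical: these classes exist in the stable stems (see, e.g., the computations of Isaksen-Wang-Xu), and their Adams detecting classes map, under the change-of-rings to $\Ext_{A(2)_*}(\FF_2)$, to the classes that detect the named generators of $\pi_{39}\tmf$ and $\pi_{45}\tmf$. The main obstacle throughout is bookkeeping: for each $\tilde{x}$ one must carefully track the Adams filtration and consult Figure~\ref{fig:tmf2} to ensure no other element of $\pi_*\tmf$ in the same stem and compatible filtration could equal $h_{\tmf}(\tilde{x})$. In the listed stems, the named class is sufficiently isolated in Adams filtration that this verification is routine.
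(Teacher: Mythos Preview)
Your proposal is correct and follows essentially the same approach as the paper: both observe that the lemma is tautological because the listed elements of $\pi_*\tmf$ were \emph{defined} in Section~\ref{sec:intro} as the Hurewicz images of the corresponding well-known elements of $\pi_*^s$, and both confirm this by tracking the Adams detecting classes $h_1, h_2, c_0, d_0, g$ (and, for $u$ and $w$, the paper names $\Delta h_1 d_0$ and $\Delta h_1 g$) through the map $\Ext_{A_*}(\FF_2) \to \Ext_{A(2)_*}(\FF_2)$. Your additional care in ruling out competing targets in each stem is more detailed than the paper's terse treatment but not strictly necessary given the tautological nature of the claim.
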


\begin{proof}
The elements $\eta$, $\nu$, $\epsilon$, $\kappa$, $\bar{\kappa}$, $u$, and $w$ are all well known elements of $\pi_*^s$, detected in the Adams spectral sequence by $h_1$, $h_2$, $c_0$, $d_0$, $g$, $\Delta h_1 d_0$, and $\Delta h_1 g$
\cite[Table 8]{Isaksen}.  These elements have non-trivial images under the map of Adams spectral sequences induced by the unit map $S \to \tmf$.  The lemma is therefore somewhat tautological, as the corresponding elements in $\tmf$ were \emph{defined} in Section~\ref{sec:intro} to be the Hurewicz images of these elements. 
\end{proof}

\begin{lem}
The class $q \in \pi_{32}(\tmf)$ is in the Hurewicz image.
\end{lem}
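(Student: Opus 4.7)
The plan mirrors the proof of the preceding lemma: the class $q \in \pi_{32}\tmf$ was defined in Section~\ref{sec:intro} to be the Hurewicz image of an element $q$ in the stable stems, so the proof amounts to exhibiting this preimage explicitly and verifying the identification. Concretely, I would take $q \in \pi_{32}^s$ to be the well-known stable-stem class detected in the classical Adams spectral sequence by the generator $q \in \Ext^{5,37}_{A_*}(\FF_2)$ (as tabulated, for instance, in the computations of Isaksen--Wang--Xu); it is a permanent cycle by standard low-dimensional Adams-spectral-sequence computations for the sphere.

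To check that $h_\tmf(q)$ is the correct element of $\pi_{32}\tmf$, I would trace $q$ through the map of Adams spectral sequences induced by the unit $S \to \tmf$. The image of $q$ under the change-of-rings map $\Ext^{*,*}_{A_*}(\FF_2) \to \Ext^{*,*}_{A(2)_*}(\FF_2)$ can be read off Figure~\ref{fig:ExtA2}: it is the unique indecomposable generator in bidegree $(s, t-s) = (5, 32)$, and this is precisely the class that detects the named generator $q \in \pi_{32}\tmf$ in the MASS for $\tmf$. Thus $h_\tmf(q)$ agrees with $q \in \pi_{32}\tmf$ modulo elements of strictly higher Adams filtration.

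The only mild obstacle is absorbing any higher-filtration correction. Any such correction is a decomposable expression in the generators $\eta, \nu, \epsilon, \kappa, \bar\kappa$ already shown to be in the Hurewicz image by the preceding lemma (and their multiples by $c_4$-free classes lying beneath filtration $5$ in stem $32$), and it may be eliminated by adjusting the choice of lift $q \in \pi_{32}^s$ accordingly. With this adjustment, $h_\tmf(q) = q \in \pi_{32}\tmf$, completing the proof.
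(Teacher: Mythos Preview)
Your approach has a genuine gap. The key difficulty with $q$, which distinguishes it from the generators $\eta, \nu, \epsilon, \kappa, \bar\kappa, u, w$ handled in the preceding lemma, is precisely that the direct Ext-level comparison you propose \emph{does not work}.

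First, a minor correction: the element traditionally called $q$ in $\Ext_{A_*}(\FF_2)$ lies in Adams filtration $6$, not $5$; it is $\Delta h_1 h_3 \in \Ext^{6,32+6}_{A_*}(\FF_2)$. More seriously, the class $q \in \pi_{32}\tmf$ is detected in the ASS for $\tmf$ by $v_2^4 c_0 \in \Ext^{7,32+7}_{A(2)_*}(\FF_2)$, and this element does \emph{not} lie in the image of the map $\Ext_{A_*}(\FF_2) \to \Ext_{A(2)_*}(\FF_2)$. Indeed $h_3$ already maps to zero in $\Ext_{A(2)_*}(\FF_2)$, so $\Delta h_1 h_3$ maps to zero as well. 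Thus your ``trace through the map of Adams $E_2$-pages'' step yields zero, and you cannot conclude from it that $h_{\tmf}(\td q)$ is nonzero, let alone equal to $q$. There is also no indecomposable generator of $\Ext_{A(2)_*}(\FF_2)$ in bidegree $(32,5)$ for $\Delta h_1 h_3$ to hit.

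The paper's proof (deferred to Lemma~\ref{Lem:quwLift}(1)) gets around this filtration jump by passing to $M(8,v_1^8)$. There one shows, via the Geometric Boundary Theorem applied to the differentials $d_3(v_1^2 h_{2,1} g^2[1]) = v_1^8 \Delta h_1 h_3[1]$ (in the MASS for $M(8)$) and $d_4(v_1^2 h_{2,1} g^2[1]) = v_1^8 v_2^4 c_0[1]$ (in the MASS for $\tmf \wedge M(8)$), that the lifts $\td q[18] \in \pi_{50}M(8,v_1^8)$ and $q[18] \in \tmf_{50}M(8,v_1^8)$ are detected by the \emph{same} element $v_1^2 h_{2,1} g^2[1]$ in their respective MASS's. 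Since this element visibly maps to itself under $\Ext_{A_*}(H(8,v_1^8)) \to \Ext_{A(2)_*}(H(8,v_1^8))$, one concludes that $h_{\tmf}(\td q) = q$ after projecting to the top cell. The point is that the filtration discrepancy between $\Delta h_1 h_3$ and $v_2^4 c_0$ is absorbed by the Geometric Boundary Theorem step.
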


\begin{proof}
See the proof of Lemma~\ref{Lem:quwLift}(1).
\end{proof}

\begin{cor}
The classes $\eta^2 \in \pi_2(\tmf)$, $\nu^2 \in \pi_6(\tmf)$, $\nu^3 = \epsilon \eta \in \pi_9(\tmf)$, $\kappa \eta \in \pi_{15}(\tmf)$, $\kappa \nu \in \pi_{17}(\tmf)$,  $\bar{\kappa}\eta \in \pi_{21}(\tmf)$, $\bar{\kappa} \eta^2 = \kappa \epsilon \in \pi_{22}(\tmf)$, $\bar{\kappa} \epsilon = \kappa^2 \in \pi_{28}(\tmf)$, $q \eta \in \pi_{33}(\tmf)$, $\bar{\kappa} \kappa \in \pi_{34}(\tmf)$, $\bar{\kappa} \kappa \eta \in \pi_{35}(\tmf)$, $\bar{\kappa}^2 \in \pi_{40}(\tmf)$, $\bar{\kappa}^2 \eta \in \pi_{41}(\tmf)$, $\bar{\kappa}^2 \eta^2 = \kappa^3 \in \pi_{42}(\tmf)$, $w \eta \in \pi_{46}(\tmf)$,
$\bar{\kappa} q \in \pi_{52}(\tmf)$, $\bar{\kappa} q \eta \in \pi_{53}(\tmf)$,
 $\bar{\kappa}^2 \kappa \in \pi_{54}(\tmf)$, $\bar{\kappa} u \in \pi_{59}(\tmf)$, $\bar{\kappa}^3 \in \pi_{60}(\tmf)$, $\bar{\kappa} w \in \pi_{65}(\tmf)$, $\bar{\kappa} w \eta \in \pi_{66}(\tmf)$, $\bar{\kappa}^4 \in \pi_{80}(\tmf)$, $\bar{\kappa}^2 w \in \pi_{85}(\tmf)$, $w^2 \in \pi_{90}(\tmf)$, $\bar{\kappa}^5 \in \pi_{100}(\tmf)$, $\bar{\kappa}^3 w \in \pi_{105}(\tmf)$, $\bar{\kappa} w^2 \in \pi_{110}(\tmf)$, $\bar{\kappa}^4 w \in \pi_{125}(\tmf)$, and $\bar{\kappa}^2 w^2 \in \pi_{130}(\tmf)$ are in the Hurewicz image.
\end{cor}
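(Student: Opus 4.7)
The entire corollary will follow from one mechanical step: invoke the corollary immediately after Lemma~\ref{Lem:Eoo}, which says that a lift of a product $\beta\gamma\in\pi_*\tmf$ is obtained by multiplying lifts of $\beta$ and $\gamma$ in $\pi_*^s$. The two preceding lemmas exhibit $\eta,\nu,\epsilon,\kappa,\bar\kappa,q,u,w$ all in the Hurewicz image, so the plan is to write each element appearing in the statement as a word in these generators and then apply that corollary to produce the required element of $\pi_*^s$.

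For the ``singly named'' entries (such as $\eta^2$, $\nu^2$, $\kappa\eta$, $\kappa\nu$, $\bar\kappa\eta$, $q\eta$, $\bar\kappa\kappa$, $\bar\kappa^k$, $w\eta$, $\bar\kappa q$, $\bar\kappa u$, $\bar\kappa w$, $w^2$, $\bar\kappa^j w^k$, etc.) the factorization is literally the given name, so nothing needs to be checked beyond nontriviality of the product in $\tmf_*$; this is immediate from Figure~\ref{fig:tmf2}, where each listed class appears as a generator of its bidegree. The point is that the product in $\pi_*^s$ cannot map to zero if it is supposed to detect a nonzero element, and conversely the construction guarantees that the image is some multiple of the named class, so the only thing to rule out is that image being zero — which it cannot be since the named class itself is nonzero in $\tmf_*$.

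For the four entries carrying two names, namely $\nu^3=\epsilon\eta$, $\bar\kappa\eta^2=\kappa\epsilon$, $\bar\kappa\epsilon=\kappa^2$, and $\bar\kappa^2\eta^2=\kappa^3$, the additional task is to verify the stated equality in $\pi_*\tmf$. The first is the image of the classical Toda relation $\eta\epsilon=\nu^3$ in $\pi_*^s$. The remaining three are relations in $\pi_*\tmf$ that can be read directly off Figure~\ref{fig:tmf2} (in each relevant bidegree there is a unique nonzero class, so one simply checks that both products are nonzero); alternatively they are recorded in the standard multiplicative presentation of $\pi_*\tmf$ in \cite[Ch.~9]{BrunerRognes}.

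The only obstacle is therefore the bookkeeping involved in confirming nontriviality of each product in $\tmf_*$ and verifying the four ambiguous-name identities. Both are routine checks against Figure~\ref{fig:tmf2}. Once these checks are in place, each class in the statement is the image under $h_{\tmf}$ of the corresponding product of lifts in $\pi_*^s$, completing the proof.
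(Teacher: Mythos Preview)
Your approach is correct and is exactly what the paper intends: the corollary is stated without proof because it is immediate from the two preceding lemmas together with the multiplicativity corollary after Lemma~\ref{Lem:Eoo}. One small over-complication: since $h_{\tmf}$ is a ring map, the product $\tilde\beta\tilde\gamma$ maps \emph{exactly} to $\beta\gamma$, not merely to ``some multiple'' of it, so no nontriviality check in $\tmf_*$ is needed and the identities $\nu^3=\epsilon\eta$, etc., are purely informational rather than something to be verified for the argument.
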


\begin{lem}\label{Lem:v28nu2}
The classes  $\{ \nu \Delta^2 \} \nu \in \pi_{54}(\tmf)$, $\{\nu \Delta^2\} \kappa \in \pi_{65}(\tmf)$, and $\{ \eta^2 \Delta^2\} \bar{\kappa} \in \pi_{70}(\tmf)$ are in the Hurewicz image.
\end{lem}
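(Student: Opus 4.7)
The plan is to apply the lifting strategy from Section~\ref{sec:intro}: each target class factors as $\alpha\beta$ where $\alpha$ is a $c_4$-torsion element involving $\Delta^2$ and $\beta \in \{\nu,\kappa,\bar{\kappa}\}$ already lies in the Hurewicz image. By Proposition~\ref{prop:torsion}, $\alpha$ lifts to $\tmf_*M(8,v_1^8)$, so one need only further lift to $\pi_*M(8,v_1^8)$ and project to the top cell. The bracket notation $\{\nu\Delta^2\}$ (resp.\ $\{\eta^2\Delta^2\}$) denotes a specific such lift in $\pi_*M(8,v_1^8)$, and the products $\{\alpha\}\beta$ are formed using the ring structure from Proposition~\ref{prop:ring}.

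First I would identify MASS detecting classes. In the MASS for $\tmf\wedge M(8,v_1^8)$ (Figure~\ref{fig:MASSM38}), the classes $\nu\Delta^2$ and $\eta^2\Delta^2$ are detected respectively by $v_2^8 h_2[\epsilon]$ and $v_2^8 h_1^2[\epsilon]$ for an appropriate cell-index $\epsilon \in \{0,1\}$. Since $v_2^8 \in \Ext_{A_*}(H(8,v_1^8))$ exists by Lemma~\ref{lem:v2^8}, these classes also live in the MASS for $M(8,v_1^8)$. Multiplying by the permanent cycles $h_2,\,d_0,\,g \in \Ext_{A_*}(\FF_2)$ (which detect $\nu,\kappa,\bar{\kappa}$), one obtains candidate detecting classes
\[
v_2^8 h_2^2[\epsilon], \qquad v_2^8 h_2 d_0[\epsilon], \qquad v_2^8 h_1^2 g[\epsilon],
\]
whose bidegrees place them in stems $54$, $65$, and $70$ as required.

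The main obstacle will be showing these candidates are permanent cycles in the MASS for $M(8,v_1^8)$. While the Leibniz rule handles the generic case, potential differentials on $v_2^8$ (known only to be a permanent cycle in the algebraic $\tmf$-resolution, via Lemma~\ref{lem:v2^8}) must be ruled out after multiplying by the $h_2$, $d_0$, or $g$ factor. I would enumerate the possible $d_r$-targets in each of the three bidegrees using the $E_1$-page of the algebraic $\tmf$-resolution for $H(8,v_1^8)$ from Section~\ref{sec:M38} (in particular Figure~\ref{fig:Extbo1H38}), and eliminate each possibility in turn. Most $h_{2,1}$-tower obstructions are killed by Propositions~\ref{prop:h21deathH38} and~\ref{prop:h21detectionH38}; any remaining obstructions can be dispatched by applying Lemma~\ref{lem:technical} to lift differentials from the MASS for $\tmf\wedge\br{\tmf}^n\wedge M(8,v_1^8)$, in the same style as the case-by-case analysis in the proof of Theorem~\ref{thm:v232}. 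Once permanence is established, projecting to the top cell of $M(8,v_1^8)$ produces an element of $\pi_*^s$ whose $\tmf$-Hurewicz image is the prescribed product class, as can be read off from Figure~\ref{fig:MASSM38}.
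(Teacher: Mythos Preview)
Your proposal has a genuine gap: you have misidentified the detecting classes in the MASS for $M(8,v_1^8)$, because you have skipped the Geometric Boundary Theorem step that governs the passage from $M(8)$ to $M(8,v_1^8)$.

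The paper (in Lemma~\ref{Lem:v28Lift}, to which this lemma defers) does \emph{not} factor the classes and lift the factors separately; it lifts the product $\Delta^2\nu^2$ (resp.\ $\Delta^2\kappa\nu$, $\Delta^2\eta^2\bar\kappa$) as a whole.  On the top cell of $M(8)$ one does see the ``obvious'' names $v_2^8 h_2^2[1]$, $v_2^8 h_2 d_0[1]$, $g^2 h_{2,1}^6[1]$.  But to pass to $M(8,v_1^8)$ one must locate the MASS differential in $\tmf\wedge M(8)$ that kills the $v_1^8$-multiple, e.g.
\[
d_2\bigl(v_2^{10} v_1^4 h_0 h_2[1]\bigr)=v_1^8\cdot v_2^8 h_2^2[1],
\]
and the Geometric Boundary Theorem then says the lift $\Delta^2\nu^2[18]$ is detected by the \emph{source} $v_2^{10} v_1^4 h_0 h_2[1]$, in MAF~$18$ and stem~$72$.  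Similarly one finds $v_2^{10} v_1^4 h_0 d_0[1]$ (MAF~$21$, stem~$83$) and $v_2^8 v_1^4 d_0 e_0[1]$ (MAF~$24$, stem~$88$).  Your candidates $v_2^8 h_2^2[\epsilon]$, $v_2^8 h_2 d_0[\epsilon]$, $v_2^8 h_1^2 g[\epsilon]$ sit in the wrong bidegrees (note also your stated stems $54,65,70$ omit the $+18$ shift for the top cell), so any obstruction analysis you would run is for the wrong classes.

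Once the correct detecting classes are in hand, the argument is much lighter than you anticipate: one checks by direct inspection of the charts for $\Ext_{A(2)_*}(\bou_1^{\otimes k}\otimes H(8,v_1^8))$ that the relevant target bidegrees in the algebraic $\tmf$-resolution and in the MASS are simply zero.  There is no need for Propositions~\ref{prop:h21deathH38}--\ref{prop:h21detectionH38} or Lemma~\ref{lem:technical} here.  (A minor point: the brackets $\{\nu\Delta^2\}$, $\{\eta^2\Delta^2\}$ in the statement denote elements of $\pi_*\tmf$, not lifts to $\pi_*M(8,v_1^8)$; indeed $\nu\Delta^2\in\pi_{51}\tmf$ is \emph{not} in the Hurewicz image by Theorem~\ref{thm:nothi}, so one cannot hope to lift it separately to the sphere and then multiply.)
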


\begin{proof}
See Lemma \ref{Lem:v28Lift}. 
\end{proof}

\begin{cor}\label{Cor:v28nu2}
The classes $\{ \nu \Delta^2 \} \nu^2 \in \pi_{57}(\tmf)$ and $\{ \nu \Delta^2\} \kappa \nu \in \pi_{68}(\tmf)$ are in the Hurewicz image.
\end{cor}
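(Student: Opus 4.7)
The plan is essentially to invoke multiplicativity. The Hurewicz homomorphism $h_\tmf \colon \pi_*^s \to \pi_*\tmf$ is a ring map by Lemma~\ref{Lem:Eoo}, so the Hurewicz image is closed under products. Both of the classes in question have already been exhibited as products of elements the preceding results place in the Hurewicz image: $\{\nu\Delta^2\}\nu^2 = \{\nu\Delta^2\}\nu \cdot \nu$ and $\{\nu\Delta^2\}\kappa\nu = \{\nu\Delta^2\}\kappa \cdot \nu$.

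First I would extract from Lemma~\ref{Lem:v28nu2} the existence of lifts $\widetilde{\{\nu\Delta^2\}\nu} \in \pi_{54}^s$ and $\widetilde{\{\nu\Delta^2\}\kappa} \in \pi_{65}^s$ mapping under $h_\tmf$ to $\{\nu\Delta^2\}\nu$ and $\{\nu\Delta^2\}\kappa$ respectively. Next I would use that $\nu \in \pi_3\tmf$ is visibly in the Hurewicz image — it is the image of $\nu \in \pi_3^s$. Forming the products $\widetilde{\{\nu\Delta^2\}\nu} \cdot \nu \in \pi_{57}^s$ and $\widetilde{\{\nu\Delta^2\}\kappa} \cdot \nu \in \pi_{68}^s$, Lemma~\ref{Lem:Eoo} then guarantees that these map under $h_\tmf$ to $\{\nu\Delta^2\}\nu^2$ and $\{\nu\Delta^2\}\kappa\nu$, respectively.

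There is no real obstacle to this argument; as a corollary of Lemma~\ref{Lem:v28nu2} combined with Lemma~\ref{Lem:Eoo}, it is essentially immediate. The only thing one might worry about is whether the product of the lifts could be hit by a relation in $\pi_*^s$ that makes the product lift trivial in $\pi_*\tmf$ — but this would merely mean the target class in $\pi_*\tmf$ is itself zero, which is not the case for either $\{\nu\Delta^2\}\nu^2$ or $\{\nu\Delta^2\}\kappa\nu$ as recorded in Figure~\ref{fig:tmf2}. Hence both products give honest, non-trivial Hurewicz-image elements in the claimed stems.
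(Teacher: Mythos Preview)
Your argument is correct and is exactly the approach the paper takes: the corollary follows immediately from Lemma~\ref{Lem:v28nu2} together with the multiplicativity of the Hurewicz map (Lemma~\ref{Lem:Eoo} and its corollary), by multiplying the lifts of $\{\nu\Delta^2\}\nu$ and $\{\nu\Delta^2\}\kappa$ by $\nu$.
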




\begin{lem}\label{Lem:v216neekb}
The classes $\{ \nu \Delta^4\} \nu \in \pi_{102}(\tmf)$, $\{ \epsilon \Delta^4\} \in \pi_{104}(\tmf)$, $\{ \kappa \Delta^4 \} \in \pi_{110}(\tmf)$, $2 \Delta^4 \bar{\kappa} \in \pi_{116}(\tmf)$, and $\{\eta \Delta^4 \} \bar{\kappa} \in \pi_{117}(\tmf)$ are in the Hurewicz image.
\end{lem}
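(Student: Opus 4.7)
The plan is to follow the paradigm of Lemma~\ref{Lem:v28Lift}, which handles the analogous $\Delta^2$-multiples, with $v_2^{16}=(v_2^8)^2$ taking the place of $v_2^8$.  Each of the five classes listed in the statement is $c_4$-torsion, so Proposition~\ref{prop:torsion} and its corollary produce canonical lifts to $\tmf_*M(8,v_1^8)$.  The task is then to further lift each of these to an element of $\pi_*M(8,v_1^8)$ and project to the top cell of $M(8,v_1^8)$ to obtain the desired class in $\pi_*^s$.

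By Lemma~\ref{lem:v2^8}, the class $v_2^8 \in \Ext^{8,48+8}_{A_*}(H(8,v_1^8))$ is a permanent cycle in the algebraic $\tmf$-resolution, hence so is its square $v_2^{16}\in\Ext^{16,96+16}_{A_*}(H(8,v_1^8))$, and $v_2^{16}$ detects an element of $\pi_{96}M(8,v_1^8)$ whose image under the $\tmf$-Hurewicz map represents $\Delta^4$ in the MASS for $\tmf\wedge M(8,v_1^8)$.  To each element in the statement we associate a candidate MASS-detector on the bottom cell of $H(8,v_1^8)$:  $v_2^{16}h_2^2$ for $\{\nu\Delta^4\}\nu$, $v_2^{16}c_0$ for $\{\epsilon\Delta^4\}$, $v_2^{16}d_0$ for $\{\kappa\Delta^4\}$, $v_2^{16}h_0 g$ for $2\Delta^4\bar\kappa$, and $v_2^{16}h_1 g$ for $\{\eta\Delta^4\}\bar\kappa$.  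Each of the second factors is a permanent cycle in the classical Adams spectral sequence, so by the Leibniz rule the construction reduces to showing that none of these five products supports a non-trivial differential in the MASS for $M(8,v_1^8)$.

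The main obstacle is this last step, which I would carry out exactly as in the proof of Theorem~\ref{thm:v232} but in a substantially lower total degree.  Potential targets in the algebraic $\tmf$-resolution for $H(8,v_1^8)$ are enumerated via the decomposition (\ref{eq:E1decomp}) and the weight filtration, using the charts of Figures~\ref{fig:ExtH38} and~\ref{fig:Extbo1H38} together with the $\bou_i$-computations of Section~\ref{sec:algtmfres}.  The overwhelming majority of obstructions fall in $h_{2,1}$-towers originating from summands $\Ext_{A(2)_*}(\bou_1^{\otimes k}\otimes H(8,v_1^8))$ with $k\ge 2$, and are killed by Proposition~\ref{prop:h21deathH38} in the weight spectral sequence.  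The few remaining candidates, coming from $\Ext_{A(2)_*}(\bou_1\otimes H(8,v_1^8))$ and $\Ext_{A(2)_*}(\bou_2\otimes H(8,v_1^8))$ in low filtration, are eliminated either by direct chart inspection, by comparison with the MASS for $\tmf\wedge M(8,v_1^8)$ of Figure~\ref{fig:MASSM38}, or by applying Lemma~\ref{lem:technical} to known differentials in the MASS for $\tmf\wedge\br{\tmf}^n\wedge M(8,v_1^8)$, in the manner of Remark~\ref{rmk:technical}.  Once these obstructions are cleared, projection to the top cell of $M(8,v_1^8)$ yields the five required elements in $\pi_*^s$ and completes the proof.
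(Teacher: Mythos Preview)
Your approach has a fundamental gap in how you handle the cells of $M(8,v_1^8)$.  You propose to place the detecting classes on the \emph{bottom} cell of $H(8,v_1^8)$ and then ``project to the top cell'' at the end.  But an element in stem $n$ of $\pi_*M(8,v_1^8)$ projects under $M(8,v_1^8)\to S^{18}$ to $\pi^s_{n-18}$, not $\pi^s_n$.  For instance, your class $v_2^{16}h_2^2[0]$ lives in stem $102$, so even if it were a permanent cycle it would produce something in $\pi^s_{84}$, not the desired lift of $\{\nu\Delta^4\}\nu\in\pi_{102}\tmf$.  The paper's method (see Lemma~\ref{Lem:v216neLift}) instead lifts $x\in\tmf_n$ to $x[18]\in\tmf_{n+18}M(8,v_1^8)$ and works there.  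Moreover, the element detecting $x[18]$ in the MASS for $\tmf\wedge M(8,v_1^8)$ is \emph{not} simply $(\text{detector of }x)[18]$: it is determined by the Geometric Boundary Theorem from a differential $d_r(y)=v_1^8\cdot(\text{detector of }x)[1]$ in the MASS for $\tmf\wedge M(8)$.  For $2\Delta^4\bar\kappa$, for example, the correct detector is $v_2^{18}v_1^4d_0h_2[1]$, not $v_2^{16}h_0g$.  Your proposed detectors are therefore the wrong classes in the wrong bidegrees.  Relatedly, your assertion that $v_2^{16}$ detects an element of $\pi_{96}M(8,v_1^8)$ is unjustified: Lemma~\ref{lem:v2^8} only gives $v_2^8$ in $\Ext_{A_*}(H(8,v_1^8))$, and the Leibniz rule only pushes $v_2^{16}$ to the $E_3$-page of the MASS, not to $E_\infty$.

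There is also a case-specific issue you miss.  The paper singles out $\{\eta\Delta^4\}\bar\kappa\in\pi_{117}\tmf$ as genuinely harder: the obstructions to lifting it to the top cell of $M(8,v_1^8)$ are difficult to clear, so Lemma~\ref{Lem:v216kbe} instead lifts it to the top cell of $M(2,v_1^4)$, where the relevant charts (from \cite{BHHM}) are cleaner, and then uses the map $\Sigma^8 M(2,v_1^4)\xrightarrow{4v_1^4}M(8,v_1^8)\to S^{18}$ to conclude.  Your uniform treatment of all five classes does not account for this.
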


\begin{proof}
See Lemmas \ref{Lem:v216neLift} and \ref{Lem:v216kbe}. 
\end{proof}


\begin{cor}\label{Cor:v216low}
The classes $\{\epsilon \Delta^4\} \eta \in \pi_{105}(\tmf)$, $\{ \kappa \Delta^4 \} \eta \in \pi_{111}(\tmf)$, $\{ \kappa \Delta^4 \} \nu \in \pi_{113}(\tmf)$, $\{ \kappa \Delta^4 \} \nu^2 \in \pi_{116}(\tmf)$, $\{\eta \Delta^4 \} \bar{\kappa} \eta \in \pi_{118}(\tmf)$, $\{ \kappa \Delta^4 \} \kappa \in \pi_{124}(\tmf)$, $\{ \kappa \Delta^4 \} \bar{\kappa} \in \pi_{130}(\tmf)$, $\{ \kappa \Delta^4 \} \bar{\kappa} \eta \in \pi_{131}(\tmf)$, $\{\eta \Delta^4 \} \bar{\kappa}^2 \in \pi_{137}(\tmf)$, and $\{ \eta \Delta^4\} \bar{\kappa}^2 \eta \in \pi_{138}(\tmf)$ are in the Hurewicz image.
\end{cor}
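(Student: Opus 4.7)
The proof is entirely formal: Corollary~\ref{Cor:v216low} is a direct consequence of Lemma~\ref{Lem:Eoo} combined with Lemma~\ref{Lem:v216neekb} and the earlier results. The plan is to invoke the fact that the unit $S \to \tmf$ is a map of ring spectra, so the Hurewicz image is a subring of $\pi_*\tmf$; it therefore suffices to exhibit each of the listed classes as a product of elements already known to be in the Hurewicz image.

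First I would observe that Lemma~\ref{Lem:v216neekb} places the classes $\{\nu\Delta^4\}\nu$, $\{\epsilon\Delta^4\}$, $\{\kappa\Delta^4\}$, $2\Delta^4\bar{\kappa}$, and $\{\eta\Delta^4\}\bar{\kappa}$ in the Hurewicz image. The earlier discussion puts $\eta$, $\nu$, $\epsilon$, $\kappa$, and $\bar{\kappa}$ in the Hurewicz image by construction, and their products (such as $\nu^2$ and $\bar{\kappa}\eta$) follow from Lemma~\ref{Lem:Eoo}. Now each entry of Corollary~\ref{Cor:v216low} admits an obvious factorization using these inputs: for example $\{\epsilon\Delta^4\}\eta = \{\epsilon\Delta^4\}\cdot\eta$, $\{\kappa\Delta^4\}\nu^2 = \{\kappa\Delta^4\}\cdot\nu^2$, $\{\kappa\Delta^4\}\bar{\kappa}\eta = \{\kappa\Delta^4\}\cdot(\bar{\kappa}\eta)$, and $\{\eta\Delta^4\}\bar{\kappa}^2\eta = \{\eta\Delta^4\}\bar{\kappa}\cdot(\bar{\kappa}\eta)$.

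Concretely, given lifts $\widetilde{a},\widetilde{b} \in \pi_*^s$ of $a,b\in\pi_*\tmf$, the product $\widetilde{a}\widetilde{b}\in\pi_*^s$ maps to $ab\in\pi_*\tmf$ under the Hurewicz homomorphism since the latter is multiplicative. Applying this to each of the pairs indicated above yields a lift of every class listed in Corollary~\ref{Cor:v216low}.

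There is essentially no obstacle to this argument; the only point requiring minor care is that some of the classes in $\pi_*\tmf$ are labeled by descent-spectral-sequence detecting classes (e.g.\ $\{\epsilon\Delta^4\}$) and thus are only well-defined modulo higher filtration. However, since we are multiplying by torsion classes in the Hurewicz image whose lifts are fixed, any such indeterminacy in the product lies in a subgroup which is itself already contained in the Hurewicz image (being generated by products of Hurewicz-image elements of lower filtration), so the conclusion holds regardless of the choice of representative.
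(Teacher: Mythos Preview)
Your proposal is correct and is exactly the argument the paper intends: the corollary is stated without proof because it follows immediately from Lemma~\ref{Lem:Eoo} and Lemma~\ref{Lem:v216neekb} by expressing each listed class as a product of elements already shown to lie in the Hurewicz image. Your added remark about indeterminacy is a reasonable clarification, though the paper treats this point as implicit.
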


\begin{lem}\label{Lem:v216q}
The class $\{q \Delta^4\} \in \pi_{128}(\tmf)$ is in the Hurewicz image. 
\end{lem}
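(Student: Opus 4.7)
The plan is to show $\{q\Delta^4\}$ is in the Hurewicz image by producing a lift to $\pi_{128}M(8,v_1^8)$ and projecting to the top cell, following the framework established in the proof of Theorem~\ref{thm:v232} and used throughout Section~\ref{sec:tmfhi}. Since $q \in \pi_{32}\tmf$ is $c_4$-torsion (Proposition~\ref{prop:torsion}), so is $q\Delta^4 \in \pi_{128}\tmf$. Thus there exists a lift $\widetilde{q\Delta^4} \in \tmf_*M(8,v_1^8)$ whose projection to the top cell is $q\Delta^4$. The MASS for $\tmf \wedge M(8,v_1^8)$ represents this lift by a class of the form $v_2^{16} \cdot \bar{q}$, where $\bar{q}$ is the $E_\infty$-representative of the tmf-lift of $q$ constructed in Lemma~\ref{Lem:quwLift}(1). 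This class in $\Ext_{A(2)_*}(H(8,v_1^8))$ corresponds, via the algebraic tmf-resolution, to a candidate class $x \in \Ext_{A_*}(H(8,v_1^8))$ of topological degree $128$ detecting a putative lift to the sphere.

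The main step is then to show that $x$ survives to $E_\infty$ of the MASS for $M(8,v_1^8)$. I would enumerate the possible targets of $d_r(x)$ for $r \geq 2$ by listing all contributing classes in
$$\Ext^{s,t}_{A(2)_*}(\bou_{i_1} \otimes \cdots \otimes \bou_{i_n} \otimes H(8,v_1^8))$$
in the relevant bidegrees, much as in Table~\ref{tab:v232targets}. Each candidate target will then be eliminated by one of the following means: (i) many $h_{2,1}$-tower contributions from $\bou_1^{\otimes k}$ for $k \geq 2$ vanish already on $\E{wss}{}_1$ by Proposition~\ref{prop:h21deathH38}; (ii) $g$-torsion-free classes matching wedge elements will be handled via Lemma~\ref{lem:technical} and the techniques of Remark~\ref{rmk:technical}, lifting known differentials from the MASS for $\tmf \wedge \br{\tmf}^n \wedge M(8,v_1^8)$ to the MASS for $M(8,v_1^8)$; (iii) the remaining low-filtration candidates will be ruled out by explicit WSS differentials coming from cobar differentials in $C^*_{\FF_2[\zeta_1^8, \zeta_2^4]}$ (as in the $\bou_1^{\otimes 4}$ and $\bou_1 \otimes \bou_2$ cases of Theorem~\ref{thm:v232}) or by appeal to known differentials in the ASS for the sphere from \cite{IsaksenChart}.

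Once $x$ is shown to be a permanent cycle, it detects an element $\tilde{y} \in \pi_{128}M(8,v_1^8)$. The image $y \in \pi_{128}^s$ obtained by projection to the top cell satisfies $h_\tmf(y) = \{q\Delta^4\}$ by construction of $x$ from $v_2^{16} \cdot \bar{q}$ in the MASS for $\tmf \wedge M(8,v_1^8)$, possibly after adjustment by classes of higher Adams filtration (which are absorbed into the coset defining $\{q\Delta^4\}$).

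The principal obstacle is eliminating the $h_{2,1}$-tower candidates enumerated via Proposition~\ref{prop:h21detectionH38}, particularly those coming from $\bou_2$ which require lifting $d_3$-differentials from \cite{BOSS} along the lines of Case 1 of Remark~\ref{rmk:technical}; a secondary difficulty is that $\Delta^4$ is not a multiple of the self-map period $v_2^{32}$, so one cannot simply propagate a lift of $q$ by iterated application of the self-map of Theorem~\ref{thm:v232}, and the argument must be run directly in stem $128$.
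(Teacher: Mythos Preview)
Your approach is essentially the same as the paper's: lift $q\Delta^4$ to the top cell of $M(8,v_1^8)$, identify its detecting class in $\Ext_{A(2)_*}(H(8,v_1^8))$ as $v_2^{16}$ times the class $v_1^2 h_{2,1} g^2[1]$ from Lemma~\ref{Lem:quwLift}(1), use Lemma~\ref{lem:v2^8} to show this is a permanent cycle in the algebraic tmf-resolution persisting to $E_3$ of the MASS, then enumerate and eliminate possible targets of longer differentials via Lemma~\ref{lem:technical}.

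Two small corrections. First, the lift $\tilde y$ lives in $\pi_{146}M(8,v_1^8)$, not $\pi_{128}$, since the top cell of $M(8,v_1^8)$ sits in dimension $18$; projection to the top cell then lands in $\pi_{128}^s$. Second, your anticipated ``principal obstacle'' is slightly off: in the paper's execution the only candidate target is a single $d_3$ detected in $\bou_1^{\otimes 2}$ (namely $v_1 h_{2,1}^{19}(v_0^{-1}v_2^2[\zeta_1^8,\zeta_2^4])[18]$), not in $\bou_2$, and it is eliminated by lifting a $d_2$ from the MASS for $\tmf\wedge\bo_1^{\wedge 2}\wedge M(8,v_1^8)$ via Lemma~\ref{lem:technical} (rather than Case~1 of Remark~\ref{rmk:technical}). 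Your general framework still handles this, but the specifics of which tower shows up and which differential kills it differ from what you guessed.
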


\begin{proof}
See Lemma \ref{Lem:v216qLift}. 
\end{proof}

\begin{cor}\label{Cor:v216q}
The classes $\{ q \Delta^4 \} \eta \in \pi_{129}(\tmf)$, $\{q \Delta^4\}\kappa = w\eta\Delta^4 \in \pi_{142}(\tmf)$, $\{ q \Delta^4 \} \bar{\kappa} \in \pi_{148}(\tmf)$, $\{ q \Delta^4 \} \bar{\kappa} \eta \in \pi_{149}(\tmf)$, $\{ q \Delta^4 \} \bar{\kappa} \eta^2 \in \pi_{150}(\tmf)$ are in the Hurewicz image.
\end{cor}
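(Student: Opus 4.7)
The proof will be a direct application of the multiplicative structure, so the plan is brief. First, I would invoke Lemma~\ref{Lem:Eoo} to note that the $\tmf$-Hurewicz homomorphism $h \colon \pi_*^s \to \pi_*(\tmf)$ is a ring map, so its image is closed under products. Lemma~\ref{Lem:v216q} then supplies a lift $\widetilde{q\Delta^4} \in \pi_{128}^s$ of the class $\{q\Delta^4\}$.

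Next, I would observe that $\eta \in \pi_1^s$, $\kappa \in \pi_{14}^s$, and $\bar{\kappa} \in \pi_{20}^s$ are standard elements whose Hurewicz images are (by definition in Section~\ref{sec:intro}) the generators of $\pi_*(\tmf)$ bearing the same names. Taking products in $\pi_*^s$ and applying $h$ then produces lifts of
\[
\{q\Delta^4\}\eta,\ \{q\Delta^4\}\kappa,\ \{q\Delta^4\}\bar{\kappa},\ \{q\Delta^4\}\bar{\kappa}\eta,\ \{q\Delta^4\}\bar{\kappa}\eta^2,
\]
placing each of these classes in the Hurewicz image. The bidegrees $129, 142, 148, 149, 150$ match those listed in the statement.

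The one point requiring a separate argument is the asserted equality $\{q\Delta^4\}\kappa = w\eta\Delta^4$ in $\pi_{142}(\tmf)$, which is purely a relation in $\pi_*(\tmf)$ and not a lifting claim. I would derive this from the identity $q\kappa = w\eta$ in $\pi_{46}(\tmf)$, readable from Figure~\ref{fig:tmf2} as the $\eta$-multiplication line emanating from $w$, then multiply through by $\Delta^4$, using that $\pi_*(\tmf)$ is a $\ZZ_{(2)}[\Delta^8]$-module with compatible products.

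The plan involves no genuine obstacle at this stage; all the substantive work is concentrated in the prerequisite Lemma~\ref{Lem:v216q}, which actually constructs the lift $\widetilde{q\Delta^4}$ through the MASS for $M(8,v_1^8)$ and the $v_2^{32}$-self map of Theorem~\ref{thm:v232}. The present corollary merely harvests the multiplicative consequences.
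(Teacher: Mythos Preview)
Your proposal is correct and follows exactly the approach the paper intends: the corollary is stated without proof precisely because it is an immediate consequence of Lemma~\ref{Lem:Eoo} (multiplicativity of the Hurewicz map) together with Lemma~\ref{Lem:v216q} and the known lifts of $\eta$, $\kappa$, $\bar{\kappa}$. Your treatment of the identity $\{q\Delta^4\}\kappa = w\eta\Delta^4$ as a pure $\pi_*(\tmf)$ relation, deduced from $q\kappa = w\eta$ in $\pi_{46}(\tmf)$, is the right perspective; the paper is simply naming the element rather than asserting something new.
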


\begin{lem}\label{Lem:a135}
The class $\Delta^4 u \in \pi_{135}(\tmf)$ is in the Hurewicz image. 
\end{lem}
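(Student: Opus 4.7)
The plan is to follow the lifting strategy of the preceding lemmas in this section: produce an explicit permanent cycle in the MASS for $M(8,v_1^8)$ whose projection to the top cell realizes $\Delta^4 u$, and then invoke the commutativity of the tmf-Hurewicz diagram used throughout. Since $u \in \pi_{39}\tmf$ is $c_4$-torsion (detected by $\Delta h_1 d_0$), so is $\Delta^4 u$, and by the corollary of Proposition~\ref{prop:torsion} it lifts to an element $\widetilde{\Delta^4 u} \in \tmf_{153}M(8,v_1^8)$. The candidate MASS class in $\Ext_{A_*}(H(8,v_1^8))$ is then $v_2^{16}\cdot \widetilde{\Delta h_1 d_0}[k]$ for an appropriate cell index $k\in\{0,1,17,18\}$; call this class $z$.

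The central task is to show $z$ is a permanent cycle in the MASS for $M(8,v_1^8)$. By Lemma~\ref{lem:v2^8}, $v_2^8$ is a permanent cycle in $\Ext_{A_*}(H(8,v_1^8))$, so $v_2^{16}$ is too; combined with the fact that a homotopy lift $\widetilde{u}\in\pi_*M(8,v_1^8)$ of $u$ already exists (this being the input to the earlier inclusion of $u$ in the Hurewicz image), the Leibniz rule immediately guarantees that $z$ persists to the $E_4$-page. The remaining task is to rule out $d_r(z)$ for $r\geq 4$.

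To do so, I would enumerate candidate targets by working through the algebraic tmf-resolution for $H(8,v_1^8)$ in the appropriate bidegree, exactly as in the tabulation used to prove Theorem~\ref{thm:v232}. Most $h_{2,1}$-towers are eliminated by Propositions~\ref{prop:h21deathH38} and~\ref{prop:h21detectionH38}; the $v_0$-torsion-free classes coming from higher tmf-resolution filtration die in the WSS via the techniques of Section~\ref{sec:algtmfres}; and any remaining low-filtration candidates are dispatched using Lemma~\ref{lem:technical} applied to differentials in the MASS for $\tmf\wedge\br{\tmf}^n\wedge M(8,v_1^8)$, following the two prototype arguments of Remark~\ref{rmk:technical}.

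The main obstacle will be a possible boundary case analogous to the final step of the proof of Theorem~\ref{thm:v232}: if some candidate target sits at algebraic tmf-resolution filtration $1$ or $2$ in a bidegree where the natural lift provided by Lemma~\ref{lem:technical} is obstructed by a non-trivial $d_0^{wss}$ on the source class, then one must instead argue by contradiction using a partial totalization $T^n\wedge M(8,v_1^8)$ together with the modified Adams cover, ruling out the hypothetical differential by producing a class of impossible modified Adams filtration in $\pi_*\Sigma^{-n}\br{\tmf}^n\wedge M(8,v_1^8)$. Once $z$ is shown to be a permanent cycle, its detected homotopy class in $\pi_{153}M(8,v_1^8)$ projects to a class $y\in\pi_{135}^s$ whose $\tmf$-Hurewicz image equals $\Delta^4 u$ modulo elements of strictly higher Adams filtration; by the multiplicativity of $h_{\tmf}$ (Lemma~\ref{Lem:Eoo}), any such ambiguity is absorbed by products of elements already established to be in the Hurewicz image by the preceding lemmas of this section.
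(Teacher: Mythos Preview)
Your high-level strategy matches the paper's: lift $\Delta^4 u$ to the top cell of $M(8,v_1^8)$ by exhibiting a permanent cycle in the MASS. However, your execution is both vaguer and far more elaborate than what is actually needed.

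The paper (in Lemma~\ref{Lem:v216uLift}) pins down the detecting element precisely via the Geometric Boundary Theorem: in the MASS for $\tmf\wedge M(8)$ there is a differential
\[
d_4\bigl(v_2^{16}v_1^3h_{2,1}^2g^2[1]\bigr) = v_2^{16}v_1^{10}x_{35}[1],
\]
so $\Delta^4 u[18]$ is detected by $v_2^{16}v_1^3h_{2,1}^2g^2[1] \in \Ext_{A(2)_*}(H(8,v_1^8))$. Your candidate ``$v_2^{16}\cdot\widetilde{\Delta h_1 d_0}[k]$'' with unspecified cell index $k$ never reaches this level of precision, and without it the chart inspection you describe cannot be carried out.

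More importantly, once the detecting element is identified, the argument is very short. The factor $v_1^3h_{2,1}^2g^2[1]$ already detects $u[18]$, which was shown in Lemma~\ref{Lem:quwLift} to be a permanent cycle in the MASS for $M(8,v_1^8)$; and $v_2^{16}=(v_2^8)^2$ is a $d_2$-cycle by Lemma~\ref{lem:v2^8} and Leibniz. Hence $v_2^{16}\cdot\{v_1^3h_{2,1}^2g^2[1]\}$ is a $d_2$-cycle. Chart inspection then reveals that the \emph{only} possible nontrivial differential on this class is a $d_2$ hitting $\{v_2^8 h_{2,1}^{15}\zeta_2^4[18]\}$, which is already excluded. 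There are no $d_r$'s with $r\ge 3$ to rule out, so the full apparatus you propose (Propositions~\ref{prop:h21deathH38} and~\ref{prop:h21detectionH38}, Lemma~\ref{lem:technical}, the two cases of Remark~\ref{rmk:technical}, and the partial-totalization contradiction argument from Theorem~\ref{thm:v232}) is unnecessary here.

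One small correction: Leibniz applied to $v_2^{16}=(v_2^8)^2$ only yields persistence to $E_3$, not $E_4$ as you claim; fortunately this is all that is needed, since the only obstruction is a $d_2$.
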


\begin{proof}
See Lemma \ref{Lem:v216uLift}. 
\end{proof}

\begin{cor}
The classes $\Delta^4 u \eta \in \pi_{136}(\tmf)$ and $\Delta^4 u \bar{\kappa} \in \pi_{155}(\tmf)$ are in the Hurewicz image. 
\end{cor}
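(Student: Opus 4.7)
The plan is to produce an element of $\pi_{153}M(8,v_1^8)$ whose top-cell projection lifts $\Delta^4 u$ under the $\tmf$-Hurewicz map, following the general strategy of Section~\ref{sec:intro}. Since $\Delta^4 u \in \pi_{135}\tmf$ is $c_4$-torsion, Proposition~\ref{prop:torsion} produces a lift $\td{x} \in \tmf_{153}M(8,v_1^8)$ whose top-cell projection is $\Delta^4 u$. The task is then to further lift $\td{x}$ to an element of $\pi_{153}M(8,v_1^8)$.

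The natural candidate detecting class in the MASS for $M(8,v_1^8)$ is $v_2^{16} \cdot \Delta h_1 d_0[18] \in \Ext_{A_*}(H(8,v_1^8))$, where $\Delta h_1 d_0$ is the Adams spectral sequence name for the class detecting $u$ in the sphere, and $v_2^{16} = (v_2^8)^2$ exists by Lemma~\ref{lem:v2^8}. This class detects $\td{x}$ in the MASS for $\tmf \wedge M(8,v_1^8)$, so the problem reduces to showing that it is a permanent cycle in the MASS for $M(8,v_1^8)$.

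The verification proceeds by enumerating the potential targets of differentials $d_r^{mass}$ for $r \ge 2$ in the appropriate bidegree using the algebraic tmf-resolution for $H(8,v_1^8)$ and the charts of Sections~\ref{sec:algtmfres} and~\ref{sec:M38}. Each potential target is then eliminated by one of three mechanisms: (i) a WSS $d_0$-differential killing $h_{2,1}$-tower classes, as in Proposition~\ref{prop:h21deathH38}; (ii) a differential in the algebraic tmf-resolution itself, in the spirit of the case-by-case analysis of Theorem~\ref{thm:v232}; or (iii) Lemma~\ref{lem:technical} applied to known differentials in the MASS for $\tmf \wedge \br{\tmf}^n \wedge M(8,v_1^8)$, as illustrated in Remark~\ref{rmk:technical}.

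The chief obstacle is that stem $135$ lies deep within the wedge region of $\Ext_{A_*}(H(8,v_1^8))$, so the list of candidate differential targets is substantial. Particular care is needed for the $h_{2,1}$-periodic classes of the types $\Delta^{2m}h_{2,1}^j Q_2[k]$ and $\Delta^{2m} v_1^i h_{2,1}^j Mg^2[k]$ (cf.\ Proposition~\ref{prop:h21detectionH38}), which survive the $g$-local WSS and typically require Lemma~\ref{lem:technical} to eliminate by lifting differentials from the MASS for $\tmf \wedge \br{\tmf}^n \wedge M(8,v_1^8)$. Once permanence is established, the resulting element of $\pi_{153}M(8,v_1^8)$ projects to the desired lift in $\pi_{135}^s$.
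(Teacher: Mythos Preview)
You are proving the wrong statement. The Corollary in question asserts that the \emph{products} $\Delta^4 u \cdot \eta$ and $\Delta^4 u \cdot \bar{\kappa}$ lie in the Hurewicz image, and in the paper this is an immediate consequence of the multiplicativity of the Hurewicz map (Lemma~\ref{Lem:Eoo}) together with the fact that the factors $\Delta^4 u$, $\eta$, and $\bar{\kappa}$ have already been shown to lie in the Hurewicz image. The existence of a lift of $\Delta^4 u$ is precisely the content of the \emph{preceding} Lemma~\ref{Lem:a135} (proved in Lemma~\ref{Lem:v216uLift}), not of this Corollary. Your proposal re-derives that Lemma rather than deducing the Corollary from it, and it never mentions $\eta$, $\bar{\kappa}$, or the multiplicativity step that is the actual content here.

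Even read as an argument for Lemma~\ref{Lem:a135}, your outline diverges from the paper's proof in a way that would cause trouble. You propose $v_2^{16}\cdot \Delta h_1 d_0[18]$ as the detecting class in the MASS for $M(8,v_1^8)$, but in the paper the Geometric Boundary Theorem analysis shows that $\Delta^4 u[18]$ is detected by $v_2^{16}v_1^3 h_{2,1}^2 g^2[1]$ on the $1$-cell, not the $18$-cell; this is what makes the obstruction analysis tractable. The paper then dispatches the single potential $d_2$-target by observing that $v_2^{16}\cdot\{v_1^3 h_{2,1}^2 g^2[1]\}$ is a product of $d_2$-cycles, rather than by the lengthy wedge-region enumeration you sketch. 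So even as a proof of the Lemma, your approach starts from the wrong detecting class and would make the problem substantially harder than it needs to be.
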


%

\begin{lem}\label{Lem:v224nu2}
The classes $\{ \nu \Delta^6 \} \nu \in \pi_{150}(\tmf)$ and $\{\nu \Delta^6 \} \kappa \in \pi_{161}(\tmf)$ are in the Hurewicz image.
\end{lem}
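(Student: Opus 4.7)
The approach follows the template established by the earlier lifting lemmas in this section (for instance Lemma~\ref{Lem:v28nu2} with its proof in Lemma~\ref{Lem:v28Lift}, and Lemma~\ref{Lem:v216neekb} with its proofs in Lemmas~\ref{Lem:v216neLift} and \ref{Lem:v216kbe}), and in practice would be completed by deferring to a subsequent lifting lemma. Both target classes are $c_4$-torsion (since $\nu$ is $c_4$-torsion), so Proposition~\ref{prop:torsion} produces preliminary lifts into $\tmf_{*+18}M(8,v_1^8)$. The plan is then to promote these to lifts in $\pi_*M(8,v_1^8)$ via the MASS, and finally to project onto the top cell of $M(8,v_1^8)$ to obtain the desired elements of $\pi_*^s$.

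First I would identify, in the MASS for $\tmf \wedge M(8,v_1^8)$ (computed in the manner of Figure~\ref{fig:MASSM38}), the class that detects the $\tmf$-lift of $\{\nu\Delta^6\}$; this will be an appropriate $\Delta^6$-translate of the class detecting $\nu$, carried on some cell of $H(8,v_1^8)$. Using the algebraic $\tmf$-resolution for $H(8,v_1^8)$ and the machinery of Section~\ref{sec:M38}, I would then produce a class $\widetilde{\alpha} \in \Ext_{A_*}(H(8,v_1^8))$ detecting this element in the tmf-resolution, and verify that $\widetilde{\alpha}$ is a permanent cycle in the MASS for $M(8,v_1^8)$. Once the corresponding element $\widetilde{y} \in \pi_*M(8,v_1^8)$ is in hand, the weak ring structure of Proposition~\ref{prop:ring} lets us form $\nu \cdot \widetilde{y}$ and $\kappa \cdot \widetilde{y}$, and projection to the top cell yields elements of $\pi_{150}^s$ and $\pi_{161}^s$; the fact that $S \to \tmf$ is a ring map (Lemma~\ref{Lem:Eoo}) guarantees these are Hurewicz lifts of $\{\nu\Delta^6\}\nu$ and $\{\nu\Delta^6\}\kappa$ respectively.

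The principal obstacle is the permanent-cycle verification for $\widetilde{\alpha}$: one must enumerate all contributions to $\Ext_{A_*}(H(8,v_1^8))$ in the relevant bidegrees via the algebraic $\tmf$-resolution $E_1$-decomposition into $\bou_{i_1}\otimes\cdots\otimes\bou_{i_n}\otimes H(8,v_1^8)$ terms, and rule out each as a potential differential target. I expect most candidates to be $h_{2,1}$-towers that die under Proposition~\ref{prop:h21deathH38} in the WSS; the $h_{2,1}$-periodic survivors (identified with wedge classes in $\Ext_{A_*}(H(8,v_1^8))$ via Proposition~\ref{prop:h21detectionH38}) will be handled by lifting known differentials from the MASS for $\tmf\wedge \br{\tmf}^n \wedge M(8,v_1^8)$ using Lemma~\ref{lem:technical}, following the patterns of Case~1 and Case~2 of Remark~\ref{rmk:technical}, exactly as in the proof of Theorem~\ref{thm:v232}.
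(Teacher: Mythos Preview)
Your proposal has a genuine gap. You plan to first lift $\{\nu\Delta^6\}$ itself to the top cell of $M(8,v_1^8)$, obtain $\widetilde{y}\in\pi_*M(8,v_1^8)$, and only then multiply by $\nu$ and $\kappa$. But this intermediate step cannot succeed: $\{\nu\Delta^6\}\in\pi_{147}\tmf$, and $147=24\cdot 6+3$ lies in exactly the congruence class excluded by Theorem~\ref{thm:main}(3). By Theorem~\ref{thm:nothi}, $\{\nu\Delta^6\}$ is \emph{not} in the Hurewicz image, so there is no element of $\pi_*M(8,v_1^8)$ projecting on the top cell to something with $\tmf$-Hurewicz image $\{\nu\Delta^6\}$. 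Concretely, when you attempt to verify that your class $\widetilde{\alpha}$ is a permanent cycle in the MASS for $M(8,v_1^8)$, you will necessarily run into a nontrivial differential that cannot be removed.

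The paper instead lifts the \emph{products} $\Delta^6\nu^2$ and $\Delta^6\kappa\nu$ directly (Lemma~\ref{Lem:v224nu2Lift}). After using the Geometric Boundary Theorem to identify their detecting classes in $\Ext_{A(2)_*}(H(8,v_1^8))$ as $v_2^{26}v_1^4h_0h_2[1]$ and $v_2^{26}v_1^4h_0d_0[1]$, one recognizes these as $v_2^{16}$ times the $\Delta^2$ classes already handled in Lemma~\ref{Lem:v28Lift}. The permanent-cycle verification then reduces to Lemma~\ref{lem:v2^8} together with a short check that no longer differentials are possible; the heavier machinery of Lemma~\ref{lem:technical} and Remark~\ref{rmk:technical} is not needed here.
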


\begin{proof}
See Lemma \ref{Lem:v224nu2Lift}. 
\end{proof}

\begin{cor}\label{Cor:v224nu2}
The classes $\{\nu \Delta^6\} \nu^2 \in \pi_{153}$, $\{ \nu \Delta^6 \} \nu^3 \in \pi_{156}$, $\{\nu \Delta^6 \} \kappa \eta \in \pi_{162}(\tmf)$ and $\{ \nu \Delta^6 \} \kappa \nu \in \pi_{164}(\tmf)$ are in the Hurewicz image.
\end{cor}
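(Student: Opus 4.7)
The plan is to deduce this corollary directly from Lemma~\ref{Lem:v224nu2} together with the multiplicativity of the $\tmf$-Hurewicz homomorphism (Lemma~\ref{Lem:Eoo}), exactly in the pattern used for Corollaries~\ref{Cor:v28nu2}, \ref{Cor:v216low}, and \ref{Cor:v216q}. Lemma~\ref{Lem:v224nu2} supplies lifts $\widetilde{x} \in \pi_{150}^s$ and $\widetilde{y} \in \pi_{161}^s$ of $\{\nu\Delta^6\}\nu$ and $\{\nu\Delta^6\}\kappa$, respectively, while the elements $\eta, \nu \in \pi_*^s$ obviously lift themselves. Since $h_{\tmf}\colon \pi_*^s \to \pi_*\tmf$ is a ring map, the products $\widetilde{x}\cdot\nu$, $\widetilde{x}\cdot\nu^2$, $\widetilde{y}\cdot\eta$, and $\widetilde{y}\cdot\nu$ in $\pi_*^s$ map to the four desired elements of $\pi_*\tmf$.

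Concretely, I will simply record the four products: $\widetilde{x}\nu \in \pi_{153}^s$ maps to $\{\nu\Delta^6\}\nu\cdot\nu = \{\nu\Delta^6\}\nu^2$; $\widetilde{x}\nu^2 \in \pi_{156}^s$ maps to $\{\nu\Delta^6\}\nu^3$; $\widetilde{y}\eta \in \pi_{162}^s$ maps to $\{\nu\Delta^6\}\kappa\eta$; and $\widetilde{y}\nu \in \pi_{164}^s$ maps to $\{\nu\Delta^6\}\kappa\nu$. Each of these four target classes is a genuinely nonzero element of $\pi_*\tmf$ (visible in Figure~\ref{fig:tmf2}), so no care is needed beyond invoking the ring map property.

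There is essentially no obstacle here; the only thing to verify is that the products $\{\nu\Delta^6\}\nu^2$, $\{\nu\Delta^6\}\nu^3$, $\{\nu\Delta^6\}\kappa\eta$, and $\{\nu\Delta^6\}\kappa\nu$ are not hit by some relation in $\pi_*\tmf$ that would make them equal to previously handled classes (in which case we would still be done, but wouldn't need the corollary). All of the work was done in Lemma~\ref{Lem:v224nu2}; this corollary is a bookkeeping consequence recording the additional stems covered by multiplying the generating lifts by $\eta$, $\nu$, and $\nu^2$, just as in the analogous corollaries earlier in the section.
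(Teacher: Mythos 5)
Your argument is exactly the paper's (implicit) one: the corollary is a direct consequence of Lemma~\ref{Lem:v224nu2} together with the fact that the Hurewicz map is a ring map (Lemma~\ref{Lem:Eoo}), multiplying the lifts of $\{\nu\Delta^6\}\nu$ and $\{\nu\Delta^6\}\kappa$ by $\nu$, $\nu^2$, $\eta$, and $\nu$ respectively. No gaps; this matches the bookkeeping pattern of the earlier corollaries in Section~\ref{sec:tmfhi}.
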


%


Thus our calculation of the Hurewicz image up to dimension $192$ has been reduced to showing that the following list of elements is in the Hurewicz image. 

\begin{cor}\label{Cor:Gens}
Up to dimension $192$, the Hurewicz image is generated under multiplication by 
\begin{align*}
\{ & \eta, \nu, \epsilon, \kappa, \bar{\kappa}, q, u, w, \{\nu \Delta^2 \} \nu, \{\nu \Delta^2 \} \kappa, \{\eta^2 \Delta^2 \} \bar{\kappa}, \{\nu \Delta^4 \} \nu, \{ \epsilon \Delta^4\}, \\
& \{ \kappa \Delta^4 \}, 2 \Delta^4 \bar{\kappa}, \{\eta \Delta^4 \} \bar{\kappa}, \{q \Delta^4\},  \Delta^4 u, \{\nu \Delta^6 \} \nu , \{ \nu \Delta^6 \} \kappa \}.
\end{align*}
\end{cor}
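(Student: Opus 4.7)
The plan is to read off the statement directly from Figure~\ref{fig:tmf2}, combined with the characterization of the Hurewicz image supplied by Theorem~\ref{thm:main}. By that theorem, up to dimension $192$ the Hurewicz image is generated additively by (i) the elements of $\pi_{\le 3}\tmf$, namely $\eta,\eta^2,\nu$, (ii) the elements $c_4^i\eta$ and $c_4^i\eta^2$ appearing in this range, and (iii) all $c_4$-torsion classes outside degrees $24k+3$. Since the generators of type (ii) with $i \ge 1$ have degree $\ge 9$ and are themselves products $c_4^i \cdot \eta^\epsilon$ \emph{in} $\pi_*\tmf$, but $c_4$ itself is \emph{not} in the Hurewicz image, they have to be accounted for as genuine classes of type (iii); this is automatic from the fact that on $c_4$-torsion, $c_4$ acts as $\epsilon$, so e.g. $c_4^i\eta = \epsilon^i\eta$ on $c_4$-torsion classes (up to higher Adams filtration) and can be written in terms of $\epsilon$ and $\eta$. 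Thus the real work is to handle elements of type (iii).

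Next, I would partition the $c_4$-torsion classes in $\pi_{<192}\tmf$ according to the $\Delta^{2k}$-periodicity of Figure~\ref{fig:tmf2}, which displays the homotopy groups as a module over $\mathbb{Z}_{(2)}[\Delta^8]$. Each such class has the form $\Delta^{2k}\cdot y$ for some $k \in \{0,1,2,3\}$ (since $\Delta^8$ has degree $192$) and some $c_4$-torsion generator $y$ from the fundamental period. For $k=0$, every such $y$ is a product of the ``small'' generators $\eta,\nu,\epsilon,\kappa,\bar\kappa,q,u,w$, as already recorded in the corollaries immediately following the lemma listing these generators. This uses only the multiplicative structure of the $\Delta$-fundamental domain in $\pi_*\tmf$, which can be checked by inspection of Figure~\ref{fig:tmf2}. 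For $k=1,2,3$, each $c_4$-torsion class in the corresponding strip is of the form $\Delta^{2k}\cdot y$ where $y$ is a $c_4$-torsion class in the fundamental strip, and Figure~\ref{fig:tmf2} shows that in each strip there is a distinguished ``bottom'' class of the form $\{\alpha\Delta^{2k}\}$ (for $\alpha \in \{\nu,\epsilon,\kappa,2,\eta,q,u,\nu,\ldots\}$ as listed) from which all other $c_4$-torsion classes in that strip are obtained by multiplication by elements of the fundamental strip.

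Thus the proof reduces to the following bookkeeping step: for each congruence class of $k \bmod 4$ and each $c_4$-torsion class $\Delta^{2k}y$ appearing in Figure~\ref{fig:tmf2} in degrees $<192$ and not of the form $24\ell+3$, exhibit a factorization
\[
\Delta^{2k} y = (\text{a generator of the form } \{\alpha\Delta^{2k}\} \text{ from the list})\cdot(\text{element of the } k=0 \text{ strip}).
\]
The lemmas and corollaries preceding this statement have done exactly this listing: Lemma~\ref{Lem:v28nu2} and Corollary~\ref{Cor:v28nu2} cover $k=1$; Lemmas~\ref{Lem:v216neekb}, \ref{Lem:v216q}, \ref{Lem:a135} and the Corollaries~\ref{Cor:v216low} and \ref{Cor:v216q} cover $k=2$; and Lemma~\ref{Lem:v224nu2} together with Corollary~\ref{Cor:v224nu2} covers $k=3$. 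Finally, one compares the union of these lists with the full set of $c_4$-torsion classes in the relevant stems of Figure~\ref{fig:tmf2} (excluding the forbidden degrees $24\ell+3$) and checks that the two agree; this is a straightforward, if tedious, chart-inspection.

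The only real obstacle is that final chart-inspection: one has to verify that the lists above are exhaustive, i.e.\ that no $c_4$-torsion class in $\pi_{<192}\tmf$ outside the forbidden degrees has been overlooked as a product. The main thing to watch for is $\Delta^{2k}$-multiples that are \emph{not} directly of the form $\Delta^{2k}\cdot(\text{small class})$ because of $c_4$-extensions and hidden multiplications in the descent spectral sequence; for such classes one uses the relation $c_4 y = \epsilon y$ on $c_4$-torsion (\cite[9.5]{BrunerRognes}) to rewrite $\Delta^{2k}\cdot c_4^j y$ as $\Delta^{2k}\epsilon^j y$, bringing it into the form already handled. Once this is done, Corollary~\ref{Cor:Gens} follows.
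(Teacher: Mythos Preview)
Your overall strategy matches the paper's: Corollary~\ref{Cor:Gens} is meant to be a summary of the preceding lemmas and corollaries together with a chart inspection of Figure~\ref{fig:tmf2}, and that is what you outline. Two points need correction, however.

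First, invoking Theorem~\ref{thm:main} at the outset is circular: Corollary~\ref{Cor:Gens} is a step \emph{toward} proving Theorem~\ref{thm:main} (it reduces the lifting problem to the listed generators, whose lifts are then constructed later in the section). What is actually available at this point is the upper bound of Theorem~\ref{thm:nothi} together with the preceding lemmas asserting that the listed generators lie in the Hurewicz image. This is a minor bookkeeping issue, easily repaired.

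Second --- and this is a genuine error --- your treatment of the type~(ii) elements $c_4^i\eta$ and $c_4^i\eta^2$ is incorrect. You argue they can be rewritten as $\epsilon^i\eta$ and $\epsilon^i\eta^2$ via the relation ``$c_4$ acts as $\epsilon$ on $c_4$-torsion.'' But the classes $c_4^i\eta$ are \emph{not} $c_4$-torsion: $\eta$ itself is not $c_4$-torsion in $\pi_*\tmf$, since under $c_4^{-1}\tmf \simeq \KO[j^{-1}]$ the classes $c_4^i\eta$ map to the nonzero $v_1$-periodic $\eta$-classes in $\pi_*\KO$. So the relation from \cite[9.5]{BrunerRognes} does not apply. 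Moreover $\epsilon$ is nilpotent in $\pi_*\tmf$ while $c_4$ is not, so the identification $c_4^i\eta = \epsilon^i\eta$ is impossible for $i$ large; already in degree~$9$ one has $c_4\eta \ne \epsilon\eta = \nu^3$. In fact the classes $c_4^i\eta^\epsilon$ for $i \ge 1$ are \emph{not} products of the generators listed in Corollary~\ref{Cor:Gens} at all. The paper handles these elements separately: as noted in the introduction, they are the Hurewicz images of Adams' $\mu_{8i+j}$, so no further lifting is required for them. The substantive content of Corollary~\ref{Cor:Gens} concerns the $c_4$-torsion (type~(iii)) part of the Hurewicz image, and the statement should be read in that light.
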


\subsection*{Lifting generators}

We will now describe our method for lifting generators.
Given an element $x \in \tmf_*$, we want to lift it to an element $y \in \pi_*^s$.  To this end, we consider the diagram of (M)ASS's:
$$
\xymatrix{
& \Ext_{A(2)_*}(H(8,v_1^8)) \ar@{=>}[rr] \ar[dd]
&& \tmf_{*+18}M(8,v_1^8) \ar[dd] \\
\Ext_{A_*}(H(8,v_1^8)) \ar@{=>}[rr] \ar[ru] \ar[dd]  
&& \pi_{*+18} M(8,v_1^8) \ar[ru] \ar[dd] & \\
& \Ext_{A(2)_*}(\FF_2) \ar@{=>}[rr] 
&& \tmf_* \\
\Ext_{A_*}(\FF_2) \ar@{=>}[rr] \ar[ru]
&& \pi_*^s \ar[ru]
}
$$ 
First, we identify an element
$$ x' \in \Ext_{A(2)_*}(\FF_2) $$
which detects the element $x$ in the ASS for $\tmf_*$, and then we identify an element 
$$ \td{x}' \in \Ext_{A(2)_*}(H(8,v_1^8)) $$
which maps to it.  This element $\td{x}'$ can be regarded as an element of the zero line of the algebraic $\tmf$-resolution for $\Ext_{A_*}(H(8,v_1^8))$.  We will show that the element $\td{x}'$ is a permanent cycle in the algebraic $\tmf$-resolution, and thus lifts to an element
$$ \td{y}' \in \Ext_{A_*}(H(8,v_1^8)). $$
We will then show that the element $\td{y}'$ is a permanent cycle in the MASS for $M(8,v_1^8)$, and hence detects an element
$$ \td{y} \in \pi_*M(8,v_1^8). $$
Let $y \in \pi^s_*$ be the projection of $\td{y}$ to the top cell. 
It then follows that the image of $y$ in $\tmf_*$ equals $x$, modulo terms of higher Adams filtration (AF).  Furthermore, using the $v_2^{32}$-self map on $M(8,v_1^8)$, we deduce that the element
$$ v_2^{32k}\td{y} \in \pi_*M(8,v_1^8) $$
projects on the top cell to an element $v_2^{32k}y \in \pi_*^s$ whose image in $\tmf_*$ is $\Delta^{8k}x$ modulo terms of higher Adams filtration.  Finally, Theorem~\ref{thm:nothi} eliminates the potential ambiguity caused by elements of higher Adams filtration, since the elements of higher Adams filtration are $v_1^4$-periodic.

We will show all of the generators of Corollary~\ref{Cor:Gens} except for $\eta$, $\nu$ and $\epsilon$ actually come from the top cell of $M(8,v_1^8)$, and thus $v_2^{32}$ periodicity extends our work below dimension 192 to all dimensions.  It turns out that $\nu^2$ and $\epsilon$ do not come from the top cell of $M(8,v_1^8)$.  In order to show that the elements  
$$ \Delta^{8k}\nu^2,  \Delta^{8k}\epsilon \in \pi_*\tmf $$
are in the Hurewicz image, for $k > 0$, we will instead show that $\Delta^8\nu^2$ and $\Delta^8\epsilon$ come from the top cell of $M(8,v_1^8)$ (Lemma~\ref{lem:nu2epsilon}). 

\begin{lem}\label{Lem:kkbLift}
The following classes lift to the top cell of $M(8,v_1^8)$:
\begin{enumerate}
\item $\kappa \in \pi_{14}(\tmf)$, 
\item $\bar{\kappa} \in \pi_{20}(\tmf)$. 
\end{enumerate}
\end{lem}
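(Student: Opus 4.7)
The plan is to apply the general MASS-based lifting strategy outlined just before the statement. For $\kappa$ I would use its ASS-detecting class $d_0 \in \Ext^{4,18}_{A(2)_*}(\FF_2)$ and form the top-cell lift
$$
d_0[18] \in \Ext^{4,36}_{A(2)_*}(H(8,v_1^8));
$$
for $\bar{\kappa}$ I would use $g \in \Ext^{4,24}_{A(2)_*}(\FF_2)$ to form $g[18] \in \Ext^{4,42}_{A(2)_*}(H(8,v_1^8))$. The existence of these top-cell lifts is immediate from the fact that $\kappa$ and $\bar{\kappa}$ are both $2$-torsion and that $v_1^8$ acts trivially on $H(8,v_1^8)$ by construction.

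Next I would show that $d_0[18]$ and $g[18]$ are permanent cycles in the algebraic tmf-resolution and then in the MASS for $M(8,v_1^8)$. The algebraic tmf-resolution claim is immediate because $d_0$ and $g$ lie in the image of the edge homomorphism $\Ext_{A_*}(\FF_2) \to \Ext_{A(2)_*}(\FF_2)$ coming from the $0$-line of the algebraic tmf-resolution for $\FF_2$, so both top-cell lifts arise from honest classes in $\Ext_{A_*}(H(8,v_1^8))$ by naturality with respect to the inclusion of the top cell. For the MASS claim, naturality under the top-cell projection $p \colon M(8,v_1^8) \to S^{18}$ forces any putative differential $d_r(d_0[18])$ or $d_r(g[18])$ to be carried by a class supported on the $0$-, $1$-, or $17$-cell of $H(8,v_1^8)$, since $p_* d_0[18] = d_0$ and $p_* g[18] = g$ are permanent cycles in the ASS for $S^0$; a finite inspection of the relevant bidegrees using the charts of Figures~\ref{fig:ExtH38} and~\ref{fig:MASSM38} then rules out all such candidates.

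Having established permanent cycle status, $d_0[18]$ and $g[18]$ detect elements $\td{\kappa} \in \pi_{32}M(8,v_1^8)$ and $\td{\bar{\kappa}} \in \pi_{38}M(8,v_1^8)$ whose top-cell projections lie in $\pi_{14}^s$ and $\pi_{20}^s$ and are detected by $d_0$ and $g$ respectively. By inspection of the Adams $E_\infty$ pattern in these stems, such elements equal $\kappa$ and $\bar{\kappa}$ up to higher-Adams-filtration $v_1^4$-periodic corrections, and by Theorem~\ref{thm:nothi} these corrections already lie in the Hurewicz image and can be absorbed by modifying the lifts. The main obstacle is the MASS-differential verification in the middle step; thanks to the low Adams filtration of $d_0$ and $g$ and the MASS information already tabulated in Section~\ref{sec:M38}, this reduces to a bounded check in an already-completed range.
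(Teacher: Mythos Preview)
Your approach is workable but takes a different and more elaborate route than the paper. The paper bypasses the MASS entirely and argues directly in homotopy via the Atiyah--Hirzebruch spectral sequence: since $\kappa$ and $\bar{\kappa}$ are already known elements of $\pi_*^s$ mapping to the named elements of $\pi_*\tmf$, one only needs (i) that they are $8$-torsion, so lift to $\pi_*M(8)$, and (ii) that $v_1^8$ applied to these lifts vanishes, which is checked by observing that the relevant bidegrees in Isaksen's chart for the sphere (stems $31,32$ and $36,37$ in sufficiently high Adams filtration) are empty. Your MASS-based argument follows the heavier template used for the harder generators later in the section; it can be carried out, but is overkill in these low dimensions where the direct homotopy check is immediate.

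There are also a couple of inaccuracies in your write-up. First, $\bar{\kappa}$ has order $8$ in $\pi_{20}^s$, not $2$; this does not break the argument (order $8$ is exactly what is needed to lift to $M(8)$), but the statement is wrong as written. Second, the phrase ``$v_1^8$ acts trivially on $H(8,v_1^8)$ by construction'' does not justify the existence of the top-cell lifts $d_0[18]$ and $g[18]$: what one actually needs is that $d_0[1]$ and $g[1]$ are $v_1^8$-torsion in $\Ext_{A(2)_*}(H(8))$, which is a separate (easy, but not tautological) check. Finally, ruling out MASS differentials on $d_0[18]$ and $g[18]$ requires inspecting not just $\Ext_{A(2)_*}(H(8,v_1^8))$ but the full algebraic tmf-resolution for $H(8,v_1^8)$, so the appeal to Figures~\ref{fig:ExtH38} and~\ref{fig:MASSM38} alone is incomplete.
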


\begin{proof}
We will check that each element lifts using the AHSS:
\begin{enumerate}

\item Since $\kappa$ is $2$-torsion (and thus $8$-torsion), it lifts to $\kappa[1] \in \pi_{15}(M(8))$. Inspection of \cite[Pg. 3]{IsaksenChart} in stems $31$ and $32$ and AF $\geq 12$ reveals that there are no classes which could detect $v_1^8 \kappa[1]$. Therefore $\kappa[1]$ lifts to $\kappa[18] \in \pi_{32}(M(8,v_1^8))$. 
\vspace{10pt}

\item Since $\bar{\kappa}$ is $8$-torsion, it lifts to $\bar{\kappa}[1] \in \pi_{21}(M(8))$. Inspection of \cite[Pg. 3]{IsaksenChart} in stems $36$ and $37$ and AF $\geq 12$ reveals that there are no classes which could detect $v_1^8 \bar{\kappa}[1]$. Therefore $\bar{\kappa}[1]$ lifts to $\bar{\kappa}[18] \in \pi_{38}(M(8,v_1^8))$. 

\end{enumerate}
\end{proof}

\begin{lem}\label{Lem:quwLift}
The following classes lift to the top cell of $M(8,v_1^8)$:
\begin{enumerate}
\item $q \in \pi_{32}(\tmf)$,
\item $u \in \pi_{39}(\tmf)$,  
\item $w \in \pi_{45}(\tmf)$.
\end{enumerate}
\end{lem}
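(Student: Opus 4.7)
The plan is to adapt the method of Lemma~\ref{Lem:kkbLift} verbatim to each of $q, u, w$. Since each of these elements is $2$-torsion (and hence $8$-torsion), each lifts through the cofiber sequence $S \xrightarrow{8} S \to M(8)$ to a class on the $1$-cell of $M(8)$: $q[1] \in \pi_{33}(M(8))$, $u[1] \in \pi_{40}(M(8))$, and $w[1] \in \pi_{46}(M(8))$. The obstruction to further lifting each class to the top cell of $M(8,v_1^8)$ via the cofiber sequence $\Sigma^{16} M(8) \xrightarrow{v_1^8} M(8) \to M(8,v_1^8)$ is the class $v_1^8 x[1] \in \pi_{\deg(x)+17}(M(8))$. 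Since $v_1^8$ has Adams filtration $8$, each such obstruction has Adams filtration at least $8 + \mr{AF}(x)$, and the strategy is to verify via inspection of \cite[Pg.~3]{IsaksenChart} that no Adams class exists at the relevant stem and filtration, forcing the obstruction to vanish.

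Explicitly, $q$ is detected in the Adams spectral sequence for the sphere by $q \in \Ext^{6,38}_{A_*}(\FF_2)$, so the obstruction $v_1^8 q[1] \in \pi_{49}(M(8))$ has $\mr{AF} \geq 14$; one should inspect stems $49$ and $50$ at $\mr{AF} \geq 14$. The element $u$ is detected by $\Delta h_1 d_0 \in \Ext^{9,48}_{A_*}(\FF_2)$, so the obstruction $v_1^8 u[1] \in \pi_{56}(M(8))$ has $\mr{AF} \geq 17$; inspect stems $56$ and $57$ at $\mr{AF} \geq 17$. The element $w$ is detected by $\Delta h_1 g \in \Ext^{9,54}_{A_*}(\FF_2)$, so the obstruction $v_1^8 w[1] \in \pi_{62}(M(8))$ has $\mr{AF} \geq 17$; inspect stems $62$ and $63$ at $\mr{AF} \geq 17$.

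The main potential obstacle is the chart inspection for $w$, since the Adams spectral sequence near stem $62$ is in the Kervaire region and contains numerous $v_1$- and $v_2$-periodic families. However, the Adams vanishing line (of slope roughly $1/2$) together with the high filtration bound $\mr{AF} \geq 17$ should reduce the search to a handful of candidates; each surviving candidate is expected to be either $v_1^4$-periodic (and hence already accounted for by the $\KO$-theory computation) or ruled out by the Hurewicz image upper bound of Theorem~\ref{thm:nothi}. Once all candidates are eliminated, the resulting lifts produce the desired classes $q[18] \in \pi_{50}(M(8,v_1^8))$, $u[18] \in \pi_{57}(M(8,v_1^8))$, and $w[18] \in \pi_{63}(M(8,v_1^8))$, each projecting to the correct class on the top cell.
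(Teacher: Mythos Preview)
Your approach works for $u$ and $w$, and matches the paper's argument there (modulo an off-by-one in the stems: the obstruction $v_1^8 u[1]$ lies in $\pi_{56}(M(8))$, which is built from stems $55$ and $56$, not $56$ and $57$; similarly for $q$ and $w$). For these two classes the detecting elements $\Delta h_1 d_0$ and $\Delta h_1 g$ have nonzero image under $\Ext_{A_*}(\FF_2) \to \Ext_{A(2)_*}(\FF_2)$, so the Hurewicz compatibility is immediate and the only work is the AHSS inspection you describe. Your closing paragraph about $v_1$-periodic candidates and Theorem~\ref{thm:nothi} is a red herring, though: the obstruction is a class in $\pi_*(M(8))$, and one simply checks that the relevant bidegrees in the ASS chart are empty; neither $\KO$-theory nor the upper bound on the Hurewicz image plays any role.

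For $q$, however, there is a genuine gap. The class you call $q \in \Ext^{6,38}_{A_*}(\FF_2)$ (which the paper writes as $\Delta h_1 h_3$) maps to \emph{zero} in $\Ext_{A(2)_*}(\FF_2)$, since $h_3$ vanishes there. So the map of Adams $E_2$-pages does not witness that the sphere class $\td{q} \in \pi^s_{32}$ hits the $\tmf$ class $q$, which is detected by $v_2^4 c_0$ in Adams filtration $7$. Your argument lifts $\td{q}$ to the top cell of $M(8,v_1^8)$, but it does not establish that the Hurewicz image of $\td{q}$ is the desired element of $\pi_{32}(\tmf)$ rather than zero. The paper closes this gap by identifying, via the Geometric Boundary Theorem applied to $d_3(v_1^2 h_{2,1} g^2[1]) = v_1^8 \Delta h_1 h_3[1]$ in the MASS for $M(8)$ and $d_4(v_1^2 h_{2,1} g^2[1]) = v_1^8 v_2^4 c_0[1]$ in the MASS for $\tmf \wedge M(8)$, that both lifts $\td{q}[18]$ and $q[18]$ are detected by the \emph{same} class $v_1^2 h_{2,1} g^2[1]$ in the respective MASS's for $M(8,v_1^8)$ and $\tmf \wedge M(8,v_1^8)$. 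This common detecting class is what forces the Hurewicz map to carry one to the other.
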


\begin{proof}
We will check that each element lifts using the Atiyah-Hirzebruch spectral sequence (AHSS).

\begin{enumerate}

\item We begin with $q \in \pi_{32}(\tmf)$, which we will define to be the unique non-trivial $c_4$-torsion class detected by the element 
$$ v_2^4 c_0 \in \Ext^{7,7+32}_{A(2)_*}(\FF_2) $$
in the ASS for $\tmf$. The element $v_2^4 c_0$ does \emph{not} lift to $\Ext_{A_*}$.  
Nevertheless, we claim that there is an element $\td{q} \in \pi^s_{32}$\footnote{The element we are calling $\td{q}\in \pi^s_{32}$ is traditionally called $q$, but we add the tilde to distinguish it from the element we are calling $q$ in $\pi_{32}\tmf$.}detected by the element
$$ \Delta h_1 h_3 \in \Ext^{6,6+32}_{A_*}(\FF_2) $$
in the ASS for the sphere, which maps to $q$ under the $\tmf$ Hurewicz homomorphism.  Our strategy will be to argue that $\td{q}$ and $q$ lift to 
$$ \td{q}[18] \in \pi_{50}M(8,v_1^8) \quad \mr{and} \quad q[18] \in \tmf_{50}M(8,v_1^8) $$ respectively, and that the element which detects $\td{q}[18]$ in the MASS for $M(8,v_1^8)$ maps to the element which detects $q[18]$ in the MASS for $\tmf \wedge M(8,v_1^8)$ under the map
\begin{equation}\label{eq:extmap}
 \Ext_{A_*}(H(8,v_1^8)) \rightarrow \Ext_{A(2)_*}(H(8,v_1^8)).
 \end{equation}

Inspection of \cite[Pg. 3]{IsaksenChart} in stem $32$ and AF $ \geq 7$ reveals that $\td{q}$ is $2$-torsion (and thus $8$-torsion), so $\td{q}$ lifts to $\td{q}[1] \in \pi_{33}(M(8))$. Inspection of \cite[Pg. 3]{IsaksenChart} in stems $48$ and $49$ and AF $\geq 14$ reveals that there are no classes which could detect $v_1^8 \td{q}[1]$. Therefore $\td{q}[1]$ lifts to $\td{q}[18] \in \pi_{50}(M(8,v_1^8))$.  A similar but easier analysis reveals that the lift $q[18]$ exists.

The elements $\Delta h_1 h_3 \in \Ext_{A_*}(\FF_2)$ and $v_2^4c_0 \in \Ext_{A(2)_*}(\FF_2)$ are $h_0$-torsion, and hence lift to elements
\begin{align*}
 \Delta h_1 h_3[1] \in \Ext_{A_*}(H(8)), \\
 v_2^4 c_0[1] \in \Ext_{A(2)_*}(H(8))
 \end{align*}
which detect $\td{q}[1] \in \pi_{33}M(8)$ and $q[1] \in \tmf_{33}M(8)$, respectively, in the MASS. To identify the elements which detect $\td{q}[18]$ and $q[18]$ in the MASS, 
we make use of the Geometric Boundary Theorem \cite[Appendix~A]{goodehp}.\footnote{We are specifically using case $(5)$ of the Geometric Boundary Theorem since the relevant class (denoted $p_*(y)$ in the theorem statement) is a permanent cycle.  We will be using this argument repeatedly in subsequent proofs in this section, and for brevity will simply say ``by the Geometric Boundary Theorem...'' in these subsequent instances.}
The differentials
\begin{align*}
 d_3(v_1^2 h_{2,1}g^2[1]) & = v_1^8 \Delta h_3 h_1[1], \\
 d_4(v_1^2 h_{2,1}g^2[1]) & = v_1^8 v_2^4 c_0[1]
 \end{align*}
in the MASS's for $M(8)$ and $\tmf \wedge M(8)$, respectively, imply that $\td{q}[18] \in \pi_{50}M(8,v_1^8)$ and $q[18] \in \tmf_{50}M(8,v_1^8)$ are detected by 
\begin{align*}
v_1^2 h_{2,1}g^2[1] & \in \Ext_{A_*}(H(8,v_1^8)), \\
v_1^2 h_{2,1}g^2[1] & \in \Ext_{A(2)_*}(H(8,v_1^8)),
\end{align*}
in the MASS's for $M(8,v_1^8)$ and $\tmf \wedge M(8,v_1^8)$, respectively, and the former maps to the latter under the map (\ref{eq:extmap}).
\vspace{10pt}

\item Since $u \in \pi_{39}\tmf$ is detected by an element of $\Ext_{A(2)_*}$ in the image of the map
\begin{equation}\label{eq:extmapF2}
 \Ext_{A_*}(\FF_2) \rightarrow \Ext_{A(2)_*}(\FF_2)
\end{equation}
 we immediately see that the element $u \in \pi_{39}(S)$ maps to it. We are left with lifting $u \in \pi^s_{39}$ to the top cell of $M(8,v_1^8)$.  Inspection of \cite[Pg. 3]{IsaksenChart} in stem $39$ and AF $ \geq 10$ reveals that $u$ is $2$-torsion (and thus $8$-torsion), so $u$ lifts to $u[1] \in \pi_{40}(M(8))$. Inspection of \cite[Pg. 3]{IsaksenChart} in stems $55$ and $56$ and AF $\geq 17$ reveals that there are no classes which could detect $v_1^8 u[1]$. Therefore $u[1]$ lifts to $u[18] \in \pi_{57}(M(8,v_1^8))$. 
\vspace{10pt}

\item The element $w \in \pi_{45}\tmf$ is detected by an element which is in the image of the map (\ref{eq:extmapF2}), and thus we deduce that $w \in \pi_{45}(S)$ maps to it.  A similar argument to the case above shows that $w$ lifts to $w[18] \in \pi_{63}(M(8,v_1^8))$. 

\end{enumerate}
\end{proof}

\begin{lem}\label{Lem:v28Lift}
The following classes lift to the top cell of $M(8,v_1^8)$:
\begin{enumerate}
\item $\Delta^2 \nu^2 \in \pi_{54}(\tmf)$, 
\item $\Delta^2 \kappa \nu \in \pi_{65}(\tmf)$,  
\item $\Delta^2 \eta^2 \bar{\kappa} \in \pi_{70}(\tmf)$.
\end{enumerate}
\end{lem}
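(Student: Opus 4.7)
The plan is to follow the template established by Lemmas~\ref{Lem:kkbLift} and \ref{Lem:quwLift}: for each of the three target classes, exhibit a lift to the top cell of $M(8,v_1^8)$ of the underlying degree-zero factor, and then multiply by $v_2^8 \in \pi_{48}M(8,v_1^8)$ from Lemma~\ref{lem:v2^8} to pick up the $\Delta^2$ factor. Concretely, I plan to produce lifts
$$ \nu^2[18] \in \pi_{24}M(8,v_1^8), \quad \kappa\nu[18] \in \pi_{35}M(8,v_1^8), \quad \eta^2\bar\kappa[18] \in \pi_{40}M(8,v_1^8), $$
and then form the products $v_2^8 \cdot \nu^2[18]$, $v_2^8 \cdot \kappa\nu[18]$, $v_2^8 \cdot \eta^2\bar\kappa[18]$ using the weak ring structure of Proposition~\ref{prop:ring}. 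Projecting to the top cell produces classes in $\pi_{54}^s$, $\pi_{65}^s$, $\pi_{70}^s$ whose $\tmf$-Hurewicz images agree with $\Delta^2\nu^2$, $\Delta^2\kappa\nu$, $\Delta^2\eta^2\bar\kappa$ modulo higher Adams filtration, and the ambiguity is killed by Theorem~\ref{thm:nothi} because the higher-filtration classes in these degrees are $v_1^4$-periodic.

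The lifting through $M(8)$ in Step 1 is immediate, since each of $\nu^2$, $\kappa\nu$, $\eta^2\bar\kappa$ is $2$-torsion (hence $8$-torsion) in $\pi_*^s$, so each lifts over the projection $M(8) \to S^1$ to elements
$$ \nu^2[1] \in \pi_7 M(8), \quad \kappa\nu[1] \in \pi_{18}M(8), \quad \eta^2\bar\kappa[1] \in \pi_{23}M(8). $$
The lifting through $v_1^8$ in Step 2 requires showing that $v_1^8 \cdot x[1] = 0$ for each element; using the cofiber sequence $\Sigma^{16}M(8) \xrightarrow{v_1^8} M(8) \to M(8,v_1^8)$ this reduces to a chart inspection of \cite[Pg.~3]{IsaksenChart} in stems $22,23$ for $\nu^2$, in stems $33,34$ for $\kappa\nu$, and in stems $38,39$ for $\eta^2\bar\kappa$, at Adams filtration at least one more than that of $v_1^8$ applied to the filtration of the lift on the bottom cell. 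In each case one checks that there are no classes in these bidegrees capable of detecting $v_1^8 \cdot x[1]$, exactly as in the proof of Lemma~\ref{Lem:kkbLift}.

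The main obstacle is the chart inspection in Step 2, particularly for $\eta^2\bar\kappa$, which lies in a dense region of the ASS for the sphere; however since we only need to rule out classes of Adams filtration at least a large explicit threshold, and since the relevant ranges are within the computational tables of Isaksen-Wang-Xu, this is a finite and verifiable check. Everything else is essentially formal: the ring structure of Proposition~\ref{prop:ring} provides the product with $v_2^8$, the dimension shifts work out ($(|x|+18) + 48 - 18 = |x|+48$, matching $54, 65, 70$ respectively), and Theorem~\ref{thm:nothi} resolves the filtration indeterminacy in the projection to the top cell.
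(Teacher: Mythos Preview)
Your argument has a genuine gap at the key step. Lemma~\ref{lem:v2^8} does \emph{not} produce an element $v_2^8 \in \pi_{48}M(8,v_1^8)$; it only shows that $v_2^8 \in \Ext^{8,48+8}_{A(2)_*}(H(8,v_1^8))$ is a permanent cycle in the algebraic $\tmf$-resolution, yielding an element of $\Ext^{8,48+8}_{A_*}(H(8,v_1^8))$. That is an element on the $E_2$-page of the MASS, not a homotopy class. Indeed, the entire point of Theorem~\ref{thm:v232} is the hard work of showing that the \emph{fourth power} $v_2^{32}$ survives the MASS; in the proof of that theorem the paper only uses that $v_2^8 \in \E{mass}{}_2$, and the Leibniz rule then gives $v_2^{32} \in \E{mass}{}_4$, nothing more. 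So you cannot multiply a homotopy class $\nu^2[18]$ by $v_2^8$ using Proposition~\ref{prop:ring}, because $v_2^8$ is not known to be a homotopy class.

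If you try to salvage the idea by working on the $E_2$-page---forming $v_2^8 \cdot \{\nu^2[18]\}$ in $\Ext_{A_*}(H(8,v_1^8))$ and then arguing it is a permanent cycle---you are back to doing essentially what the paper does, but without having identified the correct detecting element. The paper's approach is to lift $\Delta^2\nu^2$ (and its companions) directly to $\tmf_*M(8,v_1^8)$, then use the Geometric Boundary Theorem applied to the differential $d_2(v_2^{10}v_1^4 h_2 h_0[1]) = v_2^8 v_1^8 h_2^2[1]$ in the MASS for $\tmf \wedge M(8)$ to identify the detecting class of $\Delta^2\nu^2[18]$ as $v_2^{10}v_1^4 h_2 h_0[1]$ (note the $v_1^4$-shift). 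One then checks by chart inspection in the algebraic $\tmf$-resolution that this element is a permanent cycle there, and finally that there are no possible targets for MASS differentials from it. The analogous detecting elements for parts (2) and (3) are $v_2^{10}v_1^4 d_0 h_0[1]$ and $v_2^8 v_1^4 d_0 e_0[1]$, obtained the same way.
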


\begin{proof}
We follow the proof of \cite[Thm. 11.1]{BHHM2} (which builds on \cite[Exm. 9.5]{BHHM2} and \cite[Prop. 10.1]{BHHM2}).

\begin{enumerate}

\item We begin with $\Delta^2 \nu^2 \in \pi_{54}(\tmf)$. This class lifts to an element 
$$\Delta^2 \nu^2[1] \in \tmf_{55}(M(8))$$
which is detected by 
$$v_2^8 h_2^2[1] \in \Ext^{12,55+12}_{A(2)_*}(H(8))$$ 
in the MASS for $\tmf \wedge M(8)$. 
Let 
$$\Delta^2 \nu^2[18] \in \tmf_{72}(M(8,v_1^8)).$$
be a lift of $\Delta^2 \nu^2[1]$. 
In the MASS for $\tmf \wedge M(8)$, there is a differential 
$$d_2(v_2^{10} v_1^4 h_2 h_0[1]) = v_2^8 v_1^8 h_2^2[1].$$
Since $v_2^{10} v_1^4 h_2 h_0[1]$ is a permanent cycle in the MASS for $\tmf \wedge M(8,v_1^8)$, it follows from the Geometric Boundary Theorem that $\Delta^2 \nu^2 [18]$ is detected by $v_2^{10} v_1^4 h_2 h_0[1]$ in the MASS for $\tmf \wedge M(8,v_1^8)$. In particular, we see that $\Delta^2 \nu^2[18]$ has modified Adams filtration (MAF) $18$ and stem $72$.

We now check that $v_2^{10} v_1^4 h_2 h_0[1]$ is a permanent cycle in the algebraic $\tmf$-resolution for $H(8,v_1^8)$. Its relative position\footnote{We will say that an element $x \in \Ext_{A(2)_*}(H(8,v_1^8))$ has \emph{relative position} $(t-s,s)$ in $\Ext_{A(2)_*}(\bou_I \otimes H(8,v_1^8))$ if the image of a differential supported by $x$ in the algebraic tmf-resolution lies in $\Ext^{s+1,t}_{A(2)_*}(\bou_I\otimes H(8,v_1^8))$, and the image of a differential supported by $x$ in the MASS could be detected in the algebraic tmf-resolution by an element in $\Ext^{s+r,t-r+1}_{A(2)_*}(\bou_1 \otimes H(8,v_1^8))$. In other words, if you were to pretend $x$ were an element in $\Ext^{s,t}_{A(2)_*}(\bou_I\otimes H(8,v_1^8))$, then $d_r$-differentials in the algebriac tmf-resolution ``look'' like Adams $d_1$'s, and $d_r$-differentials in the MASS ``look'' like Adams $d_r$'s.} is $t-s = 65$ and $AF = 17$, its relative position in $\Ext_{A(2)_*}(\bou_1^{\otimes 2} \otimes H(8,v_1^8))$ is $t-s = 58$ and $AF = 16$, and its relative position in $\Ext_{A(2)_*}(\bou_1^{\otimes 3} \otimes H(8,v_1^8))$ is $t-s = 51$ and $AF = 15$, the last of which lies above the vanishing line. Inspection of the relevant charts shows that $v_2^{10} v_1^4 h_2 h_0[1]$ cannot support a nontrivial $d_1$-differential since the target bidegrees are zero. Therefore $v_2^{10} v_1^4 h_2 h_0[1]$ is a permanent cycle in the algebraic $\tmf$-resolution for $H(8,v_1^8)$ and therefore it detects an element $\{v_2^{10} v_1^4 h_2 h_0[1]\}$ in $\Ext_{A_*}(H(8,v_1^8))$. 

Finally, inspection of the same algebraic tmf-resolution charts reveals that there are no possible targets for a nontrivial differential supported by $\{v_2^{10} v_1^4 h_2 h_0[1]\}$ in the MASS for $M(8,v_1^8)$. Therefore $\{v_2^{10} v_1^4 h_2 h_0[1]\}$ is a permanent cycle which detects a lift of $\Delta^2 \nu^2$. 
\vspace{10pt}

\item The class $\Delta^2 \kappa \nu \in \pi_{65}(\tmf)$ lifts to an element 
$$\Delta^2 \kappa \nu[1] \in \tmf_{66}(M(8))$$ 
which is detected by 
$$v_2^8 h_2 d_0[1] \in \Ext_{A(2)_*}^{15,66+15}(H(8))$$ 
in the MASS for $\tmf \wedge M(8)$. Lift $\Delta^2 \kappa \nu[1]$ to an element 
$$\Delta^2 \kappa \nu[18] \in \tmf_{83}(M(8,v_1^8)).$$ In the MASS for $\tmf \wedge M(8)$, there is a differential 
$$d_2(v_2^{10} v_1^4 d_0 h_0[1]) = v_2^8 v_1^8 h_2d_0[1].$$
It follows from the Geometric Boundary Theorem that $v_2^8 \kappa \nu[18]$ is detected by $v_2^{10} v_1^4 d_0 h_0[1]$ in the MASS for $\tmf \wedge M(8,v_1^8)$. 
In particular, we see that $\Delta^2 \kappa \nu[18]$ has MAF $21$ and stem $83$. 

We now check that $v_2^{10} v_1^4 d_0 h_0[1]$ is a permanent cycle in the algebraic $\tmf$-resolution for $H(8,v_1^8)$. Its relative position in $\Ext_{A(2)_*}(\bou_1 \otimes H(8,v_1^8))$ is $t-s = 76$ and $AF = 20$, its relative position in $\Ext_{A(2)_*}(\bou_1^{\otimes 2} \otimes H(8,v_1^8))$ is $t-s = 69$ and $AF = 19$, and its relative position in $\Ext_{A(2)_*}(\bou_1^{\otimes 3} \otimes H(8,v_1^8))$ is $t-s = 62$ and $AF = 18$, the last of which has targets only above the vanishing line. Inspection of the relevant charts shows that $v_2^{10} v_1^4 d_0 h_0[1]$ cannot support a nontrivial $d_1$-differential since the target bidegrees are zero. Therefore $v_2^{10} v_1^4 d_0 h_0[1]$ is a permanent cycle in the algebraic $\tmf$-resolution for $H(8,v_1^8)$ and detects an element $\{v_2^{10} v_1^4 d_0 h_0[1]\}$ in $\Ext_{A_*}(H(8,v_1^8))$. 

Finally, inspection of the same charts reveals that there are no possible targets for a nontrivial differential supported by $\{v_2^{10} v_1^4 d_0 h_0[1]\}$ in the MASS for $M(8,v_1^8)$. Therefore $\{v_2^{10} v_1^4 d_0 h_0[1]\}$ is a permanent cycle. 
\vspace{10pt}

\item The class $\Delta^2 \eta^2 \bar{\kappa} \in \pi_{70}(\tmf)$ lifts to an element 
$$\Delta^2 \eta^2 \bar{\kappa} [1] \in \tmf_{71}(M(8))$$
which is detected by 
$$g^2h^6_{2,1}[1] \in \Ext^{16,71+16}_{A(2)_*}(H(8))$$
in the MASS for $\tmf \wedge M(8)$. Lift $\Delta^2 \eta^2 \bar{\kappa}[1]$ to an element 
$$\Delta^2 \eta^2 \bar{\kappa}[18] \in \tmf_{88}(M(8,v_1^8)).$$ 
In the MASS for $\tmf \wedge M(8)$, there is a differential 
$$d_2(v_2^{8} v_1^4 d_0 e_0[1]) = g^2 v_1^8 h_{2,1}^6[1].$$
It follows from the Geometric Boundary Theorem that $\Delta^2 \eta^2 \bar{\kappa}[18]$ is detected by $v_2^{8} v_1^4 d_0 e_0[1]$ in the MASS for $\tmf \wedge M(8,v_1^8)$. In particular, we see that $\Delta^2 \eta^2 \bar{\kappa}[18]$ has MAF $24$ and stem $88$. 

We now check that $v_2^{8} v_1^4 d_0 e_0[1]$ is a permanent cycle in the algebraic $\tmf$-resolution for $H(8,v_1^8)$. Its relative position in $\Ext_{A(2)_*}(\bou_1 \otimes H(8,v_1^8))$ is $t-s = 81$ and $AF = 23$ and its relative position in $\Ext_{A(2)_*}(\bou_1^{\otimes 2} \otimes H(8,v_1^8))$ is $t-s = 74$ and $AF = 22$, the latter of which lies above the vanishing line. Inspection of the relevant charts shows that $v_2^{8} v_1^4 d_0 e_0[1]$ cannot support a nontrivial differential in the algebraic tmf-resolution for $H(8,v_1^8)$ since the target bidegrees are zero. Therefore $v_2^{8} v_1^4 d_0 e_0[1]$ is a permanent cycle in the algebraic $\tmf$-resolution for $H(8,v_1^8)$ and therefore lifts to an element $\{v_2^{8} v_1^4 d_0 e_0[1]\}$ in $\Ext_{A_*}(H(8,v_1^8))$. 

Finally, inspection of the same charts reveals that there are no possible targets for a nontrivial differential supported by $\{v_2^{8} v_1^4 d_0 e_0[1]\}$ in the MASS for $M(8,v_1^8)$. Therefore $\{v_2^{8} v_1^4 d_0 e_0[1]\}$ is a permanent cycle in the MASS for $M(8,v_1^8)$. 

\end{enumerate}

\end{proof}

\begin{lem}\label{Lem:v216neLift}
The following classes lift to the top cell of $M(8,v_1^8)$:
\begin{enumerate}
\item $\Delta^4 \nu^2 \in \pi_{102}(\tmf)$, $\Delta^4 \epsilon \in \pi_{104}(\tmf)$, $\Delta^4 \kappa \in \pi_{110}(\tmf)$, 
\item $\Delta^4 2 \bar{\kappa} \in \pi_{116}(\tmf)$. 

\end{enumerate}
\end{lem}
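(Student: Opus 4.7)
The plan is to follow the same template as in Lemmas~\ref{Lem:v28Lift}, \ref{Lem:quwLift}, and \ref{Lem:kkbLift}. For each of the classes $\Delta^4\nu^2$, $\Delta^4\epsilon$, $\Delta^4\kappa$, and $2\Delta^4\bar\kappa$, I would first lift to $\tmf_{*+1}M(8)$ using $8$-torsion of the class (so $\Delta^4 x$ lifts to $\Delta^4 x[1]$ detected by $v_2^{16}\cdot x[1]$ in the MASS for $\tmf\wedge M(8)$, where $x\in\{h_2^2, c_0, d_0, h_{2,1}^4\cdot h_0\}$, i.e.\ $v_2^{16}h_2^2[1]$, $v_2^{16}c_0[1]$, $v_2^{16}d_0[1]$, $v_2^{16}gh_0[1]$, respectively). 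Pick a lift $\Delta^4 x[18]$ in $\tmf_*M(8,v_1^8)$; such a lift exists since the classes $v_1^8\cdot v_2^{16}x[1]$ can be checked to be zero in $\tmf_*M(8)$ (there is nothing in the charts to obstruct the lift in the relevant tri-degrees of the MASS for $\tmf\wedge M(8)$).

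Next, I would identify the detecting element of $\Delta^4 x[18]$ in the MASS for $\tmf\wedge M(8,v_1^8)$ via the Geometric Boundary Theorem applied to the triangle $H(8,v_1^8)[-1]\to \Sigma^{24}H(8)[-8]\xrightarrow{v_1^8}H(8)\to H(8,v_1^8)$. Concretely, I would locate MASS $d_2$-differentials in the MASS for $\tmf\wedge M(8)$ of the form
\begin{equation*}
d_2\bigl(v_2^{18}v_1^4 y[1]\bigr) = v_2^{16}v_1^8 x[1]
\end{equation*}
with $y$ chosen appropriately (by analogy with \cite[Lemma~\ref{Lem:v28Lift}]{}, these are exactly the kind of $v_2$-linear differentials already present near the vanishing line of the MASS for $\tmf\wedge M(8)$). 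The theorem then gives that $\Delta^4 x[18]$ is detected by $v_2^{18}v_1^4 y[1]$ in the MASS for $\tmf\wedge M(8,v_1^8)$, and in particular pins down the MAF of each lift.

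Then I would lift $v_2^{18}v_1^4 y[1]$ from $\Ext_{A(2)_*}(H(8,v_1^8))$ to $\Ext_{A_*}(H(8,v_1^8))$ by verifying it is a permanent cycle in the algebraic tmf-resolution for $H(8,v_1^8)$. This is the computational heart of the proof: one computes the relative position $(t-s,s)$ of $v_2^{18}v_1^4 y[1]$ in $\Ext_{A(2)_*}(\bou_1^{\otimes k}\otimes H(8,v_1^8))$ for $k=1,2,3,\ldots$, and checks that each target bidegree of a possible $d_1$ lies above the vanishing line or is zero (using the $\bou_1^{\otimes k}$ charts from Section~\ref{sec:algtmfres} and Proposition~\ref{prop:h21deathH38} to kill surviving $h_{2,1}$-towers). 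The results of Section~\ref{sec:algtmfres} — in particular the $g$-localized analysis — are designed for exactly this kind of argument, so the main obstacle is bookkeeping rather than ideas.

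Finally, with $\{v_2^{18}v_1^4 y[1]\}\in\Ext_{A_*}(H(8,v_1^8))$ in hand, I would show it is a permanent cycle in the MASS for $M(8,v_1^8)$ by inspection of the same algebraic tmf-resolution charts: the potential targets of MASS $d_r$-differentials in the relevant stems all either vanish, lie above the vanishing line, or were already eliminated in the course of proving Theorem~\ref{thm:v232}. The resulting element in $\pi_*M(8,v_1^8)$ projects on the top cell to a lift of $\Delta^4 x$ modulo higher Adams filtration; and by Theorem~\ref{thm:nothi} the indeterminacy (which is $v_1^4$-periodic and hence cannot contribute to the $c_4$-torsion classes we care about) is harmless. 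The expected main obstacle is verifying that no $v_0$- or $h_{2,1}$-tower near the vanishing line of the algebraic tmf-resolution produces a nonzero $d_r$ on $v_2^{18}v_1^4 y[1]$ for the class $2\Delta^4\bar\kappa$, since here the detecting element has the largest MAF and sits closest to the vanishing line; if a direct inspection fails, I would invoke Lemma~\ref{lem:technical} in the spirit of Remark~\ref{rmk:technical} to eliminate the offending class.
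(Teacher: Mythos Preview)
Your plan is correct and follows essentially the same template as the paper. The only notable difference is that for part~(1) the paper does not redo the argument but simply cites \cite[Thm.~11.1]{BHHM2}, where $\Delta^4\nu^2$, $\Delta^4\epsilon$, and $\Delta^4\kappa$ were already lifted; the detailed argument is carried out only for $2\Delta^4\bar\kappa$, and there the paper does exactly what you describe (with detecting element $v_2^{18}v_1^4 d_0 h_2[1]$ via a $d_2$ in the MASS for $\tmf\wedge M(8)$, followed by a clean vanishing-line check in the algebraic tmf-resolution and the MASS). One small correction to your sketch: the Geometric Boundary Theorem differentials are not uniformly $d_2$'s---for $\epsilon$ the relevant differential is a $d_3$ (compare the $\Delta^8\epsilon$ case in Lemma~\ref{lem:nu2epsilon}, where $d_3(v_2^{32}v_1^4 e_0[1]) = v_2^{32}v_1^8 c_0[1]$)---but this does not affect the strategy.
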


\begin{proof}$\quad$

\begin{enumerate}

\item These classes were lifted in \cite[Thm. 11.1]{BHHM2}. 
\vspace{10pt}

\item The class $\Delta^4 2 \bar{\kappa} \in \pi_{116}(\tmf)$ lifts to an element 
$$\Delta^4 2 \bar{\kappa}[1] \in \tmf_{117}(M(8))$$
which is detected by 
$$v_2^{16} h_0 g[1] \in \Ext^{23,117+23}_{A(2)_*}(H(8))$$
in the MASS for $\tmf \wedge M(8)$. Lift $\Delta^4 2 \bar{\kappa}[1]$ to an element 
$$\Delta^4 2 \bar{\kappa}[18] \in \tmf_{134}(M(8,v_1^8)).$$
In the MASS for $\tmf \wedge M(8)$, there is a differential 
$$d_2(v_2^{18} v_1^4 d_0 h_2[1]) = v_2^{16} v_1^8 h_0 g [1].$$ 
It follows from the Geometric Boundary Theorem that $\Delta^4 2 \bar{\kappa}[18]$ is detected by $v_2^{18} v_1^4 d_0 h_2[1]$ in the MASS for $\tmf \wedge M(8,v_1^8)$. In particular, we see that $\Delta^4 2\bar{\kappa}[18]$ has MAF $29$ and stem $134$. 

We now check that $v_2^{18} v_1^4 d_0 h_2[1]$ is a permanent cycle in the algebraic $\tmf$-resolution for $H(8,v_1^8)$. Its relative position in $\Ext_{A(2)_*}(\bou_1 \otimes H(8,v_1^8))$ is $t-s = 127$ and $AF = 28$, its relative position in $\Ext_{A(2)_*}(\bou_1^{\otimes 2} \otimes H(8,v_1^8))$ is $t-s = 120$ and $AF = 27$, and its relative position in $\Ext_{A(2)_*}(\bou_1^{\otimes 3} \otimes H(8,v_1^8))$ is $t-s = 113$ and $AF = 26$, the last of which lies above the vanishing line. Inspection of the relevant charts shows that $v_2^{16} 2 \bar{\kappa}[18]$ cannot support a nontrivial $d_1$-differential since the target bidegrees are zero. Therefore $v_2^{16} 2 \bar{\kappa}[18]$ is a permanent cycle in the algebraic $\tmf$-resolution for $H(8,v_1^8)$ and lifts to an element $v_2^{16} 2 \bar{\kappa}[18]$ in $\Ext_{A_*}(H(8,v_1^8))$. 

Finally, inspection of the same charts reveals that there are no possible targets for a nontrivial differential supported by $v_2^{16} 2 \bar{\kappa}[18]$ in the MASS for $M(8,v_1^8)$. Therefore $v_2^{16} 2 \bar{\kappa}[18]$ is a permanent cycle. 


\end{enumerate} 

\end{proof}

Contrary to the previous cases, there are several potential obstructions to lifting $\Delta^4 \bar{\kappa}\eta \in \pi_{117}(\tmf)$ to the top cell of $M(8,v_1^8)$ which are tricky to resolve. 
However, since this element is $2$-torsion and $v_1^4$-torsion, we may instead attempt to lift it to the top cell of the generalized Moore spectrum $M(2,v_1^4)$ of \cite{BHHM}, where the potential obstructions are much simpler to analyze. It then follows from the fact that the composite
$$ \Sigma^{8}M(2,v_1^4) \xrightarrow{\cdot 4v_1^4} M(8,v_1^8) \rightarrow S^{18} $$
is projection onto the top cell of $M(2,v_1^4)$ that $\Delta^4\bar{\kappa}\eta$ does lift to the top cell of $M(8,v_1^8)$.  

\begin{lem}\label{Lem:v216kbe}
The class $\Delta^4 \bar{\kappa}\eta \in \pi_{117}(\tmf)$ lifts to the top cell of $M(2,v_1^4)$.
\end{lem}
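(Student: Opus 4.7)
My plan is to adapt the strategy used in Lemmas~\ref{Lem:v28Lift} and \ref{Lem:v216neLift}, replacing $M(8,v_1^8)$ throughout with the four-cell complex $M(2,v_1^4)$. The reduction to $M(2,v_1^4)$ is available because $\Delta^4\bar{\kappa}\eta$ is both $2$-torsion and $v_1^4$-torsion in $\tmf_*$, while the transfer of the resulting top-cell lift up to a top-cell lift on $M(8,v_1^8)$ is supplied by the composite displayed just before the statement.

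First, I would observe that $\Delta^4\bar{\kappa}\eta$ is detected by $v_2^{16}gh_1 \in \Ext^{21,138}_{A(2)_*}(\FF_2)$ in the ASS for $\tmf$, so its $2$-torsion lift to $\tmf_{118}(M(2))$ is detected in the MASS for $\tmf\wedge M(2)$ by $v_2^{16}gh_1[1]$. The next step is to locate a differential in that MASS of the form
$$ d_r(x[1]) \;=\; v_2^{16}v_1^4 \, gh_1[1], $$
where $x[1]$ is a permanent cycle in the MASS for $\tmf\wedge M(2,v_1^4)$; the Geometric Boundary Theorem then identifies $x[1]$ as the MASS-detection class of a lift $\Delta^4\bar{\kappa}\eta[10]\in\tmf_{127}(M(2,v_1^4))$. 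Guided by the analogous differentials $d_2(v_2^{8}v_1^4 d_0 e_0[1])=g^2 v_1^8 h_{2,1}^6[1]$ and $d_2(v_2^{18}v_1^4 d_0 h_2[1])=v_2^{16}v_1^8 h_0 g[1]$ from the earlier lemmas, I expect $x[1]$ to be of the shape $v_2^{16+\epsilon}v_1^4\cdot w\cdot[1]$ for small $\epsilon$ and a suitable $g$-divisible class $w$.

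Having pinned down $x[1]$, I would complete the argument in the familiar three-stage manner: check that $x[1]$ is a permanent cycle in the algebraic $\tmf$-resolution for $H(2,v_1^4)$ by computing its relative position in $\Ext_{A(2)_*}(\bou_I\otimes H(2,v_1^4))$ for small $|I|$ and verifying that all potential target bidegrees vanish until the vanishing line is crossed; then check that the resulting element $\{x[1]\}\in\Ext_{A_*}(H(2,v_1^4))$ is a permanent cycle in the MASS for $M(2,v_1^4)$ by the same chart inspection. A lift $\td{y}\in\pi_{127}M(2,v_1^4)$ of $\{x[1]\}$ then projects on the top cell to the desired preimage of $\Delta^4\bar{\kappa}\eta$ in $\pi^s_{117}$.

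The main obstacle is the correct identification of the Geometric Boundary Theorem source $x[1]$ and the subsequent verification that it is free of competing differentials, either in the algebraic $\tmf$-resolution or in the MASS for $M(2,v_1^4)$. Because $M(2,v_1^4)$ has cells only in dimensions $0,1,9,10$, the algebraic $\tmf$-resolution charts for $H(2,v_1^4)$ that must be inspected are appreciably smaller than their $H(8,v_1^8)$-counterparts, which is precisely why the obstructions become tractable after passing from $M(8,v_1^8)$ to $M(2,v_1^4)$.
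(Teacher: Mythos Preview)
Your overall strategy matches the paper's, but the two concrete steps you leave open are precisely the content of the proof, and your guesses about both are off.

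First, the Geometric Boundary source is $x[1]=v_2^{20}h_2^2[1]$, via the differential $d_3(v_2^{20}h_2^2[1])=v_2^{16}v_1^4\,h_1g[1]$ in the MASS for $\tmf\wedge M(2)$. This does not fit your predicted shape $v_2^{16+\epsilon}v_1^4\cdot w$ with $w$ $g$-divisible: there is no $v_1^4$ factor (you are now killing $v_1^4$ rather than $v_1^8$, so the source need not carry extra $v_1$'s), and $h_2^2$ is not $g$-divisible. The pattern-matching from the $M(8,v_1^8)$ lemmas is misleading here.

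Second, and more importantly, the algebraic $\tmf$-resolution check is not vacuous. At the relative position in $\Ext_{A(2)_*}(\bou_1\otimes H(2,v_1^4))$ there is a nonzero potential target $x_{119,24}$, so ``the target bidegrees vanish'' is false. The paper rules this $d_1$ out by a divisibility argument: $v_2^{20}h_2^2[1]$ is $v_2^{16}$-divisible while $x_{119,24}$ is not, so no algebraic $\tmf$-resolution differential can connect them (cf.\ the proof of \cite[Prop.~10.1]{BHHM2}). Only after this is settled does the final MASS chart inspection go through as you describe.
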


\begin{proof}
The class $\Delta^4 \eta \bar{\kappa} \in \pi_{117}(\tmf)$ lifts to an element 
$$\Delta^4 \eta \bar{\kappa}[1] \in \tmf_{118}(M(2))$$ 
which is detected by 
$$v_2^{16} h_1 g[1] \in \Ext^{21, 118+21}(H(2))$$ 
in the MASS for $\tmf \wedge M(2)$. Lift $\Delta^4\eta \bar{\kappa}[1]$ to an element 
$$\Delta^4 \eta \bar{\kappa}[10] \in \tmf_{127}(M(2,v_1^4)).$$ 
In the MASS for $\tmf \wedge M(2)$, there is a differential 
$$d_3(v_2^{20}h_2^2[1]) = v_2^{16} v_1^4 h_1 g[1].$$ 
It follows from the Geometric Boundary Theorem that $\Delta^4 \eta \bar{\kappa}[10]$ is detected by $v_2^{20} h_2^2[1]$ in the MASS for $\tmf \wedge M(2,v_1^4)$. In particular, we see that $\Delta^4 \eta \bar{\kappa}[10]$ has MAF $24$ and stem $127$. 

We now check that $v_2^{20} h_2^2[1]$ is a permanent cycle in the algebraic $\tmf$-resolution for $H(2,v_1^4)$. Its relative position in $\Ext_{A(2)_*}(\bou_1 \otimes H(2,v_1^4))$ is $t-s = 120$ and $AF = 23$, its relative position in $\Ext_{A(2)_*}(\bou_1^{\otimes 2} \otimes H(2,v_1^4))$ is $t-s = 113$ and $AF = 22$, and its relative position in $\Ext_{A(2)_*}(\bou_1^{\otimes 3} \otimes H(2,v_1^4))$ is $t-s = 106$ and $AF = 21$. Inspection of the relevant charts \cite[Figs. 6.4-6.5]{BHHM} shows that there is potentially a nontrivial differential
$$d_1(v_2^{20} h_2^2[1]) = x_{119,24},$$
in the algebraic tmf-resolution, where 
$$ x_{119,24} \in \Ext^{24, 119+24}_{A(2)_*}(\bou_1 \otimes H(2,v_1^4)), $$
but since $v_2^{20} h_2^2[1]$ is $v_2^{16}$-divisible and $x_{119,24}$ is not, this differential cannot occur (compare with the proof of \cite[Prop. 10.1]{BHHM2}). Therefore $v_2^{20} h_2^2[1]$ is a permanent cycle in the algebraic $\tmf$-resolution for $H(2,v_1^4)$ and therefore lifts to an element $\{v_2^{20} h_2^2[1]\}$ in $\Ext_{A_*}(H(2,v_1^4))$. 

Finally, inspection of the same charts reveals that there are no possible nontrivial differentials supported by $\{v_2^{20} h_2^2[1]\}$ in the MASS for $M(2, v_1^4)$. Therefore $\{v_2^{20} h_2^2[1]\}$ is a permanent cycle in the MASS for $M(2,v_1^4)$. 
\end{proof}

\begin{lem}\label{Lem:v216qLift}
The class $\Delta^4 q \in \pi_{128}(\tmf)$  lifts to the top cell of $M(8,v_1^8)$.
\end{lem}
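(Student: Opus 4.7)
The plan is to follow the template of the previous $v_2^{16}$-lifts in Lemma~\ref{Lem:v216neLift}, combined with the detecting-class argument for $q$ from Lemma~\ref{Lem:quwLift}(1): first lift $\Delta^4 q$ along $M(8) \to \tmf \wedge M(8)$, promote it to a lift in $\tmf \wedge M(8,v_1^8)$ by the Geometric Boundary Theorem, pull this detecting class up through the algebraic $\tmf$-resolution to a class in $\Ext_{A_*}(H(8,v_1^8))$, and finally verify it is a permanent cycle in the MASS for $M(8,v_1^8)$.

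Since $\Delta^4 q$ is $c_4$-torsion, Proposition~\ref{prop:torsion} furnishes a lift $\Delta^4 q[1] \in \tmf_{129}(M(8))$ detected by $v_2^{20} c_0[1] = v_2^{16} \cdot v_2^4 c_0[1] \in \Ext^{23,152}_{A(2)_*}(H(8))$ in the MASS for $\tmf \wedge M(8)$. Multiplying the differential $d_4(v_1^2 h_{2,1} g^2[1]) = v_1^8 v_2^4 c_0[1]$ from the proof of Lemma~\ref{Lem:quwLift}(1) by the permanent cycle $v_2^{16}$ yields
\[
d_4\bigl(v_2^{16} v_1^2 h_{2,1} g^2[1]\bigr) = v_1^8 v_2^{20} c_0[1]
\]
in the MASS for $\tmf \wedge M(8)$. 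Invoking the Geometric Boundary Theorem, a lift $\Delta^4 q[18] \in \tmf_{146}(M(8,v_1^8))$ of $\Delta^4 q[1]$ is then detected by $v_2^{16} v_1^2 h_{2,1} g^2[1]$ in the MASS for $\tmf \wedge M(8,v_1^8)$, placing $\Delta^4 q[18]$ at MAF $27$ in stem $146$.

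Next I would check that $v_2^{16} v_1^2 h_{2,1} g^2[1]$ is a permanent cycle in the algebraic $\tmf$-resolution for $H(8,v_1^8)$. Its relative positions are $(t-s,s) = (139,26)$ in $\Ext_{A(2)_*}(\bou_1 \otimes H(8,v_1^8))$, $(132,25)$ in $\Ext_{A(2)_*}(\bou_1^{\otimes 2} \otimes H(8,v_1^8))$, and progressively lower in the higher tensor powers (each level subtracts $7$ from $t-s$ and $1$ from the AF), until eventually the relative position sits above the $h_{2,1}$-periodic vanishing line of slope $1/5$. Extending the relevant charts from Section~\ref{sec:algtmfres} to the needed range, I would verify that all target bidegrees are zero, so that $v_2^{16} v_1^2 h_{2,1} g^2[1]$ promotes to a class $\{v_2^{16} v_1^2 h_{2,1} g^2[1]\} \in \Ext_{A_*}(H(8,v_1^8))$. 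The same chart inspection, reinterpreted as an Adams-type $d_r$-analysis at the relative positions, will rule out any nontrivial MASS differential supported by this class in $M(8,v_1^8)$, so that projection onto the top cell of $M(8,v_1^8)$ produces the desired lift of $\Delta^4 q$ to $\pi_{128}^s$.

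The main obstacle I anticipate is this last chart inspection: because the source sits at high AF relatively close to the $h_{2,1}$-periodic vanishing line, one must track several levels $\bou_1^{\otimes k}$ of the algebraic $\tmf$-resolution and carefully guard against the $h_{2,1}$-towers catalogued in Proposition~\ref{prop:h21detectionH38}. By direct analogy with the earlier $v_2^{16}$-lifts in Lemma~\ref{Lem:v216neLift}, this should still be a routine (if somewhat lengthy) inspection, with no new phenomena expected.
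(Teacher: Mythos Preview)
Your strategy matches the paper's through the identification of the detecting class $v_2^{16} v_1^2 h_{2,1} g^2[1]$ and the use of Lemma~\ref{lem:v2^8} together with the $q$-case to promote it to a permanent cycle in the algebraic $\tmf$-resolution. Up to that point your outline is essentially the paper's argument.

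The gap is in your final step. You anticipate that the MASS differential check will be ``routine chart inspection, with no new phenomena expected,'' but this is exactly where a genuine obstruction appears. After using that $v_2^{16}\cdot\{g^2 h_{2,1} v_1^2[1]\}$ is a product of $d_2$-cycles (so survives to $E_3$), the paper finds that there \emph{is} a nonzero potential target for a $d_3$ in the MASS: the element
\[
v_1 h_{2,1}^{19}\bigl(v_0^{-1}v_2^2[\zeta_1^8,\zeta_2^4]\bigr)[18] \in \Ext_{A(2)_*}(\bou_1^{\otimes 2}\otimes H(8,v_1^8)).
\]
This is precisely one of the $h_{2,1}$-towers on the $[18]$-cell that Proposition~\ref{prop:h21deathH38} warns are \emph{not} killed in the WSS, so pure chart inspection does not dispose of it. The paper eliminates it by invoking Lemma~\ref{lem:technical}: there is a $d_2$-differential
\[
d_2\bigl(v_2^8 h_{2,1}^{10}(v_0^{-1}v_2^2[\zeta_1^8,\zeta_2^4])[18]\bigr)=v_1 h_{2,1}^{19}(v_0^{-1}v_2^2[\zeta_1^8,\zeta_2^4])[18]
\]
in the MASS for $\tmf\wedge\bo_1^{\wedge 2}\wedge M(8,v_1^8)$, which transports to $\tmf\wedge\br{\tmf}^2\wedge M(8,v_1^8)$ via~(\ref{eq:bo1^2map}); since the source detects the permanent cycle $\Delta^2 v_1^6 M(g^2)[1]$ in the algebraic $\tmf$-resolution (Proposition~\ref{prop:h21detectionH38}), Lemma~\ref{lem:technical} applies and shows the obstruction is already zero on the $E_3$-page. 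This is not a step you can absorb into ``routine inspection''; it is the substantive content of the proof.
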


\begin{proof}
The class $\Delta^4 q \in \pi_{128}(\tmf)$ lifts to an element
$$\Delta^4 q [1] \in \tmf_{129}(M(8))$$
which is detected by 
$$v_2^{20} c_0[1] \in \Ext^{23, 129+23}_{A(2)_*}(H(8))$$ 
in the MASS for $\tmf \wedge M(8)$. Lift $\Delta^4 q[1]$ to an element 
$$\Delta^4 q[18] \in \tmf_{146}(M(8,v_1^8)).$$ 
In the MASS for $\tmf \wedge M(8)$, there is a differential 
$$d_4(v_2^{16} g^2 h_{2,1} v_1^2[1]) = v_2^{20}v_1^8c_0[1].$$ 
It follows from the Geometric Boundary Theorem that $\Delta^4 q[18]$ is detected by $v_2^{16} g^2 h_{2,1} v_1^2[1]$ in the MASS for $\tmf \wedge M(8,v_1^8)$. In particular, we see that $\Delta^4 q[18]$ has MAF $29$ and stem $146$. 

We now check that $v_2^{16} g^2 h_{2,1} v_1^2[1]$ is a permanent cycle in the algebraic $\tmf$-resolution for $H(8,v_1^8)$. Its relative position in $\Ext_{A(2)_*}(\bou_1 \otimes H(8,v_1^8))$ is $t-s = 139$ and $AF = 28$, its relative position in $\Ext_{A(2)_*}(\bou_1^{\otimes 2} \otimes H(8,v_1^8))$ is $t-s = 132$ and $AF = 27$, and its relative position in $\Ext_{A(2)_*}(\bou_1^{\otimes 3} \otimes H(8,v_1^8))$ is $t-s = 125$ and $AF = 26$. 

The proof of Lemma~\ref{Lem:quwLift}(1) implies that the element 
$$ g^2 h_{2,1} v_1^2[1] \in \Ext_{A(2)_*}(H(8,v_1^8)) $$
is a permanent cycle in the algebraic tmf-resolution for $H(8,v_1^8)$.  It follows from Lemma~\ref{lem:v2^8} that 
$$ v_2^{16} g^2 h_{2,1} v_1^2[1] $$
is a permanent cycle in the algebraic tmf-resolution for $H(8,v_1^8)$, and detects an element 
$$ v_2^{16} \cdot \{ g^2 h_{2,1} v_1^2[1] \} \in \Ext_{A_*}(H(8,v_1^8)) $$
which persists to the $E_3$-page of the MASS for $M(8,v_1^8)$.

The only possibility for this element to support a non-trivial MASS differential is for it to support a $d_3$-differential whose target to by detected by the element
$$ {v_1 h_{2,1}^{19}(v_0^{-1}v_2^2[\zeta_1^8,\zeta_2^4])[18]} \in \Ext_{A(2)_*}(\bou_1^{\otimes 2} \otimes H(8,v_1^8)) $$
in the algebraic tmf-resolution for $H(8,v_1^8)$.

We wish to use Lemma~\ref{lem:technical} to argue that the element 
$v_1 h_{2,1}^{19}(v_0^{-1}v_2^2[\zeta_1^8,\zeta_2^4])[18]$ detects an element in $\Ext_{A_*}(H(8,v_1^8))$ which is zero in the $E_3$-page of the MASS.
In the MASS for $\bo_1^2 \wedge M(8,v_1^8)$, there is a differential
$$ d_2(v_2^8 h^{10}_{2,1}(v_0^{-1}v_2^2[\zeta_1^8,\zeta_2^4])[18]) = 
v_1 h_{2,1}^{19}(v_0^{-1}v_2^2[\zeta_1^8,\zeta_2^4])[18]. $$
Using the map
$$ \Sigma^{16}\tmf \wedge \bo^2_1 \wedge M(8,v_1^8) \hookrightarrow \tmf \wedge \br{\tmf}^2 \wedge M(8,v_1^8) $$
we get the same differential in the MASS for $\tmf \wedge \br{\tmf}^2 \wedge M(8,v_1^8)$.
By Proposition~\ref{prop:h21detectionH38}, the element $v_2^8 h^{10}_{2,1}(v_0^{-1}v_2^2[\zeta_1^8,\zeta_2^4])[18]$ is a permanent cycle in the algebraic tmf-resolution for $H(8,v_1^8)$, detecting the element
$$ \Delta^2 v_1^6 M(g^2)[1] \in \Ext_{A_*}(H(8,v_1^8)). $$
Therefore the hypotheses of Lemma~\ref{lem:technical} are satisfied, and we deduce that 
$$v_1 h_{2,1}^{19}(v_0^{-1}v_2^2[\zeta_1^8,\zeta_2^4])[18]$$ 
detects an element which is zero in the $E_3$-page of the MASS, and hence cannot be the target of a non-trivial $d_3$-differential in the MASS.
\end{proof}




\begin{lem}\label{Lem:v216uLift}
The class $\Delta^4 u \in \pi_{135}(\tmf)$ lifts to the top cell of $M(8,v_1^8)$. 
\end{lem}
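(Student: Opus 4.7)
The plan is to follow the template established in Lemmas \ref{Lem:v28Lift}, \ref{Lem:v216neLift}, and especially \ref{Lem:v216qLift}. The class $\Delta^4 u \in \pi_{135}(\tmf)$ is $c_4$-torsion, hence $8$-torsion by Proposition \ref{prop:torsion}, so it lifts to an element $\Delta^4 u[1] \in \tmf_{136}(M(8))$ detected in the MASS for $\tmf \wedge M(8)$ by $v_2^{16}$ times the element that detects $u[1] \in \tmf_{40}(M(8))$ (namely $v_2^2 \Delta h_1 d_0[1]$, up to appropriate renaming). Choose an arbitrary lift $\Delta^4 u[18] \in \tmf_{153}(M(8,v_1^8))$.

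Next, identify a differential in the MASS for $\tmf \wedge M(8)$ of the form
$$ d_r(x[1]) = v_1^8 \cdot (\text{detector of } \Delta^4 u[1]) $$
where $x[1]$ is a permanent cycle in the MASS for $\tmf \wedge M(8,v_1^8)$. By the Geometric Boundary Theorem, $x[1]$ will then detect $\Delta^4 u[18]$ in the MASS for $\tmf \wedge M(8,v_1^8)$, and by $v_2^{16}$-multiplicativity this detector should take the form $v_2^{16}$ times the detector of the lift $u[18] \in \pi_{57}(M(8,v_1^8))$ constructed in the proof of Lemma \ref{Lem:quwLift}(2).

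Once $x[1]$ is identified, I verify that it is a permanent cycle in the algebraic $\tmf$-resolution for $H(8,v_1^8)$. As in the previous lifting lemmas, this is done by computing its relative positions in $\Ext_{A(2)_*}(\bou_1^{\otimes k} \otimes H(8,v_1^8))$ for small $k$; for $k$ sufficiently large the relative position lies above the vanishing line, and for the remaining finitely many values of $k$ one checks directly on the charts that the target bidegrees of any potential $d_1$-differential are trivial. This yields an element $\{x[1]\} \in \Ext_{A_*}(H(8,v_1^8))$.

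The main obstacle is the final step: showing that $\{x[1]\}$ is a permanent cycle in the MASS for $M(8,v_1^8)$. The stem is $153$ with MAF around $32$, which lies in the range where $h_{2,1}$-periodic wedge classes of the form $\Delta^{2m} v_1^i h_{2,1}^j g^2[\epsilon]$, $\Delta^{2m} h_{2,1}^j Q_2[k]$, and $\Delta^{2m} v_1^i h_{2,1}^j Mg^2[\epsilon]$ enumerated in Proposition \ref{prop:h21detectionH38} are a potential source of obstructions. I expect to dispose of these in the same way as in the proof of Lemma \ref{Lem:v216qLift}: the $h_{2,1}$-torsion classes are killed by $d_0$-differentials in the WSS via Proposition \ref{prop:h21deathH38}, the $h_{2,1}$-torsion-free wedge elements are handled by applying Lemma \ref{lem:technical} to lift appropriate MASS differentials from $\tmf \wedge \br{\tmf}^n \wedge M(8,v_1^8)$ (in the manner of Remark \ref{rmk:technical}, Case 2), and any residual candidates are eliminated by inspection of the relevant algebraic tmf-resolution charts. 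This will complete the lift of $\Delta^4 u$ to the top cell of $M(8,v_1^8)$.
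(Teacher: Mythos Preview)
Your outline is correct and follows the same template as the paper, but you are anticipating far more work in the last two steps than is actually needed.  The paper exploits the factorization of the detector much more aggressively than you do.

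After the Geometric Boundary Theorem step, the detector of $\Delta^4 u[18]$ in $\Ext_{A(2)_*}(H(8,v_1^8))$ is $v_2^{16} v_1^3 h_{2,1}^2 g^2[1]$, i.e.\ exactly $v_2^{16}$ times the detector of $u[18]$.  Since Lemma~\ref{Lem:quwLift}(2) already established that $u[18]$ lifts to $\pi_*M(8,v_1^8)$, the element $v_1^3 h_{2,1}^2 g^2[1]$ is already known to be a permanent cycle in the algebraic $\tmf$-resolution \emph{and} a permanent cycle in the MASS for $M(8,v_1^8)$.  Lemma~\ref{lem:v2^8} then gives the algebraic $\tmf$-resolution statement for the $v_2^{16}$-multiple immediately --- no relative-position bookkeeping is required.

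For the MASS step, chart inspection shows the only possible nontrivial differential on $v_2^{16}\cdot\{v_1^3 h_{2,1}^2 g^2[1]\}$ is a $d_2$ hitting $\{v_2^8 h_{2,1}^{15}\zeta_2^4[18]\}$.  But $v_2^{16}=(v_2^8)^2$ is a $d_2$-cycle and $\{v_1^3 h_{2,1}^2 g^2[1]\}$ is a permanent cycle, so the product is a $d_2$-cycle and you are done.  The machinery of Lemma~\ref{lem:technical} and Remark~\ref{rmk:technical} that you invoke is not needed here; the argument is closer in spirit to Lemma~\ref{Lem:v224nu2Lift}(1) than to Lemma~\ref{Lem:v216qLift}.
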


\begin{proof}
The class $\Delta^4 u \in \pi_{135}(\tmf)$ lifts to an element
$$\Delta^4 u[1] \in \tmf_{136}(M(8))$$
which is detected by
$$v_2^{16}v_1^2 x_{35}[1] \in \Ext^{25,136+25}_{A(2)_*}(H(8))$$
in the MASS for $\tmf \wedge M(8)$. Lift $\Delta^4 u[1]$ to an element
$$\Delta^4 u[18] \in \tmf_{153}(M(8,v_1^8)).$$
There is a differential in the MASS for $\tmf \wedge M(8)$
$$d_4(v_2^{16} v_1^3 h_{2,1}^2 g^2[1]) = v_2^{16}v_1^{10} x_{35}[1],$$
so by the Geometric Boundary Theorem, $\Delta^4 u[18]$ is detected by $v_2^{16}v_1^3h^2_{2,1}g^2[1]$ in the MASS for $\tmf \wedge M(8,v_1^8)$. In particular, $\Delta^4 u[18]$ has MAF $31$ and stem $153$. 

We now check that $v_2^{16}v_1^3h_{2,1}^2g^2[1]$ is a permanent cycle in the algebraic $\tmf$-resolution for $H(8,v_1^8)$.  Note that $v_1^3h^2_{2,1}g^2[1]$ detects $u[18]$ in the MASS for $\tmf \wedge M(8,v_1^8)$.  In Lemma~\ref{Lem:quwLift}, we established that $u[18]$ lifts to $M(8,v_1^8)$, and therefore $v_1^3h^2_{2,1}g^2[1]$ is a permanent cycle in the algebraic tmf-resolution, and it detects a permanent cycle in the MASS for $M(8,v_1^8)$.
It follows from Lemma~\ref{lem:v2^8} that
$$ v_2^{16}v_1^3h_{2,1}^2g^2[1] $$
is a permanent cycle in the algebraic tmf-resolution, and detects an element
$$ v_2^{16} \cdot \{v_1^3h_{2,1}^2g^2[1]\} \in \Ext_{A_*}(H(8,v_1^8)). $$
Inspection of the relevant charts shows that the only possible non-trivial MASS differentials supported by this element would be
$$d_2(v_2^{16}\cdot \{v_1^2h_{2,1}^2g^2[1]\}) = \{ v_2^8 h^{15}_{2,1}\zeta_2^4[18] \}. $$
However, we have
$$ d_2(v_2^{16} \cdot \{v_1^3h_{2,1}^2g^2[1]\}) = 0, $$
since it is a product of $d_2$-cycles.
\end{proof}

\begin{lem}\label{Lem:v224nu2Lift}
The following classes lift to the top cell of $M(8,v_1^8)$:
\begin{enumerate}
\item $\Delta^6 \nu^2 \in \pi_{150}(\tmf)$,
\item $\Delta^6 \kappa \nu \in \pi_{161}(\tmf)$. 
\end{enumerate}
\end{lem}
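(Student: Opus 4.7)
The plan is to follow the template established by Lemma~\ref{Lem:v28Lift}, multiplying each detecting element and each differential by $v_2^{16}$. For part (1), we first observe that $\Delta^6 \nu^2 \in \pi_{150}(\tmf)$ is $2$-torsion, so it lifts to an element $\Delta^6 \nu^2[1] \in \tmf_{151}(M(8))$ detected in the MASS for $\tmf \wedge M(8)$ by $v_2^{24} h_2^2[1]$. Multiplying the differential from Lemma~\ref{Lem:v28Lift}(1) by $v_2^{16}$ yields a differential
$$d_2(v_2^{26} v_1^4 h_2 h_0[1]) = v_2^{24} v_1^8 h_2^2[1]$$
in the MASS for $\tmf \wedge M(8)$. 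Choosing a lift $\Delta^6 \nu^2[18] \in \tmf_{168}(M(8,v_1^8))$ and applying the Geometric Boundary Theorem shows that $\Delta^6 \nu^2[18]$ is detected by $v_2^{26} v_1^4 h_2 h_0[1]$ in the MASS for $\tmf \wedge M(8,v_1^8)$, with MAF $34$ and stem $168$.

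For part (2), the class $\Delta^6 \kappa \nu \in \pi_{161}(\tmf)$ lifts to $\Delta^6 \kappa \nu[1] \in \tmf_{162}(M(8))$ detected by $v_2^{24} h_2 d_0[1]$. Choosing a lift $\Delta^6 \kappa \nu[18] \in \tmf_{179}(M(8,v_1^8))$, the $v_2^{16}$-multiple of the Lemma~\ref{Lem:v28Lift}(2) differential,
$$d_2(v_2^{26} v_1^4 d_0 h_0[1]) = v_2^{24} v_1^8 h_2 d_0[1],$$
together with the Geometric Boundary Theorem identifies the MASS detecting element as $v_2^{26} v_1^4 d_0 h_0[1]$, with MAF $37$ and stem $179$.

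Next, I would verify that these two detecting elements are permanent cycles in the algebraic tmf-resolution for $H(8,v_1^8)$. The proof of Lemma~\ref{Lem:v28Lift} already shows that $v_2^{10} v_1^4 h_2 h_0[1]$ and $v_2^{10} v_1^4 d_0 h_0[1]$ are permanent cycles in the algebraic tmf-resolution for $H(8,v_1^8)$. Since $v_2^{16} = (v_2^8)^2$ is a permanent cycle in the algebraic tmf-resolution by Lemma~\ref{lem:v2^8}, multiplying gives that $v_2^{26} v_1^4 h_2 h_0[1]$ and $v_2^{26} v_1^4 d_0 h_0[1]$ are permanent cycles, detecting elements in $\Ext_{A_*}(H(8,v_1^8))$.

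The main obstacle will be eliminating possible nontrivial MASS differentials supported by these elements in the MASS for $M(8,v_1^8)$. In the $\Delta^2$ cases of Lemma~\ref{Lem:v28Lift}, inspection of the algebraic tmf-resolution charts in the relevant bidegrees showed that no targets existed; here we are pushed $96$ stems higher, so we must re-examine the relevant bidegrees. Any remaining potential targets come from wedge-type $h_{2,1}$-towers (which can appear in relative position above the vanishing line), and these are handled by the techniques of Section~\ref{sec:algtmfres}: $h_{2,1}$-towers coming from $\bou_1^{\otimes k}$ with $k \ge 3$ are killed in the WSS by Proposition~\ref{prop:h21deathH38}, while the few surviving towers listed in Proposition~\ref{prop:h21detectionH38} either detect genuine elements of $\Ext_{A_*}(H(8,v_1^8))$ (and so cannot be differential targets) or can be eliminated by the Case~(1) or Case~(2) argument of Remark~\ref{rmk:technical}. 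As in the proof of Theorem~\ref{thm:v232}, the combination of WSS collapse and Lemma~\ref{lem:technical} should leave no surviving obstructions, yielding the desired lifts to $\pi_*M(8,v_1^8)$.
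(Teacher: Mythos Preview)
Your approach matches the paper's proof exactly through the identification of the detecting classes $v_2^{26}v_1^4h_2h_0[1]$ and $v_2^{26}v_1^4d_0h_0[1]$ via the Geometric Boundary Theorem, and through the use of Lemma~\ref{lem:v2^8} to deduce these are permanent cycles in the algebraic tmf-resolution from the corresponding facts for the $v_2^{10}$-classes established in Lemma~\ref{Lem:v28Lift}.

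Your final paragraph, however, anticipates more work than is actually required. The paper does not invoke Proposition~\ref{prop:h21deathH38}, Remark~\ref{rmk:technical}, or Lemma~\ref{lem:technical} here at all. For part~(1), the paper simply observes that since $v_2^{16}=(v_2^8)^2$ is a $d_2$-cycle and $\{v_2^{10}v_1^4h_2h_0[1]\}$ was shown to be a permanent cycle in Lemma~\ref{Lem:v28Lift}(1), the Leibniz rule gives $d_2(v_2^{16}\cdot\{v_2^{10}v_1^4h_2h_0[1]\})=0$; then direct inspection of the algebraic tmf-resolution charts in the relevant bidegrees shows there are \emph{no} possible targets for any longer differential. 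For part~(2), chart inspection alone already shows there are no possible targets for any nontrivial MASS differential whatsoever. So the heavy machinery you outline is unnecessary---straightforward chart inspection in the shifted bidegrees (relative positions in $\Ext_{A(2)_*}(\bou_1^{\otimes k}\otimes H(8,v_1^8))$ for small $k$) suffices to conclude.
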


\begin{proof}$\quad$
\begin{enumerate}

\item The class $\Delta^6 \nu^2 \in \pi_{150}(\tmf)$ lifts to an element 
$$\Delta^6 \nu^2 [1] \in \tmf_{151}(M(8))$$
which is detected by 
$$v_2^{24} h_2^2[1] \in \Ext^{28,151+28}_{A(2)_*}(H(8))$$ 
in the MASS for $\tmf \wedge M(8)$. Lift $\Delta^6 \nu^2[1]$ to an element 
$$\Delta^6 \nu^2 [18] \in \tmf_{168}(M(8,v_1^8)).$$ 
In the MASS for $\tmf \wedge M(8)$, there is a differential 
$$d_2(v_2^{26} v_1^4 h_2 h_0[1]) = v_2^{24} v_1^8 h_2^2[1].$$ 
It follows from the Geometric Boundary Theorem that $\Delta^6 \nu^2 [18]$ is detected by $v_2^{26} v_1^4 h_2 h_0[1]$ in the MASS for $\tmf \wedge M(8,v_1^8)$. In particular, we see that $\Delta^6 \nu^2 [18]$ has MAF $34$ and stem $168$. 

In Lemma~\ref{Lem:v28Lift}(1) we showed that $v_2^{10} v_1^4 h_2 h_0[1]$ is a permanent cycle in the algebraic tmf-resolution, detecting an element
$$ \{v_2^{10} v_1^4 h_2 h_0[1]\} \in \Ext_{A_*}(H(8,v_1^8)) $$
in the algebraic tmf-resolution for $H(8,v_1^8)$.
By Lemma~\ref{lem:v2^8}, this is also true of $v_2^{26} v_1^4 h_2 h_0[1]$.

Lemma~\ref{lem:v2^8} implies that $d_2(v_2^{16}) = 0$ in the MASS for $M(8,v_1^8)$.  By Lemma~\ref{Lem:v28Lift}(1), it follows that
$$ d_2(v_2^{16} \cdot \{v_2^{10} v_1^4 h_2 h_0[1]\}) = 0. $$
Inspection of the algebraic tmf-resolution charts reveals that there are no possible targets of a longer MASS differential supported by $v_2^{16} \cdot \{v_2^{10} v_1^4 h_2 h_0[1]\}$.
\vspace{10pt}

\item The class $\Delta^6 \kappa \nu \in \pi_{161}(\tmf)$ lifts to an element 
$$\Delta^6 \kappa \nu[1] \in \tmf_{162}(M(8))$$ 
which is detected by 
$$v_2^{24} d_0 h_2[1] \in \Ext^{31,161+31}_{A(2)_*}(H(8))$$ 
in the MASS for $\tmf \wedge M(8)$. Lift $\Delta^6 \kappa \nu[1]$ to an element 
$$\Delta^6 \kappa \nu[18] \in \tmf_{179}(M(8,v_1^8)).$$ 
In the MASS for $\tmf \wedge M(8)$, there is a differential 
$$d_2(v_2^{26} v_1^4 h_0 d_0[1]) = v_2^{24} v_1^8 h_2 d_0[1].$$
It follows from the Geometric Boundary Theorem that $\Delta^6 \kappa \nu[18]$ is detected by $v_2^{26} v_1^4 h_0 d_0[1]$ in the MASS for $\tmf \wedge M(8,v_1^8)$. In particular, we see that $\Delta^6 \kappa \nu[18]$ has MAF $37$ and stem $179$. 

We showed in Lemma~\ref{Lem:v28Lift} that $v_2^{10}v_1^4 h_0 d_0[1]$ is a permanent cycle in the algebraic tmf-resolution.  By Lemma~\ref{lem:v2^8}, it follows that $v_2^{26} v_1^4 h_0 d_0[1]$ is a permanent cycle in the algebraic $\tmf$-resolution for $H(8,v_1^8)$ and lifts to an element $\{v_2^{26} v_1^4 h_0 d_0[1]\}$ in $\Ext_{A_*}(H(8,v_1^8))$. 

Finally, inspection of the algebraic tmf-resolution charts reveals that there are no possible nontrivial differentials on $\{v_2^{26} v_1^4 h_0 d_0[1]\}$ in the MASS for $M(8,v_1^8)$. Therefore $\{v_2^{26} v_1^4 h_0 d_0[1]\}$ is a permanent cycle. 

\end{enumerate}
\end{proof}

\begin{lem}\label{lem:nu2epsilon}
The classes $\Delta^8 \nu^2 \in \pi_{198}\tmf$ and $\Delta^8 \epsilon \in \pi_{200}\tmf$ lift to the top cell of $M(8,v_1^8)$.
\end{lem}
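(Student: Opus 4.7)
The plan is to follow the same template used in Lemmas~\ref{Lem:v28Lift}(1), \ref{Lem:v216neLift}(1), \ref{Lem:v224nu2Lift}(1), and \ref{Lem:quwLift}(1): for each class, first lift to $\tmf \wedge M(8)$ and identify a detecting element in the MASS, then find a Bockstein differential in the MASS for $\tmf \wedge M(8)$ whose target is the corresponding $v_1^8$-multiple, apply the Geometric Boundary Theorem to name a detecting element for the lift to $\tmf \wedge M(8,v_1^8)$, and finally argue that this element is a permanent cycle in the algebraic tmf-resolution for $H(8,v_1^8)$ and that the resulting class in $\Ext_{A_*}(H(8,v_1^8))$ is a permanent cycle in the MASS for $M(8,v_1^8)$.

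For $\Delta^8 \nu^2 \in \pi_{198}\tmf$, I would multiply the differential $d_2(v_2^{10}v_1^4 h_2 h_0[1]) = v_2^8 v_1^8 h_2^2[1]$ of Lemma~\ref{Lem:v28Lift}(1) by $v_2^{24}$ to obtain $d_2(v_2^{34} v_1^4 h_2 h_0[1]) = v_2^{32} v_1^8 h_2^2[1]$ in the MASS for $\tmf \wedge M(8)$. Since $\Delta^8\nu^2[1]$ is detected by $v_2^{32}h_2^2[1]$, the Geometric Boundary Theorem then identifies $v_2^{34} v_1^4 h_2 h_0[1]$ as a detecting element for a lift $\Delta^8\nu^2[18] \in \tmf_{216}M(8,v_1^8)$. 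Writing this as $v_2^{24} \cdot v_2^{10} v_1^4 h_2 h_0[1]$ and invoking Lemma~\ref{lem:v2^8} together with the permanent-cycle analysis of Lemma~\ref{Lem:v28Lift}(1) shows it is a permanent cycle in the algebraic tmf-resolution; the relative-position analysis in $\bou_1^{\otimes k} \otimes H(8,v_1^8)$ for $k \le 4$ is identical to that of Lemma~\ref{Lem:v224nu2Lift}(1), uniformly shifted upward in stem.

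For $\Delta^8 \epsilon \in \pi_{200}\tmf$, I would use the coincidence $\Delta^8 \epsilon = \Delta^7 q$ (both detected by $v_2^{32} c_0$ in the ASS for $\tmf$) and multiply the differential $d_4(v_1^2 h_{2,1}g^2[1]) = v_1^8 v_2^4 c_0[1]$ from Lemma~\ref{Lem:quwLift}(1) by $v_2^{28}$ to obtain $d_4(v_2^{28} v_1^2 h_{2,1}g^2[1]) = v_1^8 v_2^{32} c_0[1]$ in the MASS for $\tmf \wedge M(8)$. The Geometric Boundary Theorem then identifies $v_2^{28} v_1^2 h_{2,1}g^2[1]$ as a detecting element for a lift $\Delta^8\epsilon[18] \in \tmf_{218}M(8,v_1^8)$. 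Factoring as $v_2^{24} \cdot v_2^4 v_1^2 h_{2,1}g^2[1]$ and combining Lemma~\ref{lem:v2^8} with the analysis from Lemma~\ref{Lem:quwLift}(1) (which shows $v_1^2 h_{2,1}g^2[1]$ detects $q[18] \in \pi_{50}M(8,v_1^8)$) gives that the product is a permanent cycle in the algebraic tmf-resolution.

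The main obstacle in both cases will be verifying the absence of MASS differentials on the resulting classes in $\Ext_{A_*}(H(8,v_1^8))$, since at stems $216$ and $218$ we are within striking distance of the vanishing line and may encounter $h_{2,1}$-tower candidates not present for the smaller powers of $\Delta$ treated in the earlier lemmas. These I would systematically eliminate using Propositions~\ref{prop:h21deathH38} and \ref{prop:h21detectionH38} to collapse most $h_{2,1}$-towers in the WSS, and using Lemma~\ref{lem:technical} (as was done throughout Section~\ref{sec:v232}) to lift any residual differentials from the MASS for $\tmf \wedge \br{\tmf}^n \wedge M(8,v_1^8)$ to the MASS for $M(8,v_1^8)$.
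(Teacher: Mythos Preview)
Your outline for $\Delta^8\nu^2$ matches the paper, which also arrives at the detecting element $v_2^{34}v_1^4 h_0 h_2[1]$; the paper factors it as $v_2^{16}\cdot\{v_2^{18}v_1^4 h_0 h_2[1]\}$ (the $\Delta^4\nu^2$ element from \cite{BHHM2}) rather than through Lemma~\ref{Lem:v28Lift}(1), but the effect is the same. However, your claim that the obstruction analysis is ``identical to Lemma~\ref{Lem:v224nu2Lift}(1), uniformly shifted'' is wrong: in Lemma~\ref{Lem:v224nu2Lift}(1) there were no longer MASS differentials to rule out, whereas here a new candidate $d_4^{mass}$-target appears, detected by $h_{2,1}^{33}v_1 v_0^{-1}v_2^2[\zeta_1^8,\zeta_2^4][18]$ in $\Ext_{A(2)_*}(\bou_1^{\otimes 2}\otimes H(8,v_1^8))$, and it must be eliminated via Lemma~\ref{lem:technical}. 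You do correctly anticipate this kind of work in your final paragraph, so this is more an understatement than a gap.

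There is a genuine gap in your treatment of $\Delta^8\epsilon$. Multiplying the $q$-case differential by $v_2^{28}$ does not go through: $28$ is not a multiple of $8$, so there is no element $v_2^{28}$ in $\Ext_{A(2)_*}(H(8))$ to which the Leibniz rule applies, and your factorization $v_2^{24}\cdot v_2^4 v_1^2 h_{2,1}g^2[1]$ invokes an element that is not the one produced in Lemma~\ref{Lem:quwLift}(1). The paper instead uses a $d_3$ in the MASS for $\tmf\wedge M(8)$, namely $d_3(v_2^{32}v_1^4 e_0[1]) = v_2^{32}v_1^8 c_0[1]$, so that $\Delta^8\epsilon[18]$ is detected by $v_2^{32}v_1^4 e_0[1]$, which factors cleanly as $v_2^{16}$ times the $\Delta^4\epsilon[18]$ element from \cite{BHHM2}. (Note that if this $d_3$ holds, the element $v_1^8 v_2^{32}c_0[1]$ is already zero on $E_4$, so the Geometric Boundary Theorem would point to the source of the $d_3$, not to any $d_4$-source.) The paper then eliminates one specific $d_4^{mass}$-obstruction, detected by $\Delta^2 h_{2,1}^{28}\zeta_2^4[18]$ in $\bou_1$, via Case~(1) of Remark~\ref{rmk:technical}.
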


\begin{proof}
The classes $\Delta^8 \nu^2 \in \pi_{198}(\tmf)$ and $\Delta^8 \epsilon \in \pi_{200}\tmf$ lift to elements 
\begin{align*}
\Delta^8 \nu^2[1] & \in \tmf_{199}(M(8)), \\
\Delta^8 \epsilon[1] & \in \tmf_{201}(M(8))
\end{align*}
which are detected by 
\begin{align*}
v_2^{32}h_2^2[1] & \in \Ext^{36,199+36}_{A(2)_*}(H(8)), \\
v_2^{32}c_0[1]& \in \Ext^{37,201+37}_{A(2)_*}(H(8))
\end{align*}
in the MASS for $\tmf \wedge M(8)$. Lift $\Delta^8 \nu^2[1]$ and $\Delta^8\epsilon[1]$ to elements 
\begin{align*}
\Delta^8 \nu^2[18] & \in \tmf_{210}(M(8,v_1^8)), \\
\Delta^8 \epsilon[18] & \in \tmf_{212}(M(8,v_1^8)).
\end{align*}
In the MASS for $\tmf \wedge M(8)$, there are differentials 
\begin{align*}
d_2(v_2^{32} v_1^4 h_0h_2v_2^2[1]) & = v_2^{32} v_1^8 h_2^2 [1], \\
d_3(v_2^{32} v_1^4 e_0[1]) & = v_2^{32} v_1^8 c_0 [1].
\end{align*} 
It follows from the Geometric Boundary Theorem that $\Delta^8 \nu^2[18]$ is detected by $v_2^{34} v_1^4 h_0h_2[1]$ and $\Delta^8 \epsilon[18]$ is detected by $v_2^{32}v_1^4e_0[1]$ in the MASS for $\tmf \wedge M(8,v_1^8)$. 

In \cite[Thm.~11.1]{BHHM2} the classes $\Delta^4\nu^2[18] \in \pi_{120}M(8,v_1^8)$ and $\Delta^4 \epsilon[18] \in \pi_{122}M(8,v_1^8)$ were produced by showing that the elements 
\begin{align*} 
v_2^{18}v_1^4h_0h_2[1] & \in \Ext^{26, 120+26}_{A(2)_*}(H(8,v_1^8)), \\
v_2^{16}v_1^4e_0[1] & \in \Ext^{26, 122+26}_{A(2)_*}(H(8,v_1^8))
\end{align*}
detect via the algebraic tmf-resolution elements 
\begin{align*} 
\{v_2^{18}v_1^4h_0h_2[1]\} & \in \Ext^{26,120+26}_{A_*}(H(8,v_1^8)), \\
\{v_2^{16}v_1^4e_0[1]\} & \in \Ext^{26,122+26}_{A_*}(H(8,v_1^8)) 
\end{align*} 
which are permanent cycles in the MASS for $M(8,v_1^8)$.

Since the element $v_2^{16} \in \Ext_{A_*}(H(8,v_1^8))$ is the square of the element $v_2^8$, we have $d_2(v_2^{16}) = 0$.  
We deduce that the elements
\begin{align*} 
v_2^{16}\cdot \{v_2^{18}v_1^4h_0h_2[1]\} & \in \Ext^{26,120+26}_{A_*}(H(8,v_1^8)), \\
v_2^{16}\cdot\{v_2^{16}v_1^4e_0[1]\} & \in \Ext^{26,122+26}_{A_*}(H(8,v_1^8)) 
\end{align*} 
persist to the $E_3$ page of the MASS for $M(8,v_1^8)$.  If we can show they are permanent cycles, we are done.

We begin with $\{v_2^{34}v_1^4h_0h_2[1]\}$.
Examination of the algebraic tmf-resolution for $M(8,v_1^8)$ reveals that the only possibility of a non-trivial differential in the MASS supported by this element would be a $d_4(\{v_2^{34} v_1^4h_0h_2[1]\})$ which would be detected by 
$$ h_{2,1}^{33}v_1v_0^{-1}v_2^2[\zeta_1^8,\zeta_2^4][18] \in \Ext_{A(2)_*}(\bou_1^{\otimes 2} \otimes H(8,v_1^8)). $$
In the MASS for $\tmf \wedge \bo_1^{\wedge 2}$ there is a differential
$$ d_2^{mass}(\Delta^2 h_{2,1}^{24} v_0^{-1}v_2^2[\zeta_1^8,\zeta_2^4][18]) = 
h_{2,1}^{33}v_1 v_0^{-1}v_2^2[\zeta_1^8,\zeta_2^4][18]. $$ 
Using the map (\ref{eq:bo1^2map}) we deduce that there is a corresponding differential in the MASS for $\tmf \wedge \br{\tmf}^{\wedge 2}$.  
The elements 
\begin{gather*}
h_{2,1}^{33}v_1v_0^{-1}v_2^2[\zeta_1^8,\zeta_2^4][18], \\
\Delta^2 h_{2,1}^{24}v_1v_0^{-1}v_2^2[\zeta_1^8,\zeta_2^4][18]
\end{gather*}
respectively detect 
\begin{align*}
 v_1^7 h_{2,1}^{23} Mg^2[1] & \in \Ext_{A_*}(H(8,v_1^8)), \\
 \Delta^2 v_1^6 h_{2,1}^{14} Mg^2 [1] & \in \Ext_{A_*}(H(8,v_1^8)) 
\end{align*}
in the algebraic tmf-resolution for  $M(8,v_1^8)$.
We therefore deduce from Lemma~\ref{lem:technical} that $v_1^7 h_{2,1}^{23} Mg^2[1]$ is killed by 
$$ d^{mass}_2 (\Delta^2 v_1^6 h_{2,1}^{14} Mg^2[1]) $$
in the MASS for $M(8,v_1^8)$.
Therefore it cannot be the target of a non-trivial $d^{mass}_4$.

We now consider $\{v_2^{32}v_1^4e_0[1]\}$.
Examination of the algebraic tmf-resolution for $M(8,v_1^8)$ reveals that the only possibility of a non-trivial differential in the MASS supported by this element would be a $d_4(\{v_2^{32} v_1^4e_0[1]\})$ which would be detected by 
$$ \Delta^2 h^{28}_{2,1}\zeta_2^4[18] \in \Ext_{A(2)_*}(\bou_1 \otimes H(8,v_1^8)). $$
in the algebraic tmf-resolution for $M(8,v_1^8)$.
To eliminate this possibility we wish to employ 
Case (1) of Remark~\ref{rmk:technical}, using the differential
$$ d^{mass}_3(\Delta^2 h^{25}_{2,1}v_0^{-2}v_2^2\zeta_1^{16})[18]) = \Delta^2 h_{2,1}^{28} \zeta_2^4[18] $$
in the MASS for $\tmf \wedge \br{\tmf} \wedge M(8,v_1^8)$.
The element $\Delta^2 h^{25}_{2,1}v_0^{-2}v_2^2\zeta_1^{16})[18]$ detects the element
$$ \Delta^2 h^{19}_{2,1} Q_2[18] \in \Ext_{A_*}(H(8,v_1^8)) $$
in the algebraic tmf-resolution for $M(8,v_1^8)$.  We just need to check that there is no possibility for $\Delta^2 h^{19}_{2,1} Q_2[18]$ to support a non-trivial $d^{mass}_2$ in the MASS for $M(8,v_1^8)$.  However, examination of the algebraic tmf-resolution for $M(8,v_1^8)$ reveals there are no classes which could detect the target of such a non-trivial $d^{mass}_2$.
\end{proof}

\bibliographystyle{amsalpha}

\nocite{*}
\bibliography{tmfhi}

\end{document}